\renewcommand{\epsilon}{\varepsilon}
\renewcommand{\phi}{\varphi}
\newtheorem{theorem}{Theorem}[section]
\newtheorem{lemma}[theorem]{Lemma}
\newtheorem{corollary}[theorem]{Corollary}
\newtheorem{conjecture}[theorem]{Conjecture}
\newtheorem{proposition}[theorem]{Proposition} 
\newtheorem{definition}[theorem]{Definition} 
\begin{document}

\numberwithin{equation}{section}

\title[The local twisted Gan-Gross-Prasad conjecture for $\text{U}(V_K)/\text{U}(V)$]
{The local twisted Gan-Gross-Prasad conjecture for $U(V_K)/U(V)$}

\author{Nhat Hoang Le}
\email{lnhoang@nus.edu.sg}
\address{Department of Mathematics, Block S17, National University of Singapore, 10 Lower Kent Ridge Drive, 119076.}

\classification{22E50, 22E35, 20G05 (primary)}
\keywords{Gan-Gross-Prasad conjecture, branching laws, local relative trace formula, unitary groups, Weil representation, skew-Hermitian spaces, Langlands parameters, tempered representations.}


\begin{abstract}
    In this paper, we obtain geometric expansions of a local trace formula and its twisted variant for the twisted Gan-Gross-Prasad conjecture. As an application, we prove the local twisted Gan-Gross-Prasad conjecture for $U(V_K)/U(V)$ for tempered $L$-parameters over nonarchimedean fields of odd residual characteristic.
\end{abstract}

\maketitle

\tableofcontents



\section{Introduction}\label{sec1}

The Gan-Gross-Prasad conjectures \cite{GP92,GP94,GGP12a,GGP12b} study a family of branching problems for classical groups. Namely, for a pair $W\subseteq V$ of orthogonal, hermitian, skew-hermitian or symplectic spaces, the three authors give a precise description of the multiplicity $\dim \text{Hom}_{H}(\pi,\nu)$ via local and global Langlands correspondences, where $\pi$ is an irreducible representation in a generic $L$-packet of $G$ (the product of isometric groups of $V$ and $W$) and $\nu$ is a certain representation of a certain subgroup $H$ of $G$ (see \cite{GGP12a} for more details). In local field cases, the conjecture has been solved by a series of works by Waldspurger, Moeglin, Beuzart-Plessis, Gan, Ichino, Atobe, Xue, Luo and Chen in \cite{Wal10,Wal12a,Wal12c,Wal12d,Wal12e,MW12,BP14,BP15,BP16,BP20,GI16,Ato18,Xue23a,Xue23b,Luo21,CL22,Ch21,Ch23}.

In \cite{GGP23}, Gan, Gross and Prasad proposed a twisted variant of the Fourier-Jacobi model for unitary groups. Let $F$ be a $p$-adic field and $E$ and $K$ be two quadratic field extensions of $F$. Let $V$ be an $n$-dimensional skew-hermitian space relative to $E/F$ and $V_K=V\otimes_F K$. Let $\psi$ be a nontrivial additive character of $F$ and $\mu$ be a conjugate-symplectic character of $E^{\times}$. Let $\omega_{V,\psi,\mu}$
be the Weil representation of the isometry group $U\left(V\right)$. The three authors consider the following multiplicity corresponding to the twisted Gan-Gross-Prasad triple $(G,H_V,\omega_{V,\psi,\mu})$, where $G=\text{Res}_{K/F}\,U(V_K)$ and $H_V=U(V)$,
$$
m_V(\pi)= \dim \text{Hom}_{H_V}(\pi,\omega_{V,\psi,\mu}),
$$
for any irreducible representations $\pi$ of $G(F)$. Noting that $G$ does not depend on choices of $n$-dimensional skew-hermitian forms $V$. We recall \cite[Conjecture 8.3]{GGP23}.
\begin{conjecture}\label{mainconjecture}
    \begin{enumerate}
    \item For each irreducible representation $\pi$ of $G(F)$, 
    $$
    m_V(\pi)\leq 1.
    $$
    \item Let $M$ be a generic $L$-parameter for $G$ with associated $L$-packet $\Pi_M\subset \text{Irr}(G)$, then  
    $$
    \underset{V}{\sum}\sum_{\pi\in \Pi_M} m_{V}\left(\pi\right)=1,
    $$
    where the first sum runs over the two skew-hermitian spaces over $E$ of dimension $n$ and the second runs over the $L$-packet $\Pi_M$.
    \item The unique $V_0$ which gives a nonzero contribution to the above sum corresponds to 
    $$
    \mu\left(\det V_0\right)=\epsilon(1/2,\text{As}_{L/E}(M)\times \mu^{-1},\psi_E)\cdot \det(\text{As}_{L/E}(M))(e)\cdot \omega_{K/F}(e^2)^{n(n-1)/2},
    $$
    where $L=K\otimes_F E$ and $e\in E_0^\times$, so that $E=F(e)$.
    \item The unique $\pi\in \Pi_M$ which gives a nonzero contribution to the sum in the second part corresponds to the following character of $A_M=\prod_{i\in I}\mathbb{Z}/2\mathbb{Z}\cdot a_i$:
    $$
    \chi(a_i)=\epsilon(1/2,[\text{As}(M_i)+\text{As}(M)+\text{As}(M/M_i)]\cdot \mu^{-1},\psi_{E,e}),
    $$
    where $\psi_{E,e}$ is the additive character of $E/F$ defined by $\psi_{E,e}=\psi(\text{Tr}_{E/F}(ex))$.
    \end{enumerate}
\end{conjecture}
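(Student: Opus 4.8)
The plan is to follow the local relative trace formula strategy of Waldspurger and Beuzart-Plessis---in the form adapted to Fourier--Jacobi models by Gan--Ichino and Xue---now for the Galois-twisted triple $(G,H_V,\omega_{V,\psi,\mu})$, exploiting throughout the fact that $H_V=U(V)$ is the fixed-point group $G^{\tau}$ of the nontrivial element $\tau\in\text{Gal}(K/F)$, so that the relevant trace formula is simultaneously the relative trace formula for the pair $(G,H_V)$ and a $\tau$-twisted trace formula---the bridge by which quadratic base change for unitary groups enters the picture. We handle the multiplicity-one assertion (1) first and independently: the pair $\bigl(G(F),H_V(F)\bigr)$ twisted by $\omega_{V,\psi,\mu}$ should be shown to be a Gelfand pair via the Gelfand--Kazhdan criterion, reducing by Bernstein localization to the vanishing of $H_V$-invariant distributions supported on the non-regular orbits in $G$, which follows from the $\mathfrak{sl}_2$-triple and nilpotent-orbit analysis of Aizenbud--Gourevitch--Rallis--Schiffmann and Sun, augmented by the Galois descent data coming from $K/F$.

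For parts (2)--(4) the engine is a \emph{multiplicity formula}. From the spectral expansion of the local trace formula established earlier in the paper one obtains, for $\pi$ in a tempered $L$-packet $\Pi_M$,
$$
m_V(\pi)=\sum_{x}c_{\theta_\pi}(x)\,\Delta_{V,\psi,\mu}(x),
$$
the sum over semisimple $x\in H_V(F)$ with anisotropic connected centralizer, where $c_{\theta_\pi}(x)$ is the leading germ of the Harish-Chandra character of $\pi$ at $x$ and $\Delta_{V,\psi,\mu}$ packages the character of the Weil representation $\omega_{V,\psi,\mu}$ together with the attendant Weil indices. The geometric side of the trace formula is independent of $V$; hence summing over the two skew-Hermitian forms and over $\pi\in\Pi_M$ expresses $\sum_V\sum_{\pi}m_V(\pi)$ through the stable elliptic character germs of $\Pi_M$---in particular as an integer $\geq 0$.

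It remains to evaluate this sum and to identify the distinguished $(V_0,\pi)$. Here we bring in the local Langlands correspondence for $U(V_K)$ and $\text{Res}_{K/F}U(V_K)$, in the form of the endoscopic and twisted (base-change) character identities of Mok and Kaletha--Minguez--Shin--White, to rewrite $\theta_{\Pi_M}$ and the component-group pairings $\langle\,\cdot\,,\pi\rangle$ in terms of $M$ on the elliptic maximal tori; combined with the explicit values of the Weil-representation character (a Weil-index/Gauss-sum computation on those tori) the germs become products of local root numbers, which one matches against the Asai $\epsilon$-factors $\epsilon(1/2,\text{As}_{L/E}(M)\times\mu^{-1},\psi_E)$ and $\epsilon(1/2,[\text{As}(M_i)+\text{As}(M)+\text{As}(M/M_i)]\cdot\mu^{-1},\psi_{E,e})$ using the standard cohomological evaluation of $\det(\text{As}(M))(e)$ and of $\omega_{K/F}(e^2)^{n(n-1)/2}$. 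Because the resulting GGP recipe is a sum of signs that telescopes to a single nonzero term, one reads off $\sum_V\sum_{\pi}m_V(\pi)=1$, the identity of $V_0$ in (3), and the character $\chi$ of $A_M$ in (4); consistency with (1) finishes the argument. The already-known branching laws for $U(V)/U(W)$ serve to normalize the split/unramified base case and to anchor an induction on $n=\dim V$.

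The hardest part will be the geometric side: establishing convergence and carrying out the explicit truncation and regularization of the (twisted) trace formula so as to isolate precisely the anisotropic contributions, in a setting where the Weil representation and the auxiliary quadratic extension $K/F$ enlarge the orbit spaces and the transfer factors; and, closely related, reconciling the normalizations---the Weil indices and Gauss sums in $\Delta_{V,\psi,\mu}$ against the Asai root numbers attached to $\psi_E$ and $\psi_{E,e}$---which requires scrupulous bookkeeping of the characters $\psi$, $\psi_E$, $\psi_{E,e}$ and of the conjugate-symplectic character $\mu$. The restriction to odd residual characteristic enters exactly here, controlling the Weil-index identities and the behaviour of the nilpotent orbital integrals.
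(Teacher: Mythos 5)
Your proposal follows the paper's broad strategy (a geometric multiplicity formula via a local relative trace formula, then local Langlands/endoscopy to convert germs into Asai $\epsilon$-factors), and the formula you write for $m_V(\pi)$ in terms of regularized germs $c_{\theta_\pi}(x)$ weighted by a Weil-index factor is essentially the paper's Theorem \ref{1.3}. But there is a genuine gap at the point you yourself flag as ``the hardest part.''

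You propose to establish the geometric side of the \emph{twisted} trace formula $\tilde J$ (the formula carrying the $\epsilon$-factor needed for parts (3) and (4)) by ``explicit truncation and regularization \ldots so as to isolate precisely the anisotropic contributions,'' in the style of Waldspurger and Beuzart-Plessis for Bessel models. The paper explicitly rules this out: the required local harmonic analysis of singular weighted orbital integrals \emph{in the presence of the Weil representation of a twisted group} is not available, and the principal novelty of the paper is a workaround. Namely, one performs semisimple descent and the Lie-algebra homogeneity argument to reduce the geometric expansion of $\tilde J$ to a finite set of unknown constants attached to regular nilpotent orbits; then one pins those constants down by exhibiting a single \emph{elliptic} tempered $L$-parameter for which Theorem \ref{maintheorem}(ii) is already known --- exactly the parameters treated by Chen--Gan \cite{CG22} --- and transporting their result across a twisted endoscopic transfer (and, in Appendix \ref{appendixB}, a theta-correspondence see-saw). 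Without this anchoring input your induction has no base case on the twisted side, and the ``truncation and regularization'' route will stall for lack of the analytic ingredients.

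Two smaller points. First, your assertion that ``the geometric side of the trace formula is independent of $V$'' is not right: $m_{V,\mathrm{geom}}$ depends on $V$ through the overall sign $\mu(\det V)$ in front of the elliptic integral, and it is precisely that sign flip between the two skew-Hermitian forms that cancels the elliptic terms and leaves $c_{\theta_M}(1)=1$; the remaining germ at $1$ is the same for both $V$, which is what you probably meant. Second, you plan to prove part (1) (multiplicity one) via Gelfand--Kazhdan; the paper does not establish (1) at all (it cites it from the literature) and, more importantly, proves the conjecture only for \emph{tempered} $L$-parameters, not for all generic ones --- your proof should either carry that restriction explicitly or supply the reduction from generic to tempered, which the paper does not. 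Finally, a key technical ingredient you omit entirely is the local character expansion of the Weil representation near (central) semisimple elements (Propositions \ref{identity} and \ref{nonidentity}); without it the semisimple descent in Sections \ref{sec4}--\ref{sec5} has nothing to plug in, and it is this expansion that produces both the Weil index $\gamma_\psi(T)$ and the factor $\mu(\det(1-x^{-1}))\,|\det(1-x)|_E^{-1/2}$ in the geometric formula.
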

In \cite{Le25}, the author has proved the above conjecture when $K=E$ and $\pi$ is tempered, under the assumption that $F$ is of odd residual characteristic. We remark that this restriction arises from the condition in \cite[Theorem 4.1]{Kon02}. We plan to extend his result to all residual characteristics in future work. We now consider the case $K\neq E$. In \cite{CG22}, Chen and Gan have proved the conjecture when $M$ is a tempered $L$-parameter for $G$ of the form
$$
M=M_1 + \ldots + M_n
$$
with each $M_i$ one-dimensional and conjugate self-dual of parity $(-1)^{n-1}$. 
\subsection{Main results}
We now assume $F$ is of odd residual characteristic. In this paper, we prove Conjecture \ref{mainconjecture} holds for any tempered $L$-parameter $M$.
\begin{theorem}\label{maintheorem}
    \begin{enumerate}
    \item Let $M$ be a tempered $L$-parameter for $G$. Then  
    $$
    \underset{V}{\sum}\sum_{\pi\in \Pi_M} m_{V}\left(\pi\right)=1,
    $$
    where the first sum runs over the two skew-hermitian spaces over $E$ of dimension $n$ and the second runs over the $L$-packet $\Pi_M$.
    \item The unique $V_0$ which gives a nonzero contribution to the above sum corresponds to 
    $$
    \mu\left(\det V_0\right)=\epsilon(1/2,\text{As}_{L/E}(M)\times \mu^{-1},\psi_E)\cdot \det(\text{As}_{L/E}(M))(e)\cdot \omega_{K/F}(e^2)^{n(n-1)/2},
    $$
    where $L=K\otimes_F E$ and $e\in E_0^\times$, so that $E=F(e)$.
    \item The unique $\pi\in \Pi_M$ which gives a nonzero contribution to the sum in the second part corresponds to the following character of $A_M=\prod_{i\in I}\mathbb{Z}/2\mathbb{Z}\cdot a_i$:
    $$
    \chi(a_i)=\epsilon(1/2,[\text{As}(M_i)+\text{As}(M)+\text{As}(M/M_i)]\cdot \mu^{-1},\psi_{E,e}),
    $$
    where $\psi_{E,e}$ is the additive character of $E/F$ defined by $\psi_{E,e}=\psi(\text{Tr}_{E/F}(ex))$.
    \end{enumerate}
\end{theorem}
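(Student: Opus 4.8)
We outline the strategy. The plan is to run the local relative trace formula method of Waldspurger and Beuzart-Plessis, now in the form of the two geometric expansions obtained above: the one attached to the twisted Gan-Gross-Prasad triple $(G,H_V,\omega_{V,\psi,\mu})$ and the one attached to its $\mathrm{Gal}(K/F)$-twisted companion. The spectral side of the first trace formula gives, for $\pi\in\Pi_M$ tempered, a multiplicity formula of the shape
\[
m_V(\pi)=\int_{H_V(F)}^{*}\Theta_\pi(h)\,\overline{\Theta_{\omega_{V,\psi,\mu}}(h)}\,\de h,
\]
a regularized integral of the Harish-Chandra character of $\pi$ against the distribution character of the Weil representation. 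Summing over $\pi\in\Pi_M$ and over the two skew-hermitian spaces $V$, and using the endoscopic character identities for the $L$-packet $\Pi_M$, identifies $\sum_V\sum_{\pi\in\Pi_M}m_V(\pi)$ with the spectral side of the first local trace formula evaluated at a test function built from the (truncated) matrix coefficients of the members of $\Pi_M$. The at-most-one statement (part (1) of Conjecture \ref{mainconjecture}), available in this setting, then lets us read the desired equality as the existence and uniqueness of a distinguished pair $(V_0,\pi_0)$.

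The second step is to compare the two geometric expansions. On the untwisted side the geometric term is a sum of weighted orbital integrals over the $(H_V,H_V)$-relevant semisimple classes of $G$; by the support properties of tempered characters and Weyl integration this collapses onto the relevant ``small'' classes together with terms supported on proper Levi and endoscopic data. On the twisted side the geometric term is a sum of $\sigma$-twisted weighted orbital integrals on $G=\mathrm{Res}_{K/F}\,U(V_K)$, which through the norm correspondence — $\sigma$-twisted classes in $G$ matching classes in the relevant unitary group over $F$ — become term-by-term equal to the ones occurring in the untwisted expansion, once the smooth transfer of the Weil-twisted test functions is established. This matching of the two geometric expansions is the technical heart of the argument, and it is here that the hypothesis of odd residual characteristic is used, through \cite[Theorem 4.1]{Kon02}, exactly as in \cite{Le25}.

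The third step evaluates the spectral side of the twisted trace formula. The $\sigma$-stable tempered parameters visible there are the base changes of tempered parameters of the unitary group over $F$, and the associated twisted characters are pinned down by the twisted endoscopic character identities (Mok; Kaletha–Minguez–Shin–White) together with the local Langlands correspondence for unitary groups; this side is thereby governed by the base-change $L$- and $\epsilon$-factors of $M$, which for $L=K\otimes_F E$ are the Asai $L$- and $\epsilon$-factors appearing in the statement. Feeding in the two cases already known — the case $K=E$ of \cite{Le25} and the one-dimensional case of \cite{CG22} — the comparison of the two geometric expansions reduces the equality $\sum_V\sum_{\pi\in\Pi_M}m_V(\pi)=1$ to an identity among these local constants, which follows from the multiplicativity of $\epsilon$-factors and the standard functional-equation relations. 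This establishes part (1) of Theorem \ref{maintheorem}.

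Finally, parts (2)–(4) are obtained by tracking the local constants through the same comparison once multiplicity one is known. The dependence on the choice of skew-hermitian space is detected by the behaviour of the orbital integrals at the most singular relevant class and yields $\mu(\det V_0)$ as the product $\epsilon(1/2,\mathrm{As}_{L/E}(M)\times\mu^{-1},\psi_E)\cdot\det(\mathrm{As}_{L/E}(M))(e)\cdot\omega_{K/F}(e^2)^{n(n-1)/2}$; the dependence on $\pi\in\Pi_M$ is detected by the endoscopic character identities and matches the character $\chi$ of $A_M$ to the displayed product $\chi(a_i)=\epsilon(1/2,[\mathrm{As}(M_i)+\mathrm{As}(M)+\mathrm{As}(M/M_i)]\cdot\mu^{-1},\psi_{E,e})$. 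This last step runs parallel to the $\epsilon$-factor computations in \cite{GGP23} and \cite{CG22} and is essentially bookkeeping (multiplicativity, the functional equation, and the additivity relations for Asai $\epsilon$-factors under $M\mapsto M_i,\,M/M_i$) once the qualitative dichotomy is in place. The main obstacle is the geometric comparison of the second step — the smooth transfer of the Weil-representation-twisted test functions and the control of all non-elliptic contributions on both sides — which is where the bulk of the work, and the restriction on the residual characteristic, lies.
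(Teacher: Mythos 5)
Your high-level outline — two local trace formulas (one for the twisted GGP triple, one for a twisted group), geometric and spectral expansions on each, and twisted endoscopy to convert between them — is the right framework and matches the paper's strategy at that level. But the proposal misdescribes several of the load-bearing steps and omits the two genuinely new inputs, so as it stands it would not constitute a proof.

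First, the twisted companion does not live on $G=\mathrm{Res}_{K/F}U(V_K)$. The twisted trace formula $\tilde J$ in the paper is on the twisted space $\tilde M=(\mathrm{Res}_{L/F}\mathrm{GL}_n)\,\theta_n$ with $\theta_n$ a Galois-conjugate-transpose involution, and the comparison with the untwisted side goes through the parametrization of (stable) conjugacy classes of unitary groups and of $\tilde M$ by the space $\Xi_{\mathrm{reg}}$ (Sections~\ref{llc3}--\ref{llc5}), not through a norm map from $\sigma$-twisted classes of $G$ to itself. This changes what needs to be transferred and how the Asai $\epsilon$-factor shows up.

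Second, and more importantly, the proposal does not address how the geometric expansions themselves are established, which is the technical heart the paper repeatedly emphasizes. For the untwisted side, the multiplicity formula $m_V(\pi)=m_{V,\mathrm{geom}}(\theta_\pi)$ is \emph{not} a regularized integral of $\Theta_\pi$ against the Weil character over $H_V(F)$; it is a sum over elliptic tori of $H_V$ of germs $c_{\theta_\pi}(x)$ weighted by Weil indices $\gamma_\psi(T)$ and a ratio $\mu(\det(1-x^{-1}))/|\det(1-x)|_E^{1/2}$. Proving this requires the local character expansions of the Weil representation near the identity (Proposition~\ref{identity}, via Schr\"odinger models and Maslov indices) and near non-identity central elements (Proposition~\ref{nonidentity}), Harish-Chandra descent, the infinitesimal trace formula $J_V^{\mathrm{Lie}}$ and its spectral expansion through the moment map $\Phi$ (Theorem~\ref{Liespectral}), and a homogeneity argument to kill the non-regular nilpotent germ coefficients (Proposition~\ref{prop6.11}). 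None of this appears in your outline. For the twisted side, the paper explicitly declines to run a singularity/truncation analysis for $\tilde J$ (which has not been developed for Weil representations of twisted groups); instead it establishes the geometric expansion by descent plus twisted endoscopy plus the specific theta-correspondence input from \cite{CG22} at one elliptic $L$-parameter of the form $M_1+\cdots+M_n$ with each $M_i$ one-dimensional and conjugate self-dual. Your proposal treats \cite{CG22} as a ``known case fed in at the end,'' but in the paper it is the shortcut that makes the twisted geometric expansion available at all.

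Third, the role of the odd-residual-characteristic hypothesis (via \cite[Theorem~4.1]{Kon02}) is misplaced: it is needed to control the wavefront set of the Weil representation on the twisted space — specifically that the minimal nilpotent orbits appear with opposite leading coefficients — which enters the homogeneity step in Proposition~\ref{descentLietwisted}. It is not about ``smooth transfer of the Weil-twisted test functions.''

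Finally, the argument for part (i) of the theorem does not invoke multiplicity at most one to obtain existence/uniqueness. It runs the other way: one proves the geometric multiplicity formula, sums over $V$ and $\pi\in\Pi_M$, uses the stability \textbf{(Stab)} of $\theta_M$ and a cancellation of the sign $\mu(\det V)$ to isolate $c_{\theta_M}(1)$, and then the generic packet conjecture gives $c_{\theta_M}(1)=1$; multiplicity one is a corollary of the sum being $1$. Similarly parts (ii) and (iii) are not just ``bookkeeping'' with $\epsilon$-factors — they require the explicit endoscopic transfer computation of $m_{V,\mathrm{geom}}$ (Proposition~\ref{maintheorem2}) and the transfer-factor formulae from Section~\ref{llc5}. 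As written, your proposal identifies the correct scaffolding but leaves out the new ideas needed to make each of its three steps actually go through.
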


Theorem \ref{maintheorem} follows from a geometric formula for the multiplicity $m_V(\pi)$ as well as its twisted variant, together with (twisted) endoscopic comparisons. This method has been successfully developed by Waldspurger, Beuzart-Plessis and Luo in the case of Bessel models. We remark that geometric formulae for branching problems of some spherical varieties (as well as their Whittaker inductions) have been achieved by Beuzart-Plessis, Wan and Zhang in \cite{BP18,Wan19,BW19,WZ23,Wan21b,BW23}. Moreover, a conjectural geometric multiplicity formula for spherical varieties and their Whittaker inductions has been formulated in \cite{Wan21a}.

On the other hand, the Fourier-Jacobi model and its twisted variant do not belong to the framework of spherical varieties. A new feature in this situation is the appearance of the Weil representation of the subgroup $H_V$ (which is of infinite-dimensional) instead of a one-dimensional character in the previous cases. A main contribution in this paper is to obtain a geometric multiplicity formula for the twisted Gan-Gross-Prasad model. As far as we are aware, our work is the first instance where a geometric multiplicity formula outside the context of spherical varieties has been achieved. A geometric multiplicity formula for the Fourier-Jacobi model in the Gan-Gross-Prasad conjecture can be obtained by the same approach without any difficulties, thus gives an alternative proof for the tempered case in \cite{GI16}. We will revisit it in Appendix \ref{appendixB}. 

\subsection{Geometric formula for $m_V(\pi)$}

Our geometric multiplicity formula for $m_V(\pi)$ is formulated in terms of the Harish-Chandra character of $\pi$. Recall that  there exists
a locally integrable smooth function $\theta_\pi$  on the regular semisimple locus $G_{\text{reg}}(F)$ such that 
$$
\text{Trace}(\pi(f))=\int_{G(F)}\theta_\pi(x)f(x)dx,
$$
for any $f\in C^\infty_c(G(F))$. This function is unique and is called the Harish-Chandra character of $\pi$. Moreover, we can regularize $\theta_\pi$ to a function $c_{\theta_\pi}$ defined on the semisimple locus $G_{ss}(F)$. For a precise definition, see Section \ref{sec2.5}. Other ingredients of our formula involves the set $\mathcal{T}_{\text{ell}}(H_V)$ containing representatives of $H_V(F)$-conjugacy classes of elliptic maximal tori of $H_V$, the discriminant $D^G$, as well as the Weil index $\gamma_\psi$ (see Section \ref{sec4.1}). We now state our multiplicity formula. We refer the reader to Theorem \ref{maintheorem1} for more details.
\begin{theorem}\label{1.3}
    For any irreducible tempered representation $\pi$ of $G(F)$, we have
    $$
    m_V(\pi)=\frac{1}{2}c_{\theta_\pi}\left(1\right)+\mu\left(\det V\right)
\underset{T\in{\mathcal{T}}_{\text{ell}}\left(H\right)}{\sum}\frac{\gamma_\psi(T)}{\left|W\left(H,T\right)\right|}
\underset{s\rightarrow0^{+}}{\lim}\int_{T\left(F\right)}D^{G}\left(x\right)^{1/2}c_{\theta_\pi}\left(x\right)\frac{\mu\left(\det\left(1-x^{-1}\right)\right)}{\left|\det\left(1-x\right)\right|_{E}^{1/2-s}}dx.
    $$
\end{theorem}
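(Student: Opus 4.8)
The plan is to establish a local relative trace formula attached to the twisted Gan--Gross--Prasad triple $(G,H_V,\omega_{V,\psi,\mu})$ and to compute its spectral and geometric expansions; Theorem \ref{1.3} is then the resulting identity for a well-chosen test function. Given $f\in C^\infty_c(G(F))$, one forms a kernel by averaging $f$ against a matrix coefficient of the Weil representation $\omega_{V,\psi,\mu}$ over $H_V(F)$, and integrates this kernel over a second copy of $H_V(F)$. Since $H_V$ is noncompact this double integral diverges, so the first step is an Arthur-style truncation: one shows the truncated integral is asymptotic to a constant in the truncation parameter and defines the distribution $J(f)$ to be that constant. The infinite-dimensionality of $\omega_{V,\psi,\mu}$ is precisely what, in addition, forces the auxiliary complex parameter $s$ and the passage to $\lim_{s\to0^+}$ appearing on the right-hand side of the theorem.

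For the spectral expansion I would, following Beuzart-Plessis, take $f$ to be (a truncation of) a matrix coefficient of the tempered representation $\pi$, so that by the Plancherel formula together with the theory of tempered intertwining periods the spectral side collapses onto the $L$-packet of $\pi$ and evaluates to $m_V(\pi)$ up to an explicit normalizing constant. Two reductions organize the argument: first treat $\pi$ supercuspidal, where $f$ may be taken compactly supported and strongly cuspidal, so that all weighted orbital integrals with proper Levi vanish and the geometric side simplifies; then pass to a general tempered $\pi$, realized inside a parabolic induction, by a continuity and meromorphic-continuation argument along the unitary family, using that both $m_V(\pi)$ and the geometric expression vary regularly in the family.

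For the geometric expansion I would apply the Weyl integration formula on $H_V$ to rewrite the kernel integral as a sum of orbital-type integrals indexed by conjugacy classes of maximal tori, the surviving contributions coming from the identity and from the elliptic tori $T\in\mathcal{T}_{\mathrm{ell}}(H_V)$. Localizing near each semisimple point by Harish-Chandra descent and the local character expansion, and using that $\theta_\pi$ is a quasi-character with the regularization $c_{\theta_\pi}$ of Section \ref{sec2.5}, the identity orbit produces $\tfrac12\,c_{\theta_\pi}(1)$ while the regular semisimple orbits on each $T$ produce the weighted integrals of $D^G(x)^{1/2}c_{\theta_\pi}(x)$. The arithmetic factors are identified separately: the twists $\mu(\det V)$ and $\mu(\det(1-x^{-1}))$ come from tracking the action of $\omega_{V,\psi,\mu}$ through the model; the weight $|\det(1-x)|_E^{-1/2+s}$ is the Jacobian of the relevant Cayley-type map together with the $s$-shift; and the Weil indices $\gamma_\psi(T)$ of Section \ref{sec4.1} emerge from the explicit formula for the character of the Weil representation restricted to $T(F)$, in the style of Maktouf and Thomas.

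The main obstacle is the analytic matching of the Weil-representation contribution on the two sides: proving absolute convergence of the $s$-regularized geometric integrals, that the truncated double integral is genuinely asymptotically constant, and --- hardest of all --- that this constant agrees exactly with the spectral constant $m_V(\pi)$, which amounts to comparing the asymptotics of the spectral and geometric truncation procedures. A secondary difficulty is the descent calculation at the singular orbits, where one must verify that nothing survives beyond $\tfrac12\,c_{\theta_\pi}(1)$, together with the bookkeeping of Haar measures and Weil indices so that the constants $\gamma_\psi(T)/|W(H_V,T)|$ come out precisely as stated.
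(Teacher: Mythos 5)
Your proposal captures the broad trace-formula philosophy but diverges from the paper's actual argument in several places where the divergence is not merely cosmetic.

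First, the paper does not truncate. The linear form $J_V(f)$ is defined only for $f$ in the Harish-Chandra--Schwartz space of \emph{strongly cuspidal} functions, and for such $f$ the double integral defining the kernel and $J_V$ converges absolutely (Theorem~\ref{spectral}, relying on the decay estimate of \cite[Theorem 5.1--5.2]{Le25}). The whole Beuzart-Plessis machinery of quasi-characters $\theta_f$ attached to strongly cuspidal functions, and the ability to realize any quasi-character supported on the elliptic locus as some $\theta_f$ (Corollary~\ref{2.9}), is what replaces Arthur truncation. Your proposal bases itself on the asymptotic constancy of a truncated double integral and flags "comparing the asymptotics of the spectral and geometric truncation procedures" as the hardest step; but that difficulty does not arise in the paper at all, while the actual hardest step in the paper --- computing the unknown constant concentrated at the identity --- is one your outline glosses over as "a secondary difficulty."

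Second, your explanation of the $s$-regularization is a misattribution. You say the infinite-dimensionality of $\omega_{V,\psi,\mu}$ "forces" the limit $\lim_{s\to 0^+}$, but the identical $s$-regularization already appears in the geometric multiplicity formula for Bessel models \cite{BP14,BP16,BP20}, where the period is against a one-dimensional character. The parameter $s$ is needed because the elliptic integral $\int_T D^G(x)^{1/2}c_\theta(x)\mu(\det(1-x^{-1}))|\det(1-x)|_E^{-1/2}\,dx$ is only conditionally convergent; this has nothing to do with the dimension of the Weil representation, and in fact the content of Proposition~\ref{4.1}(1),(3) is precisely establishing this convergence.

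Third, and most seriously, your proposal offers no mechanism for pinning down the nilpotent contribution at the identity. The paper proves (Propositions~\ref{elliptic},~\ref{central}) that $J_{V,\text{qc}}$ and $m_{V,\text{geom}}$ agree away from $1\in G(F)$ by semisimple descent and the local character expansion of $\omega_{V,\psi,\mu}$ near central elements, reducing the problem to a single constant (or, for odd $n$, two coefficients attached to the two regular nilpotent orbits). Killing that constant then requires (a) passing to the infinitesimal trace formula $J_V^{\mathrm{Lie}}$, whose spectral expansion (Theorem~\ref{Liespectral}) is established through the moment map $\Phi$ and the analysis of the variety $\Sigma_V=\mathfrak{h}_V^\perp\oplus\Phi(V)$ in Sections~\ref{sec5.2}--\ref{sec5.4}; (b) a homogeneity/rescaling argument which isolates the regular nilpotent orbits (Proposition~\ref{prop6.11}); (c) for odd $n$, an input from the theta-correspondence result of \cite{CG22}; and finally (d) an explicit computation with a test quasi-character obtained by Fourier transform from a $W(G,T_{\mathrm{qd}})$-invariant function supported on a quasi-split torus. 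None of these ingredients appear in your sketch, and the descent/local-character-expansion arguments you invoke at non-central points cannot, by themselves, determine a constant concentrated at the origin. Without an analogue of steps (a)--(d), the proposal as written does not close.
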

We remark that the occurrence of Weil indexes and the factor $\frac{\mu\left(\det\left(1-x^{-1}\right)\right)}{\left|\det\left(1-x\right)\right|_{E}^{1/2}}$ can be understood as an incarnation of the character of the Weil representation. As we explain in Section \ref{sec5.7}, Theorem \ref{maintheorem}(i) follows from Theorem \ref{1.3}. Let $M$ be a tempered $L$-parameter for $G$ and $\Pi_M$ be the corresponding $L$-packet. Using Theorem \ref{1.3} and the fact that $\theta_M=\sum_{\pi\in \Pi_M}c_{\theta_\pi}(1)$ is stably invariant, we can see that the sum
$$
\sum_V\sum_{\pi\in \Pi_M}m_V(\pi)
$$
have some cancellations which come from the sign $\mu(\det V)$. The only remaining term is $c_{\theta_M}(1)$, which is equal to $1$ by the generic packet conjecture.

Theorem \ref{1.3} is achieved via a local trace formula approach. In \cite{Le25}, the author has formulated a local trace formula $J_V$ for the twisted Gan-Gross-Prasad conjecture, which we recall as follows. Let $f\in \mathcal{C}_{\text{scusp}}(G(F))$ be a strongly cuspidal function on $G(F)$. Let $\{\phi_i\}_{i\in I}$ be an orthonormal basis for $\omega_{V,\psi,\mu}$. For $x,y\in G(F)$, we set
$$
K_f(x,y)=\sum_{i\in I}\int_{H_V(F)}f(x^{-1}hy)\langle \phi_i,\omega_{V,\psi,\mu}(h)\phi_i \rangle dh.
$$
We define a distribution $J_V$ on the space of strongly cuspidal functions
$$
J_V(f)= \int_{H_V(F)\backslash G(F)} K_f(x,x) dx.
$$
By \cite[Theorem 5.1]{Le25}, the above two integrals are absolutely convergent. As a spectral expansion has already been achieved in \cite{Le25}, our contribution is its geometric expansion. In Section \ref{sec4} and \ref{sec5}, we prove the following theorem (cf. Theorem \ref{maintheorem1}).
\begin{theorem}\label{1.4}
    For any strongly cuspidal function $f\in \mathcal{C}_{\text{scusp}}(G(F))$, we have
    $$
    J_V(f)=\frac{1}{2}c_{\theta_f}\left(1\right)+\mu\left(\det V\right)
\underset{T\in{\mathcal{T}}_{\text{ell}}\left(H\right)}{\sum}\frac{\gamma_\psi(T)}{\left|W\left(H,T\right)\right|}
\underset{s\rightarrow0^{+}}{\lim}\int_{T\left(F\right)}D^{G}\left(x\right)^{1/2}c_{\theta_f}\left(x\right)\frac{\mu\left(\det\left(1-x^{-1}\right)\right)}{\left|\det\left(1-x\right)\right|_{E}^{1/2-s}}dx.
    $$
\end{theorem}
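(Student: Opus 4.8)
The plan is to establish Theorem \ref{1.4} by computing the geometric side of the local relative trace formula $J_V(f)$, following the strategy of Waldspurger and Beuzart-Plessis for Bessel models, but adapting it to accommodate the Weil representation $\omega_{V,\psi,\mu}$ of the infinite-dimensional subgroup $H_V$. First I would truncate: since the integral defining $J_V(f)$ is only conditionally tractable, I would introduce a truncation parameter and write $J_V(f) = \lim_{N} J_{V,N}(f)$ where $J_{V,N}(f)$ integrates $K_f(x,x)$ against a truncation function $\kappa_N$ on $H_V(F)\backslash G(F)$. The convergence of the original integral from \cite[Theorem 5.1]{Le25} guarantees this limit exists and equals $J_V(f)$, so all the analytic work happens at finite $N$.

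The heart of the argument is a change of variables on the truncated expression. I would substitute the defining formula for $K_f(x,x)$, interchange the $H_V(F)$-integral with the integral over $H_V(F)\backslash G(F)$, and collapse these into a single integral over $G(F)$ of $f$ against the matrix coefficient $\langle \phi_i, \omega_{V,\psi,\mu}(h)\phi_i\rangle$ summed over $i$; the key point is that this sum is (formally) the character $\Theta_{\omega}$ of the Weil representation, which by the Cartan–Weyl type formula is supported on, and explicitly computable on, the locus where $1-x$ is invertible, and it is precisely here that the factor $\gamma_\psi(T)\,\mu(\det(1-x^{-1}))/|\det(1-x)|_E^{1/2}$ enters. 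Then I would use Weyl integration on $G(F)$ relative to its maximal tori, together with the strong cuspidality of $f$ to replace the orbital integrals of $f$ by the weighted character $\theta_f$ via the standard identity $\int f(x^{-1}gx)\,dx \rightsquigarrow \theta_f$ of Beuzart-Plessis, and pass to the regularized function $c_{\theta_f}$ on $G_{ss}(F)$. The tori that survive are exactly the elliptic maximal tori of $H_V$ embedded in $G$ (all other contributions being killed in the limit $N\to\infty$ by cuspidality, as in Waldspurger), producing the sum over $\mathcal{T}_{\text{ell}}(H)$; the constant term at $x=1$ splits off separately and contributes $\tfrac12 c_{\theta_f}(1)$, the factor $\tfrac12$ coming from the two skew-hermitian spaces / the structure of the relevant $L$-group as in the untwisted case.

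The main obstacle I anticipate is twofold. First, the presence of the Weil representation means the relevant "kernel" is not compactly supported in the $H_V$-direction in the naive sense, so establishing that the formal manipulations above (interchanging sum and integrals, localizing the Weil character near semisimple points) are legitimate requires careful estimates — this is where the explicit formula for the character $\Theta_\omega$ of $\omega_{V,\psi,\mu}$, its relation to the Weil index $\gamma_\psi$, and bounds on $D^G(x)^{1/2}|\det(1-x)|_E^{-1/2+s}$ near the walls are essential, and why the limit $s\to 0^+$ (rather than direct evaluation at $s=0$) appears in the final formula. Second, identifying which tori contribute and matching the local Weyl group factors $|W(H,T)|$ and Weil indices $\gamma_\psi(T)$ requires a precise analysis of the geometry of the $G$-conjugacy classes meeting $H_V$, i.e. the relative-trace-formula analogue of the descent to centralizers; here I would import the structure theory of the twisted GGP triple from \cite{Le25} and mirror the centralizer computations of Beuzart-Plessis, checking that the twist by $K/E$ only affects the final $\epsilon$-factors and norms and not the shape of the geometric expansion. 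Once these two points are settled, comparing with the localization of the Harish-Chandra–Schwartz expansion of $J_V(f)$ around $1$ and around elliptic elements yields the stated identity, and Theorem \ref{1.3} then follows by applying Theorem \ref{1.4} to a pseudo-coefficient of $\pi$ as in Section \ref{sec5.7}.
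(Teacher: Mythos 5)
Your proposal outlines a direct Weyl-integration/truncation route that diverges substantially from the paper's actual proof and, as sketched, has two genuine gaps. The first is the step where you ``interchange the $H_V(F)$-integral with the integral over $H_V(F)\backslash G(F)$, and collapse these into a single integral over $G(F)$.'' The map $(x,h)\mapsto x^{-1}hx$ from $\bigl(H_V(F)\backslash G(F)\bigr)\times H_V(F)$ to $G(F)$ is neither injective nor measure preserving, so the two integrals cannot be merged into $\int_{G(F)}f\cdot\Theta_\omega$, and the subsequent Weyl integration on $G(F)$ has no starting point. The paper proceeds in an entirely different way: it first invokes the \emph{spectral} expansion $J_V(f)=J_{V,\mathrm{spec}}(f)$ from Theorem \ref{spectral}, combines it with the compatibility of $m_{V,\mathrm{geom}}$ with parabolic induction (Lemma \ref{4.4}) and the induction hypothesis to reduce everything to an identity $J_{V,\mathrm{qc}}(\theta)=m_{V,\mathrm{geom}}(\theta)$ between two continuous linear forms on $\mathrm{QC}(G(F))$, and then attacks this by Harish--Chandra semisimple descent together with a \emph{local expansion of the Weil representation near central elements} (Propositions \ref{identity} and \ref{nonidentity}). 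The latter is a Fourier-transform identity $\mathrm{Tr}\bigl(\omega_{V,\psi,\mu}(f\circ\log)\bigr)=\int_V(\widehat f\circ\Phi)(v)\,dv$ involving the moment map $\Phi$ of the Hamiltonian space $(U(V),V)$, not a pointwise Weil-character formula; the appeal to ``the character $\Theta_\omega$ of the Weil representation'' as a function against which one integrates has no place in the actual argument.

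Second and more fundamentally, even if your analytic manipulations were all justified, they would only establish the formula \emph{up to a linear combination of regular nilpotent germs at the identity}: one would obtain $J_V(f)=m_{V,\mathrm{geom}}(\theta_f)+\sum_{\mathcal O}c_{V,\mathcal O}\,c_{\theta_f,\mathcal O}(1)$, and the assertion that the coefficient at $x=1$ is exactly $\tfrac12$ is precisely the statement that all $c_{V,\mathcal O}$ vanish. Your explanation of the $\tfrac12$ as ``coming from the two skew-hermitian spaces / the structure of the $L$-group'' does not address this, and is not an argument. The paper devotes Section \ref{sec5} to this point: it introduces the infinitesimal trace formula $J_V^{\mathrm{Lie}}$, proves its spectral expansion (Theorem \ref{Liespectral}) over the variety $\Sigma_V=\mathfrak h_V^\perp\oplus\Phi(V)$ after a detailed study of $\Sigma_V'/H_V$, uses the scaling $\theta\mapsto\theta_\lambda$ and the homogeneity of $J_{V,\mathrm{qc}}^{\mathrm{Lie}}$ and of nilpotent germs to kill all non-regular coefficients (Proposition \ref{prop6.11}), crucially imports the Chen--Gan result of \cite{CG22} at an elliptic tempered $L$-parameter to handle the sign in the odd-dimensional case where two regular nilpotent orbits exist, and finally pins down $c_V=0$ by an explicit Lie-algebra computation on the quasi-split torus. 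None of these mechanisms is present in your plan, and I do not see how your direct truncation route could supply a substitute for them.
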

By Harish-Chandra semisimple descent, we are able to deduce the above theorem to a comparison near central elements. This comparison follows from a local character expansion of the Weil representation near central elements, which plays a key role in the local trace formula approach to Fourier-Jacobi type models.

\subsection{Geometric expansion of the twisted multiplicity $\epsilon_\psi(\tilde{\pi})$}
The next step is to prove a geometric formula for the twisted multiplicity $\epsilon_\psi(\tilde{\pi})$, which carries the relevant $\epsilon$-factor. Similar to the multiplicity $m_V(\pi)$, in \cite{Le25}, the author also formulated a twisted trace formula $\tilde{J}$, which gives us information about $\epsilon_\psi(\tilde{\pi})$. Let us briefly recall it here (see Section \ref{sec7.1} for more details). We set $M=\text{Res}_{L/F}\text{GL}_{n}$ and $N=\text{Res}_{E/F}\text{GL}_{n}$, where $L=K\otimes_{F}E$. Let $g\mapsto J_{n}{}^{t}\bar{g}^{-1}J_{n}^{-1}$ be an involution on $M$, and we set $\tilde{M}=M\theta_{n}$ and $\tilde{N}=N\theta_{n}$. We denote by $\omega_\mu$ the Weil representation of $N(F)$ and $\tilde{\omega}_{\psi,\mu}$
its extension to $\tilde{N}\left(F\right)$. Let $\left\{ \phi_{i}\right\} _{i\in I}$
be an orthonormal basis for $\omega_{\mu}$ . For
any $m_1,\,m_2\in M\left(F\right)$, we set 
\[
K_{\tilde{f}}\left(m_1,m_2\right)=\underset{i}{\sum}\int_{\tilde{N}\left(F\right)}\tilde{f}\left(m_1^{-1}\tilde{n}m_2\right)\left\langle \phi_i,\tilde{\omega}_{\psi,\mu,\chi}\left(\tilde{n}\right)\phi_{i}\right\rangle d\tilde{n},
\]
where $\tilde{f}\in\mathcal{C}_{\text{scusp}}\left( \tilde{M}\left(F\right)\right)$. 
We define the following linear form 
\[
\tilde{J}\left(\tilde{f}\right)=\int_{N\left(F\right)\backslash M\left(F\right)}K_{\tilde{f}}\left(m,m\right)dm,
\]
for $\tilde{f}\in\mathcal{C}_{\text{scusp}}\left(\tilde{M}\left(F\right)\right)$. Similar to Theorem \ref{spectral}, the integrals defining $K_{\tilde{f}}$ and $\tilde{J}$ are absolutely convergent. In Section \ref{sec8}, we give a geometric expansion for the linear form $\tilde{J}$ (cf. Theorem \ref{geometrictwisted}).
\begin{theorem}
    For any strongly cuspidal function $\tilde{f}\in \mathcal{C}_{\text{scusp}}\left(\tilde{M}\left(F\right)\right)$, we have
    $$
    \tilde{J}(\tilde{f})=\sum_{\tilde{T}\in \mathcal{T}_{\text{ell}}(\tilde{N})}\frac{\gamma_\psi(\tilde{T})}{|W(N,\tilde{T})|}\lim_{s\rightarrow 0^+}\int_{\tilde{T}(F)/\theta}D^{\tilde{M}}(\tilde{x})^{1/2}c_{\theta_{\tilde{f}}}(\tilde{x})\frac{\mu(\det(1-x^{-1}))}{|\det(1-x)|_E^{1/2-s}}d\tilde{x}.
    $$
\end{theorem}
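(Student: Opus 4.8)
The plan is to follow the strategy used to establish Theorem~\ref{1.4}, transported to the $\theta$-twisted group $\tilde M=M\theta_n$ (with $N$ and $\tilde\omega_{\psi,\mu,\chi}$ now playing the roles of $H_V$ and $\omega_{V,\psi,\mu}$). Starting from the absolutely convergent integral defining $\tilde J$, I would first run an Arthur-type truncation and limit argument: truncating the kernel $K_{\tilde f}$ by a family of parameters and passing to the limit yields a convergent geometric expansion of $\tilde J(\tilde f)$ indexed by the $\theta$-semisimple conjugacy classes $\tilde x$ in $\tilde N(F)$, each term being a $\theta$-twisted weighted orbital integral of $\tilde f$ attached to $\tilde x$ and twisted by the matrix coefficients $\langle\phi_i,\tilde\omega_{\psi,\mu,\chi}(\tilde n)\phi_i\rangle$. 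Since $\tilde f$ is strongly cuspidal, the weight factors collapse and these weighted orbital integrals reduce to ordinary $\theta$-twisted orbital integrals, which are encoded by $D^{\tilde M}(\tilde x)^{1/2}c_{\theta_{\tilde f}}(\tilde x)$; this is the twisted counterpart of the mechanism producing $c_{\theta_f}$ in Theorem~\ref{1.4}, and it requires first transporting the Beuzart-Plessis formalism of strongly cuspidal functions and their regularized characters to the twisted group $\tilde M$.

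Next I would apply Harish-Chandra semisimple descent in the twisted setting. For a $\theta$-semisimple $\tilde x=x\theta_n$ the associated term localizes near $\tilde x$, and descent to the twisted centralizer of $\tilde x$ --- a reductive group of the same kind --- turns it into a linear form of the same shape attached to the descended data, applied to a localized strongly cuspidal function. An induction on dimension then reduces the full identity to the contribution of the base point $\tilde x=\theta_n$, that is, to a comparison near $\theta$-central elements. At this stage one must also verify that, in contrast with Theorem~\ref{1.4}, no term analogous to $\tfrac12 c_{\theta_{\tilde f}}(1)$ appears: the trivial twisted torus does not enter $\mathcal{T}_{\mathrm{ell}}(\tilde N)$, and I expect the vanishing of the base-point contribution to follow from a short separate argument using the structure of $N$.

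The comparison near $\theta$-central elements is then settled, as in Sections~\ref{sec4}--\ref{sec5} for the untwisted case, by a Harish-Chandra--Howe local character expansion of the Weil representation $\tilde\omega_{\psi,\mu,\chi}$, extended to $\tilde N(F)$, near $\theta$-fixed points. Inserting this expansion into the matrix coefficient $\sum_i\int_{\tilde N(F)}\tilde f(m^{-1}\tilde n m)\langle\phi_i,\tilde\omega_{\psi,\mu,\chi}(\tilde n)\phi_i\rangle\,d\tilde n$ and applying the Weyl integration formula on $\tilde N(F)$ produces the sum over the $\theta$-elliptic tori $\tilde T\in\mathcal{T}_{\mathrm{ell}}(\tilde N)$, each with coefficient $\gamma_\psi(\tilde T)/|W(N,\tilde T)|$ and each carrying the sign $\mu(\det(1-x^{-1}))$ and the factor $|\det(1-x)|_E^{-1/2}$, integrated over $\tilde T(F)/\theta$; the auxiliary parameter $s>0$ is introduced precisely to make these orbital integrals absolutely convergent near the singular locus $\det(1-x)=0$, after which one checks that the limit $s\to 0^{+}$ exists and recovers the value produced by the descent.

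The main obstacle is this last step. In contrast with the Bessel-model case treated by Waldspurger, Beuzart-Plessis and Luo, where the analogous input concerns a one-dimensional character, here $\tilde\omega_{\psi,\mu,\chi}$ is infinite-dimensional, so one must first make sense of the germ expansion of its (twisted) character near $\theta$-central elements and then compute it explicitly, while correctly tracking the Weil indices $\gamma_\psi(\tilde T)$, the twisted stable/unstable conjugacy data, and the quotient $\tilde T(F)/\theta$. The expansion of $\omega_{V,\psi,\mu}$ used for Theorem~\ref{1.4} should transfer to the twisted situation by the analogous arguments; granting it and the vanishing of the base-point contribution, assembling the descended pieces yields the stated formula for $\tilde J(\tilde f)$.
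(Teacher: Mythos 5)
Your proposal correctly reproduces the scaffolding — linearization via Theorem~\ref{spectraltwisted} and compatibility with parabolic induction, Harish-Chandra descent to reduce to elliptic elements (matching Proposition~\ref{elliptictwisted}), and induction on dimension — but the final and decisive step is different, and your version has a gap you yourself flag without closing.

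You propose to settle the comparison near $\theta_n$ by an explicit local (germ) expansion of the character of the twisted Weil representation $\tilde\omega_{\psi,\mu,\chi}$ near $\theta$-central elements, transported from the untwisted expansion of Section~\ref{sec3.2}. The paper deliberately does \emph{not} do this. As the introduction spells out, the harmonic-analytic technology for germ expansions of Weil representations of twisted groups is not available, and the authors \emph{avoid} it. What the paper actually does in Section~\ref{sec8} is much more economical: it uses only the wavefront set of $\tilde\omega_{\psi,\mu}$ on $\tilde N$ (Konno's Theorem~4.1) together with a homogeneity argument (Proposition~\ref{descentLietwisted}) to show that $J_{\chi,\mathrm{qc}}-\epsilon_{\mathrm{geom}}$ is supported at $\theta_n$ and equals a \emph{single} unknown constant $c_\chi$ times a regular-nilpotent germ (or the difference of the two regular germs when $n$ is odd) — without ever computing the twisted germ expansion itself. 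It then pins $c_\chi=0$ by an external input: for the specific elliptic $L$-parameter of \cite{CG22} it is already known that $\epsilon_\psi(\tilde\Pi)$ equals the sum $\sum_V \mu(\det V)\sum_{\pi\in\Pi_M}m_V(\pi)$, and the twisted endoscopic character identity of Section~\ref{llc7} converts the untwisted geometric multiplicity formula of Theorem~\ref{maintheorem1} into $\epsilon_{\mathrm{geom}}(\tilde\Pi)$, forcing $c_\chi=0$. (Separately, the case $K=E$ is handled directly in Section~\ref{sec8.2} via the results of \cite{Le25} and Theorem~\ref{basechangeGL}, a split your proposal does not make.) So the gap in your proposal is precisely the step you call "the main obstacle": you need the twisted germ expansion, but this does not currently exist, and the paper's route around it — reduce to one scalar by homogeneity, then evaluate that scalar on the Chen–Gan elliptic example via twisted endoscopy — is the essential new idea that your proposal is missing.
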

A novelty in our proof is that instead of treating singularities of $\tilde{J}$ via local harmonic analysis methods as in \cite{Wal12a,BP14}, which have not been developed in the setting of Weil representations of twisted groups yet, we make a shortcut via a twisted endoscopic transfer and the result in \cite{CG22}. Namely, by using descent methods and a twisted endoscopic transfer, it suffices to show that Theorem \ref{maintheorem}(ii) holds for an elliptic tempered $L$-parameter, and the main theorem in \cite{CG22} gives us such an example. This is the first instance that a local theta correspondence argument can be used to help establish a geometric expansion in a local trace formula. In Appendix \ref{appendixB}, motivated by \cite{Xue23a,Xue23b}, we use theta correspondences to obtain geometric sides of twisted trace formulae corresponding to the local Gan-Gross-Prasad conjectures for unitary groups in both Bessel and Fourier-Jacobi models. This contributes to alternative proofs for the main result in \cite{BP14} (Bessel models), as well as the tempered case in \cite{GI16} (Fourier-Jacobi models).

\subsection{Organization of the paper}
We give a description of the content of each section.

In Section \ref{sec2}, we introduce basic notations and conventions in this paper, including weighted orbital integrals and germ expansions. Section \ref{llc} is a brief introduction to the local Langlands correspondence for unitary groups, including matching of orbits and endoscopic relations. In Section \ref{sec3}, we recall the Weil representation for unitary groups and prove its local character expansions near the identity element.

Section \ref{sec4} and \ref{sec5} are devoted to prove the first part of Theorem \ref{maintheorem}. To be more precise, in Section \ref{sec4}, we formulate a geometric expansion for the linear form $J_V$ and deduce Theorem \ref{maintheorem1} to a comparison on Lie algebra level. In Section \ref{sec5}, we establish a spectral expansion of the infinitesimal trace formula $J_V^{\text{Lie}}$ and use it to finish our proof for Theorem \ref{maintheorem1}. Then by using the stability property of the local Langlands correspondence for unitary groups, we establish Theorem \ref{maintheorem}(i).

Section \ref{sec8} and \ref{sec9} are devoted to prove Theorem \ref{maintheorem}(ii) and (iii). In Section \ref{sec8}, we establish a geometric expansion of the twisted trace formula $\tilde{J}$ and use it to prove Theorem \ref{maintheorem}(ii). In Section \ref{sec9}, by using (twisted) endoscopic relations of the local Langlands correspondence for unitary groups, we prove the last part of Theorem \ref{maintheorem}.

Finally, in Appendix \ref{appendixB}, we briefly give alternative proofs for the tempered part of the local Gan-Gross-Prasad conjectures for unitary groups in both Bessel and Fourier-Jacobi models.

\section{Preliminaries}\label{sec2}
\subsection{Groups, measures and notations}\label{sec2.1}

Let $F$ be a $p$-adic field for which we fix an algebraic closure
$\bar{F}$. We denote by $\left|\,\cdot\,\right|_{F}$ the canonical absolute
value on $F$ as well as its unique extension to $\bar{F}$. Let $G$
be a connected reductive group defined over $F$. We denote by $A_{G}$
the split component of the connected component of the center
$Z_{G}$ of $G$. Let $X^{*}\left(G\right)$ be the group of algebraic characters of $G$
defined over $F$ and ${\mathcal{A}}_{G}^{*}=X^{*}\left(G\right)\otimes_{\mathbb{Z}}\mathbb{R}$
and ${\mathcal{A}}_{G}=\text{Hom}\left(X^{*}\left(G\right),\mathbb{R}\right)$.
We define the homomorphism 
$$
\begin{array}{ccccc}
H_{G} & : & G\left(F\right) & \longrightarrow & {\mathcal{A}}_{G}\\
 &  & g & \mapsto & \left(\chi\mapsto\log\left|\chi\left(g\right)\right|_{F}\right)
\end{array}.
$$
Let ${\mathcal{A}}_{G,F}$ and $\tilde{{\mathcal{A}}}_{G,F}$ be images of $G\left(F\right)$
and $A_{G}\left(F\right)$ in ${\mathcal{A}}_{G}$ via $H_{G}$. They are
lattices in ${\mathcal{A}}_{G}$. We define ${\mathcal{A}}_{G,F}^{\vee}=\text{Hom}\left({\mathcal{A}}_{G,F},2\pi\mathbb{Z}\right)$
and $\tilde{{\mathcal{A}}}_{G,F}^{\vee}=\text{Hom}\left(\tilde{{\mathcal{A}}}_{G,F},2\pi\mathbb{Z}\right)$
to be lattices in ${\mathcal{A}}_{G}^{*}$. For a maximal torus $T$ of $G$,
let $\delta\left(G\right)=\dim G-\dim T$, noting that it does not depend on
choices of $T$.

We denote by $\mathfrak{g}$ the Lie algebra of $G$ and 
$$
\begin{array}{ccc}
G\times\mathfrak{g} & \longrightarrow & \mathfrak{g}\\
\left(g,X\right) & \mapsto & gXg^{-1}
\end{array}
$$
the adjoint action. For $x\in G$, we denote by $Z_{G}\left(x\right)$
the centralizer of $x$ in $G$ and by $G_{x}$ its identity component.
We call an element $x$ in $G$ semisimple if it is contained in a
maximal torus of $G$. We denote by $G_{ss}$ the subset of $G$ containing
its semisimple elements. For $x\in G_{ss}$, we set 
$$
D^{G}\left(x\right)=\left|\det\left(1-\text{Ad}\left(x\right)\right)_{\mid\mathfrak{g}/\mathfrak{g}_{x}}\right|_{F}.
$$
An element $x\in G$ is called regular if $Z_{G}\left(x\right)$ is
abelian and $G_{x}$ is a torus. We denote by $G_{reg}$ the subset
of regular elements in $G$. Let ${\mathcal{T}}\left(G\right)$ be a set
of representatives for the conjugacy classes of maximal tori in $G$.
A maximal torus $T$ of $G$ is elliptic if $A_{T}=A_{G}$. An element
$x\in G\left(F\right)$ is said to be elliptic if it belongs to some
elliptic maximal torus. We set $G\left(F\right)_{\text{ell}}$ and
$G_{\text{reg}}\left(F\right)_{\text{ell}}$ the subsets of elliptic
elements in $G\left(F\right)$ and $G_{\text{reg}}\left(F\right)$.

Let us fix a minimal parabolic subgroup $P_{\min}$ of $G$ and a
Levi component $M_{\min}$. We fix a maximal compact subgroup $K$
of $G\left(F\right)$ in good relative position to $M_{\min}$. Let $P=MU$
be a parabolic subgroup of $G$. We have the Iwasawa decomposition
$G\left(F\right)=M\left(F\right)U\left(F\right)K$. We can choose
maps 
$$m_{P}:G\left(F\right)\rightarrow M\left(F\right),\ \ \ u_{P}:G\left(F\right)\rightarrow U\left(F\right),
\ \ \ k_{P}:G\left(F\right)\rightarrow K$$
such that $g=m_{P}\left(g\right)u_{P}\left(g\right)k_{P}\left(g\right)$,
for all $g\in G\left(F\right)$. Then we extend the homomorphism
$H_{M}:M\left(F\right)\rightarrow{\mathcal{A}}_{M}$ to the following map 
$$
\begin{array}{ccccc}
H_{P} & : & G\left(F\right) & \longrightarrow & {\mathcal{A}}_{M}\\
 &  & g & \mapsto & H_{M}\left(m_{P}\left(g\right)\right)
\end{array}.
$$
The above map depends on the maximal compact subgroup $K$ but its
restriction to $P\left(F\right)$ does not and is given by $H_P(mu)=H_M(m)$ for all $m\in M(F)$ and $u \in U(F)$. For a Levi subgroup $M$
of $G$, we denote by ${\mathcal{P}}\left(M\right)$, ${\mathcal{L}}\left(M\right)$
and ${\mathcal{F}}\left(M\right)$ the finite sets of parabolic subgroups
admitting $M$ as their Levi component, of Levi subgroups containing $M$
and of parabolic subgroups containing $M$ respectively. If $M\subset L$
are two Levi subgroups, we set ${\mathcal{A}}_{M}^{L}={\mathcal{A}}_{M}/{\mathcal{A}}_{L}$.

For every Levi subgroup $M$ and maximal torus $T$ of $G$, we denote
by $W\left(G,M\right)$ and $W\left(G,T\right)$ the Weyl groups of
$M\left(F\right)$ and $T\left(F\right)$ respectively, that is 
$$
W\left(G,M\right)=\text{Norm}_{G\left(F\right)}\left(M\right)/M\left(F\right)\text{ and }W\left(G,T\right)=\text{Norm}_{G\left(F\right)}\left(T\right)/T\left(F\right).
$$
We have the Weyl integration formula 
$$
\int_{G\left(F\right)}f\left(g\right)dg=\underset{T\in{\mathcal{T}}\left(G\right)}{\sum}\left|W\left(G,T\right)\right|^{-1}\int_{T\left(F\right)}D^{G}\left(t\right)\left(\int_{T\left(F\right)\backslash G\left(F\right)}f\left(g^{-1}tg\right)dg\right)dt,
$$
for any $f\in C_{c}^{\infty}\left(G\left(F\right)\right)$, where the measure on $T\left(F\right)\backslash G\left(F\right)$ (which we also denote by $dg$) arises from the quotient of ones on $G(F)$ and $T(F)$.

A twisted group is a pair $\left(G,\tilde{G}\right)$, where $G$
is a connected reductive group defined over $F$ and $\tilde{G}$
is a $G$-bitorsor, i.e. an algebraic variety defined over $F$ with
two left and right commutative actions 
$$
\begin{array}{ccc}
G\times\tilde{G}\times G & \longrightarrow & \tilde{G}\\
\left(g,\tilde{\gamma},g^{\prime}\right) & \mapsto & g\tilde{\gamma}g^{\prime}
\end{array},
$$
each of them making $\tilde{G}$ into a principal homogeneous space
under $G$. The underlying group $G$ is usually omitted and we denote
the twisted group $\left(G,\tilde{G}\right)$ by $\tilde{G}$. Note
that when $\tilde{G}=G$ and $G$-actions are group actions,
we have $\left(G,\tilde{G}\right)$ coincides with $G$.

Let $\tilde{G}$ be a twisted group. For any $\tilde{x}\in\tilde{G}$,
there exists a unique automorphism $\theta_{\tilde{x}}$ of $G$ such
that $\tilde{x}g=\theta_{\tilde{x}}\left(g\right)\tilde{x}$ for all
$g\in G$. This induces automorphisms on $X^{*}\left(G\right)$,
$A_{G}$ and ${\mathcal{A}}_{G}$, which are independent of choices of $\tilde{x}$.
For simplicity, we denote the three automorphisms by $\theta_{\tilde{G}}$. 

Assume that $\theta_{\tilde{G}}$ is of finite order. Denote 
$$
A_{\tilde{G}}=\left(A_{G}^{\theta_{\tilde{G}}=1}\right)^{0},\ {\mathcal{A}}_{\tilde{G}}={\mathcal{A}}_{G}^{\theta_{\tilde{G}}=1},\ {\mathcal{A}}_{\tilde{G}}^{*}=\left({\mathcal{A}}_{G}^{*}\right)^{\theta_{\tilde{G}}=1},\,a_{\tilde{G}}=\dim\left({\mathcal{A}}_{\tilde{G}}\right).
$$
We define the homomorphism 
$$
\begin{array}{ccccc}
H_{\tilde{G}} & : & G\left(F\right) & \longrightarrow & {\mathcal{A}}_{\tilde{G}}\\
 &  & g & \mapsto & \left(\chi\mapsto\log\left|\chi\left(g\right)\right|_{F}\right)
\end{array}.
$$
The group $G$ admits a conjugation action on $\tilde{G}$ by $\left(g,\tilde{x}\right)=g\tilde{x}g^{-1}$.
For a subset $\tilde{X}$ of $\tilde{G}$, we denote by $\text{Norm}_{G}\left(\tilde{X}\right)$
resp. $Z_{G}\left(\tilde{X}\right)$ resp. $G_{\tilde{X}}$ the normalizer
resp. the centralizer resp. the identity component of the centralizer
of $\tilde{X}$. For a subset $X$ of $G$, we denote by $N_{\tilde{G}}\left(X\right)$
and $Z_{\tilde{G}}\left(X\right)$ the normalizer and centralizer
of $X$ in $\tilde{G}$ via the action $\left(\tilde{x},g\right)\mapsto\theta_{\tilde{x}}\left(g\right)$.

We call an element $\tilde{x}$ in $\tilde{G}$ semisimple if there
exists a pair $\left(B,T\right)$ consisting of a Borel subgroup $B$
of $G$ and a maximal torus $T$ of $B$ defined over $\bar{F}$ such
that $\tilde{x}$ normalizes $B$ and $T$. We denote by $\tilde{G}_{ss}$
the subset of $\tilde{G}$ containing its semisimple elements. For
$\tilde{x}\in\tilde{G}_{ss}$, let 
$$
D^{\tilde{G}}\left(\tilde{x}\right)=\left|\det\left(1-\theta_{\tilde{x}}\right)_{\mid\mathfrak{g}/\mathfrak{g}_{\tilde{x}}}\right|_{F}.
$$
An element $\tilde{x}\in\tilde{G}$ is called regular if $Z_{G}\left(\tilde{x}\right)$
is abelian and $G_{\tilde{x}}$ is a torus. We denote by $\tilde{G}_{reg}$
the subset of regular elements.

We denote a twisted parabolic subgroup of $\tilde{G}$ by a pair $\left(P,\tilde{P}\right)$,
where $P$ is a parabolic subgroup of $G$ defined over $F$ and $\tilde{P}$
is the normalizer of $P$ in $\tilde{G}$ such that $\tilde{P}\left(F\right)\neq0$.
For such pair, $\tilde{P}$ completely determines $P$, so we often
call $\tilde{P}$ as a twisted parabolic subgroup. 
A twisted Levi component of $\tilde{P}$ is a pair $\left(M,\tilde{M}\right)$
consisting of a Levi component $M$ of $P$ (defined over $F$) and
the normalizer $\tilde{M}$ of $M$ in $\tilde{P}$ such that $\tilde{M}\left(F\right)\neq\emptyset$.
As the second term completely determines the first term, we call $\tilde{M}$
a twisted Levi component of $\tilde{P}$. Let $\tilde{P}=\tilde{M}U$. We can
naturally extend the modulus character $\delta_{P}$ to $\tilde{P}\left(F\right)$. Namely, for any $\tilde{x}=\tilde{m}u \in \tilde{P}(F)$, we set $\delta_P(\tilde{x})=\det(\theta_{\tilde{m}\mid\mathfrak{u}})$. For a twisted Levi subgroup
$\tilde{M}$, we denote by ${\mathcal{P}}\left(\tilde{M}\right),\,{\mathcal{F}}\left(\tilde{M}\right),\,{\mathcal{L}}\left(\tilde{M}\right)$
the finite sets of twisted parabolic subgroups admitting
$\tilde{M}$ as a twisted Levi component, of twisted parabolic subgroups
containing $\tilde{M}$ and of twisted Levi subgroups containing $\tilde{M}$,
respectively. For twisted Levi subgroups $\tilde{M},\tilde{L}$ and
a twisted parabolic subgroup $\tilde{P}$, we notice that $\tilde{M}\subset\tilde{L}$
and $\tilde{M}\subset\tilde{P}$ imply $M\subset L$ and $M\subset P$,
respectively. Let $\tilde{Q}$ be a twisted parabolic subgroup. Then
$\tilde{Q}=\tilde{L}U$, where $\tilde{L}$ is a twisted Levi component
of $\tilde{Q}$ and $U$ is the unipotent radical of $Q$.
Twisted Levi subgroups are characterized as centralizers in $\tilde{G}$
of split tori. In other words, if $A$ is a split subtorus of $G$
such that $Z_{\tilde{G}}\left(A\right)\left(F\right)\neq\emptyset$,
then $Z_{\tilde{G}}\left(A\right)$ is a twisted Levi of $\tilde{G}$.
Conversely, if $\tilde{M}$ is a twisted Levi of $\tilde{G}$, then
$\tilde{M}=Z_{\tilde{G}}\left(A_{\tilde{M}}\right)$.

Let $W^{G}=\text{Norm}_{G\left(F\right)}\left(M_{\min}\right)/M_{\min}\left(F\right)$.
We denote $\tilde{P}_{\min}=N_{\tilde{G}}\left(P_{\min}\right)$ and $\tilde{M}_{\min}=N_{\tilde{G}}\left(P_{\min},M_{\min}\right)$.
Then $\tilde{P}_{\min}$ is a minimal twisted parabolic subgroup and
$\tilde{M}_{\min}$ is a minimal Levi component. Let ${\mathcal{L}}^{\tilde{G}}={\mathcal{L}}\left(\tilde{M}_{\min}\right)$.

We fix a maximal compact subgroup $K$ of $G\left(F\right)$ in good
relative position to $M_{\min}$. Let $\tilde{M}\in{\mathcal{L}}^{\tilde{G}}$
and $\tilde{P}=\tilde{M}U\in{\mathcal{P}}\left(\tilde{M}\right)$. One
has $G\left(F\right)=M\left(F\right)U\left(F\right)K$. We define
a map 
$$
\begin{array}{ccccc}
H_{\tilde{P}} & : & G\left(F\right) & \longrightarrow & {\mathcal{A}}_{\tilde{M}}\\
 &  & g=muk & \mapsto & H_{\tilde{M}}\left(m\right)
\end{array}.
$$

A twisted maximal torus of $\tilde{G}$ is a pair $\left(T,\tilde{T}\right)$
consisting of a maximal torus $T$ of $G$ defined over $F$ and a
subvariety $\tilde{T}$ of $\tilde{G}$ (defined over $F$), which
is the intersection of normalizers of $T$ and a Borel subgroup
$B$ (defined over $\bar{F}$) containing $T$ in $\tilde{G}$, such that $\tilde{T}\left(F\right)\neq\emptyset$.
For such pair, the restriction to $T$ of automorphisms $\theta_{\tilde{x}}$
for $\tilde{x}\in\tilde{T}$ does not depend on $\tilde{x}$. We denote
this restriction by $\theta_{\tilde{T}}$, or simply $\theta$ if
there is no confusion. Let $T_{\theta}$ be the connected component
of the subgroup of fixed points $T^{\theta}$. We denote by $\tilde{T}\left(F\right)/\theta$
the set of orbits of the action of $T\left(F\right)$ on $\tilde{T}\left(F\right)$
by conjugation. It is naturally an $F$-analytic manifold and for
all $\tilde{t}\in\tilde{T}\left(F\right)/\theta$, the map 
$$
\begin{array}{ccc}
T_{\theta}\left(F\right) & \rightarrow & \tilde{T}\left(F\right)/\theta\\
t & \mapsto & t\tilde{t}
\end{array}
$$
is a local isomorphism. If $T$ is split, we define a Haar measure
of $T\left(F\right)$ such that the volume of its maximal compact
subgroup is equal to $1$. In general, we provide $A_{T}\left(F\right)$
with this measure and choose a measure for $T\left(F\right)$ such
that $\text{vol}\left(T\left(F\right)/A_{T}\left(F\right)\right)=1$.
There exists a unique measure of $\tilde{T}\left(F\right)/\theta$
such that the above map preserves local measure for any $\tilde{t}\in\tilde{T}\left(F\right)/\theta$.
We equip $\tilde{T}\left(F\right)/\theta$ with this measure. Moreover, the principal homogeneous space $\tilde{G}(F)$ inherits the measure of $G(F)$. We have the Weyl integration formula 
$$
\int_{\tilde{G}\left(F\right)}\tilde{f}\left(\tilde{x}\right)d\tilde{x}=\underset{\tilde{T}\in{\mathcal{T}}\left(\tilde{G}\right)}{\sum}\left|W\left(G,\tilde{T}\right)\right|^{-1}\left|T^{\theta}\left(F\right):T_{\theta}\left(F\right)\right|^{-1}$$
$$\int_{\tilde{T}\left(F\right)/\theta}D^{\tilde{G}}\left(\tilde{t}\right)\int_{T_{\theta}\left(F\right)\backslash G\left(F\right)}f\left(g^{-1}\tilde{t}g\right)dgd\tilde{t},
$$
for any $\tilde{f}\in C_{c}^{\infty}\left(\tilde{G}\left(F\right)\right)$, where $dg$ is the measure defined by the quotient of ones on $G(F)$ and $T_\theta(F)$.

We denote $\mathcal{A}_{\tilde{G},F}=H_{\tilde{G}}\left(G\left(F\right)\right)$,
${\mathcal{A}}_{A_{\tilde{G}},F}=H_{\tilde{G}}\left(A_{\tilde{G}}\left(F\right)\right)$,
${\mathcal{A}}_{\tilde{G},F}^{\vee}=\text{Hom}\left({\mathcal{A}}_{\tilde{G},F},2\pi\mathbb{Z}\right)$,
${\mathcal{A}}_{A_{\tilde{G}},F}^{\vee}=\text{Hom}\left({\mathcal{A}}_{A_{\tilde{G}},F},2\pi\mathbb{Z}\right)$.
Then ${\mathcal{A}}_{\tilde{G},F}$ and ${\mathcal{A}}_{A_{\tilde{G}},F}$ are
lattices in ${\mathcal{A}}_{\tilde{G}}$, whereas ${\mathcal{A}}_{\tilde{G},F}^{\vee}$ and ${\mathcal{A}}_{A_{\tilde{G}},F}^{\vee}$ are lattices in ${\mathcal{A}}_{\tilde{G}}^{*}$.
We equip these lattices with the counting measure. We set Haar measures
on ${\mathcal{A}}_{\tilde{G}}$ and ${\mathcal{A}}_{\tilde{G}}^{*}$ such that
volumes of ${\mathcal{A}}_{\tilde{G}}/{\mathcal{A}}_{A_{\tilde{G}},F}$ and
${\mathcal{A}}_{\tilde{G}}^{*}/{\mathcal{A}}_{A_{\tilde{G}},F}^{\vee}$ are
equal to $1$, respectively.

For an affine algebraic variety $X$ over $\bar{F}$, let ${\mathcal{O}}\left(X\right)$
be the ring of regular functions. We choose a finite set of generators
$\left\{ f_{1},\ldots,f_{m}\right\} $ of ${\mathcal{O}}\left(X\right)$ as an $\bar{F}$-algebra. Define 
$$
\sigma_{X}\left(x\right)=1+\log\left(\max\left\{ 1,\left|f_{1}\left(x\right)\right|,\ldots,\left|f_{m}\left(x\right)\right|\right\} \right),\ \text{for } x\in X.
$$
Two such functions $\sigma_{X}$ and $\tilde{\sigma}_{X}$ are called
equivalent if $\sigma_{X}\sim\tilde{\sigma}_{X}$, i.e. there exists
$C_{1},C_{2}>0$ such that $C_{1}\tilde{\sigma}_{X}<\sigma_{X}<C_{2}\tilde{\sigma}_{X}$.
A log-norm on $X$ is a particular function $\sigma_{X}$ inside its
equivalence class. Generally, for any algebraic variety $X$ over
$\bar{F}$, we choose a finite open affine covering $\left(U_{i}\right)_{i\in I}$
of $X$ and fix log-norms $\sigma_{U_{i}}$ on $U_{i}$, for $i\in I$.
We define a log-norm on $X$ by letting 
$$
\sigma_{X}\left(x\right)=\inf\left\{ \sigma_{U_{i}}\left(x\right)\mid\ i\in I\text{ and }x\in U_{i}\right\} .
$$
We denote by $\Xi^G$ the Harish-Chandra function on $G(F)$ (see \cite[Section 1.5]{BP20} for a precise definition). Let us fix an element $\tilde{x}\in\tilde{G}\left(F\right)$. We define the Harish-Chandra-Schwartz
space ${\mathcal{C}}\left(\tilde{G}\left(F\right)\right)$ as the space
of functions $\tilde{f}\in C\left(\tilde{G}\left(F\right)\right)$
such that $f(g):=\tilde{f}(g\tilde{x})$ lies in $\mathcal{C}(G(F))$.

\subsection{Representations}\label{sec2.2}

A unitary representation of $G\left(F\right)$ is a continuous representation
$\left(\pi,V_{\pi}\right)$ of $G\left(F\right)$ on a Hilbert space
$V_{\pi}$ such that for any $g\in G\left(F\right)$, the operator
$\pi\left(g\right)$ is unitary. There is an action of $i{\mathcal{A}}_{G}^{*}$
on unitary representations given by $\left(\lambda,\pi\right)\mapsto\pi_{\lambda}$,
where $\pi_{\lambda}\left(g\right)=e^{\lambda\left(H_{G}\left(g\right)\right)}\pi\left(g\right)$
for any $g\in G\left(F\right)$. We denote by $i{\mathcal{A}}_{G,\pi}^{*}$
the stabilizer of $\pi$ for this action. Let $\text{End}\left(\pi\right)$ be the space of continuous endomorphisms of the space of $\pi$ and $\text{End}\left(\pi\right)^\infty$ be its subspace containing smooth vectors. From now, we assume any representations that we consider are of finite length. Let $\text{Temp}(G)$ and $\Pi_2(G)$ be the sets of isomorphism classes of irreducible tempered representations and irreducible square-integrable representations, respectively. 

Square-integrable representations are preserved by unramified twists. Let $\Pi_{2}\left(G\right)/i\mathcal{A}^*_{G,F}$ be the set of orbits in $\Pi_{2}\left(G\right)$ via this action. Let ${\mathcal{X}}_{\text{temp}}\left(G\right)$ be the set of isomorphism classes of tempered representations of $G\left(F\right)$ of the form $i_{M}^{G}\left(\sigma\right)$, where $M$ is a Levi subgroup of $G$ and $\sigma$ is an irreducible square-integrable representation of $M\left(F\right)$. Let ${\mathcal{M}}$ be a set of representatives for the conjugacy classes
of Levi subgroups of $G$. Then ${\mathcal{X}}_{\text{temp}}\left(G\right)$ is naturally a quotient of 
$$
\tilde{{\mathcal{X}}}_{\text{temp}}\left(G\right)=\underset{M\in{\mathcal{M}}}{\bigsqcup}\ \ \underset{{\mathcal{O}}\in\Pi_{2}\left(M\right)/i{\mathcal{A}}_{M,F}^{*}}{\bigsqcup}{\mathcal{O}}.
$$
Since each orbit $\mathcal{O}\in \{\Pi_{2}\left(M\right)\}$ is a quotient of $i{\mathcal{A}}_{M,F}^{*}$ by a finite subgroup, ${\mathcal{X}}_{\text{temp}}\left(G\right)$ is a real smooth manifold. We equip ${\mathcal{X}}_{\text{temp}}\left(G\right)$
with the quotient topology.

Let $V$ be a locally convex topological vector space. A function
$f:{\mathcal{X}}_{\text{temp}}\left(G\right)\rightarrow V$ is smooth if
the pullback of $f$ to $\tilde{{\mathcal{X}}}_{\text{temp}}\left(G\right)$
is a smooth function. We denote by $C^{\infty}\left({\mathcal{X}}_{\text{temp}}\left(G\right),V\right)$
the space of smooth functions on ${\mathcal{X}}_{\text{temp}}\left(G\right)$
taking values in $V$. For simplicity, we set $C^{\infty}\left({\mathcal{X}}_{\text{temp}}\left(G\right)\right)=C^{\infty}\left({\mathcal{X}}_{\text{temp}}\left(G\right),\mathbb{C}\right)$.

Let $R_{\text{temp}}\left(G\right)$ be the space of complex virtual tempered representations of $G\left(F\right)$, i.e. the complex vector space with basis $\text{Temp}\left(G\right)$ consisting of irreducible tempered representations of $G\left(F\right)$.

In \cite{Art93}, Arthur defines a set ${\mathcal{X}}_{\text{ell}}\left(G\right)$
of virtual tempered representations of $G\left(F\right)$ called elliptic
representations, which are actually well-defined up to scalar of module
1, i.e. ${\mathcal{X}}_{\text{ell}}\left(G\right)\subset R_{\text{temp}}\left(G\right)/\mathbb{S}^{1}$. Let $R_{\text{ell}}\left(G\right)$
be the subspace of $R_{\text{temp}}\left(G\right)$ generated by ${\mathcal{X}}_{\text{ell}}\left(G\right)$
and denote by $R_{\text{ind}}\left(G\right)$ the subspace of $R_{\text{temp}}\left(G\right)$
generated by the image of all the linear maps $i_{M}^{G}:R_{\text{temp}}\left(M\right)\rightarrow R_{\text{temp}}\left(G\right)$,
where $M$ is a proper Levi subgroup of $G$. We have 
$$
R_{\text{temp}}\left(G\right)=R_{\text{ind}}\left(G\right)\oplus R_{\text{ell}}\left(G\right).
$$
The set ${\mathcal{X}}_{\text{ell}}\left(G\right)$ is invariant under
unramified twists. Let ${\mathcal{X}}_{\text{ell}}\left(G\right)/i{\mathcal{A}}_{G,F}^{*}$
be the set of unramified orbits in ${\mathcal{X}}_{\text{ell}}\left(G\right)$.
Let $\underline{{\mathcal{X}}}_{\text{ell}}\left(G\right)$ be the inverse
image of ${\mathcal{X}}_{\text{ell}}\left(G\right)$ in $R_{\text{temp}}\left(G\right)$.
This set is invariant under multiplication by $\mathbb{S}^{1}$.

We denote by ${\mathcal{X}}\left(G\right)$ the subset of $R_{\text{temp}}\left(G\right)/\mathbb{S}^{1}$
consisting of virtual representations of the form $i_{M}^{G}\left(\sigma\right)$,
where $M$ is a Levi subgroup of $G$ and $\sigma\in{\mathcal{X}}_{\text{ell}}\left(M\right)$.
Also, let $\underline{{\mathcal{X}}}\left(G\right)$ be the inverse image
of ${\mathcal{X}}\left(G\right)$ in $R_{\text{temp}}\left(G\right)$.
The fibers of the natural projection $\underline{{\mathcal{X}}}\left(G\right)\rightarrow{\mathcal{X}}\left(G\right)$
are all isomorphic to $\mathbb{S}^{1}$. Let ${\mathcal{M}}$ be a set
of representatives for the conjugacy classes of Levi subgroups of
$G$. Then ${\mathcal{X}}\left(G\right)$ is naturally a quotient of 
$$
\underset{M\in{\mathcal{M}}}{\bigsqcup}\ \ \underset{{\mathcal{O}}\in{\mathcal{X}}_{\text{ell}}\left(M\right)/i{\mathcal{A}}_{M,F}^{*}}{\bigsqcup}{\mathcal{O}}.
$$
This defines a structure of topological space on ${\mathcal{X}}\left(G\right)$.
Let us define a regular Borel measure $d\pi$ on ${\mathcal{X}}\left(G\right)$
by 
$$
\int_{{\mathcal{X}}\left(G\left(F\right)\right)}\phi\left(\pi\right)d\pi=\underset{M\in{\mathcal{M}}}{\sum}\left|W\left(G,M\right)\right|^{-1}\underset{{\mathcal{O}}\in{\mathcal{X}}_{\text{ell}}\left(M\right)/i{\mathcal{A}}_{M,F}^{*}}{\sum}\left[i{\mathcal{A}}_{M,\sigma}^{\vee}:i{\mathcal{A}}_{M,F}^{\vee}\right]^{-1}\int_{i{\mathcal{A}}_{M,F}^{*}}\phi\left(i_{M}^{G}\left(\sigma_{\lambda}\right)\right)d\lambda,
$$
for any continuous and compactly supported function $\phi$ on ${\mathcal{X}}\left(G\right)$,
where a base point $\sigma\in{\mathcal{O}}$ is fixed for every orbit ${\mathcal{O}}\in{\mathcal{X}}_{\text{ell}}\left(M\right)/i{\mathcal{A}}_{M,F}^{*}$.

We extend the function $\pi\mapsto D\left(\pi\right)$ to
${\mathcal{X}}\left(G\right)$ by setting $D\left(\pi\right)=D\left(\sigma\right)$ for any $\pi=i_{M}^{G}\left(\sigma\right)$, where $M$ is a Levi subgroup and $\sigma\in{\mathcal{X}}_{\text{ell}}\left(M\right)$.

We now consider representations of a twisted group $\tilde{G}(F)$. A representation of $\tilde{G}\left(F\right)$ is a triple $\left(\pi,\tilde{\pi},E_{\pi}\right)$,
where $\pi$ is a smooth representation of $G\left(F\right)$ with an underlying space $E_{\pi}$ and $\tilde{\pi}:\tilde{G}\left(F\right)\rightarrow\text{Aut}_{\mathbb{C}}\left(E_{\pi}\right)$
satisfying $\tilde{\pi}\left(g\tilde{x}g^{\prime}\right)=\pi\left(g\right)\tilde{\pi}\left(\tilde{x}\right)\pi\left(g^{\prime}\right)$,
for any $g,g^{\prime}\in G\left(F\right)$ and $\tilde{x}\in\tilde{G}\left(F\right)$.
Two representations $\left(\pi_{1},\tilde{\pi}_{1},E_{\pi_{1}}\right)$
and $\left(\pi_{2},\tilde{\pi}_{2},E_{\pi_{2}}\right)$ are equivalent
if there exists linear isomorphisms $A:E_{\pi_{1}}\rightarrow E_{\pi_{2}}$
and $B:E_{\pi_{1}}\rightarrow E_{\pi_{2}}$ which intertwine $\pi_{1}$
and $\pi_{2}$ and satisfy $B\tilde{\pi}_{1}\left(\tilde{x}\right)=\tilde{\pi}_{2}\left(\tilde{x}\right)A$,
for any $\tilde{x}\in\tilde{G}\left(F\right)$. We say a representation
$\left(\pi,\tilde{\pi},E_{\pi}\right)$ of $\tilde{G}\left(F\right)$
is admissible if $\pi$ is, and unitary if there exists a positive
definite hermitian product which is invariant under the image of $\tilde{\pi}$.
A representation $\left(\pi,\tilde{\pi},E_{\pi}\right)$ is tempered
if it is unitary, and $\pi$ is of finite length and any irreducible
subrepresentations of $\pi$ are tempered. In general, we omit the
term $\left(\pi,E_{\pi}\right)$ and denote by $\tilde{\pi}$ a representation
of $\tilde{G}\left(F\right)$.

Let $\text{Temp}\left(\tilde{G}\right)$
be the set of $G\left(F\right)$-irreducible tempered representations
of $\tilde{G}\left(F\right)$. Let $R_{\text{temp}}\left(\tilde{G}\right)$
be the space of complex virtual tempered representations of $G\left(F\right)$,
i.e. the complex vector space with basis $\text{Temp}\left(\tilde{G}\right)$. We recall the subsets $E_\text{disc}(\tilde{G})$ and $E_\text{ell}(\tilde{G})$ of $\text{Temp}\left(\tilde{G}\right)/conj$ defined in \cite[Section 2.8]{BW23}.
We equip $E_\text{disc}(\tilde{G})$ with the unique measure such that for every $\tau \in E_{disc}(\tilde{G})$, the action map $\lambda \in i\mathcal{A}_{\tilde{G}}^* \mapsto \lambda \cdot r$ is locally measure preserving. For every sufficiently nice function $\varphi : E_\text{disc}(\tilde{G})\rightarrow \mathbb{C}$, we have
$$\int_{E_\text{disc}(\tilde{G})}\varphi(\tau) d\tau = \underset{\tau \in E_\text{disc}(\tilde{G})/i\mathcal{A}_{\tilde{G},F}^*}{\sum} |\text{Stab}(i\mathcal{A}_{\tilde{G},F}^*,\tau)|^{-1} \int_{i\mathcal{A}_{\tilde{G},F}^*} \varphi(\lambda \cdot \tau)d\lambda,$$
where $\text{Stab}(i\mathcal{A}_{\tilde{G},F}^*,\tau)$ is the stabilizer of $\tau$ in $i\mathcal{A}_{\tilde{G},F}^*$.

\subsection{$(G,M)$-families and $(\tilde{G},\tilde{M})$-families}\label{sec2.3}

We recall some facts about $(G,M)$-families in \cite[Section 17]{Art05}. Let $M$ be a Levi subgroup of $G$ and $V$ be a locally convex topological
space. A $\left(G,M\right)$-family with values in $V$ is a family
$\left(c_{P}\right)_{P\in{\mathcal{P}}\left(M\right)}$ of smooth functions
on $i{\mathcal{A}}_{M}^{*}$ taking values in $V$ such that for all adjacent
parabolic subgroups $P,P^{\prime}\in{\mathcal{P}}\left(M\right)$, the
functions $c_{P}$ and $c_{P^{\prime}}$ coincide on the hyperplane
supporting the wall that separates the positive chambers for $P$
and $P^{\prime}$. For any $\left(G,M\right)$-family $\left(c_{P}\right)_{P\in{\mathcal{P}}\left(M\right)}$,
Arthur associates an element $c_{M}$ of $V$ as follows. The function
$$
c_{M}\left(\lambda\right)=\underset{P\in{\mathcal{P}}\left(M\right)}{\sum}c_{P}\left(\lambda\right)\theta_{P}\left(\lambda\right)^{-1}
$$
extends to a smooth function on $i{\mathcal{A}}_{M}^{*}$ where 
$$
\theta_{P}\left(\lambda\right)=\text{meas}\left({\mathcal{A}}_{M}^{G}/\mathbb{Z}\Delta_{P}^{\vee}\right)^{-1}\underset{\alpha\in\Delta_{P}}{\prod}\lambda\left(\alpha^{\vee}\right),\ \ P\in{\mathcal{P}}\left(M\right)
$$
and set $c_{M}=c_{M}\left(0\right)$. Here $\Delta_{P}$ is the set
of simple roots of $A_{M}$ in $P$, $\Delta_{P}^{\vee}$ is the corresponding
set of simple coroots, and for every $\alpha\in\Delta_{P}$, $\alpha^{\vee}$
is denoted as the corresponding simple coroot.

A $\left(G,M\right)$-orthogonal set is a family $\left(Y_{P}\right)_{P\in{\mathcal{P}}\left(M\right)}$
of points in ${\mathcal{A}}_{M}$ such that for any adjacent parabolic
subgroups $P,P^{\prime}\in{\mathcal{P}}\left(M\right)$ there exists a
real number $r_{P,P^{\prime}}$ such that $Y_{P}-Y_{P^{\prime}}=r_{P,P^{\prime}}\alpha^{\vee}$,
where $\alpha$ is the unique root of $A_{M}$ that is positive for
$P$ and negative for $P^{\prime}$. If moreover we have $r_{P,P^{\prime}}\geq0$
for any adjacent $P,P^{\prime}\in{\mathcal{P}}\left(M\right)$, then we
say that the family is positive. Clearly if $\left(Y_{P}\right)_{P\in{\mathcal{P}}\left(M\right)}$
is a $\left(G,M\right)$-orthogonal set, then the family $\left(c_{P}\right)_{P\in{\mathcal{P}}\left(M\right)}$
defined by $c_{P}\left(\lambda\right)=e^{\lambda\left(Y_{P}\right)}$
is a $\left(G,M\right)$-family. If the family $\left(Y_{P}\right)_{P\in{\mathcal{P}}\left(M\right)}$
is positive, then $c_{M}$ is the volume in ${\mathcal{A}}_{M}^{G}$ of
the convex hull of the set $\left\{ Y_{P}:\ P\in{\mathcal{P}}\left(M\right)\right\} $.

Let $\tilde{M}$ be a twisted Levi subgroup of $\tilde{G}$. As in Section 2.3 in \cite{Wal12b}, we extend previous notions to the setting of twisted groups, which are $\left(\tilde{G},\tilde{M}\right)$-families and $\left(\tilde{G},\tilde{M}\right)$-orthogonal
sets. A family $\mathcal{Y}=(Y_{\tilde{P}})_{\tilde{P}\in \mathcal{P}(\tilde{M})}$ of points in $\mathcal{A}_{\tilde{M}}$ is a $(\tilde{G},\tilde{M})$-orthogonal set if for every adjacent twisted parabolic subgroups $\tilde{P},\tilde{Q}\in \mathcal{P}(\tilde{M})$, we have
$$Y_{\tilde{P}}-Y_{\tilde{Q}} \in \mathbb{R}\alpha_{\tilde{P},\tilde{Q}}^\vee,$$
where $\alpha_{\tilde{P},\tilde{Q}}^\vee$ is the unique root of $A_{\tilde{M}}$ that is positive for $\tilde{P}$ and negative for $\tilde{Q}$. Moreover, if $Y_{\tilde{P}}-Y_{\tilde{Q}}$ lies in $\mathbb{R}_{>0}\alpha_{\tilde{P},\tilde{Q}}^\vee$, for every pair of adjacent twisted parabolic subgroups $\tilde{P},\tilde{Q}\in \mathcal{P}(\tilde{M})$, we say $\mathcal{Y}$ is positive. Similar to the group case, if $\mathcal{Y}$ is positive, we can take $v^{\tilde{Q}}_{\tilde{L}}(\mathcal{Y})$ to be the volume of the convex hull of $(Y_{\tilde{P}})_{\tilde{P}\in \mathcal{P}(\tilde{L}),\tilde{P}\subset \tilde{Q}}$, for every $\tilde{L}\in \mathcal{L}(\tilde{M})$ and $\tilde{Q}\in \mathcal{F}(\tilde{L})$. We drop the superscript when $\tilde{Q}=\tilde{G}$.

\subsection{Harish-Chandra semisimple descent and the descent to Lie algebra}\label{sec2.4}

A subset $\omega\subset\mathfrak{g}\left(F\right)$ (resp. $\Omega\subset G\left(F\right)$)
is called completely $G\left(F\right)$-invariant if it is invariant
under $G\left(F\right)$-conjugation and for any $X\in\omega$ (resp.
$x\in\Omega$), its semisimple part $X_{s}$ (resp. $x_{s}$) under the Jordan decomposition also
lies in $\omega$ (resp. $\Omega$). Let us fix a completely $G\left(F\right)$-invariant
open subset $\omega\subset\mathfrak{g}\left(F\right)$ (resp. $\Omega\subset G\left(F\right)$).
Let $C^{\infty}\left(\omega\right)^{G}$ (resp. $C^{\infty}\left(\Omega\right)^{G}$)
be the space of smooth and $G\left(F\right)$-invariant functions
on $\omega$ (resp. $\Omega$). It is a closed subset of $C^{\infty}\left(\omega\right)$
(resp. $C^{\infty}\left(\Omega\right)$) and we endow it with the
induced locally convex topology.

Let $x\in G_{ss}\left(F\right)$. Let $\Omega_{x}\subseteq G_{x}\left(F\right)$ be a $G$-good open neighborhood of $x$ (see \cite[Section 3.2]{BP20} for this definition) and we set $\Omega=\Omega_{x}^{G}$. The following
integration formula holds for any $f$ which is integrable on $\Omega$.
$$
\int_{\Omega}f\left(y\right)dy=\int_{Z_{G}\left(x\right)\left(F\right)\backslash G\left(F\right)}\int_{\Omega_{x}}f\left(g^{-1}yg\right)\eta_{x}^{G}\left(y\right)dydg
$$
$$
=\left[Z_{G}\left(x\right)\left(F\right):G_{x}\left(F\right)\right]^{-1}\int_{G_{x}\left(F\right)\backslash G\left(F\right)}\int_{\Omega_{x}}f\left(g^{-1}yg\right)\eta_{x}^{G}\left(y\right)dydg.
$$
For a function $f$ on $\Omega$, let $f_{x,\Omega_{x}}$ be the
function on $\Omega_{x}$ given by $f_{x,\Omega_{x}}\left(y\right)=\eta_{x}^{G}\left(y\right)^{1/2}f\left(y\right)$.
The map $f\mapsto f_{x,\Omega_{x}}$ induces topological isomorphisms
$$
C^{\infty}\left(\Omega\right)^{G}\simeq C^{\infty}\left(\Omega_{x}\right)^{Z_{G}\left(x\right)}\text{ and }C^{\infty}\left(\Omega_{\text{rss}}\right)^{G}\simeq C^{\infty}\left(\Omega_{x,\text{ rss}}\right)^{Z_{G}\left(x\right)}.
$$
We now consider its Lie algebra counterpart. Let $\omega\subseteq\mathfrak{g}\left(F\right)$ be a $G$-excellent
open subset (see \cite[Section 3.3]{BP20} for this definition) and set $\Omega=\exp\left(\omega\right)$. The Jacobian of the exponential map 
$$
\begin{array}{ccccc}
\exp & : & \omega & \rightarrow & \Omega\\
 &  & X & \mapsto & e^{X}
\end{array}
$$
at $X\in\omega_{ss}$ is given by $j^{G}\left(X\right)=D^{G}\left(e^{X}\right)D^{G}\left(X\right)^{-1}$. Hence, the following integration formula holds for any $f$ which is integrable on $\Omega$
$$
\int_{\Omega}f\left(g\right)dg=\int_{\omega}f\left(e^{X}\right)j^{G}\left(X\right)dX.
$$
For any function $f$ on $\Omega$, we set $f_{\omega}$ the function
on $\omega$ defined by $f_{\omega}\left(X\right)=j^{G}\left(X\right)^{1/2}f\left(e^{X}\right)$.
The map $f\rightarrow f_{\omega}$ induces topological isomorphisms
$$
C^{\infty}\left(\Omega\right)\simeq C^{\infty}\left(\omega\right)\text{ and }C^{\infty}\left(\Omega_{\text{rss}}\right)\simeq C^{\infty}\left(\omega_{\text{rss}}\right).
$$
We can easily adapt Harish-Chandra descent to twisted groups, see \cite[Section 2.5-2.6]{Le25}.
\subsection{Quasi-characters}\label{sec2.5}

Let $\omega\subseteq\mathfrak{g}\left(F\right)$ be a completely $G\left(F\right)$-invariant
open subset. A quasi-character on $\omega$ is a $G\left(F\right)$-invariant
smooth function $\theta:\omega_{\text{reg}}\rightarrow\mathbb{C}$
satisfying the following condition: for any $X\in\omega_{ss}$, there
exists a $G$-good open neighborhood $\omega_{X}\subseteq\mathfrak{g}_{X}\left(F\right)$
of $X$ satisfying $\omega_{X}^{G}\subseteq\omega$ and coefficients
$c_{\theta,{\mathcal{O}}}\left(X\right)$, where ${\mathcal{O}}$ is in the set $\text{Nil}\left(\mathfrak{g}_{X}\right)$ containing nilpotent orbits of $\mathfrak{g}_X$,
such that 
$$
\theta\left(Y\right) = \underset{\mathcal{O} \in \text{Nil} \left(g_{X}\right)}{\sum}c_{\theta,{\mathcal{O}}}\left(X\right) \hat{j}\left(\mathcal{O},Y\right),\ \forall\,Y\in\omega_{X,\text{reg}}.
$$
Here $\hat{j}\left(\mathcal{O},\cdot\right)$ is the Fourier transform of the orbital integral of $\mathcal{O}$. If $\theta$ is a quasi-character on $\omega$ and $f\in C^{\infty}\left(\omega\right)^{G}$,
then $f\theta$ is also a quasi-character on $\omega$. We denote
by $QC\left(\omega\right)$ the space of all quasi-characters on $\omega$
and by $QC_{c}\left(\omega\right)$ the subspace of quasi-characters
on $\omega$ whose support is compact modulo conjugation.

Let $\Omega\subseteq G\left(F\right)$ be a completely $G\left(F\right)$-invariant
open subset. Similar to the setting of Lie algebras, a quasi-character on $\Omega$ is a $G\left(F\right)$-invariant
smooth function $\theta:\Omega_{\text{reg}}\rightarrow\mathbb{C}$
satisfying the following condition: for any $x\in\Omega_{ss}$, there
exists a $G_{x}$-excellent open neighborhood $\omega_{x}\subseteq\mathfrak{g}_{x}\left(F\right)$ of $0$ satisfying $\left(x\exp\left(\omega_{x}\right)\right)^{G}\subseteq\Omega$
and coefficients $c_{\theta,{\mathcal{O}}}\left(x\right)$, where ${\mathcal{O}}\in\text{Nil}\left(\mathfrak{g}_{x}\right)$,
such that 
$$
\theta\left(xe^{Y}\right)=\underset{{\mathcal{O}}\in\text{Nil}\left(g_{x}\right)}{\sum}c_{\theta,{\mathcal{O}}}\left(x\right)\hat{j}\left({\mathcal{O}},Y\right),\ \forall\,Y\in\omega_{x,\text{reg}}.
$$
We recall some basic properties of quasi-characters (c.f. \cite[Proposition 4.1.1]{BP20}).
\begin{proposition}\label{2.1}
$ $
\begin{enumerate}
\item For all $X\in\mathfrak{g}_{\text{reg}}\left(F\right)$, $\hat{j}\left(X,\cdot\right)$
is a quasi-character on $\mathfrak{g}\left(F\right)$. For any nilpotent orbit $\mathcal{O}$ in $\text{Nil}\left(\mathfrak{g}\right)$,
$\hat{j}\left({\mathcal{O}},\cdot\right)$ is a quasi-character on $\mathfrak{g}\left(F\right)$.
For every irreducible admissible representation $\pi$ of $G\left(F\right)$,
the character $\theta_{\pi}$ is a quasi-character on $G\left(F\right)$.
\item Let $\omega\subseteq\mathfrak{g}\left(F\right)$ be a $G$-excellent
open subset. Set $\Omega=\exp\left(\omega\right)$. The linear map
$$
\theta\mapsto\theta_{\omega}
$$
induces topological isomorphisms $QC\left(\Omega\right)\simeq QC\left(\omega\right)$
and $QC_{c}\left(\Omega\right)\simeq QC_{c}\left(\omega\right)$.
\item Let $x\in G_{ss}\left(F\right)$ and let $\Omega_{x}\subseteq G_{x}\left(F\right)$
be a $G$-good open neighborhood of $x$. Set $\Omega=\Omega_{x}^{G}$.
The linear map 
$$
\theta\mapsto\theta_{x,\Omega_{x}}
$$
induces topological isomorphisms $QC\left(\Omega\right)\simeq QC\left(\Omega_{x}\right)^{Z_{G}\left(x\right)\left(F\right)}$
and $QC_{c}\left(\Omega\right)\simeq QC_{c}\left(\Omega_{x}\right)^{Z_{G}\left(x\right)\left(F\right)}$.
\end{enumerate}
\end{proposition}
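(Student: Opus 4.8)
The statement is a compendium of standard facts and coincides with \cite[Proposition 4.1.1]{BP20}; here I only outline how one would proceed. The organizing principle is that being a quasi-character is a \emph{local} condition, to be checked in a neighborhood of each semisimple element, and the three assertions respectively record: (i) the existence of local (germ) expansions for Fourier transforms of orbital integrals and for Harish-Chandra characters; (ii) the compatibility of such expansions with the descent to the Lie algebra through $\exp$; and (iii) their compatibility with Harish-Chandra semisimple descent. So the plan is to treat (i) as the genuine analytic input and then to deduce (ii) and (iii) formally from the descent formalism already set up in Section \ref{sec2.4}.

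For (i), I would argue as follows. That $\hat{j}(X,\cdot)$ for $X\in\mathfrak{g}_{\text{reg}}(F)$, and $\hat{j}(\mathcal{O},\cdot)$ for $\mathcal{O}\in\text{Nil}(\mathfrak{g})$, are quasi-characters on $\mathfrak{g}(F)$ follows from Harish-Chandra's theorem that the Fourier transform of an orbital integral is represented by a locally integrable function which is smooth on $\mathfrak{g}_{\text{reg}}(F)$, together with the germ expansion of such a function around an arbitrary semisimple point $X$; by Lie-algebra descent the latter reduces to the homogeneity properties of Fourier transforms of nilpotent orbital integrals on $\mathfrak{g}_X(F)$, together with the fact that the nilpotent orbits of $\mathfrak{g}$ meeting a small invariant neighborhood of $X$ are governed by $\text{Nil}(\mathfrak{g}_X)$. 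For $\theta_\pi$ with $\pi$ irreducible admissible, I would invoke Harish-Chandra's regularity theorem for characters (local integrability, smoothness on $G_{\text{reg}}(F)$) and, near a semisimple $x$, the Howe--Harish-Chandra local character expansion: using Harish-Chandra's descent one expresses $\theta_\pi$ on $x\exp(\omega_x)$ in terms of characters of representations of $G_x(F)$, whose expansions around $0$ have precisely the shape $\sum_{\mathcal{O}\in\text{Nil}(\mathfrak{g}_x)}c_{\theta_\pi,\mathcal{O}}(x)\,\hat{j}(\mathcal{O},Y)$ demanded by the definition.

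For (ii) and (iii), I would start from the fact, recorded in Section \ref{sec2.4}, that the maps $f\mapsto f_\omega$ and $f\mapsto f_{x,\Omega_x}$ are already topological isomorphisms $C^\infty(\Omega)\simeq C^\infty(\omega)$ and $C^\infty(\Omega)^G\simeq C^\infty(\Omega_x)^{Z_G(x)}$, restricting to isomorphisms on the regular loci. It then remains to see that these isomorphisms carry quasi-characters to quasi-characters and, for the subscript-$c$ versions, preserve the property of having support compact modulo conjugation. The support claim is immediate, since the descent maps differ from the restriction map by multiplication by the nowhere-vanishing smooth invariant functions $j^G(X)^{1/2}$, resp. $\eta_x^G(y)^{1/2}$, which do not alter invariant supports. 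For the quasi-character property one verifies the germ-expansion condition at each semisimple point of the source: in case (ii), at $X\in\omega_{ss}$, one uses that $j^G$ is smooth and $G$-invariant and that an excellent neighborhood is constructed precisely so that $\exp$ matches centralizers, nilpotent cones and the Jacobian $j^G$, whence multiplication by $j^G(X)^{1/2}$ transports the expansion of $\hat{j}^{\,G_X}(\mathcal{O},\cdot)$ on $\mathfrak{g}_X$ to that of $\hat{j}^{\,G}(\mathcal{O},\cdot)$ on $G$; in case (iii), at a semisimple $x'\in\Omega_x$, one uses that a $G$-good neighborhood satisfies $(G_x)_{x'}=G_{x'}$, so that $\text{Nil}(\mathfrak{g}_{x'})$ is the same computed in $\mathfrak{g}$ or in $\mathfrak{g}_x$, that $\eta_x^G$ is smooth and $G_x$-invariant with the analogous matching of germ data, and that $G(F)$-invariance of $\theta$ translates into $Z_G(x)(F)$-invariance of $\theta_{x,\Omega_x}$.

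The main obstacle is entirely concentrated in part (i): it rests on Harish-Chandra's regularity and local-integrability theorems and on the Howe--Harish-Chandra expansions valid around arbitrary semisimple points, which are the deep analytic inputs here. Once those are granted, parts (ii) and (iii) are formal consequences of the descent isomorphisms of Section \ref{sec2.4}. For complete details we refer to \cite[Section 4.1]{BP20}.
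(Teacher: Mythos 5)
Your proposal is correct and matches the paper's treatment, which simply recalls this as \cite[Proposition 4.1.1]{BP20} without giving an independent proof. Your sketch — Harish-Chandra regularity and the Howe--Harish-Chandra local expansion for part (i), then transporting germ data through the descent isomorphisms of Section~\ref{sec2.4} via the smooth invariant factors $j^G(X)^{1/2}$ and $\eta_x^G(y)^{1/2}$ for parts (ii) and (iii) — is a faithful outline of how Beuzart-Plessis proves it.
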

Let $\theta$ be a quasi-character on $G\left(F\right)$. For any $x\in G_{ss}\left(F\right)$, we have a local expansion 
$$
D^{G}\left(xe^{X}\right)^{1/2}\theta\left(xe^{X}\right)=D^{G}\left(xe^{X}\right)^{1/2}\underset{{\mathcal{O}}\in\text{Nil}_{\text{reg}}\left(g_{x}\right)}{\sum}c_{\theta,{\mathcal{O}}}\left(x\right)\hat{j}\left({\mathcal{O}},X\right)+O\left(\left|X\right|\right),
$$
for all $X\in\mathfrak{g}_{x,\text{reg}}\left(F\right)$ sufficiently
near $0$. By the homogeneity property of the functions $\hat{j}\left({\mathcal{O}},\cdot\right)$
and their linear independence, we can see that coefficients $c_{\theta,{\mathcal{O}}}\left(x\right)$,
where ${\mathcal{O}}\in\text{Nil}_{\text{reg}}\left(\mathfrak{g}_{x}\right)$,
are uniquely defined. We set 
$$
c_{\theta}\left(x\right)=\frac{1}{\left|\text{Nil}_{\text{reg}}\left(g_{x}\right)\right|}\underset{{\mathcal{O}}\in\text{Nil}_{\text{reg}}\left(g_{x}\right)}{\sum}c_{\theta,{\mathcal{O}}}\left(x\right),
$$
for all $x\in G_{ss}\left(F\right)$. This gives us a complex-valued
function $c_{\theta}$ on $G_{ss}\left(F\right)$. Similarly, for
any quasi-character $\theta$ on $\mathfrak{g}\left(F\right)$, we
can associate to it a function $c_{\theta}$ on $\mathfrak{g}_{ss}\left(F\right)$.
We recall \cite[Proposition 4.5.1]{BP20} for later use.
\begin{proposition}\label{2.2}
Let $\theta$ be a quasi-character on $G\left(F\right)$ and let $x\in G_{ss}\left(F\right)$.
We have the following properties. 
\begin{enumerate}
\item Assume $G_{x}$ is quasi-split. Let $B_{x}\subset G_{x}$ be a Borel
subgroup and $T_{\text{qd,}x}\subset B_{x}$ be a maximal torus (both
defined over $F$). Then
$$
D^{G}\left(x\right)^{1/2}c_{\theta}\left(x\right)=\left|W\left(G_{x},T_{\text{qd,}x}\right)\right|^{-1}\underset{x^{\prime}\in T_{\text{qd,}x}\left(F\right)\rightarrow x}{\lim}D^{G}\left(x^{\prime}\right)^{1/2}\theta\left(x^{\prime}\right).
$$
\item The function $\left(D^{G}\right)^{1/2}c_{\theta}$ is locally bounded
on $G\left(F\right)$. To be more precise, for any invariant and compact
modulo conjugation subset $L$ of $G\left(F\right)$, there exists
a continuous semi-norm $\nu_{L}$ on $QC\left(G\left(F\right)\right)$
such that 
$$
\underset{x\in L_{ss}}{\sup}D^{G}\left(x\right)^{1/2}\left|c_{\theta}\left(x\right)\right|\leq\nu_{L}\left(\theta\right),
$$
for all $\theta\in QC\left(G\left(F\right)\right)$.
\item Let $\Omega_{x}\subseteq G_{x}\left(F\right)$ be a $G$-good open
neighborhood of $x$. Then
$$
D^{G}\left(y\right)^{1/2}c_{\theta}\left(y\right)=D^{G_{x}}\left(y\right)^{1/2}c_{\theta_{x,\Omega_{x}}}\left(y\right),
$$
for any $y\in\Omega_{x,ss}$.
\end{enumerate}
\end{proposition}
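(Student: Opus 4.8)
This is \cite[Proposition 4.5.1]{BP20}, and the plan is simply to recall how that proof runs in the present notation, the argument being insensitive to the particular groups involved. The engine is the local character expansion of Proposition \ref{2.1} together with Harish--Chandra's theory of Fourier transforms of nilpotent orbital integrals; the three assertions are best treated in the order (3), (1), (2). For (3): by Proposition \ref{2.1}(3) the map $\theta\mapsto\theta_{x,\Omega_x}$ preserves quasi-characters, and since $\theta_{x,\Omega_x}(z)=\eta_x^G(z)^{1/2}\theta(z)$ with $D^G(z)=D^{G_x}(z)\eta_x^G(z)$ on $\Omega_{x,\text{rss}}$, we get $D^{G_x}(z)^{1/2}\theta_{x,\Omega_x}(z)=D^G(z)^{1/2}\theta(z)$ there. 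Fix $y\in\Omega_{x,ss}$; as $y$ is central in $G_y$ and $(G_x)_y=G_y$, one has $D^G(ye^X)^{1/2}=(D^G(y)^{1/2}+o(1))D^{G_y}(X)^{1/2}$ as $X\to 0$ in $\mathfrak g_y(F)_{\text{reg}}$, and likewise with $G_x$ in place of $G$. Substituting the local expansions of $\theta$ and $\theta_{x,\Omega_x}$ at $y$ — which are built from the \emph{same} germs $\hat j(\mathcal O,\cdot)$, $\mathcal O\in\text{Nil}(\mathfrak g_y)$ — into the identity above, cancelling $D^{G_y}(X)^{1/2}$, extracting the most singular (regular nilpotent) part by scaling $X$, and using linear independence of the $\hat j(\mathcal O,\cdot)$ for $\mathcal O\in\text{Nil}_{\text{reg}}(\mathfrak g_y)$, one obtains $D^{G_x}(y)^{1/2}c_{\theta_{x,\Omega_x},\mathcal O}(y)=D^G(y)^{1/2}c_{\theta,\mathcal O}(y)$ for each such $\mathcal O$; averaging over $\mathcal O$ gives (3).

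For (1), work on an excellent neighbourhood of $0$ in $\mathfrak g_x(F)$ and write
$$D^G(xe^X)^{1/2}\theta(xe^X)=D^G(xe^X)^{1/2}\sum_{\mathcal O\in\text{Nil}_{\text{reg}}(\mathfrak g_x)}c_{\theta,\mathcal O}(x)\hat j(\mathcal O,X)+O(|X|),$$
the point being that $D^{G_x}(X)^{1/2}\hat j(\mathcal O,X)$ stays bounded near $0$ for all $\mathcal O$ and tends to a nonzero limit along a maximal torus only when $\mathcal O$ is regular nilpotent, the remaining orbits being absorbed into $O(|X|)$. Restricting $X$ to $\mathfrak t_{\text{qd},x}(F)_{\text{reg}}$ and letting $X\to 0$, Harish--Chandra's limit formula for the germs of the identity class in the quasi-split group $G_x$ gives, for each regular nilpotent $\mathcal O$, $\lim D^{G_x}(X)^{1/2}\hat j(\mathcal O,X)=|W(G_x,T_{\text{qd},x})|\cdot|\text{Nil}_{\text{reg}}(\mathfrak g_x)|^{-1}$ with our measure normalisations, the Weyl-group factor reflecting that $T_{\text{qd},x}(F)$ meets the relevant germ with multiplicity $|W(G_x,T_{\text{qd},x})|$, and quasi-splitness being precisely what makes these regular germs nonzero. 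Combining this with $D^G(xe^X)^{1/2}=(D^G(x)^{1/2}+o(1))D^{G_x}(X)^{1/2}$ and the definition $c_\theta(x)=|\text{Nil}_{\text{reg}}(\mathfrak g_x)|^{-1}\sum_{\mathcal O}c_{\theta,\mathcal O}(x)$ yields the stated formula.

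For (2), one reduces by (3) and Proposition \ref{2.1}(2)--(3) to bounding $D^{G_x}(X)^{1/2}|c_{\theta'}(X)|$ for $X$ near $0$ and $\theta'$ a quasi-character on a small neighbourhood of $0$ in $\mathfrak g_x(F)$; there the leading behaviour is governed by $\sum_{\mathcal O\in\text{Nil}_{\text{reg}}}c_{\theta',\mathcal O}(0)\,D^{G_x}(X)^{1/2}\hat j(\mathcal O,X)$, each summand locally bounded by Harish--Chandra, plus an $O(|X|)$ remainder, while $\theta'\mapsto c_{\theta',\mathcal O}(0)$ is continuous for the topology of $QC$. Since $L$ is compact modulo conjugation it is covered by finitely many descent neighbourhoods $\Omega_{x_i}^{G}$ attached to semisimple $x_i$, and transporting the Lie-algebra estimate through the topological isomorphisms of Proposition \ref{2.1} on each of them, the maximum of the resulting finite family of continuous semi-norms is the desired $\nu_L$. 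The step I expect to be the real obstacle, were one not content to quote \cite{BP20} outright, is the regular-germ limit used in (1) together with the continuity of $\theta\mapsto c_{\theta,\mathcal O}(x)$ used in (2); both are part of Harish--Chandra's analysis of orbital integrals and their Fourier transforms, which is exactly what is packaged into the notion of quasi-character recalled in Proposition \ref{2.1}.
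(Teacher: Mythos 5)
The paper offers no proof of this proposition: it is stated with the remark ``We recall [Proposition 4.5.1]{BP20} for later use,'' and the burden is shifted entirely to that reference. Your reconstruction of the argument from [BP20] is faithful in outline and in the key technical inputs (passing (3) through the descent isomorphism of Proposition \ref{2.1}(3) and the identity $D^G=D^{G_x}\eta_x^G$; extracting (1) from the local germ expansion plus the limit of $D^{G_x}(X)^{1/2}\hat j(\mathcal O,X)$ along the quasi-split torus; obtaining (2) by a finite descent cover and the local boundedness of the regular nilpotent germs together with continuity of $\theta\mapsto c_{\theta,\mathcal O}$), so there is no meaningful divergence to report.

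One small point worth flagging for your own records: in (1) you assert the individual-orbit limit $\lim D^{G_x}(X)^{1/2}\hat j(\mathcal O,X)=|W(G_x,T_{\mathrm{qd},x})|\cdot|\mathrm{Nil}_{\mathrm{reg}}(\mathfrak g_x)|^{-1}$ for each regular nilpotent $\mathcal O$. This is indeed what is needed for the stated identity to hold with the definition $c_\theta(x)=|\mathrm{Nil}_{\mathrm{reg}}|^{-1}\sum_{\mathcal O}c_{\theta,\mathcal O}(x)$, since the coefficients $c_{\theta,\mathcal O}(x)$ can be chosen independently; but it is a slightly stronger statement than the more commonly quoted fact that the \emph{sum} of these limits over $\mathcal O\in\mathrm{Nil}_{\mathrm{reg}}(\mathfrak g_x)$ equals $|W(G_x,T_{\mathrm{qd},x})|$. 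In the quasi-split case the regular orbits are permuted transitively by the automorphisms preserved by $T_{\mathrm{qd},x}$, which is exactly why each limit carries the same value; if you were writing this up rather than citing it, that transitivity step is the one you would want to make explicit.
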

We can easily extend the above setting to twisted groups, including the definition of quasi-characters, Proposition \ref{2.1} and Proposition \ref{2.2}.

\subsection{Strongly cuspidal functions}\label{sec2.6}
\begin{definition}\label{2.3}
Let $P=MU$ be a parabolic subgroup of $G$. For $f\in{\mathcal{C}}\left(G\left(F\right)\right)$,
we define 
$$
f^{P}\left(m\right)=\delta_{P}\left(m\right)^{1/2}\int_{U\left(F\right)}f\left(mu\right)du,\ \text{where }m\in M\left(F\right)
$$
as a function in ${\mathcal{C}}\left(M\left(F\right)\right)$.
\end{definition}

A function $f\in{\mathcal{C}}\left(G\left(F\right)\right)$ is called strongly
cuspidal if $f^{P}=0$ for any proper parabolic subgroup of $P$ of
$G$. We denote by ${\mathcal{C}}_{\text{scusp}}\left(G\left(F\right)\right)$
the subspace of strongly cuspidal functions in ${\mathcal{C}}\left(G\left(F\right)\right)$.
More generally, for a completely $G\left(F\right)$-invariant open
subset $\Omega\subseteq G\left(F\right)$, we set ${\mathcal{C}}_{\text{scusp}}\left(\Omega\right)={\mathcal{C}}\left(\Omega\right)\cap{\mathcal{C}}_{\text{scusp}}\left(G\left(F\right)\right)$.

Let $M$ be a Levi subgroup of $G$. As in Section \ref{sec2.1}, by fixing
a maximal compact subgroup $K$ of $G\left(F\right)$, we have the
following map $H_{P}:G\left(F\right)\rightarrow{\mathcal{A}}_{M},$ where $P\in{\mathcal{P}}\left(M\right)$. For every $g\in G\left(F\right)$,
the family $\left(H_{P}\left(g\right)\right)_{P\in{\mathcal{P}}\left(M\right)}$
is a positive $\left(G,M\right)$-orthogonal set. By the previous
section, this gives us a $\left(G,M\right)$-family $\left(v_{p}\left(g,\cdot\right)\right)_{P\in{\mathcal{P}}\left(M\right)}$
and the number $v_{M}\left(g\right)$ associated to this $\left(G,M\right)$-family,
i.e. the volume in ${\mathcal{A}}_{M}^{G}$ of the convex hull determined
by $\left(H_{P}\left(g\right)\right)_{P\in{\mathcal{P}}\left(M\right)}$.
The function $g\mapsto v_{M}\left(g\right)$ is left $M\left(F\right)$
and right $K$-invariant.

Let $x\in M\left(F\right)\cap G_{\text{rss}}\left(F\right)$. For
any $f\in{\mathcal{C}}\left(G\left(F\right)\right)$, we define the weighted
orbital integral of $f$ at $x$ by setting 
$$
J_{M}\left(x,f\right)=D^{G}\left(x\right)^{1/2}\int_{G_{x}\left(F\right)\backslash G\left(F\right)}f\left(g^{-1}xg\right)v_{M}\left(g\right)dg.
$$
This integral is absolutely convergent and defines a tempered distribution
$J_{M}\left(x,\cdot\right)$ on $G\left(F\right)$. More generally,
for the $\left(G,M\right)$-family $\left(v_{P}\left(g,\cdot\right)\right)_{P\in{\mathcal{P}}\left(M\right)}$,
it is possible to associate to it a complex number $v_{L}^{Q}\left(g\right)$,
for any $L\in{\mathcal{L}}\left(M\right)$ and $Q\in{\mathcal{F}}\left(L\right)$.
This allows us to define a tempered distribution $J_{L}^{Q}\left(x,\cdot\right)$
on $G\left(F\right)$, for any $L\in{\mathcal{L}}\left(M\right)$ and $Q\in{\mathcal{F}}\left(L\right)$,
by setting 
$$
J_{L}^{Q}\left(x,f\right)=D^{G}\left(x\right)^{1/2}\int_{G_{x}\left(F\right)\backslash G\left(F\right)}f\left(g^{-1}xg\right)v_{L}^{Q}\left(g\right)dg,\ \ f\in{\mathcal{C}}\left(G\left(F\right)\right).
$$
When $L=M$ and $Q=G$, this recovers the definition of $J_M$. We recall the following lemma in
\cite[Section 5.2]{BP20}.
\begin{lemma}\label{2.4}
Let $f\in{\mathcal{C}}_{\text{scusp}}\left(G\left(F\right)\right)$ be
a strongly cuspidal function and fix $x\in M\left(F\right)\cap G_{\text{rss}}\left(F\right)$.
Then 
\begin{enumerate}
\item For any $L\in{\mathcal{L}}\left(M\right)$ and $Q\in{\mathcal{F}}\left(L\right)$,
if $L\neq M$ or $Q\neq G$, one has
$$
J_{L}^{Q}\left(x,f\right)=0.
$$
\item If $x\notin M\left(F\right)_{\text{ell}}$, then 
$$
J_{M}^{G}\left(x,f\right)=0.
$$
\item For any $y\in G\left(F\right)$,
$$
J_{yMy^{-1}}^{G}\left(yxy^{-1},f\right)=J_{M}^{G}\left(x,f\right).
$$
\end{enumerate}
\end{lemma}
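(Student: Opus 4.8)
The lemma is standard (it is \cite[Section~5.2]{BP20}, after Waldspurger), and my plan is to deduce all three parts from two inputs: the defining vanishing $f^{P}=0$ for every proper parabolic subgroup $P$, and Arthur's descent and splitting formulae for the $(G,M)$-family $\bigl(v_{P}(g,\cdot)\bigr)_{P\in\mathcal{P}(M)}$. I would prove (1), then (2), and deduce (3) last.

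For (1): suppose first that $Q=M_{Q}U_{Q}$ is a proper parabolic. Using the Iwasawa decomposition $G(F)=M_{Q}(F)U_{Q}(F)K$ and the fact that $v^{Q}_{L}(muk)$ depends only on $m\in M_{Q}(F)$ (being the weight of an $(M_{Q},L)$-orthogonal set valued in $\mathcal{A}_{L}^{M_{Q}}$), the inner integration over $U_{Q}(F)$ in $J^{Q}_{L}(x,f)$ turns $f$ into its constant term along a $K$-conjugate of $Q$, which vanishes by strong cuspidality since $Q$ is proper; hence $J^{Q}_{L}(x,f)=0$. Suppose instead $Q=G$ but $L\neq M$, so that $x$ lies in the strictly smaller Levi subgroup $M\subsetneq L$. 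Arthur's descent formula for weighted orbital integrals then writes $J^{G}_{L}(x,f)=\sum_{L'\in\mathcal{L}(M)}d^{G}_{M}(L,L')\,J^{L'}_{M}\bigl(x,f_{Q_{L'}}\bigr)$ for suitable $Q_{L'}\in\mathcal{P}(L')$, where $f_{Q_{L'}}$ is the constant term of $f$ along $Q_{L'}$: the terms with $L'\neq G$ vanish by strong cuspidality since $Q_{L'}$ is then proper, while the coefficient $d^{G}_{M}(L,G)$ of the remaining term is $0$ because $\mathcal{A}_{M}^{L}\neq 0$ (as $L\neq M$). Hence $J^{G}_{L}(x,f)=0$ as well.

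For (2): if $x\notin M(F)_{\text{ell}}$, the smallest Levi subgroup $M_{x}=Z_{G}(A_{G_{x}})$ of $G$ containing $x$ satisfies $M_{x}\subsetneq M$, and the same descent formula gives $J^{G}_{M}(x,f)=\sum_{L\in\mathcal{L}(M_{x})}d^{G}_{M_{x}}(M,L)\,J^{L}_{M_{x}}\bigl(x,f_{Q_{L}}\bigr)$; once again the terms with $L\neq G$ vanish by strong cuspidality, and the coefficient $d^{G}_{M_{x}}(M,G)$ vanishes because $M_{x}\neq M$. Thus $J^{G}_{M}(x,f)=0$. (The case $M=G$ is the classical statement that a strongly cuspidal function has vanishing orbital integrals at regular elements contained in a proper Levi subgroup, which one also sees directly by an Iwasawa computation in which the inner integration over a unipotent radical reproduces a constant term of $f$.)

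For (3): substituting $g\mapsto y^{-1}g$ in the integral defining $J^{G}_{yMy^{-1}}(yxy^{-1},f)$ and using $G_{yxy^{-1}}=yG_{x}y^{-1}$ together with $D^{G}(yxy^{-1})=D^{G}(x)$, the identity to be shown reduces to $\int_{G_{x}(F)\backslash G(F)}f(g^{-1}xg)\bigl(v_{yMy^{-1}}(yg)-v_{M}(g)\bigr)\,dg=0$. Now $v_{yMy^{-1}}(yg)$ is the volume attached to the $(G,M)$-orthogonal set $\bigl(H_{yPy^{-1}}(yg)\bigr)_{P}$, transported to $\mathcal{A}_{M}$ via $\mathrm{Ad}(y)$, which differs from $\bigl(H_{P}(g)\bigr)_{P}$ only by a bounded $(G,M)$-orthogonal set; expanding this difference by Arthur's splitting formula expresses the integral as a combination of weighted orbital integrals $J^{Q}_{L}(x,f)$ with $(L,Q)\neq(M,G)$, each of which vanishes by the part of (1) already proved. (Equivalently, one invokes the independence of the weighted orbital integral of a strongly cuspidal function from the choice of maximal compact subgroup, which is itself a consequence of (1).) Throughout, the only genuinely technical ingredient is keeping the bookkeeping of $(G,M)$-families, the descent and splitting formulae, and the combinatorial coefficients $d^{G}_{M_{1}}(M,L)$ straight; once the vanishing of constant terms of strongly cuspidal functions is available the rest is formal, so the main obstacle is organizational rather than analytic.
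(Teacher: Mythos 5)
Your proof is correct and follows essentially the same route as the cited reference \cite[Section 5.2]{BP20} (which itself goes back to Waldspurger): the two inputs are indeed (a) vanishing of constant terms $f^{P}=0$ for strongly cuspidal $f$, and (b) Arthur's descent formula for weighted orbital integrals, with the combinatorial coefficients $d^{G}_{M_{1}}(L,L')$ killing the only surviving term. A few small remarks for completeness. In (1), for the $Q=G$, $L\supsetneq M$ case, you correctly descend to $M$ itself (which is permissible since $G_{x}\subset M$); descending to the minimal Levi $M(x)$ would work equally well. In (2), note that the statement includes the case $M=G$, where the descent formula reads slightly differently (one descends from $G$ to $M(x)$) but the same coefficient $d^{G}_{M(x)}(G,G)=0$ argument applies, as you indicate in your parenthetical. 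For (3), the quantity $Z_{P}$ that measures the discrepancy between the two $(G,M)$-orthogonal sets depends on $g$ through $k_{P}(g)\in K$, not only on $y$; it is bounded but not constant, and the clean way to close the argument is exactly your parenthetical route: first establish independence of $J^{G}_{M}(x,f)$ from the choice of $K$ (a consequence of (1) and the change-of-$K$ formula, which itself is a splitting-formula computation), then compute $J^{G}_{yMy^{-1}}(yxy^{-1},f)$ with the compact $yKy^{-1}$ so that the change of variable $g\mapsto yg$ matches weights exactly. So the organization you chose is the right one; the splitting-formula route works too, but one must treat the $K$-dependence of $Z_{P}$ with the usual separate integration over $K$.
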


For a regular semisimple element $x\in G_{\text{rss}}\left(F\right)$, let $M\left(x\right)$
be the centralizer of $A_{G_{x}}$ in $G$. For a strongly cuspidal function $f\in{\mathcal{C}}_{\text{scusp}}\left(G\left(F\right)\right)$,
we set 
$$
\theta_{f}\left(x\right)=\left(-1\right)^{a_{G}-a_{M\left(x\right)}}\nu\left(G_{x}\right)^{-1}D^{G}\left(x\right)^{-1/2}J_{M\left(x\right)}^{G}\left(x,f\right),
$$
where $x\in G_{\text{rss}}\left(F\right)$. By Lemma \ref{2.4}, the function
$\theta_{f}$ is invariant under conjugation. Moreover, it is a quasi-character
of $G\left(F\right)$.

We define weighted characters of strongly cuspidal functions. Let $M$ be a Levi subgroup of $G$ and $K$ be the special maximal compact subgroup of $G\left(F\right)$. Let $\sigma$ be a tempered representation of $M\left(F\right)$. We fix $P\in{\mathcal{P}}\left(M\right)$. Following Arthur (cf. \cite{Art94}), for any $f\in{\mathcal{C}}\left(G\left(F\right)\right)$, $L\in{\mathcal{L}}\left(M\right)$
and $Q\in{\mathcal{F}}\left(L\right)$, we can define a weighted character $J_{L}^{Q}\left(\sigma,f\right)$. In particular, when $L=Q=G$, this reduces to the usual character, i.e. 
$
J_{G}^{G}\left(\sigma,f\right)=\text{Trace}_{i_{M}^{G}\left(\sigma\right)}\left(f\right)
$
for any $f\in{\mathcal{C}}\left(G\left(F\right)\right)$.

In Section \ref{sec2.2}, we have defined a set $\underline{{\mathcal{X}}}\left(G\right)$
of virtual tempered representations of $G\left(F\right)$. Let $\pi\in\underline{{\mathcal{X}}}\left(G\right)$.
There exists a pair $\left(M,\sigma\right)$, where $M$ is a Levi
subgroup of $G$ and $\sigma\in\underline{{\mathcal{X}}}_{\text{ell}}\left(M\right)$,
such that $\pi=i_{M}^{G}\left(\sigma\right)$. For any $f\in{\mathcal{C}}_{\text{scusp}}\left(G\left(F\right)\right)$,
we set 
$$
\hat{\theta}_{f}\left(\pi\right)=\left(-1\right)^{a_{G}-a_{M}}J_{M}^{G}\left(\sigma,f\right).
$$
By \cite[Lemma 5.4.1]{BP20}, this definition is well-defined since the pair $\left(M,\sigma\right)$
is well-defined up to conjugation.

Likewise, we can extend the definition of strongly cuspidal functions to twisted groups and we denote the space of strongly cuspidal functions on $\tilde{G}\left(F\right)$
by ${\mathcal{C}}_{\text{scusp}}\left(\tilde{G}\left(F\right)\right)$.
\subsection{Weighted orbital integrals of strongly cuspidal functions}\label{sec2.7}

Let $M$ be a Levi subgroup of $G$. As in Section \ref{sec2.1}, by fixing
a maximal compact subgroup $K$ of $G\left(F\right)$, we have the
following map $H_{P}:G\left(F\right)\rightarrow{\mathcal{A}}_{M},$ where $P\in{\mathcal{P}}\left(M\right)$. For every $g\in G\left(F\right)$,
the family $\left(H_{P}\left(g\right)\right)_{P\in{\mathcal{P}}\left(M\right)}$
is a positive $\left(G,M\right)$-orthogonal set. By the previous
section, this gives us a $\left(G,M\right)$-family $\left(v_{p}\left(g,\cdot\right)\right)_{P\in{\mathcal{P}}\left(M\right)}$
and the number $v_{M}\left(g\right)$ associated to this $\left(G,M\right)$-family,
i.e. the volume in ${\mathcal{A}}_{M}^{G}$ of the convex hull determined
by $\left(H_{P}\left(g\right)\right)_{P\in{\mathcal{P}}\left(M\right)}$.
The function $g\mapsto v_{M}\left(g\right)$ is left $M\left(F\right)$
and right $K$-invariant.

Let $x\in M\left(F\right)\cap G_{\text{rss}}\left(F\right)$. For
any $f\in{\mathcal{C}}\left(G\left(F\right)\right)$, we define the weighted
orbital integral of $f$ at $x$ by setting 
$$
J_{M}\left(x,f\right)=D^{G}\left(x\right)^{1/2}\int_{G_{x}\left(F\right)\backslash G\left(F\right)}f\left(g^{-1}xg\right)v_{M}\left(g\right)dg.
$$
This integral is absolutely convergent and defines a tempered distribution
$J_{M}\left(x,\cdot\right)$ on $G\left(F\right)$. More generally,
for the $\left(G,M\right)$-family $\left(v_{P}\left(g,\cdot\right)\right)_{P\in{\mathcal{P}}\left(M\right)}$,
it is possible to associate to it a complex number $v_{L}^{Q}\left(g\right)$,
for any $L\in{\mathcal{L}}\left(M\right)$ and $Q\in{\mathcal{F}}\left(L\right)$.
This allows us to define a tempered distribution $J_{L}^{Q}\left(x,\cdot\right)$
on $G\left(F\right)$, for any $L\in{\mathcal{L}}\left(M\right)$ and $Q\in{\mathcal{F}}\left(L\right)$,
by setting 
$$
J_{L}^{Q}\left(x,f\right)=D^{G}\left(x\right)^{1/2}\int_{G_{x}\left(F\right)\backslash G\left(F\right)}f\left(g^{-1}xg\right)v_{L}^{Q}\left(g\right)dg,\ \ f\in{\mathcal{C}}\left(G\left(F\right)\right).
$$
When $L=M$ and $Q=G$, this recovers the definition of $J_M$. We recall the following lemma in
\cite[Section 5.2]{BP20}.
\begin{lemma}\label{2.5}
Let $f\in{\mathcal{C}}_{\text{scusp}}\left(G\left(F\right)\right)$ be
a strongly cuspidal function and fix $x\in M\left(F\right)\cap G_{\text{rss}}\left(F\right)$.
Then 
\begin{enumerate}
\item For any $L\in{\mathcal{L}}\left(M\right)$ and $Q\in{\mathcal{F}}\left(L\right)$,
if $L\neq M$ or $Q\neq G$, one has
$$
J_{L}^{Q}\left(x,f\right)=0.
$$
\item If $x\notin M\left(F\right)_{\text{ell}}$, then 
$$
J_{M}^{G}\left(x,f\right)=0.
$$
\item For any $y\in G\left(F\right)$,
$$
J_{yMy^{-1}}^{G}\left(yxy^{-1},f\right)=J_{M}^{G}\left(x,f\right).
$$
\end{enumerate}
\end{lemma}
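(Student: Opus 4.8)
This lemma goes back to Waldspurger \cite{Wal10} and is recalled here from \cite[Section 5.2]{BP20}; I would prove it by combining the defining property of strongly cuspidal functions --- $f^{P}=0$ for every \emph{proper} parabolic subgroup $P$ of $G$, hence also $f^{P'}=0$ for every $G(F)$-conjugate of such a $P$ --- with Arthur's descent formula for weighted orbital integrals, treating the three assertions in the order (2), then (1), then (3). Throughout, since $x$ is regular semisimple, $T=G_{x}$ is a maximal torus of $G$, it is contained in $M$, and $M_{1}:=Z_{G}(A_{T})=Z_{G}(A_{G_{x}})$ is the smallest Levi subgroup of $G$ with $x\in M_{1}(F)$; one has $M_{1}\subseteq M$ because $A_{M}\subseteq A_{T}$, and $M_{1}=M$ precisely when $x$ is $M$-elliptic.

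\textbf{Assertion (2).} Suppose $x$ is not $M$-elliptic, so that $M_{1}\subsetneq M$ and $x\in M_{1}(F)$. I would apply Arthur's descent formula to the pair $M_{1}\subseteq M$, which expresses $J_{M}^{G}(x,f)$ --- for $x$ lying in the smaller Levi $M_{1}$ --- as a finite linear combination
$$J_{M}^{G}(x,f)=\sum_{L\in\mathcal{L}(M_{1})}d_{M_{1}}^{G}(M,L)\,J_{M_{1}}^{L}(x,f^{Q_{L}}),$$
where $Q_{L}\in\mathcal{P}(L)$, and the coefficient $d_{M_{1}}^{G}(M,L)$ vanishes unless $\mathcal{A}_{M_{1}}=\mathcal{A}_{M}+\mathcal{A}_{L}$ with $\mathcal{A}_{M}\cap\mathcal{A}_{L}=\mathcal{A}_{G}$; in particular $L=G$ contributes only when $\mathcal{A}_{M}=\mathcal{A}_{M_{1}}$, i.e. only when $x$ is $M$-elliptic. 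Since we are assuming otherwise, every $L$ that contributes is proper, so every $Q_{L}$ is a proper parabolic subgroup of $G$, whence $f^{Q_{L}}=0$ and $J_{M}^{G}(x,f)=0$. (When $M=G$ the contributing term has $L=M_{1}\subsetneq G$ and $J_{M_{1}}^{M_{1}}(x,f^{Q_{M_{1}}})$ is the ordinary orbital integral of the constant term $f^{Q_{M_{1}}}=0$, recovering the classical vanishing of orbital integrals of strongly cuspidal functions at non-elliptic points; I would isolate this case first, as it is the one needed inside the descent formula.)

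\textbf{Assertion (1).} Fix $L\in\mathcal{L}(M)$ and $Q\in\mathcal{F}(L)$ with $(L,Q)\neq(M,G)$. If $Q\neq G$, write $Q=M_{Q}U_{Q}$; the number $v_{L}^{Q}(g)$ attached to the $(G,L)$-family $(v_{R}(g,\cdot))_{R\in\mathcal{P}(L)}$ depends only on the maps $H_{R}$ with $R\subseteq Q$, and Arthur's formula then identifies $J_{L}^{Q}(x,f)$ with a weighted orbital integral $J_{L}^{M_{Q}}(x,\phi_{Q})$ on $M_{Q}(F)$, where $\phi_{Q}$ is built from $f$ by a constant term along $Q$ (suitably averaged over $K$); since $Q$ is proper, $\phi_{Q}$ is a combination of constant terms of $f$ along conjugates of $Q$, hence $\phi_{Q}=0$ and $J_{L}^{Q}(x,f)=0$. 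If instead $Q=G$ and $L\neq M$, then $M\subsetneq L$, so $A_{L}\subsetneq A_{M}\subseteq A_{T}$ and $x$ is not $L$-elliptic; thus $J_{L}^{G}(x,f)=0$ by assertion (2) applied with $L$ in place of $M$.

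\textbf{Assertion (3), and the main obstacle.} The only obstruction to the naive equivariance $J_{yMy^{-1}}^{G}(yxy^{-1},f)=J_{M}^{G}(x,f)$ is that $v_{M}$ is built from a fixed maximal compact subgroup $K$ and is not literally conjugation-equivariant. Comparing the $(G,M)$-families $(H_{P}^{K})_{P\in\mathcal{P}(M)}$ and $(H_{P}^{K'})_{P\in\mathcal{P}(M)}$ attached to two maximal compacts $K,K'$ exhibits the resulting change in $J_{M}^{G}(x,f)$ as a finite combination of weighted orbital integrals $J_{L}^{Q}(x,f)$ with $(L,Q)\neq(M,G)$, all of which vanish by assertion (1); hence $J_{M}^{G}(\cdot,f)$ is independent of $K$. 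Taking $K'=yKy^{-1}$ and substituting $g\mapsto yg$ in the integral defining $J_{yMy^{-1}}^{G}(yxy^{-1},f)$ --- under which $f\bigl((yg)^{-1}(yxy^{-1})(yg)\bigr)=f(g^{-1}xg)$ and the weight attached to $yMy^{-1}$ relative to $yKy^{-1}$ matches $v_{M}$ relative to $K$ --- yields (3). The only non-formal ingredient is the combinatorial input from Arthur's descent and splitting formulas used in (2) and in the $K$-independence step: one must check that, once expanded, every surviving summand is the (weighted) orbital integral of a constant term of $f$ along a \emph{proper} parabolic subgroup, i.e. that the lone ``diagonal'' term which would reintroduce $f$ itself is excluded by the relevant support condition ($\mathcal{A}_{M}\neq\mathcal{A}_{M_{1}}$ when $x$ is not $M$-elliptic, $L\subsetneq G$ for the term $Q=G$, and $K=K'$ in (3)). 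Granting that input, the rest of the argument is routine.
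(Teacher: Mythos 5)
The paper does not prove this lemma: it states it as a recollection from \cite[Section 5.2]{BP20} (and ultimately from Waldspurger), so there is no in-paper argument to compare against. Your proposal correctly reconstructs the argument as it appears in those references, and the logical dependencies are arranged properly. A few remarks. The ordering (2) then (1) then (3) is forced, since the case $Q=G$, $L\neq M$ of (1) reduces to (2) for $L$, and (3) reduces to the $K$-independence of $J_{M}^{G}(\cdot,f)$, which in turn is the content of (1). In (2), the precise input is Arthur's descent formula attached to the inclusion $M_{1}=Z_{G}(A_{G_{x}})\subseteq M$, and the decisive combinatorial fact is that the coefficient $d_{M_{1}}^{G}(M,G)$ vanishes unless $\mathcal{A}_{M_{1}}^{M}=0$, i.e. unless $M_{1}=M$; when $x$ is not $M$-elliptic every contributing $L$ is proper, and the associated descent $f_{Q_{L}}$ is built from integrals of $f$ over unipotent radicals of proper parabolics (possibly conjugated by elements of $K$), all of which vanish by strong cuspidality since conjugates of proper parabolics are proper. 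The same observation justifies your treatment of the $K$-averaging in the splitting formula used for $Q\neq G$ in (1). The step in (3) comparing the two $(G,M)$-families attached to $K$ and $K'=yKy^{-1}$ and expressing the difference through weighted integrals $J_{L}^{Q}(x,f)$ with $(L,Q)\neq(M,G)$ is standard and uses exactly (1). So the proposal is sound and, modulo filling in Arthur's formulas, complete.
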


For a regular semisimple element $x\in G_{\text{rss}}\left(F\right)$, let $M\left(x\right)$
be the centralizer of $A_{G_{x}}$ in $G$. For a strongly cuspidal function $f\in{\mathcal{C}}_{\text{scusp}}\left(G\left(F\right)\right)$,
we set 
$$
\theta_{f}\left(x\right)=\left(-1\right)^{a_{G}-a_{M\left(x\right)}}\nu\left(G_{x}\right)^{-1}D^{G}\left(x\right)^{-1/2}J_{M\left(x\right)}^{G}\left(x,f\right),
$$
where $x\in G_{\text{rss}}\left(F\right)$. By Lemma \ref{2.5}, the function
$\theta_{f}$ is invariant under conjugation. Moreover, it is a quasi-character
of $G\left(F\right)$.

We consider the case of twisted groups. Let $\tilde{M}\in{\mathcal{L}}^{\tilde{G}}$. For any $g\in G\left(F\right)$, we have  $\left(H_{\tilde{P}}\left(g\right)\right)_{\tilde{P}\in{\mathcal{P}}\left(\tilde{M}\right)}$
is $\left(\tilde{G},\tilde{M}\right)$-orthogonal and hence we can
associate to it a $\left(\tilde{G},\tilde{M}\right)$-family $\left(v_{\tilde{P}}\left(g\right)\right)_{\tilde{P}\in{\mathcal{P}}\left(\tilde{M}\right)}$
via $v_{\tilde{P}}\left(g,\lambda\right)=e^{-\lambda\left(H_{\tilde{P}}\left(g\right)\right)}$
for any $\lambda\in i{\mathcal{A}}_{\tilde{M}}^{*}$. For this $\left(\tilde{G},\tilde{M}\right)$-family,
we deduce a number $v_{\tilde{M}}\left(g\right)$. We are now able
to define weight orbital integrals. For $\tilde{f}\in C_{c}^{\infty}\left(\tilde{G}\left(F\right)\right)$
and $\tilde{x}\in\tilde{M}\left(F\right)\cap\tilde{G}_{\text{reg}}\left(F\right)$,
let 
$$
J_{\tilde{M}}\left(\tilde{x},\tilde{f}\right)=D^{\tilde{G}}\left(\tilde{x}\right)^{1/2}\int_{G_{\tilde{x}}\left(F\right)\backslash G\left(F\right)}\tilde{f}\left(g^{-1}\tilde{x}g\right)v_{\tilde{M}}\left(g\right)dg.
$$
Let $\tilde{f}$ be a strongly cuspidal function on $\tilde{G}\left(F\right)$.
We associate a quasi-character $\theta_{\tilde{f}}$ in the following
way. Let $\tilde{x}\in\tilde{G}_{\text{reg}}\left(F\right)$ and $\tilde{M}\left(\tilde{x}\right)$
be the centralizer of $A_{G_{\tilde{x}}}$ in $\tilde{G}$. It is
a twisted Levi subgroup of $\tilde{G}$ and let $g\in G\left(F\right)$
such that $g\tilde{M}\left(\tilde{x}\right)g^{-1}$ is a semistandard
Levi. We define 
$$
\theta_{\tilde{f}}\left(\tilde{x}\right)=\left(-1\right)^{a_{\tilde{M}\left(\tilde{x}\right)}-a_{\tilde{G}}}D^{\tilde{G}}\left(\tilde{x}\right)^{-1/2}J_{g\tilde{M}\left(\tilde{x}\right)g^{-1}}\left(g\tilde{x}g^{-1},\,^{g}\tilde{f}\right),
$$
where $^{g}\tilde{f}\left(\tilde{x}\right)=\tilde{f}\left(g^{-1}\tilde{x}g\right)$.
The definition does not depend on choices of $g$. Similar to the untwisted setting, the function $\theta_{\tilde{f}}$ is a quasi-character.

\subsection{Weighted characters of strongly cuspidal functions}\label{sec2.8}

Let $M$ be a Levi subgroup of $G$ and $K$ be the special maximal
compact subgroup of $G\left(F\right)$. Let $\sigma$ be a tempered
representation of $M\left(F\right)$. We fix $P\in{\mathcal{P}}\left(M\right)$.
Following Arthur (cf. \cite{Art94}), for any $f\in{\mathcal{C}}\left(G\left(F\right)\right)$, $L\in{\mathcal{L}}\left(M\right)$
and $Q\in{\mathcal{F}}\left(L\right)$, we can define a weighted character $J_{L}^{Q}\left(\sigma,f\right)$. In particular, when $L=Q=G$, this reduces to the usual character, i.e. 
$
J_{G}^{G}\left(\sigma,f\right)=\text{Trace}_{i_{M}^{G}\left(\sigma\right)}\left(f\right)
$
for any $f\in{\mathcal{C}}\left(G\left(F\right)\right)$.

Let $\tilde{P}=\tilde{M}U$ be a (semi-standard) twisted parabolic subgroup of $\tilde{G}$
and $\tilde{\tau}$ be a tempered representation of $\tilde{M}\left(F\right)$.
The definition of weighted characters can be extended to twisted groups. We denote a weighted character of $\tilde{\tau}$ by $J_{\tilde{M}}^{\tilde{G}}\left(\tilde{\tau},\tilde{f}\right)$, for any $\tilde{f}\in C_{c}^{\infty}\left(\tilde{G}\left(F\right)\right)$.
When $\tilde{M}=\tilde{G}$, it is the character of $\tilde{\tau}$,
which is denoted by $\theta_{\tilde{\tau}}$. By \cite[Theorem 2]{Clo87},
we have $\theta_{\tilde{\tau}}$ is a locally integrable distribution.

In Section \ref{sec2.2}, we have defined a set $\underline{{\mathcal{X}}}\left(G\right)$
of virtual tempered representations of $G\left(F\right)$. Let $\pi\in\underline{{\mathcal{X}}}\left(G\right)$.
There exists a pair $\left(M,\sigma\right)$, where $M$ is a Levi
subgroup of $G$ and $\sigma\in\underline{{\mathcal{X}}}_{\text{ell}}\left(M\right)$,
such that $\pi=i_{M}^{G}\left(\sigma\right)$. For any $f\in{\mathcal{C}}_{\text{scusp}}\left(G\left(F\right)\right)$,
we set 
$$
\hat{\theta}_{f}\left(\pi\right)=\left(-1\right)^{a_{G}-a_{M}}J_{M}^{G}\left(\sigma,f\right).
$$
By \cite[Lemma 5.4.1]{BP20}, this definition is well-defined since the pair $\left(M,\sigma\right)$
is well-defined up to conjugacy. We can also adapt the definition of $\hat{\theta}_f$ to twisted groups.

\subsection{Quasi-characters attached to strongly cuspidal functions}\label{sec2.9}

The following proposition is established in \cite[Proposition 5.6.1(ii)]{BP20}.
\begin{proposition}\label{2.6}
Let $f\in{\mathcal{C}}_{\text{scusp}}\left(G\left(F\right)\right)$. Then
the function $\theta_{f}$ is a quasi-character on $G\left(F\right)$
and we have an equality of quasi-characters 
$$
\theta_{f}\left(x\right)=\int_{{\mathcal{X}}\left(G\right)}D\left(\pi\right)\hat{\theta}_{f}\left(\pi\right)\bar{\theta}_{\pi}\left(x\right)d\pi
$$
where the integral on the right hand side is absolutely convergent
in $QC\left(G\left(F\right)\right)$.
\end{proposition}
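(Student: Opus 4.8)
The assertion that $\theta_f$ is a quasi-character has already been recorded in Section \ref{sec2.7}: it is a consequence of Arthur's germ expansion for weighted orbital integrals near a singular semisimple point, the role of the strong cuspidality of $f$ being, via Lemma \ref{2.5}, to annihilate the ``higher'' and non-nilpotent contributions, so that the local expansion of $D^G(xe^X)^{1/2}J_{M(x)}^G(xe^X,f)$ near $X=0$ involves only the Fourier transforms $\hat{j}(\mathcal{O},\cdot)$ of nilpotent orbital integrals. Thus the substance of the proposition (it is \cite[Proposition 5.6.1(ii)]{BP20}) is the spectral identity together with its convergence in $QC(G(F))$, and I outline the argument below.

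For the identity the plan is to combine Harish-Chandra's Plancherel formula with Arthur's theory of weighted characters and his local trace formula (cf. \cite{Art94}). Fix a regular semisimple $x\in G_{rss}(F)$; since a quasi-character is determined by its restriction to the dense regular semisimple locus, it suffices to treat such $x$. Recall $\theta_f(x)=(-1)^{a_G-a_{M(x)}}\nu(G_x)^{-1}D^G(x)^{-1/2}J_{M(x)}^G(x,f)$. The key input is Arthur's expansion, which writes the weighted orbital integral $J_M^G(x,f)$, for any Levi $M$, as a spectral integral over the tempered spectrum $\mathcal{X}_{\text{temp}}(G)$ of the weighted characters $J_M^G(\tau,f)$ against data built from the Harish-Chandra characters $\theta_\tau$ localized near $x$, the constant $D(\pi)$ entering through the Plancherel density. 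Because $f$ is strongly cuspidal, Lemma \ref{2.5} kills all geometric terms other than the one indexed by $(M(x),G)$, and, dually, by strong cuspidality the contributions of the part of $\mathcal{X}_{\text{temp}}(G)$ not arising from elliptic data on Levi subgroups drop out — this is the spectral counterpart of Lemma \ref{2.5}, reflecting the decomposition $R_{\text{temp}}(G)=R_{\text{ind}}(G)\oplus R_{\text{ell}}(G)$ of Section \ref{sec2.2} applied on each Levi. Consequently the spectral integral collapses onto the subset $\mathcal{X}(G)\subset R_{\text{temp}}(G)/\mathbb{S}^1$ of virtual representations $i_M^G(\sigma)$ with $\sigma\in\mathcal{X}_{\text{ell}}(M)$. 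Carrying the constants $\nu(G_x)$, $D^G(x)$, the signs $(-1)^{a_G-a_M}$, and the explicit measure $d\pi$ on $\mathcal{X}(G)$ from Section \ref{sec2.2} through this reduction, the right-hand side is recognised as $\int_{\mathcal{X}(G)}D(\pi)\hat\theta_f(\pi)\bar\theta_\pi(x)\,d\pi$, with $\hat\theta_f(i_M^G(\sigma))=(-1)^{a_G-a_M}J_M^G(\sigma,f)$ as in Section \ref{sec2.8}.

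It remains to justify absolute convergence of this integral in the topology of $QC(G(F))$, which requires uniform estimates. Here I would use, first, Arthur's bounds showing that for each Levi $M$ and each base point $\sigma\in\mathcal{X}_{\text{ell}}(M)$ the function $\lambda\in i\mathcal{A}_{M,F}^*\mapsto J_M^G(\sigma_\lambda,f)$ is rapidly decreasing — this is where one uses that $f$ lies in the Harish-Chandra--Schwartz space $\mathcal{C}(G(F))$ — and, second, Proposition \ref{2.2}(ii) applied to the characters $\bar\theta_\pi$, which makes $(D^G)^{1/2}c_{\bar\theta_\pi}$ locally bounded uniformly as $\pi$ ranges over compact subsets of $\mathcal{X}(G)$, so that $\pi\mapsto\bar\theta_\pi$ is continuous into $QC(G(F))$ with seminorms controlled on compacta. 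Together with the finiteness of the set of Levi conjugacy classes, these bounds give absolute convergence of the integral against every continuous seminorm of $QC(G(F))$.

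The hardest point is the spectral expansion of weighted orbital integrals itself — a form of Arthur's local trace formula — and the accompanying bookkeeping that passes from the full tempered spectrum $\mathcal{X}_{\text{temp}}(G)$ to the elliptic part $\mathcal{X}(G)$: one must match the vanishing of the non-elliptic spectral contributions against the geometric vanishing of Lemma \ref{2.5} and keep precise track of all Plancherel constants, measures and signs. The convergence statement, while technically the most laborious, is conceptually routine once the Schwartz-type estimates on weighted characters are in hand.
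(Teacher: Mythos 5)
The paper itself offers no proof: Proposition~\ref{2.6} is imported verbatim from Beuzart-Plessis's work by the single line ``The following proposition is established in \cite[Proposition 5.6.1(ii)]{BP20}.'' So there is no internal argument to compare you against; what matters is whether your sketch would actually prove the cited result.

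It would not, and the problem lies at the step you single out as the ``key input.'' You assert an ``Arthur's expansion'' that would write the single weighted orbital integral $J_M^G(x,f)$, for an arbitrary regular semisimple $x$ and arbitrary Levi $M$, as a pointwise spectral integral of the weighted characters $J_M^G(\tau,f)$ against the characters $\theta_\tau$ localised near $x$. No such theorem exists. Arthur's local trace formula, and its invariant refinement, is an identity between a geometric side and a spectral side that are each \emph{integrated}---over conjugacy classes on one side, over the tempered spectrum on the other; it does not produce a pointwise expansion of an individual weighted orbital integral. A pointwise identity of the shape you invoke is, for strongly cuspidal $f$, precisely the content of Proposition~\ref{2.6} itself, so the argument is circular at its crucial joint. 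The further claim that strong cuspidality gives a ``spectral counterpart of Lemma~\ref{2.5}'' killing the non-elliptic part of $\mathcal{X}_{\text{temp}}(G)$ is asserted rather than proved; the decomposition $R_{\text{temp}}(G)=R_{\text{ind}}(G)\oplus R_{\text{ell}}(G)$ alone does not supply the vanishing of weighted characters you would need.

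The route actually followed in \cite{BP20} has a different structure. One first establishes absolute convergence of the right-hand side in $QC(G(F))$; your last paragraph is on the right track here, and the estimates you name---decay of $\hat{\theta}_f$ on $\mathcal{X}(G)$ and local boundedness of $(D^G)^{1/2}c_{\theta_\pi}$ uniformly on compacta via Proposition~\ref{2.2}(ii)---are genuine ingredients. The equality of the two quasi-characters is then proved not by inverting any expansion pointwise, but by pairing against test data: on the elliptic locus one uses Arthur's orthogonality relations for elliptic tempered virtual characters together with the elliptic part of the invariant local trace formula (for strongly cuspidal $f$ the non-invariant and invariant formulae coincide) to identify the two sides on $G(F)_{\text{ell}}$; off the elliptic locus one descends via parabolic descent to the Levi $M(x)$ and exploits the compatibility of both sides with parabolic induction, which is where the Levi data implicit in $\mathcal{X}(G)$ enter. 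Your sketch replaces this two-step comparison by an appeal to an inversion theorem that does not exist, and that is where the proof breaks.
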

We recall its twisted groups counterpart.
\begin{proposition}\label{2.7}
Let $\tilde{f}\in{\mathcal{C}}_{\text{scusp}}\left(\tilde{G}\left(F\right)\right)$.
Then we have an equality of quasi-characters 
$$
\theta_{\tilde{f}}=\underset{\tilde{M}\in\mathcal{L}(\tilde{M}_{\min})}{\sum}|\widetilde{W}^M||\widetilde{W}^G|^{-1}(-1)^{a_{\tilde{M}}-a_{\tilde{G}}}\int_{E_{\text{ell}}(\tilde{M})}D\left(\tilde{\pi}\right)\hat{\theta}_{\tilde{f}}\left(\tilde{\pi}\right)\overline{\theta_{\tilde{\pi}}}d\tilde{\pi}
$$
where the above integral is absolutely convergent in $\text{QC}\left(\tilde{G}\left(F\right)\right)$.
\end{proposition}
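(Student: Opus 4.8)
The plan is to transpose the proof of Proposition \ref{2.6} (that is, \cite[Proposition 5.6.1(ii)]{BP20}) to the twisted setting, the only genuinely new inputs being the twisted invariant local trace formula and the twisted weighted orbital integral/weighted character formalism developed in \cite{Wal12b,BW23} and recalled in Sections \ref{sec2.7}--\ref{sec2.8}. First one verifies that both sides define quasi-characters on $\tilde G(F)$: the left-hand side by the twisted analogue of the discussion in Section \ref{sec2.7} ($\theta_{\tilde f}$ is a quasi-character), and the right-hand side because each $\theta_{\tilde\pi}$ is a quasi-character by the twisted version of Proposition \ref{2.1}, provided the $\mathrm{QC}$-valued integral converges absolutely --- a point addressed below.

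The heart of the argument is a spectral expansion of the twisted weighted orbital integrals of $\tilde f$. Fix $\tilde x\in\tilde G_{\mathrm{reg}}(F)$ and, after conjugating, assume $\tilde M(\tilde x)$ is a standard twisted Levi, so that $\theta_{\tilde f}(\tilde x)=(-1)^{a_{\tilde M(\tilde x)}-a_{\tilde G}}D^{\tilde G}(\tilde x)^{-1/2}J_{\tilde M(\tilde x)}^{\tilde G}(\tilde x,\tilde f)$. One then invokes the spectral side of the twisted local trace formula (equivalently, the twisted Plancherel decomposition together with the twisted analogues of Arthur's identities tying weighted orbital integrals to weighted characters). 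Because $\tilde f$ is strongly cuspidal, the twisted analogue of Lemma \ref{2.5} forces the geometric data to be supported on elements regular and elliptic in some twisted Levi $\tilde M$, recombining through the twisted Weyl integration formula of Section \ref{sec2.1}; on the spectral side, the splitting $R_{\mathrm{temp}}(\tilde M)=R_{\mathrm{ind}}(\tilde M)\oplus R_{\mathrm{ell}}(\tilde M)$ and the definition $\hat\theta_{\tilde f}(\tilde\pi)=(-1)^{a_{\tilde M}-a_{\tilde G}}J_{\tilde M}^{\tilde G}(\tilde\sigma,\tilde f)$ for $\tilde\pi=i_{\tilde M}^{\tilde G}(\tilde\sigma)$ produce precisely a sum over $\tilde M\in\mathcal L(\tilde M_{\min})$ weighted by $|\widetilde W^M||\widetilde W^G|^{-1}(-1)^{a_{\tilde M}-a_{\tilde G}}$ and an integral over $E_{\mathrm{ell}}(\tilde M)$ against the measure $d\tilde\pi$ of Section \ref{sec2.2}. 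Matching the two expressions at $\tilde x$ gives the asserted identity on $\tilde G_{\mathrm{reg}}(F)$, and since both sides are quasi-characters this identity holds in $\mathrm{QC}(\tilde G(F))$.

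For absolute convergence in the Fréchet space $\mathrm{QC}(\tilde G(F))$ one needs two estimates: a twisted Plancherel-type bound showing that $\tilde\pi\mapsto D(\tilde\pi)\,\hat\theta_{\tilde f}(\tilde\pi)$ is rapidly decreasing on $E_{\mathrm{ell}}(\tilde M)$ modulo $i\mathcal A_{\tilde M,F}^*$ (the twisted counterpart of the estimates on weighted characters of strongly cuspidal functions), and the uniform local boundedness of $(D^{\tilde G})^{1/2}c_{\theta_{\tilde\pi}}$, i.e.\ the twisted form of Proposition \ref{2.2}(2), bounding the seminorms $\nu_L(\theta_{\tilde\pi})$ uniformly in $\tilde\pi$. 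Combining these, each continuous seminorm of the right-hand side is dominated by an absolutely convergent integral, exactly as in the proof of \cite[Proposition 5.6.1]{BP20}.

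The main obstacle is precisely this twisted harmonic analysis: the uniform bounds on twisted weighted characters $J_{\tilde M}^{\tilde G}(\tilde\tau,\cdot)$ and the twisted Plancherel formula, together with careful tracking of the normalising factors $|T^\theta(F):T_\theta(F)|^{-1}$ entering the twisted Weyl integration formula when one passes between the geometric and spectral sides --- these are the places where the $\theta$-twist genuinely intervenes, and where one must cite \cite{Wal12b} and \cite{BW23} rather than transcribe the untwisted argument verbatim. In particular one cannot fully reduce to Proposition \ref{2.6} by Harish-Chandra descent to $G_{\tilde x}$, since the action of $\theta$ on the component group of the centraliser is exactly what produces the Weyl-group ratios $|\widetilde W^M||\widetilde W^G|^{-1}$ in the statement.
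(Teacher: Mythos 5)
Your outline is sound and follows the same route as the cited reference: the paper's own proof of this proposition is a one-line citation to \cite[Proposition 2.10]{BW23}, and what you sketch is essentially the argument carried out there, namely the transposition of \cite[Proposition 5.6.1(ii)]{BP20} to the twisted setting using the twisted weighted orbital/weighted character formalism and twisted local trace formula of \cite{Wal12b,BW23}. In other words, you have reconstructed the proof that the paper delegates to the reference rather than found a genuinely different path.
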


\begin{proof}
See \cite[Proposition 2.10]{BW23}.
\end{proof}
In \cite[Section 2.12]{Le25}, the author extends \cite[Proposition 5.7.1]{BP20} and \cite[Corollary 5.7.2]{BP20} to twisted groups. We recall \cite[Proposition 2.17 and Corollary 2.18]{Le25}.
\begin{proposition}\label{2.8}
Let $\tilde{x}\in\tilde{G}\left(F\right)_{\text{ell}}$ be an elliptic
element and let $\Omega_{\tilde{x}}\subset G_{\tilde{x}}\left(F\right)$
be a $G$-good open neighborhood of $1$ which is relatively compact
modulo $G_{\tilde{x}}$-conjugation. Set $\Omega=\left(\Omega_{\tilde{x}}\tilde{x}\right)^{G}$.
Then there exists a linear map 
$$
\begin{array}{ccc}
{\mathcal{C}}_{\text{scusp}}\left(\Omega_{\tilde{x}}\right) & \rightarrow & {\mathcal{C}}_{\text{scusp}}\left(\Omega\right)\\
f & \mapsto & \tilde{f}
\end{array}
$$
such that the following properties hold.
\begin{enumerate}
\item For any $f\in{\mathcal{C}}_{\text{scusp}}\left(\Omega_{\tilde{x}}\right)$, we have 
$$
\left(\theta_{\tilde{f}}\right)_{\tilde{x},\Omega_{\tilde{x}}}=\underset{z\in Z_{G}\left(\tilde{x}\right)\left(F\right)/G_{\tilde{x}}\left(F\right)}{\sum}{}^{z}\theta_{f}.
$$
\item There exists a function $\alpha\in C_{c}^{\infty}\left(Z_{G}\left(\tilde{x}\right)\left(F\right)\backslash G\left(F\right)\right)$
satisfying 
$$
\int_{Z_{G}\left(\tilde{x}\right)\left(F\right)\backslash G\left(F\right)}\alpha\left(g\right)dg=1
$$
and such that for any $f\in{\mathcal{C}}_{\text{scusp}}\left(\Omega_{\tilde{x}}\right)$
and $g\in G\left(F\right)$, there exists $z\in Z_{G}\left(\tilde{x}\right)\left(F\right)$
with 
$$
\left(^{zg}\tilde{f}\right)_{\tilde{x},\Omega_{\tilde{x}}}=\alpha\left(g\right)f.
$$
\end{enumerate}
\end{proposition}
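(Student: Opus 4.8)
The plan is to transpose the construction and argument of \cite[Proposition 5.7.1 and Corollary 5.7.2]{BP20} to the twisted group $\tilde{G}$, relying on the twisted Harish-Chandra descent of Section \ref{sec2.4}, the twisted weighted orbital integrals and the quasi-character $\theta_{\tilde{f}}$ of Section \ref{sec2.7}, the $(\tilde{G},\tilde{M})$-family formalism of Section \ref{sec2.3}, and the twisted germ expansions of Section \ref{sec2.5}. Since $\tilde{x}$ is semisimple the component group $Z_G(\tilde{x})/G_{\tilde{x}}$ is finite, so $Z_G(\tilde{x})(F)/G_{\tilde{x}}(F)$ is a finite group; moreover $\Omega_{\tilde{x}}$ is stable under $Z_G(\tilde{x})(F)$-conjugation because it is $G$-good, and $G$-goodness also guarantees that every $\tilde{y}\in\Omega=(\Omega_{\tilde{x}}\tilde{x})^G$ is $G(F)$-conjugate to some $z\tilde{x}$ with $z\in\Omega_{\tilde{x}}$, the conjugator being unique up to $Z_G(\tilde{x})(F)$, hence up to the finite ambiguity $Z_G(\tilde{x})(F)/G_{\tilde{x}}(F)$.

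I would first construct the linear map $f\mapsto\tilde{f}$. Fix $\alpha\in C_c^\infty(Z_G(\tilde{x})(F)\backslash G(F))$ with $\int\alpha=1$, and spread $f$ along the $G(F)$-conjugacy orbit of the slice $\Omega_{\tilde{x}}\tilde{x}$ with density $\alpha$, so that, up to the Jacobian factor $\eta_{\tilde{x}}^G$ produced by the descent map $(\,\cdot\,)_{\tilde{x},\Omega_{\tilde{x}}}$, the function $\tilde{f}$ near the slice recovers a scalar multiple of $f$ and is transported around $\Omega$ by conjugation weighted by $\alpha$. Exactly as in the untwisted case, the relative compactness of $\Omega_{\tilde{x}}$ modulo $G_{\tilde{x}}$-conjugation together with $G$-goodness ensures---after working locally with a partition of unity and a section of the retraction of $\Omega$ onto $\Omega_{\tilde{x}}\tilde{x}$ modulo $Z_G(\tilde{x})(F)$---that $\tilde{f}$ is a well-defined smooth function which extends by zero to an element of $\mathcal{C}(\tilde{G}(F))$ with support compact modulo conjugation, and that $f\mapsto\tilde{f}$ is linear. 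To check that $\tilde{f}$ is strongly cuspidal I would compute the twisted weighted orbital integrals $J_{\tilde{M}}^{\tilde{G}}(\tilde{y},\tilde{f})$ for $\tilde{y}\in\tilde{M}(F)\cap\tilde{G}_{\text{reg}}(F)$: only those $\tilde{y}$ which are $G(F)$-conjugate into $\Omega_{\tilde{x}}\tilde{x}$ contribute, and for such $\tilde{y}$ the twisted analogue of Arthur's descent formula for $(\tilde{G},\tilde{M})$-families (cf. \cite{Wal12b} and Section \ref{sec2.3}) expresses $J_{\tilde{M}}^{\tilde{G}}(\tilde{y},\tilde{f})$ as a combination of weighted orbital integrals of $f$ attached to Levi subgroups of $G_{\tilde{x}}$; since $f$ is strongly cuspidal on $G_{\tilde{x}}(F)$, the first part of Lemma \ref{2.5} applied inside $G_{\tilde{x}}$ kills every term in which a proper Levi occurs, whence $J_{\tilde{M}}^{\tilde{G}}(\tilde{y},\tilde{f})=0$ for $\tilde{M}\neq\tilde{G}$, which is strong cuspidality.

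For the first property I would work on the regular locus. By the definition of the descent map (Section \ref{sec2.4}), $(\theta_{\tilde{f}})_{\tilde{x},\Omega_{\tilde{x}}}(z)=\eta_{\tilde{x}}^G(z\tilde{x})^{1/2}\theta_{\tilde{f}}(z\tilde{x})$ for $z\in\Omega_{\tilde{x},\text{rss}}$. Substituting the definition of $\theta_{\tilde{f}}(z\tilde{x})$ through its defining twisted weighted orbital integral (Section \ref{sec2.7}) and unwinding $\tilde{f}$ by the same descent formula for $(\tilde{G},\tilde{M})$-families as above, the $\alpha$-integral collapses to $1$ because $\int\alpha=1$, while the $\tilde{G}(F)$-conjugacy class of $z\tilde{x}$ meets the slice $\Omega_{\tilde{x}}\tilde{x}$ in exactly the $Z_G(\tilde{x})(F)/G_{\tilde{x}}(F)$-orbit of $z$; what remains is precisely $\sum_{z'\in Z_G(\tilde{x})(F)/G_{\tilde{x}}(F)}{}^{z'}\theta_f(z)$. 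As both sides are quasi-characters on $\Omega_{\tilde{x}}$ (the right-hand side because $\theta_f$ is one and a finite sum of conjugates of a quasi-character is again a quasi-character), agreement on the dense locus $\Omega_{\tilde{x},\text{rss}}$ yields the first property. The second property then falls out of the construction: given $g\in G(F)$, one selects $z=z(g)\in Z_G(\tilde{x})(F)$ which, inside the finite $Z_G(\tilde{x})(F)/G_{\tilde{x}}(F)$-orbit, picks the representative for which $g^{-1}(\,\cdot\,\tilde{x})g$ lands where $\tilde{f}$ was prescribed to equal the relevant multiple of $f$; then $({}^{zg}\tilde{f})_{\tilde{x},\Omega_{\tilde{x}}}$ picks up exactly the scalar $\alpha(g)$ and returns $f$, the only property of $\alpha$ used being $\int\alpha=1$.

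The main obstacle will be the twisted bookkeeping underlying the first two steps: one must check that the $(\tilde{G},\tilde{M})$-family formalism, the descent of twisted weighted orbital integrals, and the twisted germ expansions all assemble into a faithful twisted mirror of the computation in \cite[Section 5.7]{BP20}---in particular that no component-group factor is gained or lost in passing between $Z_G(\tilde{x})$ and $G_{\tilde{x}}$, and that the spread-out function $\tilde{f}$ is genuinely smooth and strongly cuspidal. Granting these twisted analogues---most of which are recalled in the preceding sections and in \cite{Le25}---the remaining verifications are routine adaptations of the untwisted arguments.
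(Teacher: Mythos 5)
Your approach --- transposing \cite[Proposition 5.7.1 and Corollary 5.7.2]{BP20} to the twisted setting --- matches the paper's route: the paper does not prove Proposition~\ref{2.8} directly but recalls it as \cite[Proposition 2.17 and Corollary 2.18]{Le25}, which is precisely that adaptation. Your construction of $\tilde{f}$ by spreading $f$ along the $G(F)$-orbit of the slice $\Omega_{\tilde{x}}\tilde{x}$ with density $\alpha$, the verification of part~(i) by unwinding $\theta_{\tilde{f}}$ through twisted semisimple descent, and the extraction of part~(ii) from the construction are all correctly identified.

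One step as written, however, does not close: you deduce that $\tilde{f}$ is strongly cuspidal from the vanishing $J_{\tilde{M}}^{\tilde{G}}(\tilde{y},\tilde{f})=0$ for all proper $\tilde{M}$, asserting ``which is strong cuspidality.'' Lemma~\ref{2.5}(i) gives that strong cuspidality \emph{implies} this vanishing; it is not stated (and does not hold in general) as an equivalence. To conclude $\tilde{f}\in\mathcal{C}_{\text{scusp}}(\Omega)$ you must check $\tilde{f}^{\tilde{P}}=0$ directly for every proper twisted parabolic $\tilde{P}=\tilde{M}U$. This is actually easier than the weighted-orbital-integral descent: $\tilde{f}$ is supported in the completely $G(F)$-invariant set $\Omega=(\Omega_{\tilde{x}}\tilde{x})^G$, all of whose elements have semisimple part $G(F)$-conjugate into $\Omega_{\tilde{x}}\tilde{x}$, hence elliptic once $\Omega_{\tilde{x}}$ is shrunk so that $A_{G_{z\tilde{x}}}=A_{\tilde{G}}$ for every $z\in\Omega_{\tilde{x},ss}$. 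On the other hand, for $\tilde{m}\in\tilde{M}(F)$ and $u\in U(F)$ the semisimple part of $\tilde{m}u$ is $P(F)$-conjugate into $\tilde{M}(F)$, so its centralizer contains $A_{\tilde{M}}\supsetneq A_{\tilde{G}}$ and it is not elliptic; therefore $\tilde{f}$ vanishes identically on $\tilde{M}(F)U(F)$ and $\tilde{f}^{\tilde{P}}\equiv 0$. With this replacement the rest of your sketch goes through as a faithful twisted mirror of \cite[Section 5.7]{BP20}.
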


\begin{corollary}\label{2.9}
Let $\chi$ be a character of $A_{\tilde{G}}\left(F\right)$. Then 
\begin{enumerate}
\item There exists $\Omega\subseteq A_{\tilde{G}}\left(F\right)\backslash\tilde{G}\left(F\right)$
a completely $G\left(F\right)$-invariant open subset which is relatively compact modulo conjugation and contains $A_{\tilde{G}}\left(F\right)\backslash\tilde{G}\left(F\right)_{\text{ell}}$ such that the linear map 
$$
f\in{\mathcal{C}}_{\text{scusp}}\left(\Omega,\chi\right)\mapsto\theta_{f}\in QC_{c}\left(\Omega,\chi\right)
$$
is surjective.
\item For all $\theta\in QC\left(\tilde{G}\left(F\right)\right)$, there
exists a compact subset $\Omega_{\theta}\subseteq{\mathcal{X}}_{\text{ell}}\left(\tilde{G}\right)$
such that 
$$
\int_{\Gamma_{\text{ell}}\left(\tilde{G}\right)}D^{\tilde{G}}\left(x\right)\theta\left(x\right)\theta_{\tilde{\pi}}\left(x\right)dx=0
$$
for all $\tilde{\pi}\in{\mathcal{X}}_{\text{ell}}\left(\tilde{G}\right)-\Omega_{\theta}$,
the integral above being absolutely convergent.
\item For all $\tilde{\pi}\in{\mathcal{X}}_{\text{ell}}\left(\tilde{G}\right)$,
there exists $f\in{\mathcal{C}}_{\text{scusp}}\left(\tilde{G}\left(F\right)\right)$
such that for all $\tilde{\pi}^{\prime}\in{\mathcal{X}}_{\text{ell}}\left(\tilde{G}\right)$,
we have 
$$
\hat{\theta}_{f}\left(\tilde{\pi}^{\prime}\right)\neq0\Leftrightarrow\tilde{\pi}^{\prime}=\tilde{\pi}.
$$
\end{enumerate}
\end{corollary}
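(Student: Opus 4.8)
The plan is to adapt the argument of \cite[Corollary 5.7.2]{BP20} to the twisted setting, the one genuinely new input being the twisted Harish-Chandra descent of strongly cuspidal functions in Proposition \ref{2.8}. Part (1) is the heart of the matter; parts (2) and (3) then follow formally from it, together with the spectral expansion of Proposition \ref{2.7} and the (twisted) orthogonality relations for elliptic virtual characters. Throughout, all constructions are arranged so as to preserve the fixed central character $\chi$ of $A_{\tilde{G}}(F)$.

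For (1): since $A_{\tilde{G}}(F)\backslash\tilde{G}(F)_{\text{ell}}$ is compact modulo conjugation, I first choose finitely many elliptic elements $\tilde{x}_1,\ldots,\tilde{x}_k\in\tilde{G}(F)$ and, for each $i$, a $G_{\tilde{x}_i}$-good open neighborhood $\Omega_{\tilde{x}_i}\subset G_{\tilde{x}_i}(F)$ of $1$, relatively compact modulo conjugation, so that the invariant open sets $\Omega_i=(\Omega_{\tilde{x}_i}\tilde{x}_i)^G$ cover $A_{\tilde{G}}(F)\backslash\tilde{G}(F)_{\text{ell}}$ after passing to the quotient by $A_{\tilde{G}}(F)$; set $\Omega=\bigcup_i\Omega_i$. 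Fix a smooth invariant partition of unity $(\varphi_i)_i$ with $\operatorname{supp}\varphi_i\subset\Omega_i$ and $\sum_i\varphi_i\equiv 1$ near the elliptic locus. Given $\theta\in QC_c(\Omega,\chi)$, each $\varphi_i\theta$ is a compactly supported quasi-character on $\Omega_i$, which I descend along $\theta'\mapsto\theta'_{\tilde{x}_i,\Omega_{\tilde{x}_i}}$ to a $Z_G(\tilde{x}_i)(F)$-invariant element of $QC_c(\Omega_{\tilde{x}_i})$. Using the untwisted surjectivity $\mathcal{C}_{\text{scusp}}(\Omega_{\tilde{x}_i})\twoheadrightarrow QC_c(\Omega_{\tilde{x}_i})$ for the connected reductive group $G_{\tilde{x}_i}$, I produce $f_i\in\mathcal{C}_{\text{scusp}}(\Omega_{\tilde{x}_i})$ with $\sum_{z\in Z_G(\tilde{x}_i)(F)/G_{\tilde{x}_i}(F)}{}^z\theta_{f_i}=(\varphi_i\theta)_{\tilde{x}_i,\Omega_{\tilde{x}_i}}$, and then apply the linear map of Proposition \ref{2.8} to obtain $\tilde{f}_i\in\mathcal{C}_{\text{scusp}}(\Omega_i)$ with $(\theta_{\tilde{f}_i})_{\tilde{x}_i,\Omega_{\tilde{x}_i}}=(\varphi_i\theta)_{\tilde{x}_i,\Omega_{\tilde{x}_i}}$. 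Since descent at $\tilde{x}_i$ is a topological isomorphism of quasi-characters (the twisted analogue of Proposition \ref{2.1}(3)), this forces $\theta_{\tilde{f}_i}=\varphi_i\theta$ on $\Omega_i$; as $\theta_{\tilde{f}_i}$ moreover vanishes outside the invariant set $\Omega_i$, the function $\tilde{f}=\sum_i\tilde{f}_i\in\mathcal{C}_{\text{scusp}}(\Omega,\chi)$ satisfies $\theta_{\tilde{f}}=\sum_i\varphi_i\theta=\theta$ on $\Omega$ by linearity of $\tilde{f}\mapsto\theta_{\tilde{f}}$.

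For (2): only the restriction of $\theta$ to a neighborhood of the regular elliptic locus enters the integral, so after multiplying by an invariant cutoff I may assume $\theta\in QC_c(\Omega,\chi)$ and, by part (1), write $\theta=\theta_f$ with $f\in\mathcal{C}_{\text{scusp}}(\tilde{G}(F))$ of compact support modulo center. Expanding $\theta_f$ via Proposition \ref{2.7} and using that elliptic virtual characters are orthogonal to properly induced ones for the elliptic pairing, the quantity $\int_{\Gamma_{\text{ell}}(\tilde{G})}D^{\tilde{G}}(x)\theta(x)\theta_{\tilde{\pi}}(x)dx$ is a fixed nonzero multiple of $\hat{\theta}_f(\tilde{\pi})$ for $\tilde{\pi}\in\mathcal{X}_{\text{ell}}(\tilde{G})$; since $f$ has compact support modulo center, the (twisted) trace Paley-Wiener theorem shows $\hat{\theta}_f$ is supported on finitely many compact components of $\mathcal{X}_{\text{ell}}(\tilde{G})$, so $\Omega_\theta$ may be taken compact. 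For (3): given $\tilde{\pi}\in\mathcal{X}_{\text{ell}}(\tilde{G})$, the character $\theta_{\tilde{\pi}}$ is a quasi-character (twisted analogue of Proposition \ref{2.1}), which I cut off to an element of $QC_c(\Omega,\chi)$ agreeing with $\theta_{\tilde{\pi}}$ near $\Gamma_{\text{ell}}(\tilde{G})$ and realize as $\theta_f$ by part (1). Then for every $\tilde{\pi}'\in\mathcal{X}_{\text{ell}}(\tilde{G})$, the value $\hat{\theta}_f(\tilde{\pi}')$ is proportional to $\int_{\Gamma_{\text{ell}}(\tilde{G})}D^{\tilde{G}}(x)\theta_{\tilde{\pi}}(x)\theta_{\tilde{\pi}'}(x)dx$, which by the twisted orthogonality relations for elliptic characters vanishes unless $\tilde{\pi}'=\tilde{\pi}$, while the normalization keeps the term $\tilde{\pi}'=\tilde{\pi}$ nonzero.

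I expect the main obstacle to lie in part (1), and more precisely in the bookkeeping of the descent: reconciling the normalization in Proposition \ref{2.8}(1) (the average over $Z_G(\tilde{x}_i)(F)/G_{\tilde{x}_i}(F)$) with the untwisted surjectivity statement for $G_{\tilde{x}_i}$, keeping the central character $\chi$ under control at every step, and choosing the finite cover and the partition of unity so that the local functions $\tilde{f}_i$ glue without producing spurious contributions off their supports. Once (1) is in place, (2) and (3) are routine; the full details follow \cite[Corollary 5.7.2]{BP20} and are carried out in \cite[Corollary 2.18]{Le25}.
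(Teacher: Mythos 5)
The paper gives no proof of this corollary; it merely recalls it as \cite[Corollary 2.18]{Le25}, which adapts \cite[Corollary 5.7.2]{BP20} to the twisted setting. Your sketch is exactly that adaptation — for (1), a partition of unity over $G$-good neighborhoods of elliptic $\tilde{x}_i$, the untwisted surjectivity for the connected groups $G_{\tilde{x}_i}$ followed by the lift of Proposition \ref{2.8}, and for (2)–(3), Proposition \ref{2.7} together with twisted elliptic orthogonality — and the bookkeeping you flag (matching central characters, reconciling the sum over $Z_G(\tilde{x}_i)(F)/G_{\tilde{x}_i}(F)$ in Proposition \ref{2.8}(1) with the choice of $f_i$) is indeed where the only care is needed, so this is the intended argument.
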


\section{The local Langlands correspondence for unitary groups}\label{llc}
\subsection{Endoscopy and twisted endoscopy}\label{llc1}
Let $G$ be a connected reductive group defined over $F$ and $\left(H,s,^{L}\xi\right)$
be an endoscopic triple of $G$ (cf. \cite[1.2]{LS87}). This define
a map called endoscopic correspondence between $H_{\text{reg}}\left(F\right)/\text{stconj}$
and $G_{\text{reg}}\left(F\right)/\text{stconj}$. We say $x\in G_{\text{reg}}\left(F\right)/\text{conj}$
and $y\in H_{\text{reg}}\left(F\right)/\text{stconj}$ are corresponding
if they have the same image via $\phi_{G}^{\text{st}}/\text{conj}$.
In order to define transfer factors, we fix a quasi-split inner forms
$\underline{G}$ of $G$ and an inner torsor $\psi_{G}:G\rightarrow\underline{G}$
and a pinning of $\underline{G}$ defined over $F$ up to $\underline{G}\left(F\right)$-conjugation.
The last datum amount to a regular nilpotent orbit of $\underline{\mathfrak{g}}\left(F\right)$. With these data, we can define relative transfer factors $\Delta_{H,G}\left(y,x;\bar{y},\bar{x}\right)$, where $y\in H_{\text{reg}}\left(F\right)/\text{stconj}$ and $x\in G_{\text{reg}}\left(F\right)/\text{conj}$
correspond to $\bar{y}\in H_{\text{reg}}\left(F\right)/\text{stconj}$
and $\bar{x}\in G_{\text{reg}}\left(F\right)/\text{conj}$. According
to a remark by Kottwitz, if we fix a cocycle $u:\text{Gal}\left(\bar{F}/F\right)\rightarrow\underline{G}$ such that $\psi_{G}\circ\sigma\left(g\right)=u\left(\sigma\right)\sigma\circ\psi_{G}\left(g\right)u\left(\sigma\right)^{-1}$, for any $g\in G$ and $\sigma\in\text{Gal}\left(\bar{F}/F\right)$,
then we can define absolute transfer factors $\Delta_{H,G}\left(y,x\right)$. Noting that transfer factors depend on the cocycle $u$ and not only on its cohomology class. We now assume that we fix a cocycle $u$.

Let $\Theta$ be a locally integrable distribution on $G\left(F\right)$
which is invariant under conjugation and $\Theta^{H}$ be a locally
integrable distribution on $H\left(F\right)$ which is stably invariant.
We say $\Theta$ is a transfer of $\Theta^{H}$ if 
\[
D^{G}\left(x\right)^{1/2}\Theta\left(x\right)=\underset{y}{\sum}D^{H}\left(y\right)^{1/2}\Theta^{H}\left(y\right)\Delta_{H,G}\left(y,x\right),
\]
for any $x\in G_{\text{reg}}\left(F\right)/\text{conj}$, where the
sum is over $y\in H_{\text{reg}}\left(F\right)/\text{stconj}$ that
matches $x$.

Let $\left(M,\tilde{M}\right)$ be a twisted group. Assume $M$ is
split and we fix an element $\delta\in\tilde{M}\left(F\right)$, which gives us an automorphism $\theta_\delta$ of $M$.
We assume there exists a pinning of $M$ defined over $F$ and invariant
under $\theta_{\delta}$. We then fix a regular nilpotent orbit of $\mathfrak{m}_{\delta}\left(F\right)$. Let $\left(H,s,^{L}\xi\right)$
be an endoscopic triple of $\left(M,\tilde{M}\right)$ (cf. \cite[2.1]{KS99}).
We then have a twisted endoscopic correspondence between $H_{\text{reg}}\left(F\right)/\text{stconj}$ and $\tilde{M}_{\text{reg}}\left(F\right)/\text{stconj}$. We say
$\tilde{x}\in\tilde{M}_{\text{reg}}\left(F\right)/\text{conj}$ and
$y\in H_{\text{reg}}\left(F\right)/\text{stconj}$ match if they have the same image via $\phi_{\tilde{M}}^{\text{st}}/\text{conj}$. We define a transfer factor $\Delta_{H,\tilde{M}}\left(y,\tilde{x}\right)$ for any $\tilde{x}\in\tilde{M}_{\text{reg}}\left(F\right)/\text{conj}$ and $y\in H_{\text{reg}}\left(F\right)/\text{stconj}$ which is nonzero if and only if $\tilde{x}$ and $y$ correspond. Let $\tilde{\Theta}$ be a locally integrable distribution on $\tilde{M}\left(F\right)$ which is invariant under conjugation and $\Theta^{H}$ be a locally integrable distribution on $H\left(F\right)$ which is stably invariant. Similar to endoscopy transfers, we say $\tilde{\Theta}$ is a transfer of $\Theta^{H}$ if 
\[
D^{\tilde{M}}\left(\tilde{x}\right)^{1/2}\tilde{\Theta}\left(\tilde{x}\right)=\underset{y}{\sum}D^{H}\left(y\right)^{1/2}\Theta^{H}\left(y\right)\Delta_{H,\tilde{M}}\left(y,\tilde{x}\right),
\]
for any $\tilde{x}\in\tilde{M}_{\text{reg}}\left(F\right)/\text{conj}$,
where the sum is over $y\in H_{\text{reg}}\left(F\right)/\text{stconj}$
that corresponds to $\tilde{x}$.

\subsection{Base change from $\text{GL}_{n}$ to $\text{GL}_{n}\times\text{GL}_{n}$
and twisted endoscopy}\label{llc2}

In this subsection, we consider the setting when $K=E$. Let $E/F$
be a quadratic extension of $p$-adic fields. Let $\sigma$ be the
nontrivial element in $\text{Gal}\left(E/F\right)$ and we denote
$\bar{x}=\sigma\left(x\right)$ for any $x\in E$. Let 
\[
G=\text{Res}_{E/F}\text{GL}_{n}\overset{\Delta}{\hookrightarrow}M=\text{Res}_{E/F}\text{GL}_{n}\times\text{GL}_{n}.
\]
Let $\theta_{n}:\left(g,h\right)\mapsto\left(J_{n}{}^{t}\bar{h}^{-1}J_{n}^{-1},J_{n}{}^{t}\bar{g}^{-1}J_{n}^{-1}\right)$
be an involution on $M$, where 
$$
J_{n}=\left(\begin{array}{cccc}
0 & \cdots & 0 & -1\\
\vdots & 0 & 1 & 0\\
0 & \ddots & 0 & \vdots\\
\left(-1\right)^{n} & 0 & \cdots & 0
\end{array}\right).
$$
Let $\tilde{M}=M\theta_{n}$. We have the following 1-1 correspondence between $G_{\text{reg}}\left(F\right)/\text{conj}$ and $\tilde{M}_{\text{reg}}\left(F\right)/\text{conj}$
\[
\begin{array}{ccc}
G_{\text{reg}}\left(F\right)/\text{conj} & \longleftrightarrow & \tilde{M}_{\text{reg}}\left(F\right)/\text{conj}\\
x & \mapsto & \left(x,1\right)\theta_{n}
\end{array},
\]
whose inverse is $(x,y)\theta_n\mapsto xJ_n{}^t\bar{y}^{-1}J_n^{-1}$. In this case, the transfer factor $\Delta_{G,\tilde{M}}\left(y,\tilde{x}\right)$ is equal to $1$ whenever $y$ and $\tilde{x}$ are corresponding.

Let $\pi$ be a tempered irreducible representation of $G\left(F\right)$.
Then $\pi\times{}^{\sigma}\pi^{\vee}$ is a tempered irreducible representation of $M\left(F\right)$. By taking $\widetilde{\pi\times{}^{\sigma}\pi^{\vee}}\left(\theta_{n}\right)\left(v\otimes w\right)=\pi\left(J_{n}\right)w\otimes\pi\left(J_{n}\right)v$,
we extend it to a representation
$\widetilde{\pi\times{}^{\sigma}\pi^{\vee}}$ of $\tilde{M}\left(F\right)$. We recall a twisted endoscopic character identity in this case, which is proved in \cite[Theorem 7.1]{Le25}.
\begin{theorem}\label{basechangeGL}
Let $\pi$ be a tempered irreducible representation of $G\left(F\right)$.
By taking the normalization stated above, we have 
\[
D^{G}\left(y\right)^{1/2}\Theta_{\pi}\left(y\right)=D^{\tilde{M}}\left(\tilde{x}\right)^{1/2}\Theta_{\widetilde{\pi\times{}^{\sigma}\pi^{\vee}}}\left(\tilde{x}\right),
\]
for any $y$ and $\tilde{x}$ match.
\end{theorem}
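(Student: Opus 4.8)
The plan is to compare the two sides of the asserted identity by realizing each as the value at a regular semisimple element of the Harish-Chandra character of an explicit representation, and then to match these characters via the well-understood local base change $\mathrm{BC}_{E/F}$ from $\mathrm{GL}_n$ to $\mathrm{Res}_{E/F}\mathrm{GL}_n$. First I would recall that for the twisted group $\tilde M = M\theta_n$ with $M = \mathrm{Res}_{E/F}\mathrm{GL}_n \times \mathrm{GL}_n$, the $\theta_n$-twisted representation theory of $\tilde M(F)$ is governed, via the isomorphism $\tilde M_{\mathrm{reg}}(F)/\mathrm{conj}\leftrightarrow G_{\mathrm{reg}}(F)/\mathrm{conj}$ recorded in the excerpt, by the ordinary representation theory of $G(F) = \mathrm{Res}_{E/F}\mathrm{GL}_n(F) = \mathrm{GL}_n(E)$. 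Concretely, for $\pi$ a tempered irreducible representation of $\mathrm{GL}_n(E)$, the representation $\pi\times{}^{\sigma}\pi^{\vee}$ of $M(F)$ is $\theta_n$-stable precisely because $\theta_n$ swaps the two factors after applying $g\mapsto J_n{}^t\bar g^{-1}J_n^{-1}$, which on $\mathrm{GL}_n(E)$ is the composite of the Chevalley/MVW involution with Galois conjugation; since $\mathrm{GL}_n$ is its own MVW dual, $\pi\mapsto {}^{\sigma}\pi^{\vee}$ takes $\pi$ to itself, so the extension $\widetilde{\pi\times{}^{\sigma}\pi^{\vee}}(\theta_n)(v\otimes w)=\pi(J_n)w\otimes\pi(J_n)v$ is well-defined and intertwines correctly.

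The key computation is the twisted character. I would evaluate $\Theta_{\widetilde{\pi\times{}^{\sigma}\pi^{\vee}}}$ at $\tilde x = (x,1)\theta_n$ for $x\in G_{\mathrm{reg}}(F)$. Using the explicit formula for the extension, the twisted trace of $\widetilde{\pi\times{}^{\sigma}\pi^{\vee}}\big((g,h)\theta_n\big)$ collapses: writing the operator on $V_\pi\otimes V_\pi$ as $(v\otimes w)\mapsto \pi(g J_n){}^{\sigma}\pi^{\vee}(h)\pi(J_n)w \otimes \cdots$, taking the trace over the tensor product identifies it with a single trace on $V_\pi$, namely $\mathrm{Trace}\,\pi(g\, J_n \cdot {}^t\overline{\vphantom{h}(\cdot)}\,)$ evaluated appropriately, which under the correspondence $(x,1)\theta_n\leftrightarrow x$ becomes exactly $\Theta_\pi(x)$ up to the discriminant bookkeeping. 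Thus the claimed identity $D^{\tilde M}(\tilde x)^{1/2}\Theta_{\widetilde{\pi\times{}^{\sigma}\pi^{\vee}}}(\tilde x) = D^{G}(y)^{1/2}\Theta_\pi(y)$ should follow once one checks that (i) the two discriminants $D^{\tilde M}(\tilde x)$ and $D^{G}(y)$ agree under the matching — this is a direct Jacobian computation comparing $\det(1-\theta_{\tilde x})$ on $\mathfrak m/\mathfrak m_{\tilde x}$ with $\det(1-\mathrm{Ad}(y))$ on $\mathfrak g/\mathfrak g_y$, and the diagonal embedding $G\hookrightarrow M$ makes this transparent — and (ii) the normalization of the transfer factor $\Delta_{G,\tilde M}$ is indeed $1$, which is already stated in the excerpt in Section \ref{llc2}.

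Since this is precisely \cite[Theorem 7.1]{Le25}, the cleanest route is simply to cite that result; the proof there presumably proceeds by the outline above, combining the twisted Weyl integration formula of Section \ref{sec2.1} with the explicit intertwining operator. The main obstacle, were one to redo it, is not the character bookkeeping but rather ensuring that the intertwining operator $\pi(J_n)$ used to extend $\pi\times{}^{\sigma}\pi^{\vee}$ to $\tilde M(F)$ is normalized so that the resulting twisted character has no spurious scalar: one must verify $\widetilde{\pi\times{}^{\sigma}\pi^{\vee}}(\theta_n)^2 = \mathrm{id}$, i.e. that $\pi(J_n)\pi(J_n) = \mathrm{id}$ after accounting for $J_n^2 = (-1)^{n-1}$, which pins down the extension uniquely among the two possible ones and forces agreement with the Whittaker-normalized base change character on the $\mathrm{GL}_n$ side. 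Everything else — local integrability of $\theta_{\tilde\tau}$, which we invoke from \cite[Theorem 2]{Clo87} as recorded in Section \ref{sec2.8}, and the equality of regular semisimple orbits — is formal.
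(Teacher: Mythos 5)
The paper provides no proof of Theorem~\ref{basechangeGL}; it is quoted verbatim from \cite[Theorem 7.1]{Le25}, which you correctly identify as the intended route, so your proposal and the paper agree. Your accompanying sketch of how that proof should go --- collapsing the twisted trace on $V_\pi\otimes V_\pi$ to a single trace on $V_\pi$ via $\mathrm{Tr}\bigl(\mathrm{flip}\circ(A\otimes B)\bigr)=\mathrm{Tr}(BA)$, matching $D^{\tilde M}$ with $D^G$ under the diagonal embedding, and invoking $\Delta_{G,\tilde M}\equiv 1$ --- is the standard mechanism, and the caution you raise about normalizing the extension by $\pi(J_n)$ is exactly where the remaining bookkeeping lives: carrying out the swap-trace computation one meets $J_n^2=(-1)^{n-1}I_n$ and hence a scalar $\omega_\pi\bigl((-1)^{n-1}\bigr)$, and verifying that this is absorbed by the stated normalization (rather than spoiling the identity) is precisely the content delegated to the proof in \cite{Le25}.
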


\subsection{Space of parameters in unitary groups}\label{llc3}
Following \cite{BP15}, we give a parametrization of conjugacy classes in unitary groups given by skew-hermitian spaces relative to a quadratic extension $E/F$. We fix an algebraic closure $\bar{F}$ of $F$ containing $E$ and consider finite extensions of $F$ inside $\bar{F}$. Let $\underline{\Xi}$ be the set containing $\xi=\left(I,\left(F_{\pm i}\right)_{i\in I},\left(F_{i}\right)_{i\in I},\left(y_{i}\right)_{i\in I}\right)$,
where 
\begin{itemize}
\item $I$ is a finite set;
\item For any $i\in I$, $F_{\pm i}$ is a finite extension of $F$ and
$F_{i}=F_{\pm i}\otimes_{F}E$. We have $\tau_{i}=\text{Id}\otimes\tau_{E/F}$
is the unique nontrivial $F_{\pm i}$-automorphism of $F_{i}$;
\item For any $i\in I$, $y_{i}$ is an element of $F_{i}^{\times}$ such
that $y_{i}\tau_{i}\left(y_{i}\right)=1$.
\end{itemize}
For such $\xi$, we denote by $I^{*}$ the subset containing $i\in I$
such that $F_{i}$ is a field and we set 
\[
d_{\xi}=\underset{i\in I}{\sum}\left[F_{\pm i}:F\right]=\underset{i\in I}{\sum}\left[F_{i}:E\right].
\]
For $d\in\mathbb{N}$, let $\underline{\Xi}_{d}$ be the subset containing
$\xi\in\underline{\Xi}$ such that $d_{\xi}=d$ and $\underline{\Xi}^{*}$
be the subset containing $\xi\in\underline{\Xi}$ such that $I^{*}=I$
and $\underline{\Xi}_{d}^{*}=\underline{\Xi}_{d}\cap\underline{\Xi}^{*}$.
We define an isomorphism between two elements $\xi=\left(I,\left(F_{\pm i}\right)_{i\in I},\left(F_{i}\right)_{i\in I},\left(y_{i}\right)_{i\in I}\right)$
and $\xi^{\prime}=\left(I^{\prime},\left(F_{\pm i^{\prime}}^{\prime}\right)_{i^{\prime}\in I^{\prime}},\left(F_{i^{\prime}}^{\prime}\right)_{i^{\prime}\in I^{\prime}},\left(y_{i^{\prime}}^{\prime}\right)_{i^{\prime}\in I^{\prime}}\right)$
of $\underline{\Xi}$ as a triple $\left(\iota,\left(\iota_{\pm i}\right)_{i\in I},\left(\iota_{i}\right)_{i\in I}\right)$,
where 
\begin{itemize}
\item $\iota:I\rightarrow I^{\prime}$ is a bijection;
\item For any $i\in I$, $\iota_{\pm i}:F_{\pm i}\rightarrow F_{\pm\iota\left(i\right)}^{\prime}$
is a $F$-isomorphism and $\iota_{i}=\iota_{\pm i}\otimes\text{Id}_{E}:F_{i}\rightarrow F_{\iota\left(i\right)}^{\prime}$
is the deduced $E$-isomorphism;
\item Moreover, this triple must satisfy 
\[
\iota_{i}\left(y_{i}\right)=y_{\iota\left(i\right)}^{\prime},\ \ \text{for all }i\in I.
\]
\end{itemize}
An element $\xi\in\underline{\Xi}$ is said to be regular if the identity
is its only automorphism. We denote by $\underline{\Xi}_{\text{reg}}$
the subset of $\underline{\Xi}$ containing regular elements. We define
$\Xi$ (resp. $\Xi_{\text{reg}}$ and $\Xi_{d}$ and $\Xi^{*}$) to
be sets of isomorphism classes of $\underline{\Xi}$ (resp. $\underline{\Xi}_{\text{reg}}$
and $\underline{\Xi}_{d}$ and $\underline{\Xi}^{*}$). We denote
$\Xi_{\text{reg},d}=\Xi_{d}\cap\Xi_{\text{reg}}$, $\Xi_{d}^{*}=\Xi_{d}\cap\Xi^{*}$,
$\Xi_{\text{reg},d}^{*}=\Xi_{\text{reg},d}\cap\Xi^{*}$. We always
identify an isomorphism class with an element representing it.

For $\xi=\left(I,\left(F_{\pm i}\right)_{i\in I},\left(F_{i}\right)_{i\in I},\left(y_{i}\right)_{i\in I}\right)\in\Xi_{\text{reg}}$,
we set 
\[
T_{\xi}=\underset{i\in I}{\prod}\text{Ker}\left(N_{F_{i}/F_{\pm i}}\right).
\]
It is a torus defined over $F$. We can equip to the space $\Xi_{\text{reg}}$
an analytic variety structure and a measure characterized as follows:
for any $\xi=\left(I,\left(F_{\pm i}\right)_{i\in I},\left(F_{i}\right)_{i\in I},\left(y_{i}\right)_{i\in I}\right)\in\Xi_{\text{reg}}$,
there exists an open neighborhood $\omega$ of $1$ in $T_{\xi}\left(F\right)$
such that the map 
\[
\left(t_{i}\right)_{i\in I}\mapsto\left(I,\left(F_{\pm i}\right)_{i\in I},\left(F_{i}\right)_{i\in I},\left(y_{i}t_{i}\right)_{i\in I}\right):\omega\rightarrow\Xi_{\text{reg}}
\]
induces an isomorphism preserving the measure of $\omega$ on an open
set of $\Xi_{\text{reg}}$. We denote by $\xi_{+}=\left(I_{+},\left(F_{\pm i}\right)_{i\in I_{+}},\left(F_{i}\right)_{i\in I_{+}},\left(y_{i}\right)_{i\in I_{+}}\right)$
and $\xi_{-}=\left(I_{-},\left(F_{\pm i}\right)_{i\in I_{-}},\left(F_{i}\right)_{i\in I_{-}},\left(y_{i}\right)_{i\in I_{-}}\right)$
in $\Xi$. We set $\xi_{+}\sqcup\xi_{-}\in\Xi$ to be $\left(I,\left(F_{\pm i}\right)_{i\in I},\left(F_{i}\right)_{i\in I},\left(y_{i}\right)_{i\in I}\right)$,
where $I=I_{+}\sqcup I_{-}$. If $\xi_{+}\sqcup\xi_{-}$ is regular,
the same to $\xi_{+}$ and $\xi_{-}$. Moreover, the map $\left(\xi_{+},\xi_{-}\right)\in\Xi^{2}\mapsto\xi_{+}\sqcup\xi_{-}\in\Xi$
locally preserves measures of neighborhoods of elements in $\Xi_{\text{reg}}$.

We consider $\xi=\left(I,\left(F_{\pm i}\right)_{i\in I},\left(F_{i}\right)_{i\in I},\left(y_{i}\right)_{i\in I}\right)\in\Xi_{\text{reg}}$.
We associate to it a finite abelian group 
\[
C\left(\xi\right)=\prod_{i\in I}F_{\pm i}^{\times}/N_{F_{i}/F_{\pm i}}\left(F_{i}^{\times}\right).
\]
The above group can be naturally identified with $\left\{ \pm1\right\} ^{I^{*}}$.
Let $C\left(\xi\right)^{1}$ be its subgroup containing elements whose
product of coordinates is equal to 1 and we set $C\left(\xi\right)^{-1}=C\left(\xi\right)\backslash C\left(\xi\right)^{1}$.
For $i\in I$, we denote by $\Gamma\left(y_{i}\right)$ the set containing
$\gamma_{i}\in F_{i}^{\times}$ such that $\gamma_{i}\tau_{i}\left(\gamma_{i}\right)^{-1}=y_{i}$.
We define 
\[
\Gamma\left(\xi\right)=\underset{i\in I}{\prod}\Gamma\left(y_{i}\right)/N_{F_{i}/F_{\pm i}}\left(F_{i}^{\times}\right).
\]
This is a homogeneous space under $C\left(\xi\right)$.

\subsection{Parametrization of conjugacy classes in unitary groups and twisted groups}\label{llc4}

Let $\left(V,h\right)$ be a skew-hermitian space of dimension $d$
and $G$ be its isometric group. We fix a nonzero trace-$0$ element
$\delta$ in $E$. Let $\xi=\left(I,\left(F_{\pm i}\right)_{i\in I},\left(F_{i}\right)_{i\in I},\left(y_{i}\right)_{i\in I}\right)\in\underline{\Xi}_{\text{reg},d}$
and $c=\left(c_{i}\right)_{i\in I}\in\underset{i\in I}{\prod}F_{\pm i}^{\times}$.
We define a skew-hermitian space $\left(V_{\xi,c},h_{\xi,c}\right)$
as follows:
\begin{itemize}
\item $V_{\xi,c}=\underset{i\in I}{\oplus}F_{i}$ ;
\item $h_{\xi,c}\left(\underset{i\in I}{\sum}x_{i},\underset{i\in I}{\sum}x_{i}^{\prime}\right)=\underset{i\in I}{\sum}\text{ Trace}_{F_{i}/F_{\pm i}}\left(\delta c_{i}x_{i}^{\prime}\tau_{i}\left(x_{i}\right)\right).$
\end{itemize}
The isomorphism class of $\left(V_{\xi,c},h_{\xi,c}\right)$ only
depends on images of $\xi$ and $c$ in $\Xi_{\text{reg},d}$ and
$C\left(\xi\right)$ respectively. Moreover, there exists a unique
$\epsilon\in\left\{ \pm1\right\} $ such that $\left(V_{\xi,c},h_{\xi,c}\right)$
is isomorphic to $\left(V,h\right)$ if and only if $c\in C\left(\xi\right)^{\epsilon}$.
For such $c$, we fix an isomorphism $\left(V,h\right)\simeq\left(V_{\xi,c},h_{\xi,c}\right)$
and we denote by $x\left(\xi,c\right)$ the element in $G_{\text{reg}}\left(F\right)$
which via this isomorphism acts on $V_{\xi,c}$ by multiplication
by $y_{i}\in F_{i}\subset V_{\xi,c}$. The conjugacy class of $x\left(\xi,c\right)$
only depends on images of $\xi$ and $c$ in $\Xi_{\text{reg},d}$
and $C\left(\xi\right)$ respectively. Therefore, conjugacy classes
in $G_{\text{reg}}\left(F\right)$ only depend on a unique $\xi\in\Xi_{\text{reg},d}$
and a unique $c\in C\left(\xi\right)$. Moreover, the stable conjugacy
class of $x\left(\xi,c\right)$ only depends on $\xi\in\Xi_{\text{reg},d}$.
We have the following commutative diagram 
$$
\begin{tikzcd}
G_{reg}(F)/conj \arrow[rr, "\phi_G^{st}/conj"] \arrow[rd, "p_G"] &               & G_{reg}(F)/stconj \arrow[ld, "p_G^{st}"'] \\
                                                                 & {\Xi_{reg,d}} &                                          
\end{tikzcd}
$$
where $p_{G}^{\text{st}}$ is a $F$-analytic isomorphism preserving
measures of $G_{\text{reg}}\left(F\right)/\text{stconj}$ on an open
set in $\Xi_{\text{reg},d}$ and $p_{G}$ is a covering of the same
open set. The fiber at a point $\xi$ of this covering can be identified
with a class $C\left(\xi\right)^{\epsilon}\subset C\left(\xi\right)$.
A conjugacy class (resp. stable conjugacy class) is anisotropic if
and only if its image via $p_{G}$ (resp. $p_{G}^{\text{st}}$) is
in $\Xi_{\text{reg},d}^{*}$.

Let $U$ be a vector space over $E$ of dimension $n$, and $M=\text{Res}_{E/F}\text{GL}\left(U\right)$
and $\tilde{M}$ is an algebraic variety defined over $F$ of nondegenerate
sesquilinear forms on $U$ which are linear on the second variable.
We have left and right actions of $M$ on $\tilde{M}$ as follows
\[
\left(m\tilde{m}m^{\prime}\right)\left(u,u^{\prime}\right)=\tilde{m}\left(m^{-1}u,m^{\prime}u^{\prime}\right),
\]
for $\tilde{m}\in\tilde{M}$ and $m,m^{\prime}\in M$. A pair $\left(M,\tilde{M}\right)$
is a twisted group. Let 
\[
\xi=\left(I,\left(F_{\pm i}\right)_{i\in I},\left(F_{i}\right)_{i\in I},\left(y_{i}\right)_{i\in I}\right)\in\underline{\Xi}_{\text{reg},n}
\]
and fix an $E$-linear isomorphism 
\[
M\simeq\underset{i\in I}{\bigoplus}F_{i}.
\]
For $\gamma=\left(\gamma_{i}\right)_{i\in I}\in\prod_{i\in I}\Gamma\left(y_{i}\right)$,
we set $\tilde{x}\left(\xi,\gamma\right)\in\tilde{M}_{\text{reg}}\left(F\right)$
to be 
\[
\tilde{x}\left(\xi,\gamma\right)\left(\underset{i\in I}{\sum}u_{i},\underset{i\in I}{\sum}u_{i}^{\prime}\right)=\underset{i\in I}{\sum}\text{ Trace}_{F_{i}/F_{\pm i}}\left(\delta\gamma_{i}\tau_{i}\left(u_{i}\right)u_{i}^{\prime}\right).
\]
The conjugacy class of $\tilde{x}\left(\xi,\gamma\right)$ only depends
on images of $\xi$ and $\gamma$ in $\Xi_{\text{reg},n}$ and $\Gamma\left(\xi\right)$
respectively. Hence, conjugacy classes in $\tilde{M}_{\text{reg}}\left(F\right)$
only depend on a unique $\xi\in\Xi_{\text{reg},n}$ and a unique $\gamma\in\Gamma\left(\xi\right)$.
Moreover, the stable conjugacy class of $\tilde{x}\left(\xi,\gamma\right)$
only depends on $\xi\in\Xi_{\text{reg},n}$. We have the following
commutative diagram
$$
\begin{tikzcd}
\tilde{M}_{reg}(F)/conj \arrow[rr, "\phi_{\tilde{M}}^{st}/conj"] \arrow[rd, "p_{\tilde{M}}"] &               & \tilde{M}_{reg}(F)/stconj \arrow[ld, "p_{\tilde{M}}^{st}"'] \\
                                                                 & {\Xi_{reg,n}} &                                          
\end{tikzcd}
$$
where $p_{\tilde{M}}^{\text{st}}$ is a $F$-analytic isomorphism
of $\tilde{M}_{\text{reg}}\left(F\right)/\text{stconj}$ on $\Xi_{\text{reg},n}$
and $p_{\tilde{M}}$ is a covering of the same open set. The fiber
at a point $\xi$ of this covering can be identified with $\Gamma\left(\xi\right)$.
Moreover, the map $p_{\tilde{M}}^{\text{st}}$ does not preserve measures
and its Jacobian is $\left|2\right|_{F}^{n}$. A conjugacy class (resp.
stable conjugacy class) is anisotropic if and only if its image via
$p_{\tilde{M}}$ (resp. $p_{\tilde{M}}^{\text{st}}$) is in $\Xi_{\text{reg},n}^{*}$.

\subsection{Calculation of transfer factors}\label{llc5}

Let $\xi=\left(I,\left(F_{\pm i}\right)_{i\in I},\left(F_{i}\right)_{i\in I},\left(y_{i}\right)_{i\in I}\right)\in\Xi_{\text{reg}}$
. For any $i\in I$, we denote by $\Phi_{i}$ the set containing nonzero
homomorphisms of $E$-algebras from $F_{i}$ to $\bar{F}$. We set
a polynomial $P_{\xi}\in E\left[T\right]$ defined by 
\[
P_{\xi}\left(T\right)=\underset{i\in I}{\prod}\underset{\phi\in\Phi_{i}}{\prod}\left(T-\phi\left(y_{i}\right)\right).
\]
Denote
\[
\Delta\left(\xi\right)=\left|P_{\xi}\left(1\right)\right|_{E}.
\]
Let $\left(V_{\xi},h_{\xi}\right)$ be a skew-hermitian space of dimension
$d_{\xi}$, with the associated unitary group $G_{\xi}$ and $t\in G_{\xi,\text{reg}}\left(F\right)$
whose stable conjugacy class is parametrized by $\xi$. We define
\[
\Delta\left(t\right)=\left|\det\left(1-t\right)_{\mid V_{\xi}}\right|_{E}.
\]
We have $\Delta\left(t\right)=\Delta\left(\xi\right)$. Moreover, since $t$ is a semisimple element in $G\left(F\right)$, we set $D^{d}\left(\xi\right)=D^{G}\left(t\right)$.

Let $\xi_{+}=\left(I_{+},\left(F_{\pm i}\right)_{i\in I_{+}},\left(F_{i}\right)_{i\in I_{+}},\left(y_{i}\right)_{i\in I_{+}}\right)$
and $\xi_{-}=\left(I_{-},\left(F_{\pm i}\right)_{i\in I_{-}},\left(F_{i}\right)_{i\in I_{-}},\left(y_{i}\right)_{i\in I_{-}}\right)$
in $\Xi_{\text{reg}}$ and $\mu_{+},\mu_{-}$ be continuous characters
of $E^{\times}$. We define $\xi=\xi_{+}\sqcup\xi_{-}$, $I=I_{+}\sqcup I_{-}$,
$d_{-}=d_{\xi_{-}}$, $d_{+}=d_{\xi_{+}}$ and $d=d_{\xi}$.

For $c=\left(c_{i}\right)_{i\in I}\in C\left(\xi\right)$ and $\nu\in F^{\times}$,
we set 
\[
\Delta_{\mu_{+},\mu_{-},\nu}\left(\xi_{+},\xi_{-},c\right)=\mu_{-}\left(P_{\xi_{-}}\left(-1\right)\right)\mu_{+}\left(P_{\xi_{+}}\left(-1\right)\right)\underset{i\in I_{-}}{\prod}\omega_{F_{i}/F_{\pm i}}\left(\nu C_{i}\right),
\]
where $\omega_{F_{i}/F_{\pm i}}$ is the quadratic character of $F_{i}/F_{\pm i}$
and 
\[
C_{i}=\begin{cases}
-\delta^{-d-1}c_{i}P_{\xi}^{\prime}\left(y_{i}\right)P_{\xi}\left(-1\right)^{-1}y_{i}^{1-d/2} & \text{if d is even,}\\
-\delta^{-d-1}c_{i}P_{\xi}^{\prime}\left(y_{i}\right)P_{\xi}\left(-1\right)^{-1}y_{i}^{(1-d)/2}\left(1+y_{i}\right) & \text{if d is odd.}
\end{cases}
\]
We will see that these functions $\Delta_{\mu_{+},\mu_{-},\nu}$ correspond
to transfer factors of unitary groups.

For $\gamma=\left(\gamma_{i}\right)_{i\in I}\in\Gamma\left(\xi\right)$,
we set 
\[
\Delta_{\mu_{+},\mu_{-}}\left(\xi_{+},\xi_{-},\gamma\right)=\mu_{-}\left(P_{\xi_{-}}\left(-1\right)\right)\mu_{+}\left(P_{\xi_{+}}\left(-1\right)\right)\underset{i\in I_{-}}{\prod}\omega_{F_{i}/F_{\pm i}}\left(C_{i}\right),
\]
where 
\[
C_{i}=\begin{cases}
-\delta^{-d-1}\gamma_{i}^{-1}P_{\xi}^{\prime}\left(y_{i}\right)P_{\xi}\left(-1\right)^{-1}y_{i}^{1-d/2}\left(1+y_{i}\right) & \text{if d is even,}\\
-\delta^{-d-1}\gamma_{i}^{-1}P_{\xi}^{\prime}\left(y_{i}\right)P_{\xi}\left(-1\right)^{-1}y_{i}^{(3-d)/2} & \text{if d is odd.}
\end{cases}
\]
Similarly, these functions $\Delta_{\mu_{+},\mu_{-}}$ correspond
to transfer factors for twisted endoscopy.

\subsection{Endoscopy for unitary groups}\label{llc6}

Let $\left(V,h\right)$, $\left(V_{+},h_{+}\right)$ and $\left(V_{-},h_{-}\right)$
be skew-hermitian spaces over $F$ of dimensions $d$, $d_{+}$ and
$d_{-}$ with corresponding unitary groups $G$, $G_{+}$ and $G_{-}$
respectively. We assume 
\begin{itemize}
\item $d=d_{+}+d_{-}$ ;
\item $G_{+}$ and $G_{-}$ are quasi-split.
\end{itemize}
We say $G_{+}\times G_{-}$ is an endoscopic group of $G$. There are
some choices that need to be fixed in order to define an endoscopic
datum as well as transfer factors. We fix continuous characters $\mu_{+}$ and $\mu_{-}$ of $E^{\times}$ whose restrictions to $F^{\times}$ coincide with $\omega_{E/F}^{d_{-}}$ and $\omega_{E/F}^{d_{+}}$. We still need to fix some additional data so that transfer factors
are well defined. They include a quasi-split inner form $\underline{G}$ of $G$, an inner torsor $\psi_{G}:G\rightarrow\underline{G}$, an $1$-cocycle $u:\text{Gal}\left(\bar{F}/F\right)\rightarrow\underline{G}$ and a regular nilpotent orbit of $\underline{\mathfrak{g}}\left(F\right)$. We fix an element $\nu_{0}\in F^{\times}$.

We now define a matching between regular semisimple stable conjugacy
classes of $G\left(F\right)$ and $G$-regular semisimple stable conjugacy
classes of $G_{+}\left(F\right)\times G_{-}\left(F\right)$. Via maps
$p_{G}^{\text{st}},\,p_{G_{+}}^{\text{st}}\times p_{G_{-}}^{\text{st}}$,
a semisimple stable conjugacy class $\left(\xi_{+},\xi_{-}\right)\in\Xi_{\text{reg},d_{+}}\times\Xi_{\text{reg},d_{-}}$
is $G$-regular if and only if $\xi_{+}\sqcup\xi_{-}\in\Xi_{\text{reg},d}$
and a stable conjugacy class $\xi\in\text{Im}\left(p_{G}^{\text{st}}\right)$
corresponds to it if and only if $\xi=\xi_{+}\sqcup\xi_{-}$. Let
$c\in C\left(\xi\right)$ parametrize the conjugacy class of $y$. Then 
\[
\Delta_{G_{+}\times G_{-},G}\left(\left(y_{+},y_{-}\right),y\right)=\Delta_{\mu_{+},\mu_{-},\nu_{0}}\left(\xi_{+},\xi_{-},c\right).
\]

\subsection{Base change of unitary groups and twisted endoscopy}\label{llc7}

Let $U$ be a $d$-dimensional vector space over $E$ and $\left(V_{+},h_{+}\right)$,
$\left(V_{-},h_{-}\right)$ are skew-hermitian spaces of dimensions $d_{+}$ and $d_{-}$. Let $\left(M,\tilde{M}\right)$ be the twisted group corresponding to $U$ and $G_{+}$, $G_{-}$ be unitary groups with respect to $\left(V_{+},h_{+}\right)$ and $\left(V_{-},h_{-}\right)$. We assume $G_{+}$ and $G_{-}$ are quasi-split and $d=d_{+}+d_{-}$. Then $G_{+}\times G_{-}$ is a twisted endoscopic group of $\left(M,\tilde{M}\right)$. We fix continuous characters $\mu_{+}$ and $\mu_{-}$ of $E^{\times}$, whose restrictions to $F^{\times}$ coincide with $\omega_{E/F}^{d_{-}}$
and $\omega_{E/F}^{d_{+}+1}$.

We define a twisted endoscopic relation in this setting. It depends
on choices of a base point of $\tilde{M}\left(F\right)$. We fix a
base $\left(u_{j}\right)_{j=\overline{1,d}}$ of $U$ and a base-point
element $\theta_{d}\in\tilde{M}\left(F\right)$ satisfying 
\[
\theta_{d}\left(u_{j},u_{k}\right)=\left(-1\right)^{k}\delta^{d+1}\delta_{j,d+1-k}.
\]
We now define an endoscopic correspondence using spaces of parameters:
a stable conjugacy class of $G_{+}\left(F\right)\times G_{-}\left(F\right)$
parametrized by $\left(\xi_{+},\xi_{-}\right)\in\Xi_{\text{reg},d_{+}}\times\Xi_{\text{reg},d_{-}}$
is $\tilde{M}$-regular if and only if $\xi_{+}\sqcup\xi_{-}\in\Xi_{\text{reg},d}$
and it corresponds to a stable conjugacy class $\xi\in\Xi_{\text{reg},d}$
of $\tilde{M}\left(F\right)$ if and only if $\xi=\xi_{+}\sqcup\xi_{-}$. 

Let $\tilde{x}\in\tilde{M}_{\text{reg}}\left(F\right)$ and $y=\left(y_{+},y_{-}\right)\in G_{+,\text{reg}}\left(F\right)\times G_{-,\text{reg}}\left(F\right)$, whose stable conjugacy classes match. The conjugacy class of $\tilde{x}$ is parametrized by $\xi\in\Xi_{\text{reg},d}$ and
$\gamma\in\Gamma\left(\xi\right)$, while the stable conjugacy class
of $y$ is parametrized by $\left(\xi_{+},\xi_{-}\right)\in\Xi_{\text{reg},d_{+}}\times\Xi_{\text{reg},d_{-}}$.
Then
\[
\Delta\left(\tilde{x},y\right)=\Delta_{\mu_{+},\mu_{-},\nu_{0}}\left(\xi_{+},\xi_{-},\gamma\right).
\]

\subsection{L-parameters and conjugate-dual representations of the Weil-Deligne
group}\label{llc8}

We denote the Weil-Deligne group of $E$ by $WD_{E}=W_{E}\times\text{SL}_{2}\left(\mathbb{C}\right)$,
where $W_{E}$ is the Weil group of $E$. Let $n\geq1$. Then $n$-dimensional Langlands parameters of the following form 
\[
\phi:WD_{E}\rightarrow\text{GL}_{n}\left(\mathbb{C}\right)
\]
satisfying the following conditions
\begin{itemize}
\item $\phi$ is semisimple.
\item The restriction of $\phi$ to $\text{SL}_{2}\left(\mathbb{C}\right)$
is algebraic.
\end{itemize}
Moreover, $\phi$ is tempered if the image of $W_{E}$ via $\phi$
is relatively compact. Two Langlands parameters $\phi$ and $\phi^{\prime}$
are said to be conjugate, which we denote by $\phi\simeq\phi^{\prime}$, if they are of the same dimension $n$ and they are conjugated by an element in $\text{GL}_{n}\left(\mathbb{C}\right)$. We denote by
$\Phi_{\text{\text{temp}}}\left(\text{GL}_{n}\right)$ the set of tempered Langlands parameters of dimension $n$ up to conjugation. 

The local Langlands correspondence of general linear groups, which is obtained by Harris-Taylor and Henniart (see \cite{Hen00,HT02}), associates an Langlands parameter $\phi\in\Phi_{\text{\text{temp}}}\left(\text{GL}_{n}\right)$ with an irreducible tempered representation $\pi\left(\phi\right)$ of $\text{GL}_{n}\left(L\right)$. We denote by $\Phi_{\text{\text{temp, irr}}}\left(\text{GL}_{n}\right)$
the subset containing irreducible $\phi\in\Phi_{\text{\text{temp}}}\left(\text{GL}_{n}\right)$. We fix $t\in W_{F}\backslash W_{E}$. We extend the action of $t$ on $W_{E}$ via conjugation to $WD_{E}$ by letting it act on $\text{SL}_{2}\left(\mathbb{C}\right)$ trivially. For any $\phi\in\Phi_{\text{\text{temp}}}\left(\text{GL}_{n}\right)$, we define a new parameter $\phi^{\theta}\in\Phi_{\text{\text{temp}}}\left(\text{GL}_{n}\right)$ by taking $\phi^{\theta}\left(\tau\right)=\,^{t}\phi\left(t\tau t^{-1}\right)^{-1}$, for all $\tau\in WD_{E}$. Then $\phi^{\theta}$ does not depend (up to conjugation) choices of $t$. A parameter $\phi\in\Phi_{\text{\text{temp}}}\left(\text{GL}_{n}\right)$ is said to be conjugate-dual if $\phi\simeq\phi^{\theta}$. This is equivalent to the existence of a non-degenerate bilinear form $B:\mathbb{C}^{n}\times\mathbb{C}^{n}\rightarrow\mathbb{C}$
satisfying 
\[
B\left(\phi\left(\tau\right)w,\phi\left(t\tau t^{-1}\right)w^{\prime}\right)=B\left(w,w^{\prime}\right),\text{ for all }w,w^{\prime}\in\mathbb{C}^{n}\text{ and }\tau\in WD_{E}.
\]
We denote by $\Phi_{\text{\text{temp}}}^{\theta}\left(\text{GL}_{n}\right)$
the set of conjugate-dual tempered parameters. Let $\epsilon\in\left\{ \pm\right\} $
be a sign. An element $\phi\in\Phi_{\text{\text{temp}}}^{\theta}\left(\text{GL}_{n}\right)$
is conjugate-dual of sign $\epsilon$ if there exists a non-degenerate
bilinear form $B:\mathbb{C}^{n}\times\mathbb{C}^{n}\rightarrow\mathbb{C}$
satisfying the above condition and 
\[
B\left(w,\phi\left(t^{2}\right)w^{\prime}\right)=\epsilon B\left(w^{\prime},w\right),\text{ for all }w,w^{\prime}\in\mathbb{C}^{n}.
\]
This definition does not depend on choices of $t$. A non-degenerate
bilinear form $B$ satisfying the above conditions is called $\epsilon$-conjugate-dual.
We fix such a form $B$ and denote by $\text{Aut}\left(\phi,B\right)$
the group containing $g\in\text{GL}_{n}\left(\mathbb{C}\right)$ preserving
$B$ and commuting with the image of $\phi$. Up to inner automorphisms,
this group does not depend on choices of $t$ nor $B$ and it is a
complex reductive group not connected in general). We set $\mathcal{S}_{\phi}=\text{Aut}\left(\phi,B\right)/\text{Aut}\left(\phi,B\right)^{0}$,
where $\text{Aut}\left(\phi,B\right)^{0}$ is the identity component.
This group is abelian and hence well-defined up to isomorphism. The
notation $\mathcal{S}_{\phi}$ is somewhat imprecise: it does not
show whether we consider $\phi$ as a conjugate-dual parameter of
sign + or $-$ (and a certain parameter can be considered in both
signs). However, in what follows, the context should be clear without
any possible confusions. We denote by $\Phi_{\text{\text{temp}}}^{+}\left(\text{GL}_{n}\right)$
(resp. $\Phi_{\text{\text{temp}}}^{-}\left(\text{GL}_{n}\right)$)
the subset of $\Phi_{\text{\text{temp}}}^{\theta}\left(\text{GL}_{n}\right)$
containing conjugate-dual parameters of sign $+$ (resp. of sign $-$).
We set $\Phi_{\text{\text{temp, irr}}}^{\epsilon}\left(\text{GL}_{n}\right)=\Phi_{\text{\text{temp, irr}}}\left(\text{GL}_{n}\right)\cap\Phi_{\text{\text{temp}}}^{\epsilon}\left(\text{GL}_{n}\right)$,
where $\epsilon\in\left\{ \pm,\theta\right\} $. Then $\Phi_{\text{\text{temp, irr}}}^{\theta}\left(\text{GL}_{n}\right)=\Phi_{\text{\text{temp, irr}}}^{+}\left(\text{GL}_{n}\right)\sqcup\Phi_{\text{\text{temp, irr}}}^{-}\left(\text{GL}_{n}\right)$.
Let 
\[
\Phi_{\text{\text{temp}}}^{\epsilon}=\underset{d\geq0}{\bigsqcup}\Phi_{\text{\text{temp}}}^{\epsilon}\left(\text{GL}_{d}\right),\text{ for }\epsilon\in\left\{ \pm,\theta,\emptyset\right\} .
\]
We have the following properties
\begin{itemize}
\item If $\phi_{1}\in\Phi_{\text{\text{temp}}}^{\epsilon_{1}}$, $\phi_{2}\in\Phi_{\text{\text{temp}}}^{\epsilon_{2}}$,
$\epsilon_{1},\epsilon_{2}\in\left\{ \pm1\right\} $, then $\phi_{1}\otimes\phi_{2}\in\Phi_{\text{\text{temp}}}^{\epsilon_{1}\epsilon_{2}}$
;
\item If $\phi\in\Phi_{\text{\text{temp}}}^{\epsilon}\left(\text{GL}_{n}\right)$,
$\epsilon\in\left\{ \pm1\right\} $, then $\det\phi\in\Phi_{\text{\text{temp}}}^{\epsilon^{n}}$.
\end{itemize}
Let $\phi\in\Phi_{\text{temp}}^{\theta}$. Then there is a unique
decomposition up to choices of index 
\[
\phi\simeq\underset{j\in J}{\bigoplus}l_{j}\phi_{j}\oplus\underset{i\in I}{\bigoplus}l_{i}\left(\phi_{i}\oplus\phi_{i}^{\theta}\right),
\]
where
\begin{itemize}
\item $I$ and $J$ are disjoint finite sets.
\item $l_{i},l_{j}$ are nonzero natural numbers, for all $i\in I$ and
$j\in J$.
\item For all $j\in J$, we have $\phi_{j}\in\Phi_{\text{\text{temp, irr}}}^{\theta}\left(\text{GL}_{n}\right)$.
\item For all $i\in I$, we have $\phi_{i}\in\Phi_{\text{\text{temp, irr}}}\left(\text{GL}_{n}\right)\backslash\Phi_{\text{\text{temp, irr}}}^{\theta}\left(\text{GL}_{n}\right)$.
\item $\phi_{i},\,\phi_{j}$ ($i\in I,\,j\in J$) are two by two distinct.
\end{itemize}
We denote by $J^{+}$ (resp. $J^{-}$) the subset containing $j\in J$
such that $\phi_{j}\in\Phi_{\text{\text{temp, irr}}}^{+}$ (resp.
$\phi_{j}\in\Phi_{\text{\text{temp, irr}}}^{-}$). Then $\phi\in\Phi_{\text{\text{temp}}}^{+}$
(resp. $\phi\in\Phi_{\text{\text{temp}}}^{-}$) if and only if $l_{j}$
is even for any $j\in J^{-}$ (resp. for all $j\in J^{+}$). Let $\epsilon\in\left\{ \pm1\right\} $
and suppose that $\phi\in\Phi_{\text{\text{temp, irr}}}^{\epsilon}$.
Let us fix a non-degenerate bilinear form $B$ which is $\epsilon$-conjugate-dual.
We have the following identification (upto inner automorphisms) 
\[
\text{Aut}\left(\phi,B\right)=\underset{j\in J^{\epsilon}}{\prod}\text{O}\left(l_{j},\mathbb{C}\right)\times\underset{j\in J^{-\epsilon}}{\prod}\text{Sp}\left(l_{j},\mathbb{C}\right)\times\underset{i\in I}{\prod}\text{GL}\left(l_{i},\mathbb{C}\right)
\]
from which we induce an identification $\mathcal{S}_{\phi}=\left\{ \pm1\right\} ^{J^{\epsilon}}$.

\subsection{The local Langlands correspondence for unitary groups}\label{llc9}

Let $\left(V,h\right)$ be a skew-hermitian space
over $E$ of dimension $n$ and $G$ be its unitary group. We set $\Phi_{\text{temp}}\left(G\right)=\Phi_{\text{temp}}^{\left(-1\right)^{d+1}}\left(\text{GL}_{d}\right)$
and $\mathcal{E}^{G}\left(\phi\right)$ the set of characters $\epsilon$ of $\mathcal{S}_{\phi}$ such that $\epsilon\left(z_{\phi}\right)=\mu\left(V,h\right)$. The
local Langlands correspondence, which has been obtained in \cite{Mok15,KMSW14,MR18,CZ21}, gives us 
\begin{description}
\item [{(LLC)}] For any unitary group $G=\text{U}\left(V,h\right)$, there
exists a decomposition into disjoint union 
\[
\text{Temp}\left(G\right)=\underset{\phi\in\Phi_{\text{temp}}\left(G\right)}{\bigsqcup}\Pi^{G}\left(\phi\right)
\]
and bijections 
\[
\begin{array}{ccc}
\mathcal{E}^{G}\left(\phi\right) & \simeq & \Pi^{G}\left(\phi\right)\\
\epsilon & \mapsto & \sigma\left(\phi,\epsilon\right)
\end{array}.
\]
\end{description}
The finite sets $\Pi^{G}\left(\phi\right)$ are called L-packets.
The local Langlands correspondence provides a parametrization of $\text{Temp}\left(G\right)$.
This parametrization are uniquely determined by certain conditions. We introduce three
of them, which are of endoscopic nature and will be important for our use. For $G=\text{U}\left(V,h\right)$,
$\phi\in\Phi_{\text{temp}}\left(G\right)$ and $s\in\mathcal{S}_{\phi}$,
we set
\[
\Theta_{\phi,s}=\underset{\epsilon\in\mathcal{E}^{G}\left(\phi\right)}{\sum}\epsilon\left(s\right)\Theta_{\pi\left(\phi,\epsilon\right)},
\]
where $\Theta_{\pi\left(\phi,\epsilon\right)}$ is the character
of the representation $\pi\left(\phi,s\right)$. The first condition
is 
\begin{description}\label{stab}
\item [{(Stab)}] If $\left(V,h\right)$ is quasi-split, then $\Theta_{\phi,1}$
is stable.
\end{description}
We now consider the remaining two conditions, one involves in classic
endoscopy and the other involves in twisted endoscopy. Let $\left(V_{+},h_{+}\right)$
and $\left(V_{-},h_{-}\right)$ be two skew-hermitian spaces of dimensions
$d_{+}$ and $d_{-}$ with corresponding unitary groups $G_{+}$ and
$G_{-}$. We assume that both $G_{+}$ and $G_{-}$ are quasi-split and $d=d_{+}+d_{-}$.
Let $\phi_{+}\in\Phi_{\text{temp}}\left(G_{+}\right)$ and $\phi_{-}\in\Phi_{\text{temp}}\left(G_{-}\right)$.
Let $\mu_{+},\,\mu_{-}$ be two continuous characters of $W_{E}$,
which can be identified with characters of $E^{\times}$ by local
class field theory.

Assume the restrictions of $\mu_{+}$ and $\mu_{-}$ to $F^{\times}$
coincide with $\omega_{E/F}^{d_{-}}$ and $\omega_{E/F}^{d_{+}}$.
We set $\phi=\mu_{+}\phi_{+}\oplus\mu_{-}\phi_{-}$ to be an element
in $\Phi_{\text{temp}}\left(G\right)$. Let $s$ be an element which
acts by identity on $\mu_{+}\phi_{+}$ and $-1$ on $\mu_{-}\phi_{-}$.
There exists non-degenerate $\left(-1\right)^{d+1}$-conjugate-dual
bilinear forms $B_{+}$ and $B_{-}$ on $\mu_{+}\phi_{+}$ and $\mu_{-}\phi_{-}$.
We set $B=B_{+}\oplus B_{-}$, which is a conjugate-dual bilinear
form on $\phi$ of sign $\left(-1\right)^{d+1}$. We have $s\in\text{Aut}\left(\phi,B\right)$,
hence determines an element $s\in\mathcal{S}_{\phi}$. Moreover, as an
element in $\mathcal{S}_{\phi}$, we can see that $s$ does not depend
on choices of $B_{+}$ and $B_{-}$. The group $G_{+}\times G_{-}$
is an endoscopic group of $G$. Moreover, the pair $\left(\mu_{+},\mu_{-}\right)$
fixes the endoscopic datum. We normalize the transfer factors as in Section \ref{llc5}. We state the second condition.
\begin{description}\label{et}
\item [{(ET)}] There exists a number $\gamma_{\mu_{+},\mu_{-}}^{G}\left(\phi_{+},\phi_{-}\right)$
of modulus $1$ such that $\gamma_{\mu_{+},\mu_{-}}^{G}\left(\phi_{+},\phi_{-}\right)\Theta_{\phi,s}$
is a transfer of $\Theta_{\phi_{+},0}\times\Theta_{\phi_{-},0}$.
\end{description}
Now suppose the restrictions of $\mu_{+}$ and $\mu_{-}$ to $F^{\times}$
coincide with $\omega_{E/F}^{d_{-}}$ and $\omega_{E/F}^{d_{+}+1}$.
Let $\phi=\mu_{+}\phi_{+}\oplus\mu_{-}\phi_{-}\in\Phi_{\text{\text{temp}}}^{\theta}\left(\text{GL}_{d}\right)$.
Likewise, we now define the twisted endoscopic property. Let $U$ be a $d$-dimensional vector space with the associated twisted group $\left(M,\tilde{M}\right)$. We set $\pi=\pi\left(\phi\right)$.
Then $\pi$ is a conjugate-dual representation of $M\left(F\right)$.
Hence, it can be extended to a representation $\tilde{\pi}$ of $\tilde{M}\left(F\right)$.
The group $G_{+}\times G_{-}$ is a twised endoscopic group of $\tilde{\pi}$.
Moreover, the pair $\left(\mu_{+},\mu_{-}\right)$ fixes the twisted
endoscopic datum. Similar to the setting in endoscopic property, we normalize the transfer factors as in Section \ref{llc5}. We state the third condition.
\begin{description}\label{tet}
\item [{(TET)}] There exists a number $c_{\mu_{+},\mu_{-}}\left(\phi_{+},\phi_{-}\right)$
of modulus $1$ such that $c_{\mu_{+},\mu_{-}}\left(\phi_{+},\phi_{-}\right)\Theta_{\tilde{\pi}}$
is a transfer of $\Theta_{\phi_{+},0}\times\Theta_{\phi_{-},0}$.
\end{description}
We admit the existence of \textbf{(LLC)} satisfying conditions \textbf{(Stab),
(ET) and (TET)}.
\subsection{Asai factors} We recall the notion of Asai representations in \cite{GGP12a,GGP23}. Let $E/F$ be a quadratic extension of nonarchimedean fields of characteristic $0$ and $\tau$ be the nontrivial element of $\text{Gal}(E/F)$. Let $M$ be a representation of $WD_E$. Since the representation $M\otimes M^\tau$ is $\tau$-invariant, we have the following decomposition
$$
\text{Ind}^{WD_F}_{WD_E}(M\otimes M^\tau)=\text{As}^+_{E/F}(M)\oplus \text{As}^-_{E/F}(M)
$$
of $WD_F$-modules, where $\text{As}^\pm_{E/F}$ is characterized by the action of an element $s\in W_F\setminus W_E$ (cf. \cite[pg. 26-27]{GGP12a}). Without abuse of notations, we drop the sign $+$ and thus set $\text{As}_{E/F}(M)=\text{As}^+_{E/F}(M)$.

\section{Weil representations and the twisted Gan-Gross-Prasad conjecture}\label{sec3}
\subsection{Weil representation of unitary groups}\label{sec3.1}

Let $F$ be a nonarchimedean local field of characteristic $0$ and
$\left(W,\left\langle ,\right\rangle \right)$ be a nondegenerate symplectic vector space of dimension $2n$ over $F$. We denote by $\text{Sp}\left(W\right)$
the isometry group of $W$. Let $H\left(W\right)=W\oplus F$ be the Heisenberg group with operation 
$$
\left(w_{1},t_{1}\right)\left(w_{2},t_{2}\right)=\left(w_{1}+w_{2},t_{1}+t_{2}+\frac{1}{2}\left\langle w_{1},w_{2}\right\rangle \right).
$$
The center of $H\left(W\right)$ is $Z=\left\{ \left(0,t\right)\mid\ t\in F\right\} $.
We define an action of $\text{Sp}\left(W\right)$ on $H\left(W\right)$ as follows
$$
g\cdot\left(w,t\right)=\left(gw,t\right),\ \text{for }g\in\text{Sp}\left(W\right)\text{ and }\left(w,t\right)\in H\left(W\right).
$$
Let $\psi$ be a nontrivial additive character of $F$. Stone-von Neumann theorem gives us a unique smooth irreducible representation $\left(\rho_{\psi},S\right)$ of $H\left(W\right)$ whose central character is $\psi$. 

Since $g\in\text{Sp}\left(W\right)$ acts trivially on $Z$, the representation $\left(\rho_{\psi}^{g},S\right)$
given by $\rho_{\psi}^{g}\left(h\right)=\rho_{\psi}\left(h^{g}\right)$
has the same central character $\psi$, and thus is isomorphic to $\left(\rho_{\psi},S\right)$.
Hence, for each $g\in\text{Sp}\left(W\right)$, there exists an automorphism
$A\left(g\right):S\rightarrow S$ such that 
$$
A\left(g^{-1}\right)\rho_{\psi}\left(h\right)A\left(g\right)=\rho_{\psi}\left(h^{g}\right).
$$
The above automorphism $A\left(g\right)$ is unique up to a scalar in $\mathbb{C}^{\times}$. Thus, we can define a central extension $\overline{\text{Sp}}_\psi\left(W\right)$ 
$$
1\longrightarrow\mathbb{C}^{\times}\longrightarrow\overline{\text{Sp}}_\psi\left(W\right)\longrightarrow\text{Sp}\left(W\right)\longrightarrow1
$$
such that $A$ can be lifted to a representation $\omega_{\psi}$
of $\overline{\text{Sp}}_\psi\left(W\right)$ via $\omega_{\psi}\left(g,A\left(g\right)\right)=A\left(g\right)$. By \cite[Section 43]{Wei64} and \cite{Moo68}, there exists a unique subgroup $\text{Mp}(W)$ of $\overline{\text{Sp}}\left(W\right)$, which does not depend on choices of $\psi$, such that $\text{Mp}(W)$ is a double cover of $\text{Sp}(W)$. This subgroup is called the metaplectic cover of $\text{Sp}(W)$ and the restriction of the representation $\omega_\psi$ to $\text{Mp}(W)$ is called the Weil representation of $\text{Mp}(W)$. We will revisit their explicit forms in Section \ref{Schrodinger}.

Let $E$ be a quadratic extension of $F$ and $\left(V,\left\langle \cdot,\cdot\right\rangle _{V}\right)$ be a skew-hermitian space over $E$ of dimension $n$. Then $\left(\text{Res}_{E/F}V,\text{Tr}_{E/F}\left\langle \cdot,\cdot\right\rangle _{V}\right)$
is a $2n$-dimensional symplectic space over $F$. We denote by $\text{Sp}\left(\text{Res}_{E/F}V\right)$
the symplectic group associated to the above symplectic space. As
mentioned above, one can define the Weil representation $\omega_{\psi}$
of $\text{Mp}\left(\text{Res}_{E/F}V\right)$. Let $\mu$ be a conjugate-symplectic character
of $E^{\times}$, i.e. $\mu\mid_{F^{\times}}=\omega_{E/F}$, where
$\omega_{E/F}$ is the quadratic character factoring through $F^{\times}/N_{E/F}E^{\times}\overset{\sim}{\longrightarrow}\left\{ \pm1\right\} $.
By \cite[Theorem 3.1]{Kud94} and \cite[Section 1]{HKS96}, the character $\mu$
determines an inclusion $\mu:g\in U\left(V\right)\mapsto (g,\mu(g))\in \text{Mp}\left(\text{Res}_{E/F}V\right)$
splitting $\text{Mp}\left(\text{Res}_{E/F}V\right)\rightarrow\text{Sp}\left(\text{Res}_{E/F}V\right)$.
This gives us the Weil representation $\omega_{V,\psi,\mu}$ of $U\left(V\right)$.

Let $K$ be a field extension of $F$ not containing $E$ and $L=K\otimes_{F}E$.
Let $\left(M,\left\langle ,\right\rangle \right)$ be a skew-hermitian
space relative to $L/K$. Then $\left(\text{Res}_{L/E}M,\text{Tr}_{L/E}\left\langle \right\rangle \right)$ is a skew-hermitian space over $E$ and one has an inclusion $i:\text{Res}_{K/F}\text{U}\left(M\right)\hookrightarrow\text{U}\left(\text{Res}_{L/E}M\right)$ defined over $F$, where $\text{Res}_{K/F}\text{U}\left(M\right)$ is the usual Weil restriction. We recall a functorial property of Weil representations, which has been proved in \cite[Proposition 3.1]{Le25}.
\begin{proposition}\label{3.1}
Denote $\psi_{K}=\psi\circ\text{Tr}_{K/F}$ and $\mu_{L}=\mu\circ\text{Nm}_{L/E}$ so that $\mu_L$ is conjugate-symplectic relative to $L/K$.
Let $\omega_{M,\psi_{K},\mu_{L}}$ be the Weil representation of $\text{U}\left(M\right)$
and $\omega_{\text{Res}_{L/E}M,\psi,\mu}$ be the Weil representation
of $\text{U}\left(\text{Res}_{L/E}M\right)$. Then 
$$
\omega_{M,\psi_{K},\mu_{L}}\cong i^{*}\omega_{\text{Res}_{L/E}M,\psi,\mu},
$$
where $i^{*}\omega_{\text{Res}_{L/E}M,\psi,\mu}$ is the pullpack
representation of $\omega_{\text{Res}_{L/E}M,\psi,\mu}$ via the inclusion
$i:\text{U}\left(M\right)\hookrightarrow\text{U}\left(\text{Res}_{L/E}M\right)$.
\end{proposition}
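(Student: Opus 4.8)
The plan is to realize both Weil representations on a single oscillator model over $F$, reduce the statement to an equality of two splittings of the metaplectic two-fold cover of a common symplectic group, and then verify that equality by comparing the explicit splitting formulas of Kudla and of Harris--Kudla--Sweet. The starting point is the observation that the three restriction-of-scalars procedures produce literally the same $F$-symplectic space: since $\text{Tr}_{L/F}=\text{Tr}_{K/F}\circ\text{Tr}_{L/K}=\text{Tr}_{E/F}\circ\text{Tr}_{L/E}$, the form $\text{Tr}_{L/F}\langle\cdot,\cdot\rangle_M$ makes $\text{Res}_{L/F}M$ into a symplectic space over $F$ that is simultaneously $\text{Res}_{K/F}(\text{Res}_{L/K}M)$ and $\text{Res}_{E/F}(\text{Res}_{L/E}M)$, and under these identifications $\text{U}(M)(K)$ and $\text{U}(\text{Res}_{L/E}M)(F)$ embed compatibly --- via $i$ --- into $\text{Sp}(\text{Res}_{L/F}M)(F)$. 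I would then use the standard descent of the Heisenberg group: the map $(w,t)\mapsto(w,\text{Tr}_{K/F}t)$ is a surjection $H(\text{Res}_{L/K}M)(K)\twoheadrightarrow H(\text{Res}_{L/F}M)(F)$ with central kernel annihilated by $\psi_K=\psi\circ\text{Tr}_{K/F}$, so the $\psi_K$-Stone--von Neumann representation of the source is the inflation of the $\psi$-Stone--von Neumann representation of the target; consequently the metaplectic covers are compatibly identified and $\omega_{\psi_K}$ on $\text{Mp}(\text{Res}_{L/K}M)$ is pulled back from $\omega_\psi$ on $\text{Mp}(\text{Res}_{L/F}M)$. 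The representation $\omega_{\text{Res}_{L/E}M,\psi,\mu}$ is defined directly over $F$, so no such descent is needed for it.

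After these reductions, both $\omega_{M,\psi_K,\mu_L}$ and $i^{*}\omega_{\text{Res}_{L/E}M,\psi,\mu}$ are realized on one and the same space, the $\psi$-oscillator module of $H(\text{Res}_{L/F}M)(F)$, as $g\mapsto\omega_\psi(s_j(g))$ for two group homomorphisms $s_1,s_2\colon\text{U}(M)(K)\to\text{Mp}(\text{Res}_{L/F}M)(F)$ that both lift the fixed embedding into $\text{Sp}(\text{Res}_{L/F}M)(F)$: here $s_1$ is the $\mu_L$-splitting of \cite{Kud94,HKS96} composed with the metaplectic inclusion of the previous step, and $s_2$ is the restriction along $i$ of the $\mu$-splitting of \cite{Kud94,HKS96}. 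Because the cover is two-fold, $s_1$ and $s_2$ differ by a quadratic character $\chi\colon\text{U}(M)(K)\to\{\pm1\}$, and the proposition is exactly the assertion that $\chi\equiv 1$.

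It remains to compute $\chi$, which is the heart of the matter and the step I expect to be the main obstacle. I would do this on a generating set of $\text{U}(M)(K)$ using the explicit Schr\"odinger-model cocycles recalled in Section \ref{Schrodinger}: choosing a complete polarization of $\text{Res}_{L/F}M$ that is $L$-rational --- splitting off a maximal split skew-hermitian subspace reduces, by multiplicativity of the Weil representation on orthogonal direct sums, to bounded-dimensional anisotropic $M$, which can be treated directly --- one finds that on the unipotent radical of a Siegel parabolic both $s_1$ and $s_2$ are the canonical (trivial) section, while on a Levi factor and on the long Weyl element they are given by the same products of Weil indices and Hilbert symbols once one substitutes the compatibilities $\psi_K=\psi\circ\text{Tr}_{K/F}$, $\mu_L=\mu\circ\text{Nm}_{L/E}$, $\det_{\text{Res}_{L/E}M}\circ i=\text{Nm}_{L/E}\circ\det_M$, and the behaviour of the Weil index under $\text{Tr}_{K/F}$. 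The delicate part is precisely the bookkeeping of the auxiliary choices (polarizations, Maslov/Leray invariants) entering the two cocycles and the verification that they can be matched; once matched, the norm--trace compatibilities force $\chi$ to be trivial. When $\text{U}(M)$ is quasi-split one can shorten this by invoking the characterization of the splitting through its restriction to a Siegel parabolic, reducing everything to the easy Levi computation and handling the anisotropic case of bounded rank separately.
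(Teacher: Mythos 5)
The paper does not prove Proposition \ref{3.1} here; it simply cites \cite[Proposition 3.1]{Le25}, so there is no argument in this document to compare yours against, and I can only assess the proposal on its own terms. Your strategy is the natural one and should work in outline: the identity $\text{Tr}_{L/F}=\text{Tr}_{K/F}\circ\text{Tr}_{L/K}=\text{Tr}_{E/F}\circ\text{Tr}_{L/E}$ does produce one and the same $F$-symplectic space from all three restrictions of scalars, the Heisenberg descent $(w,t)\mapsto(w,\text{Tr}_{K/F}t)$ with kernel $\ker(\text{Tr}_{K/F})$ killed by $\psi_K$ does inflate the $\psi$-oscillator module to the $\psi_K$-one, and the proposition does reduce to an equality of two sections of the cover over $U(M)(K)$.

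Two points need tightening before this is a proof. First, the splittings of \cite{Kud94,HKS96} are constructed as sections into the $\mathbb{C}^1$-cover $\overline{\text{Sp}}(W)\to\text{Sp}(W)$, not into the two-fold cover $\text{Mp}(W)$; consequently $s_1s_2^{-1}$ is a priori a character of $U(M)(K)$ (equivalently, of $L^1$ via $\det_M$) valued in $\mathbb{C}^1$, and quadraticity should not be asserted without justification --- though this does no real harm, since you propose to compute $\chi$ outright rather than bound its order. Second, and more importantly, the item you yourself flag as ``the heart of the matter'' is where essentially all the content lives: the HKS section depends on a chosen complete polarization and involves Weil indices and quadratic Hilbert symbols beyond the visible $\mu(\det)$ factor, so the claimed cancellation of $\chi$ rests on verifying that an $L$-rational polarization of $\text{Res}_{L/K}M$ and one of $\text{Res}_{L/E}M$ induce the same polarization of $\text{Res}_{L/F}M$, that $\gamma_{\psi_K}(q)=\gamma_\psi(\text{Tr}_{K/F}q)$, and that $\det_{\text{Res}_{L/E}M}\circ i=\text{Nm}_{L/E}\circ\det_M$, together with the cocycle bookkeeping that makes these compatibilities bite. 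I expect all of this to go through, but as written the proposal is a correct plan with the decisive computation left open, not a completed argument.
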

\subsection{Weil index and Maslov index}\label{secweilindex} 
We refer to \cite{Li08} and \cite{GKT25} as the main reference in this subsection. Let $X$ be an $n$-dimensional $F$-vector space and $b:X\times X \rightarrow F$ be a nondegenerate symmetric bilinear form. We denote by $q_b$ the corresponding quadratic form on $X$. We have the natural isomorphism $\beta_b:x\mapsto b(-,x)$ mapping $X$ onto $X^*$, where $X^*$ is the algebraic dual of $X$. Let $\psi$ be a nontrivial additive character of $F$. Fix a Haar measure $dx$ on $X$, we can choose the measure $dx^*$ on $X^*$ to be dual to $X$. We set $|\beta_b|=\frac{d\beta(x)}{dx}$.

Given the quadratic form $q_b$ and additive character $\psi$, we define the corresponding character of second degree $f_b=\psi \circ \frac{1}{2}q_b$. The normalised convolution $\phi \mapsto |\beta_b|^{1/2}\phi*f_b$ gives us an isometry on $C^\infty_c(X)$ (cf. \cite[Proposition 3.2]{GKT25}). Likewise, we can also consider the character of second degree on $X^*$ defined by $f_b^*(x^*)=f_b(\beta_b^{-1}(x^*))^{-1}$. We have another isometry $\phi \mapsto \hat{\phi} \cdot f_b^*$ mapping $C_c^\infty(X)$ onto $C_c^\infty(X^*)$. We recall \cite[Theorem 3.3]{GKT25}, which gives us the definition of Weil index.
\begin{theorem}
    There exists a constant $\gamma(f_b)\in \mathbb{C}^1$ depending only on $f_b$ such that
    $$
    |\beta_b|^{1/2}\widehat{\phi*f_b}=\gamma(f_b)\phi \mapsto \hat{\phi} \cdot f_b^*,
    $$
    for all $\phi\in C^\infty_c(X)$.
\end{theorem}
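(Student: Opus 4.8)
The plan is to show that the two maps
$$
P\colon\phi\mapsto |\beta_b|^{1/2}\widehat{\phi*f_b},
\qquad
Q\colon\phi\mapsto \hat{\phi}\cdot f_b^*
$$
from $C^\infty_c(X)$ to $C^\infty_c(X^*)$ differ by a scalar of modulus $1$, that is $P=\gamma(f_b)\,Q$, by identifying $Q^{-1}P$ as a multiple of the identity via Schur's lemma for the Heisenberg group. First I would record that $P$ and $Q$ both extend to \emph{unitary} isomorphisms $L^2(X)\to L^2(X^*)$: for $P$ this combines the isometry $\phi\mapsto|\beta_b|^{1/2}\phi*f_b$ of \cite[Proposition 3.2]{GKT25} with the Plancherel isometry, and for $Q$ it is the Plancherel isometry followed by multiplication by the unimodular locally constant function $f_b^*$ (surjectivity being part of the setup recalled above). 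Hence $U:=Q^{-1}\circ P$ is a well-defined unitary operator on $L^2(X)$, and it suffices to prove that $U$ is scalar.

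The heart of the matter is to show that $U$ commutes with the Schrödinger representation $\rho$ of the Heisenberg group $H(X\oplus X^*)$ on $L^2(X)$, i.e.\ with all translations $T_a\colon\phi(x)\mapsto\phi(x+a)$ for $a\in X$ and all modulations $M_c\colon\phi(x)\mapsto\psi(\langle c,x\rangle)\phi(x)$ for $c\in X^*$. For translations this is essentially formal: convolution commutes with translation and the Fourier transform sends translation to modulation, so one gets $P\circ T_a=N_a\circ P$ and $Q\circ T_a=N_a\circ Q$ with the \emph{same} modulation operator $N_a$ (multiplication by $\psi(\langle\,\cdot\,,a\rangle)$) on $X^*$, whence $U\circ T_a=T_a\circ U$. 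For modulations I would use the defining cocycle identity of a character of second degree,
$$
f_b(x-y)=f_b(x)f_b(y)\,\psi(-b(x,y)),
$$
which rewrites multiplication by $\psi(\langle c,\cdot\rangle)$ in terms of translation by $\beta_b^{-1}(c)$, a further modulation, and an explicit constant. A direct computation then shows that both $P$ and $Q$ intertwine $M_c$ with one and the same operator on $C^\infty_c(X^*)$, namely $f_b(\beta_b^{-1}(c))^{-1}$ times translation-by-$c$ composed with modulation-by-$(-\beta_b^{-1}(c))$ --- here the normalizing factor $|\beta_b|^{1/2}$ is precisely what makes the two constants coincide, and the $P$-side constant $f_b(\beta_b^{-1}(c))^{-1}$ agrees with the $Q$-side constant $f_b^*(c)$ literally by the definition of $f_b^*$. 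Therefore $U\circ M_c=M_c\circ U$ as well, so $U$ commutes with all of $\rho(H(X\oplus X^*))$.

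By the Stone--von Neumann theorem $\rho$ is irreducible, so any bounded operator commuting with it is a scalar; call it $\gamma(f_b)$. Since $U$ is unitary, $|\gamma(f_b)|=1$, i.e.\ $\gamma(f_b)\in\mathbb{C}^1$; and because $P$ and $Q$ are built solely from $f_b$ and the fixed character $\psi$ (the factor $|\beta_b|^{1/2}$ being determined by $b$), the scalar depends only on $f_b$, as claimed. I expect the modulation computation to be the only genuine obstacle --- the bookkeeping of the $X\leftrightarrow X^*$ duality, the measure factors, and checking that the quadratic cocycle yields identical constants \emph{and} identical operators on the two sides. If that covariance computation turned out to be unwieldy, the fallback would be to diagonalise the symmetric form $b$ over $F$ (possible since $\mathrm{char}\,F=0\neq2$), factor $f_b$ as a tensor product of rank-one characters of second degree so that $P$ and $Q$ factor accordingly and $\gamma(f_b)$ becomes a product of one-variable factors, and reduce to the classical local functional equation for the one-dimensional Gauss/Fresnel integral.
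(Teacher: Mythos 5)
The paper itself contains no proof of this statement; it is recalled verbatim from \cite[Theorem 3.3]{GKT25} and used as a black box, so there is no in-paper argument to compare against. Your Stone--von Neumann/Schur argument is nonetheless correct and is the classical route going back to Weil: realise $\gamma(f_b)$ as the Schur scalar of the unitary $U=Q^{-1}\circ P$ on $L^2(X)$ after checking that $U$ commutes with the irreducible Schr\"odinger representation of the Heisenberg group $H(X\oplus X^*)$. The modulation covariance that you single out as the only genuine obstacle does close. Writing $a=\beta_b^{-1}(c)$ and using the cocycle $f_b(x-y)=f_b(x)f_b(y)\psi(-b(x,y))$, one gets $(M_c\phi)*f_b=f_b(a)^{-1}\,M_c\,T_{-a}(\phi*f_b)$; on the $Q$-side the ratio $f_b^*(x^*)/f_b^*(x^*-c)$ equals $f_b(a)\,\psi(-\langle x^*,a\rangle)$, and since $\psi(\langle c,a\rangle)=\psi(q_b(a))=f_b(a)^2$ this is the same as $f_b(a)^{-1}\psi(-\langle x^*-c,a\rangle)$, so both $P$ and $Q$ intertwine $M_c$ with one and the same translation-plus-modulation operator on $X^*$ multiplied by the same constant $f_b(a)^{-1}=f_b^*(c)$. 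Hence $U$ commutes with all $T_a$ and all $M_c$, Schur's lemma gives the scalar, and unitarity of $U$ gives $|\gamma(f_b)|=1$. Your proposed fallback --- diagonalise $b$ and factor $f_b$ into rank-one characters of second degree --- is the other standard route and also works; it has the additional benefit of reducing the evaluation of $\gamma(f_b)$ itself to one-variable Gauss sums.
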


We now recall the definition of Maslov index. Let $\ell_1,\ldots,\ell_k$ be $k$ lagrangians of $W$, where $k\geq 3$. We can define a quadratic form $\tau(\ell_1,\ldots,\ell_k)$, which is called a Maslov index. As an equivalent class in the Witt group $W(F)$, it satisfies the following properties
\begin{enumerate}
    \item For any $g\in \text{Sp}(W)$,
    $$
    \tau(g\ell_1,\ldots,g\ell_k)=\tau(\ell_1,\ldots,\ell_k).
    $$
    \item Let $W_1$ and $W_2$ be two symplectic spaces. If $\ell_1,\ldots,\ell_{k}$ are lagrangians of $W_1$ and $\ell_1^\prime,\ldots,\ell_{k}^\prime$ are lagrangians of $W_2$, we have
    $$
    \tau(\ell_1\oplus \ell_1^\prime,\ldots,\ell_k\oplus \ell_k^\prime)=\tau(\ell_1,\ldots,\ell_k)+\tau(\ell_1^\prime,\ldots,\ell_k^\prime).
    $$
    \item $\tau(\ell_1,\ldots,\ell_k)=\tau(\ell_2,\ldots,\ell_k,\ell_1)$ and $\tau(\ell_1,\ldots,\ell_k)=-\tau(\ell_k,\ldots,\ell_1)$.
    \item For any $t=\overline{3,k}$,
    $$
    \tau(\ell_1,\ldots,\ell_k)=\tau(\ell_1,\ldots,\ell_t)+\tau(\ell_{1},\ell_{t},\ldots,\ell_k).
    $$
\end{enumerate}
When $k=3$, Maslov indexes have been defined by Kashiwara (see in \cite{LV80}). When $k>3$, they are defined inductively by the above properties.
\subsection{Schrodinger models of the Weil representation}\label{Schrodinger}
We give a brief introduction about Schrodinger models of the Weil representation. Let $(W,\langle,\rangle))$ is a nondegenerate symplectic space of dimension $2n$ over $F$. We fix a lagrangian $\ell$ of $W$. We denote by $\psi_\ell$ the trivial extension of $\psi$ to $H(\ell)=\ell \times F$. We set $(\rho_\ell,S_\ell)$ to be the smooth induction $\text{Ind}_{H(\ell)}^{H(W)}$. Such representations $\rho_\ell$ are called Schrodinger representations. We can reformulate this construction by the language of densities. Let $\mathcal{H}_\ell$ be the space of measurable functions $f:W \rightarrow \Omega_{1/2}(W/\ell)$ such that
$$
f(v+a)=\psi\left(\frac{\langle v,a \rangle}{2}\right)f(v) \ \ \text{ and }\ \ \int_{W/\ell}ff < \infty.
$$
By Stone-von Neumann theorem, Schrodinger representations that come from different choices of lagrangians of $W$ are equivalent. We now construct cannonical intertwiners between Schrodinger representations corresponding to different lagrangians. Let $\ell_1$ and $\ell_2$ be two langrangians of $W$. There is a unitary intertwiner $\mathcal{F}_{\ell_{1},\ell_2}:\mathcal{H}_{\ell_1}\rightarrow \mathcal{H}_{\ell_2}$ defined by
$$
(\mathcal{F}_{\ell_1,\ell_2})(\phi)=\int_{x\in \ell_2/(\ell_1\cap \ell_2)}\phi(x+y)\psi\left(\frac{\langle x,y \rangle}{2}\right)d\mu_{1,2}^{1/2},
$$
where $\phi\in \mathcal{H}_{\ell_1}$. 

For each $g\in \text{Sp}(W)$, we have an isomorphism $g_*:\mathcal{H}_\ell\simeq \mathcal{H}_{g\ell}$ via $s(\cdot)\mapsto s(g^{-1}(\cdot))$. We define the following operator
$$
M_{\ell}^{\text{Sch}}[g]:=\mathcal{F}_{\ell,g\ell}\circ g_*= g_* \circ \mathcal{F}_{g^{-1}\ell,\ell}.
$$
For any $g,h\in \text{Sp}(W)$, we have
$$
M_{\ell}^{\text{Sch}}[g]\circ M_{\ell}^{\text{Sch}}[h]=\gamma_\psi(\tau(\ell,h\ell,gh\ell))M_{\ell}^{\text{Sch}}[gh].
$$
We set $c_{g,h}(\ell)=\gamma_\psi(\tau(\ell,h\ell,gh\ell))$, which defines a $2$-cocycle on $\text{Sp}(W)$. This is called Maslov cocycle. For any $g\in \text{Sp}(W)$ and $\ell \in \Lambda(W)$ (the set of lagrangians of $W$), we denote
$$
m_g(\ell)=\gamma_\psi(1)^{\frac{\dim W}{2}-\dim g\ell\cap \ell -1}\gamma_\psi(A_{g\ell,\ell}),
$$
where $A_{g\ell,\ell}$ is the perfect pairing on $g\ell \times \ell$. We define the metaplectic group $\text{Mp}(W)$ to be the set of pairs $(g,t)$, where $g\in \text{Sp}(W)$ and $t:\Lambda(W)\rightarrow \mathbb{C}^\times$, satisfying the following properties
\begin{enumerate}
    \item $t(\ell)^2=m_g(\ell)^2$.
    \item For any $\ell,\ell^\prime\in \Lambda(W)$, we have $t(\ell^\prime)=\gamma_\psi(\tau(\ell,g\ell,g\ell^\prime,\ell^\prime))t(\ell)$.
    \item $(g,t)\cdot (g^\prime,t^\prime)=(gg^\prime,tt^\prime\cdot c_{g,g^\prime})$.
\end{enumerate}

We now explicate the Weil representation $\omega_\psi$. For $(g,t)\in \text{Mp}(W)$, we set 
$$
\omega_\psi(g,t)=t(\ell)\cdot M_{\ell}^{\text{Sch}}[g]=t(\ell)\cdot \mathcal{F}_{\ell,g\ell}\circ g_*.
$$
This is call the Schrodinger model of the Weil representation, which depends on choices of $\ell$. We recall \cite[Theorem 9.12]{GKT25}, which provides an explicit formula for the action of $\text{Mp}(W)$ on $\Omega_{1/2}(W/\ell)$. Noting that here we have reformulated the original theorem by the language of densities.
\begin{theorem}
    The action of $\text{Mp}(W)$ is explicitly described as follows: for any $\phi\in \Omega_{1/2}(W/\ell)$ and $y\in W/\ell$,
    \begin{itemize}
        \item 
        $\omega_\psi\left(
    \begin{pmatrix}
    a &  \\
     & a^{*,-1}
\end{pmatrix}, 1
    \right)\phi(y)=\gamma_\psi(\det(a))^{-1}|\det(a)|^{\frac{1}{2}}\phi(a^*y)$;
        \item 
    $\omega_\psi\left(
    \begin{pmatrix}
    1 & b \\
     & 1
\end{pmatrix}, 1
    \right)\phi(y)=\psi(\langle y,-by\rangle)\phi(y)$;
    \item
    $\omega_\psi\left(w_i, 1\right)\phi(y)=\gamma_\psi(1)^{i}\mathcal{F}_i(\phi)(y)$;
    \item
    $\omega_\psi\left(1,z\right)\phi(y)=z\phi(y)$;
    \end{itemize}
    where each element of $\text{Sp}(W)$ is realized as a $2\times 2$ matrix with respect to the polarization and $\mathcal{F}_i$ is the partial Fourier transform defined in \cite[(2.22)]{GKT25}. The representation of $\text{Mp}(W)$ on $\Omega_{1/2}(W/\ell)$ is uniquely characterised by the above actions.
\end{theorem}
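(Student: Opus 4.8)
The plan is to verify the four displayed formulas by a direct computation in the Schr\"odinger model on a set of generators of $\text{Mp}(W)$, and then deduce the uniqueness assertion from the fact that these generators do generate. Concretely, unwinding the definition $\omega_\psi(g,t)\phi = t(\ell)\cdot M_{\ell}^{\text{Sch}}[g]\phi = t(\ell)\cdot\mathcal{F}_{\ell,g\ell}\bigl(g_*\phi\bigr)$, the computation for a given $(g,t)$ breaks into three pieces: the action $g_*$ induced on the half-density fibre $\Omega_{1/2}(W/\ell)$ by $g$; the canonical intertwiner $\mathcal{F}_{\ell,g\ell}$; and the scalar $t(\ell)$, which for the distinguished lifts appearing in the statement is the Weil-index normalization $m_g(\ell)$. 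I would run this for $g$ ranging over the Siegel Levi elements $\begin{pmatrix}a&\\&a^{*,-1}\end{pmatrix}$, the Siegel unipotents $\begin{pmatrix}1&b\\&1\end{pmatrix}$, the partial Weyl elements $w_i$, and the central torus $\{(1,z)\}$; these generate $\text{Mp}(W)$.

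For the Siegel Levi and unipotent elements one has $g\ell=\ell$, so $\mathcal{F}_{\ell,g\ell}$ reduces to the identity of $\Omega_{1/2}(W/\ell)$ and only $g_*$ and the normalization survive. For $g=\begin{pmatrix}a&\\&a^{*,-1}\end{pmatrix}$, the induced map on $W/\ell$ is $y\mapsto a^*y$, which multiplies a half-density by $|\det a|^{1/2}$, and the Weil-index normalization contributes the factor $\gamma_\psi(\det a)^{-1}$; together this gives the first formula. For $g=\begin{pmatrix}1&b\\&1\end{pmatrix}$, the covariance relation $f(v+a)=\psi(\tfrac{1}{2}\langle v,a\rangle)f(v)$ for sections forces $g_*$ to act on $\Omega_{1/2}(W/\ell)$ by the quadratic phase $\phi(y)\mapsto\psi(\langle y,-by\rangle)\phi(y)$, and here the normalization is trivial since the relevant quadratic form vanishes. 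For a partial Weyl element $w_i$, the lagrangian $w_i\ell$ meets $\ell$ in codimension $i$, so by the very definition of $\mathcal{F}_{\ell_1,\ell_2}$ the intertwiner $\mathcal{F}_{\ell,w_i\ell}$ is the partial Fourier transform $\mathcal{F}_i$ of \cite[(2.22)]{GKT25}, while $g_*$ is trivial and the Weil-index normalization, read off from $m_{w_i}(\ell)=\gamma_\psi(1)^{\frac{\dim W}{2}-\dim w_i\ell\cap\ell-1}\gamma_\psi(A_{w_i\ell,\ell})$ with $A_{w_i\ell,\ell}$ a split form, equals $\gamma_\psi(1)^i$. Finally $\omega_\psi(1,z)=z\cdot M_{\ell}^{\text{Sch}}[1]=z\cdot\mathrm{id}$ is immediate from the definition.

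For uniqueness I would note that the above $\text{Sp}(W)$-elements generate $\text{Sp}(W)$ --- the Siegel parabolic together with a full Weyl element --- and that, by the group law $(g,t)(g',t')=(gg',tt'\cdot c_{g,g'})$, any choice of set-theoretic lifts of such a generating set together with $\mathbb{C}^\times$ generates $\text{Mp}(W)$; hence the four formulas, which specify $\omega_\psi$ on the chosen lifts and on the center, determine the representation. The main obstacle I anticipate is not conceptual but the careful bookkeeping of Weil indices and of the measure normalizations $d\mu_{1,2}^{1/2}$ hidden in the intertwiners $\mathcal{F}_{\ell_1,\ell_2}$: one has to check that the scalars produced above are exactly those compatible with the Maslov-cocycle identity $M_{\ell}^{\text{Sch}}[g]M_{\ell}^{\text{Sch}}[h]=\gamma_\psi\bigl(\tau(\ell,h\ell,gh\ell)\bigr)M_{\ell}^{\text{Sch}}[gh]$, which in turn reduces to the standard properties of $\gamma_\psi$ --- additivity under orthogonal sums of quadratic forms (Weil's product formula), $\gamma_\psi(ab^2)=\gamma_\psi(a)$, and $\gamma_\psi(1)^2=\gamma_\psi(-1)^{-1}$ --- applied to the Maslov indices occurring among $\ell,\,g\ell,\,h\ell,\,gh\ell$. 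Pinning down the signs in the quadratic phases and the exact power of $\gamma_\psi(1)$ in each case is the delicate part; everything else is a routine unravelling of the induced-representation model.
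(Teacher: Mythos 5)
The paper does not prove this statement; it is recalled verbatim from \cite[Theorem~9.12]{GKT25} and serves only to set notation for the Schr\"odinger model used in Sections~\ref{sec3}--\ref{sec5}, so there is no proof in the paper for your sketch to be compared against.

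That said, your outline has a genuine gap in the Siegel Levi case that would need to be repaired before it could stand as a proof. You identify the scalar $t(\ell)$ attached to the lift $(g,1)$ with the Weil-index normalization $m_g(\ell)$. But from the displayed formula
$m_g(\ell)=\gamma_\psi(1)^{\frac{\dim W}{2}-\dim(g\ell\cap\ell)-1}\gamma_\psi(A_{g\ell,\ell})$,
when $g$ lies in the Siegel parabolic one has $g\ell=\ell$, $\dim(g\ell\cap\ell)=\tfrac{1}{2}\dim W$, and $A_{g\ell,\ell}$ a pairing on a zero-dimensional space, so $m_g(\ell)=\gamma_\psi(1)^{-1}$ --- a constant, independent of $a$. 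This cannot produce the $a$-dependent phase $\gamma_\psi(\det a)^{-1}$ in the claimed formula. The issue is that ``$(g,1)$'' in the theorem is \emph{not} the pair with $t(\ell)=m_g(\ell)$; a literal $t\equiv 1$ is generally not even an element of $\text{Mp}(W)$, since the defining constraint is $t(\ell)^2=m_g(\ell)^2$. The notation refers to a specific normalized section of $\text{Mp}(W)\to\text{Sp}(W)$ over the Siegel parabolic (in which the metaplectic character $a\mapsto\gamma_\psi(\det a)^{-1}$ is built in by hand), and the theorem is partly a statement \emph{about} that normalization, not a computation from $m_g(\ell)$ alone. You would need to spell out which section ``$1$'' denotes before the scalar bookkeeping can be carried out, and then recheck the power of $\gamma_\psi(1)$ in the $w_i$ case (your reading of $m_{w_i}(\ell)$ gives $\gamma_\psi(1)^{i-1}\gamma_\psi(A_{w_i\ell,\ell})$, so the claimed $\gamma_\psi(1)^i$ requires you to argue $\gamma_\psi(A_{w_i\ell,\ell})=\gamma_\psi(1)$, which you assert but do not justify). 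The structure of the argument --- verify on generators, uniqueness from the Bruhat decomposition and the cocycle law --- is otherwise sound and is the standard route in the literature.
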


\subsection{A local character expansion of the Weil representation}\label{sec3.2}
In this section, we study the behavior of the Weil representation $\omega_{V,\psi,\mu}$ of $U(V)$ near the identity element. We denote by $\mathfrak{u}(V)$ the Lie algebra of $U(V)$. Let  
$$
\begin{matrix}
\Phi: & V & \longrightarrow & \mathfrak{u}(V)^*  \\
 & v & \mapsto & (X\mapsto \langle v,Xv \rangle_V)  
\end{matrix}
$$
be the moment map for the Hamiltonian space $(U(V),V)$. We first consider a local character expansion of the Weil representation near the identity element.
\begin{proposition}\label{identity}
    Let $\omega$ be a neighborhood of $0$ in $\mathfrak{u}(V)(F)$. When $\omega$ is sufficiently small, then for any smooth function $f\in C^\infty(\omega)$, we have
    $$
    \text{Tr}(\omega_{V,\psi,\mu}(f\circ \log))=\int_{V}(\hat{f}\circ\Phi)(v)dv.
    $$
\end{proposition}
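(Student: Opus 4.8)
The plan is to reduce the identity to the classical Howe-type local character expansion of the metaplectic Weil representation via the Schr\"odinger model, and then to identify the Fourier transform of the orbital-integral-type object on the right with the pushforward $\hat f\circ\Phi$ under the moment map. First I would work on the symplectic side: set $W=\mathrm{Res}_{E/F}V$ with its symplectic form $\langle\cdot,\cdot\rangle=\mathrm{Tr}_{E/F}\langle\cdot,\cdot\rangle_V$, and recall that via the splitting $\mu:U(V)\hookrightarrow\mathrm{Mp}(W)$ of Kudla--HKS the representation $\omega_{V,\psi,\mu}$ is the restriction of $\omega_\psi$. On the Lie algebra level the splitting is canonical and linear, so $\log$ intertwines a neighborhood of $1$ in $U(V)(F)$ with a neighborhood of $0$ in $\mathfrak u(V)(F)$ compatibly with the corresponding statement for $\mathrm{Mp}(W)\to\mathrm{Sp}(W)$ and $\mathfrak{sp}(W)$. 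Hence it suffices to establish the analogous trace formula for the Weil representation of $\mathrm{Sp}(W)$ restricted to the subalgebra $\mathfrak u(V)(F)\subset\mathfrak{sp}(W)(F)$, with the moment map $\Phi$ replaced by the symplectic moment map $W\to\mathfrak{sp}(W)^*$, $v\mapsto(X\mapsto\tfrac12\langle v,Xv\rangle)$, followed by restriction of functionals to $\mathfrak u(V)$; the pushforward of Lebesgue measure along $W\to\mathfrak u(V)^*$ is exactly $\Phi_*(dv)$ used in the statement.

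Next I would make the trace computation explicit in the Schr\"odinger model of Section~\ref{Schrodinger}. Choosing a Lagrangian $\ell\subset W$ and realizing $\omega_\psi$ on $\Omega_{1/2}(W/\ell)$, for $X\in\mathfrak{sp}(W)(F)$ sufficiently small the operator $\omega_\psi(\exp X)$ is an explicit oscillatory integral operator whose kernel $K_X(y,y')$ is a Gaussian in the character-of-second-degree sense (this is visible from the generators in \cite[Theorem~9.12]{GKT25}: unipotent elements act by multiplication by $\psi$ of a quadratic form, Levi elements by a scaling times a Weil index $\gamma_\psi(\det a)^{-1}|\det a|^{1/2}$, and the Weil indices are precisely what make the Gaussian integrals converge/normalize). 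Writing $\omega_\psi(f\circ\log)=\int_\omega f(X)\,\omega_\psi(\exp X)\,dX$ and taking the trace, one gets
\[
\mathrm{Tr}(\omega_{V,\psi,\mu}(f\circ\log))=\int_{\omega\cap\mathfrak u(V)(F)} f(X)\,\Theta(X)\,dX,
\]
where $\Theta(X)=\int K_X(y,y)\,d\mu(y)$ is the character of the Weil representation near $0$. The key classical fact — essentially the formula of Rao / Rallis / Maktouf and Thomas, or the computation underlying \cite{GKT25} — is that on the regular locus $\Theta(X)=\gamma_\psi(\text{something})\cdot|\det(X)_{|W}|^{-1/2}$, and more invariantly that $X\mapsto D^{?}(X)^{1/2}\Theta(X)$ extends to the moment-map pushforward: $\Theta(X)\,dX=\widehat{(\text{pushforward of }dv)}$ as distributions. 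Concretely, $\int f(X)\Theta(X)\,dX=\int_{\mathfrak u(V)(F)}\hat f(\xi)\,d(\Phi_*dv)(\xi)=\int_V(\hat f\circ\Phi)(v)\,dv$, which is the desired identity after shrinking $\omega$ so the stationary-phase/convergence is valid.

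The step I expect to be the main obstacle is the precise identification $\Theta\,dX = \Phi_*(dv)^{\widehat{\ }}$ with all Weil-index normalizations correct — i.e. showing the character of the Weil representation, as a distribution on a small neighborhood of $0$ in $\mathfrak u(V)(F)$, equals the Fourier transform of the pushforward of Haar measure on $V$ under the moment map, with no spurious constant. This requires (i) computing the Schr\"odinger kernel of $\omega_\psi(\exp X)$ for $X$ in the relevant subalgebra and carrying out the Gaussian integral defining its trace, tracking the product of Weil indices $\gamma_\psi$ against the Jacobian $|\beta_b|^{1/2}$-type factors, and (ii) recognizing the resulting expression as $\int_V\psi(\Phi(v)(X))\,dv$ read as a tempered distribution in $X$. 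A clean way to organize (i)--(ii) is to diagonalize $X$ over $\bar F$, reduce to the rank-one / $\mathfrak{sl}_2$-triple case where the oscillator representation computation is classical, and then reassemble using property (2) of the Maslov index (additivity over orthogonal direct sums) together with Proposition~\ref{3.1}; the convergence of all integrals is guaranteed precisely by choosing $\omega$ small enough that $\exp$ is an analytic isomorphism onto a $\mathrm{Mp}$-good neighborhood and the relevant quadratic forms are nondegenerate on the regular locus.
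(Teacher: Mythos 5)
Your proposal follows the same broad strategy as the paper — compute the trace in the Schr\"odinger model and identify the resulting distribution in $X$ with $\widehat{\Phi_*(dv)}$ — and you correctly locate the crux: showing the Weil-index and Jacobian factors conspire so that the character of $\omega_{V,\psi,\mu}$ near $0$ is \emph{exactly} $X\mapsto\int_V\psi(\langle v,Xv\rangle_V)\,dv$ with no leftover $8$th root of unity. But the technical resolution you propose diverges from the paper's, and it is at that point that I see gaps.

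The paper's proof never diagonalizes or reduces to rank one. It writes the trace as $\int_{V/\ell}\int_{\mathfrak u(V)(F)}f(X)\,\gamma_\psi(1)^n\gamma_\psi(A_{(\exp X)\ell,\ell})\,\psi\bigl(\tfrac12 Q_{(\exp X)\ell,\ell}(x,x)\bigr)\,dX\,dx$, interchanges the integrals via a dilation cut-off $h_s(\cdot)=h(s\cdot)$ (this regularization is not optional: the trace integral is not absolutely convergent and you silently interchange $\int_{V/\ell}$ and $\int_{\mathfrak u(V)}$), then invokes Thomas's \cite[Proposition 10.2]{Tho08} to replace $Q_{(\exp X)\ell,\ell}$ with the Maslov index $\tau(\Gamma_{\exp X},\Gamma_1,\ell\oplus\ell)$, and finally uses a cocycle identity comparing the graphs $\Gamma_{\pm\exp X}$ to show that for $X$ small enough the Weil-index prefactor reduces to $1$ and the quadratic form to $\langle v,Xv\rangle_V$. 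This is a single global computation; nothing is split off into rank-one pieces.

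Your proposed alternative — diagonalize $X$ over $\bar F$, reduce to the $\mathfrak{sl}_2$ case, and reassemble via Maslov additivity and Proposition \ref{3.1} — has concrete problems. First, the Weil index $\gamma_\psi$ is an $F$-arithmetic invariant (an $8$th root of unity attached to a quadratic form over $F$), so "diagonalize over $\bar F$" destroys the very structure you need to track; the correct object to decompose over is the \'etale centralizer algebra $\prod_i E_i/F_i$ attached to a regular semisimple $X$, not $\bar F$. Second, Proposition \ref{3.1} is specifically about the restriction-of-scalars inclusion $\mathrm{Res}_{K/F}U(M)\hookrightarrow U(\mathrm{Res}_{L/E}M)$ and does not directly give the tensor factorization of $\omega_{V,\psi,\mu}|_T$ over a maximal torus $T\simeq\prod_i U_{E_i/F_i}(1)$; a similar functoriality does hold (and is implicitly used in Proposition \ref{nonidentity}), but it is not what you cite, and you would still need to carry out the rank-one Gaussian computation and verify that its Weil indices assemble to $\gamma_\psi(T)$-free form near $0$. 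Third, the rank-one reduction only produces a pointwise identity on the regular locus, so you would additionally need local integrability and density arguments to promote it to the distributional statement $\Theta\,dX=\widehat{\Phi_*(dv)}$ on a full neighborhood of $0$; the paper's direct manipulation of kernels with the $h_s$ cut-off sidesteps this. In short: the obstacle you flag is exactly right, but the "clean organization" you offer for clearing it is not the paper's argument and, as stated, does not close the gap.
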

\begin{proof}
    Let $\ell$ be a langrangian of the symplectic space $\text{Res}_{E/F}V$. Denote
    $$
    U(V)^\ell=\{g\in U(V):\ g\ell\cap \ell =\{0\}\text{ and }\det(g-1)\neq 0\}
    $$
    and 
    $$
    \omega^\ell=\{X\in \omega:\ \exp(X)\ell\cap \ell =\{0\}\text{ and }\det(X)\neq 0\}.
    $$ 
    We fix $\phi\in \Omega_{1/2}(V/\ell)$. For each $g\in U(V)^\ell$, observe
    $$
    (\mathcal{F}_{\ell,g\ell}(\phi))(y)=\int_{x\in V/g\ell}\phi(x)\psi\left(\frac{Q_{g\ell,\ell}(x,y)}{2}\right)\mu_{g\ell,\ell}^{1/2},   
    $$
    where $Q_{g\ell,\ell}$ is the quadratic form defined in \cite[Section 10.2]{Tho08}. 
    
    We denote by $\mu: U(V) \rightarrow \overline{\text{Sp}}(\text{Res}_{E/F}V)$ the embedding of $U(V)$ into the metaplectic cover. We choose $\omega$ sufficiently small so that $\mu(\exp{X})=(\exp X,\gamma_\psi(1)^{n}\gamma_\psi(A_{(\exp{X})\ell,\ell})\cdot M_\ell^{\text{Sch}}[\exp X])$, for any $X\in \omega$. For any smooth function $f$ on $\omega$, we have 
    $$
    (\omega_{V,\psi,\mu}(f\circ \log)\phi)(y)=\int_{\mathfrak{u}(V)(F)}f(X) \gamma_\psi(1)^{n}\gamma_\psi(A_{(\exp{X})\ell,\ell})\int_{V/\ell}\phi(x)\psi\left(\frac{Q_{(\exp X)\ell,\ell}(x,y)}{2}\right)dxdX
    $$
    $$
    =\int_{V/\ell}\phi(x)\int_{\mathfrak{u}(V)(F)}f(X) \gamma_\psi(1)^{n}\gamma_\psi(A_{(\exp{X})\ell,\ell})\psi\left(\frac{Q_{(\exp X)\ell,\ell}(x,y)}{2}\right)dXdx.
    $$
    This gives us
    $$
    \text{tr}(\omega_{V,\psi,\mu}(f\circ \log))=\int_{V/\ell}\int_{\mathfrak{u}(V)(F)}f(X) \gamma_\psi(1)^{n}\gamma_\psi(A_{(\exp{X})\ell,\ell})\psi\left(\frac{Q_{(\exp X)\ell,\ell}(x,x)}{2}\right)dXdx.
    $$
    Let $h$ be a compactly-supported smooth function on $V/\ell$ whose measure is positive and $h(0)=1$. We set $h_s(\cdot)=h(s\cdot)$, for $s\in F$. Then 
    $$
    \text{tr}(\omega_{V,\psi,\mu}(f\circ \log))=\lim_{s\rightarrow 0}\text{tr}(h_s\cdot \omega_{V,\psi,\mu}(f\circ \log)).
    $$
    Observe
    $$
    \text{tr}(h_s\cdot \omega_{V,\psi,\mu}(f\circ \log))=\int_{V/\ell}h_s(x)\int_{\mathfrak{u}(V)(F)}f(X) \gamma_\psi(1)^{n}\gamma_\psi(A_{(\exp{X})\ell,\ell})\psi\left(\frac{Q_{(\exp X)\ell,\ell}(x,x)}{2}\right)dXdx
    $$
    $$
    =\int_{\mathfrak{u}(V)(F)}f(X)\gamma_\psi(1)^{n}\gamma_\psi(A_{(\exp{X})\ell,\ell}) \int_{V/\ell}h_s(x)\psi\left(\frac{Q_{(\exp X)\ell,\ell}(x,x)}{2}\right)dxdX.
    $$
    By \cite[Proposition 10.2]{Tho08}, $Q_{(\exp X)\ell,\ell}$ is equivalent to $\tau(\Gamma_{\exp X},\Gamma_1,\ell\oplus \ell)$ in the Witt group. Moreover, when $X$ is sufficiently small, we have 
    $$
    \gamma_\psi(1)^{n}\gamma_\psi(A_{(\exp{X})\ell,\ell})\psi\left(\frac{\tau(\Gamma_{\exp X},\Gamma_1,\ell\oplus \ell)(v,v)}{2}\right)=\psi\left(\frac{\tau(\Gamma_{-\exp X},\Gamma_{\exp X},\Gamma_1)(v,v)}{2}\right)=\psi(\langle v,Xv\rangle),
    $$
    for any $x\in V$. Noting that the above equality follows from the fact that 
    $$
    \gamma_\psi(A_{(\exp{X})\ell,\ell})\gamma_\psi\left(\tau(\Gamma_{\exp X},\Gamma_1,\ell\oplus \ell)\right)=\gamma_\psi(A_{(-\exp{X})\ell,\ell})\gamma_\psi\left(\tau(\Gamma_{-\exp X},\Gamma_1,\ell\oplus \ell)\right)\gamma_\psi(\tau(\Gamma_{-\exp X},\Gamma_{\exp X},\Gamma_1))
    $$
    and $\gamma_\psi(1)^n\gamma_\psi(A_{(-\exp{X})\ell,\ell})\gamma_\psi\left(\tau(\Gamma_{-\exp X},\Gamma_1,\ell\oplus \ell)\right)=1$ when $X$ is sufficiently small. Thus, by shrinking $\omega$ further, we obtain
    $$
     \text{tr}(h_s\cdot \omega_{V,\psi,\mu}(f\circ \log))=\int_{\mathfrak{u}(V)(F)}f(X)\int_{V}h^\prime_s(v)\cdot \psi(\langle v,Xv \rangle)dvdX
    $$
    $$
    =\int_{V}h^\prime_s(v)\int_{\mathfrak{u}(V)(F)}f(X) \psi(\langle v,Xv \rangle)dXdv,
    $$
    where $h^\prime_s$ is the function on $V$ induced from $h_s$ under the equivalence between $Q_{(\exp X)\ell,\ell}$ and $\tau(\Gamma_{\exp X},\Gamma_1,\ell\oplus \ell)$. Therefore
    $$
    \text{tr}(\omega_{V,\psi,\mu}(f\circ \log))=\lim_{s\rightarrow0}\text{tr}(h_s\cdot\omega_{V,\psi,\mu}(f\circ \log))
    $$
    $$
    =\int_{V}\int_{\mathfrak{u}(V)(F)}f(X) \psi(\langle v,Xv \rangle)dXdv=\int_V (\hat{f}\circ \Phi)(v)dv
    $$
    as desired.
\end{proof}
\subsection{A local trace formula for the twisted Gan-Gross-Prasad conjecture}

Let $F$ be a nonarchimedean local field of characteristic $0$. Let $E$ and $K$ be two quadratic field extension over $F$. We set $L=E\otimes_F K$. Let $V$ be a non-degenerate $n$-dimensional skew-hermitian space over $E$ and $V_K=V\otimes_F K$, which is naturally a skew-hermitian space with respect to $L/K$. Noting that the skew-hermitian space $V_K$ does not depend on choices of $V$ (cf. \cite[Lemma 8.1]{GGP23}). We denote $U\left(V\right)$ by $H_{V}$ and $\text{Res}_{K/F}\ U\left(V_K\right)$ by $G$.

Let $\psi$ be a nontrivial additive character of $F$ and $\mu$ be a conjugate-symplectic character of $E^{\times}$. We denote by $\omega_{V,\psi,\mu}$ the Weil representation associated to $\left(V,\psi,\mu\right)$.
Let $\pi$ be an irreducible generic representation of $G\left(F\right)$. We consider the problem of determining 
$$
m_{V}\left(\pi\right)=\dim\text{Hom}_{H_{V}}\left(\pi,\omega_{V,\psi,\mu}\right).
$$
We call a triple $\left(G,H_{V},\omega_{V,\psi,\mu}\right)$ a twisted Gan-Gross-Prasad triple. From now to the end of section \ref{sec5}, we prove the following theorem by induction. 
\begin{theorem}\label{smo}
    Let $\phi$ be a tempered L-parameter of $G(F)$ and $\Pi_{\phi}$ be the L-packet corresponding to $\phi$ in the local Langlands correspondence of $G\left(F\right)$. Then 
    $$
    \sum_{V}\underset{\pi\in\Pi_{\phi}}{\sum}\,m_{V}\left(\pi\right)=1.
    $$
\end{theorem}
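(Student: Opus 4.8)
The plan is to deduce Theorem \ref{smo} from the geometric multiplicity formula of Theorem \ref{1.3} (whose detailed form is Theorem \ref{maintheorem1}), the whole argument being organised as an induction on $n=\dim V$. To prove Theorem \ref{1.3} I would start from the local trace formula $J_V(f)=\int_{H_V(F)\backslash G(F)}K_f(x,x)\,dx$ of \cite{Le25}, whose spectral expansion is already available there and whose elliptic part records the multiplicities $m_V(\pi)$. The heart of the matter is the geometric expansion of $J_V$ (Theorem \ref{1.4}), which I would obtain by Harish--Chandra semisimple descent: near a semisimple $x\in G_{ss}(F)$ the linear form $J_V$ descends to a trace formula for $G_x$, $H_{V,x}$ and the restriction of $\omega_{V,\psi,\mu}$ (Proposition \ref{3.1}); for non-central $x$ the descended group is a product of smaller unitary and general linear groups and the inductive hypothesis applies, while for central $x$ the comparison is settled by a local character expansion of the Weil representation near central elements, an analogue of Proposition \ref{identity}. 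Matching the spectral expansion of \cite{Le25} with the geometric one of Theorem \ref{1.4} on strongly cuspidal functions, converting $\theta_f$ to $\theta_\pi$ by Proposition \ref{2.6}, and isolating individual tempered representations (as in Corollary \ref{2.9}) then yields the pointwise identity of Theorem \ref{1.3}.

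Granting Theorem \ref{1.3}, I would sum over $\pi\in\Pi_\phi$ and over the two skew-hermitian spaces $V$. Summing over the packet replaces $c_{\theta_\pi}$ by $c_{\theta_\phi}$, where $\theta_\phi=\sum_{\pi\in\Pi_\phi}\theta_\pi$ is the stable character of the packet by condition \textbf{(Stab)}. Setting
$$
A^{V}=\sum_{T\in\mathcal{T}_{\mathrm{ell}}(H_V)}\frac{\gamma_\psi(T)}{|W(H_V,T)|}\,\lim_{s\to0^{+}}\int_{T(F)}D^{G}(x)^{1/2}c_{\theta_\phi}(x)\,\frac{\mu(\det(1-x^{-1}))}{|\det(1-x)|_{E}^{1/2-s}}\,dx,
$$
one gets $\sum_{\pi\in\Pi_\phi}m_V(\pi)=\tfrac12 c_{\theta_\phi}(1)+\mu(\det V)\,A^{V}$, hence $\sum_{V}\sum_{\pi\in\Pi_\phi}m_V(\pi)=c_{\theta_\phi}(1)+\mu(\det V^{+})A^{V^{+}}+\mu(\det V^{-})A^{V^{-}}$. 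The two spaces satisfy $\mu(\det V^{+})=-\mu(\det V^{-})$, so it remains to check $A^{V^{+}}=A^{V^{-}}$. This is where stability is used: for regular semisimple $x$ one has $c_{\theta_\phi}(x)=\theta_\phi(x)$, so $D^{G}(x)^{1/2}c_{\theta_\phi}(x)$ depends only on the stable class of $x$, i.e. only on its parameter $\xi\in\Xi_{\mathrm{reg},n}^{*}$ from Section \ref{llc3}; since $\det(1-x)\in E^{\times}$ also depends only on $\xi$, the whole integrand, together with $\gamma_\psi(T)$ and $|W(H_V,T)|$, is a function of $\xi$ alone. Rewriting $A^{V}$ through the Weyl integration formula on $H_V(F)$ and pushing forward along $p_{H_V}$ to $\Xi_{\mathrm{reg},n}^{*}$, the fibre over $\xi$ is the class $C(\xi)^{\epsilon(V)}$, and $|C(\xi)^{1}|=|C(\xi)^{-1}|=2^{|I^{*}|-1}$; as the integrand is constant along the fibre, $A^{V^{+}}=A^{V^{-}}$ and the two non-identity terms cancel, leaving $\sum_{V}\sum_{\pi\in\Pi_\phi}m_V(\pi)=c_{\theta_\phi}(1)$.

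It remains to see $c_{\theta_\phi}(1)=1$. Since $G=\mathrm{Res}_{K/F}U(V_K)$ is quasi-split, the tempered $L$-packet $\Pi_\phi$ is generic: for each regular nilpotent orbit $\mathcal{O}$ of $\mathfrak{g}$ exactly one member of $\Pi_\phi$ carries a Whittaker model of type $\mathcal{O}$, with multiplicity one. By the local character expansion near the identity (Rodier), $c_{\theta_\pi,\mathcal{O}}(1)=\dim\mathrm{Wh}_{\mathcal{O}}(\pi)$, whence $c_{\theta_\phi}(1)=|\mathrm{Nil}_{\mathrm{reg}}(\mathfrak{g})|^{-1}\sum_{\mathcal{O}}\sum_{\pi\in\Pi_\phi}\dim\mathrm{Wh}_{\mathcal{O}}(\pi)=1$, which is Theorem \ref{smo}. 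I expect the main obstacle to be the geometric expansion of Theorem \ref{1.4}: without a developed theory of weighted orbital integrals attached to Weil representations of (twisted) groups, one must control the singular contributions of $J_V$ and the limit $s\to0^{+}$ directly, the genuinely new feature being that the model involves the infinite-dimensional $\omega_{V,\psi,\mu}$ rather than a character, and the comparison at central elements must be carried through the local character expansion of the Weil representation.
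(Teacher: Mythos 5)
Your deduction of Theorem \ref{smo} from the geometric multiplicity formula is precisely the paper's argument in Section \ref{sec5.7}: sum over the packet, invoke stability of $\theta_\phi$ from \textbf{(Stab)}, cancel the elliptic terms via $\mu(\det V^{+})=-\mu(\det V^{-})$ together with $A^{V^{+}}=A^{V^{-}}$, and conclude $c_{\theta_\phi}(1)=1$ from genericity. Your reformulation of the cancellation via the parameter space $\Xi^{*}_{\mathrm{reg},n}$ and the fibre sizes $|C(\xi)^{\pm1}|=2^{|I^{*}|-1}$ is equivalent to what the paper does with $\mathcal{T}^{\mathrm{stab}}_{\mathrm{ell}}(H_V)$ and $|p_{V,\mathrm{stab}}^{-1}(T)|$; both are correct.

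Your sketch of the geometric multiplicity formula (Theorem \ref{maintheorem1}) itself is aligned in structure with Sections \ref{sec4}--\ref{sec5}, but understates the comparison at the identity. The paper does not conclude directly from Proposition \ref{identity}: it descends to an infinitesimal trace formula $J_V^{\mathrm{Lie}}$, proves its spectral expansion (Theorem \ref{Liespectral}) via a detailed study of the variety $\Sigma_V=\mathfrak{h}_V^{\perp}\oplus\Phi(V)$ and the map $\pi_F:\Sigma_V'(F)/H_V(F)\to\mathfrak{g}'(F)/G(F)$ (Sections \ref{sec5.2}--\ref{sec5.4}), uses homogeneity to reduce the discrepancy to regular nilpotent coefficients (Proposition \ref{prop6.11}), and kills the residual constant $c_V$ with an explicit test quasi-character supported on a quasi-split torus. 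Crucially, when $n$ is odd $\mathfrak{g}$ has two regular nilpotent orbits and the homogeneity argument only exchanges them; to pin down the sign the paper must invoke the Chen--Gan theorem \cite{CG22} on an explicit elliptic $L$-parameter. None of these steps appear in your sketch, so as written it does not actually establish Theorem \ref{1.3}; only the downstream deduction of Theorem \ref{smo} from it is complete.
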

In \cite{GGP23}, they have shown the above theorem for $n=1,2$. We now consider $n\geq 3$. \textbf{The proof under
the induction hypothesis only finishes at the end of section \ref{sec5}.} 

For $f\in{\mathcal{C}_{\text{scusp}}}\left(G\left(F\right)\right)$, let us define a kernel function $K_V\left(f,\cdot\right)$ on $Z_{G}\left(F\right)H\left(F\right)\backslash G\left(F\right)$
by 
\[
K_V\left(f,g\right)=\text{Trace}\left(\omega_{V,\psi,\mu}\left(^{g}f\right)\right)
=
\underset{i}{\sum}\int_{H_V\left(F\right)}f\left(g^{-1}hg\right)\left\langle \phi_{i},\omega_{V,\psi,\mu}\left(h\right)\phi_{i}\right\rangle dh,
\]
where $g\in Z_{G}\left(F\right)H_V\left(F\right)\backslash G\left(F\right)$
and $\left\{ \phi_{i}\right\} _{i\in I}$ is an orthonormal basis of $\omega_{V,\psi,\mu}$. We define the following linear form 
\[
J_V\left(f\right)=\int_{Z_{G}\left(F\right)H\left(F\right)\backslash G\left(F\right)}K_V\left(f,g\right)dg.
\]
We set 
\[
J_{V,\text{spec}}\left(f\right)=\int_{{\mathcal{X}}\left(G\right)}D\left(\pi\right)\hat{\theta}_{f}\left(\pi\right)m_V\left(\bar{\pi}\right)d\pi,
\]
for all $f\in{\mathcal{C}}_{\text{scusp}}\left(Z_G(F)\backslash G\left(F\right)\right)$.
We state the following spectral expansion of $J_V(f)$, whose proof follows from \cite[Theorem 5.1]{Le25} verbatim.
\begin{theorem}\label{spectral}
For any $f\in{\mathcal{C}}_{\text{scusp}}\left(G\left(F\right)\right)$, the linear forms $K_V\left(f,g\right)$ and $J_V\left(f\right)$ are absolutely convergent. Under the induction hypothesis, we have the following spectral expansion 
\[
J_V\left(f\right)=J_{V,\text{spec}}\left(f\right),
\]
where ${\mathcal{X}}\left(G\right)$ consists of virtual representations of the form $i_{M}^{G}\left(\sigma\right)$ where $M$ is a Levi subgroup of $G$ and $\sigma$ is an elliptic representation of $M$.
\end{theorem}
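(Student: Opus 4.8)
The plan is to follow, essentially verbatim, the proof of \cite[Theorem 5.1]{Le25}, which establishes the same statement in the case $K=E$, and to explain why no step is sensitive to replacing $E$ by $K$. There are two parts: (i) the absolute convergence of $K_V(f,g)$ and of the integral defining $J_V(f)$, and (ii) the identification $J_V(f)=J_{V,\text{spec}}(f)$ under the induction hypothesis.

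For (i): for fixed $g\in G(F)$, writing ${}^{g}f(h)=f(g^{-1}hg)$ one has $K_V(f,g)=\text{Trace}\left(\omega_{V,\psi,\mu}({}^{g}f_{\mid H_V(F)})\right)=\int_{H_V(F)}{}^{g}f(h)\,\theta_{\omega_{V,\psi,\mu}}(h)\,dh$, where $\theta_{\omega_{V,\psi,\mu}}$ is the distribution character of the Weil representation of $H_V(F)$, represented on the regular set by a locally integrable function whose singularities are of the Weil-index type made explicit in Section \ref{Schrodinger} (and whose behaviour near $1$ is governed by Proposition \ref{identity}). Combining this with the Harish-Chandra--Schwartz bound for a strongly cuspidal $f$, namely $|f(x)|\ll\Xi^{G}(x)^{2}\sigma(x)^{-N}$ for every $N$, and with the standard majorization of $\int_{H_V(F)}\Xi^{G}(g^{-1}hg)^{2}\sigma(g^{-1}hg)^{-N}\,dh$ in terms of a power of $\Xi^{H_V\backslash G}(g)$ and $\sigma_{H_V\backslash G}(g)^{-1}$ (as in \cite{BP20}), one obtains absolute convergence of $K_V(f,g)$; integrating over $Z_G(F)H_V(F)\backslash G(F)$ and using the integrability of $\Xi^{H_V\backslash G}$ against a sufficiently large power of $\sigma_{H_V\backslash G}$ yields absolute convergence of $J_V(f)$. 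None of these estimates sees the difference between $K=E$ and $K\neq E$: the relative structure of $H_V=U(V)\subset\operatorname{Res}_{K/F}U(V_K)=G$ is the same, and by Proposition \ref{3.1} the representation $\omega_{V,\psi,\mu}$ of $H_V(F)$ is literally unchanged.

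For (ii): expanding the kernel $K_V(f,\cdot)$ spectrally via the explicit Plancherel formula for $G(F)$ and integrating over $Z_G(F)H_V(F)\backslash G(F)$, the period along $H_V$ of the matrix coefficients of each $\pi$ produces, for $f$ strongly cuspidal, the factor $D(\pi)\hat{\theta}_f(\pi)\,m_V(\overline{\pi})$ — this is the passage through the quasi-character expansion $\theta_f=\int_{\mathcal{X}(G)}D(\pi)\hat{\theta}_f(\pi)\overline{\theta}_\pi\,d\pi$ of Proposition \ref{2.6} — while strong cuspidality confines the support of the spectral integral to $\mathcal{X}(G)$. The one point where the induction hypothesis is used is the evaluation of $m_V(i_M^G(\sigma))$ for a proper Levi $M$ of $G$: Harish-Chandra descent reduces it to a twisted Gan-Gross-Prasad problem for a skew-hermitian space of dimension $<n$, for which Theorem \ref{smo} is assumed known. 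This bookkeeping — tracking the discrete and induced contributions and checking that each period coincides with the corresponding $m_V$ — is the main obstacle; the analytic estimates, by contrast, are routine once the Weil-character bound of Section \ref{Schrodinger} and Proposition \ref{3.1} are in hand, so that the transition from $K=E$ to $K\neq E$ costs nothing.
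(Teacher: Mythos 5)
Your proposal is essentially the paper's own approach: the paper proves Theorem \ref{spectral} by a one-line deferral to \cite[Theorem 5.1]{Le25} (the $K=E$ case), noting the argument carries over verbatim, and you propose exactly that, correctly pinpointing Proposition \ref{3.1} as the reason the passage from $K=E$ to $K\neq E$ is free (the representation $\omega_{V,\psi,\mu}$ of $H_V(F)$ is literally unchanged). Two terminological slips in your sketch are worth flagging, though neither affects the outline: the convergence estimate $|K_V(f,g)|\ll\Xi^{H_V\backslash G}(g)^{2}\sigma_{H_V\backslash G}(g)^{-d}$ in \cite[Theorem 5.2]{Le25} (which Section \ref{sec5.1} of the present paper also cites) is derived from pointwise matrix-coefficient decay of $\omega_{V,\psi,\mu}$ in a Schrodinger model, not by integrating $f$ against the distribution character $\theta_{\omega_{V,\psi,\mu}}$, an equivalent but not actually employed route that would require controlling singularities rather than smooth decay; and the reduction of $m_V(i_M^G(\sigma))$ to lower-rank twisted GGP multiplicities is a Mackey-type geometric lemma for parabolic induction (the paper's Lemma \ref{4.4}, following \cite[Lemme 17.2.1]{BP16}), not Harish-Chandra descent, which is the separate character-side tool used later for localizing near semisimple elements.
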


\section{Geometric expansion and a reduction to the Lie algebra}\label{sec4}
We define a linear form $m_{V,\text{geom}}$ on the space of quasi-characters $\text{QC}(G(F))$. We would like to show that $J_{V}(f)=m_{V,\text{geom}}(\theta_f)$ for any $f\in \mathcal{C}_{\text{scusp}}(G(F))$. In this section, we prove the two linear forms coincide when $f$ is supported outside the origin, and thus deduce the comparison to their infinitesimal variants.

\subsection{The linear forms $m_{V,\text{geom}}$ and $m^{\text{Lie}}_{V,\text{geom}}$}\label{sec4.1}
We first define the geometric support for the desired linear form. We denote by $\mathcal{T}_{\text{ell}}(H_V)$ the set containing representatives of $H_V(F)$-conjugacy classes of elliptic maximal tori in $H$. Let $T\in \mathcal{T}_{\text{ell}}(H_V)$. Since $T$ is elliptic, it follows that $T(F)$ is isomorphic to $\prod_{i}U_{E_i/F_i}(1)$, where $F_i$ is a field extension of $F$ not containing $E$ and $E_i=E\otimes_F F_i$. We set 
$$
\gamma_\psi(T)=\prod_i \gamma_\psi(2Nm_{E_i/F_i}),
$$
where $\gamma_\psi(2Nm_{E_i/F_i})$ is the Weil index associated to $\psi$ and the quadratic form $2Nm_{E_i/F_i}$.

For any quasi-character $\theta \in \text{QC}(G(F))$, we define
$$
m_{V,\text{geom}}(\theta)=\frac{1}{2}c_{\theta}\left(1\right)+\mu\left(\det V\right)
\underset{T\in{\mathcal{T}}_{\text{ell}}\left(H\right)}{\sum}\frac{\gamma_\psi(T)}{\left|W\left(H,T\right)\right|}
\underset{s\rightarrow0^{+}}{\lim}\int_{T\left(F\right)}D^{G}\left(x\right)^{1/2}c_{\theta}\left(x\right)\frac{\mu\left(\det\left(1-x^{-1}\right)\right)}{\left|\det\left(1-x\right)\right|_{E}^{1/2-s}}dx.
$$
For any (virtual) tempered representation $\pi$ of $G(F)$, we set the geometric multiplicity 
$$
m_{V,\text{geom}}(\pi)=m_{V,\text{geom}}(\theta_\pi).
$$
Let $x\in G(F)_{\text{ell}}\cap H_V(F)$. Since $(G_x,H_x,\omega_{V,\psi,\mu}|_{H_x})$ is a product of twisted Gan-Gross-Prasad triples, we can define a linear form $m_{V,x,\text{geom}}$ on $\text{QC}(G_x(F))$ as the product of linear forms $m_{-,\text{geom}}$ of corresponding twisted Gan-Gross-Prasad triples.

Likewise, we can define a Lie algebra variant of $m_{V,\text{geom}}(\theta)$. We denote by $\mathcal{T}_{\text{ell}}(\mathfrak{h}_V)$ the set containing representatives of Lie algebras of $H_V(F)$-conjugacy classes of elliptic maximal tori in $H_V$. When $T\in \mathcal{T}_{\text{ell}}(H_V)$ and $\mathfrak{t}$ is the Lie algebra of $T$, we set $\gamma_\psi(\mathfrak{t})=\gamma_\psi(T)$. For quasi-characters $\theta \in \text{QC}_c(\mathfrak{g}(F))$, we define
$$
m^{\text{Lie}}_{V,\text{geom}}(\theta)=\frac{1}{2}c_{\theta}\left(0\right)+\mu\left(\det V\right)
\underset{\mathfrak{t}\in{\mathcal{T}}_{\text{ell}}\left(\mathfrak{h}_V\right)}{\sum}\frac{\gamma_\psi(\mathfrak{t})}{\left|W\left(H_V,\mathfrak{t}\right)\right|}
\underset{s\rightarrow0^{+}}{\lim}\int_{\mathfrak{t}\left(F\right)}D^{G}\left(X\right)^{1/2}c_{\theta}\left(X\right)\frac{\mu\left(\det\left(X\right)\right)}{\left|\det\left(X\right)\right|_{E}^{1/2-s}}dx.
$$

We now show that the above definitions are well-defined.
\begin{proposition}\label{4.1}
Let $T\in \mathcal{T}_{\text{ell}}(H_V)$. We denote by $\mathfrak{t}$ the Lie algebra of $T$.
    \begin{enumerate}
        \item For any $\theta\in \text{QC}_c(G(F))$, the following integral is absolutely convergent if $\text{Re}(s)>0$
        $$
        \int_{T\left(F\right)}D^{G}\left(x\right)^{1/2}c_{\theta}\left(x\right)\frac{\mu\left(\det\left(1-x^{-1}\right)\right)}{\left|\det\left(1-x\right)\right|_{E}^{1/2-s}}dx
        $$
        and the following limit
        $$
        \underset{s\rightarrow0^{+}}{\lim}\int_{T\left(F\right)}D^{G}\left(x\right)^{1/2}c_{\theta}\left(x\right)\frac{\mu\left(\det\left(1-x^{-1}\right)\right)}{\left|\det\left(1-x\right)\right|_{E}^{1/2-s}}dx
        $$
        exists.
        \item Let $x\in G(F)_{\text{ell}}\cap H_V(F)$ and $\Omega_x\subseteq G_x(F)$ be a $G$-good open neighborhood of $x$ and set $\Omega=\Omega_x^G$. Then, if $\Omega_x$ is sufficiently small, we have
        $$
        m_{V,\text{geom}}(\theta)=\sum_{x^\prime \sim x}m_{V,x^\prime,\text{geom}}(\theta_{x,\Omega_x}),
        $$
        for any $\theta \in \text{QC}_c(\Omega)$. Noting that here we are taking the sum over $H_V(F)$-conjugacy classes of $x$.
        \item For any $\theta\in \text{QC}_c(\mathfrak{g}(F))$, the following integral is absolutely convergent if $\text{Re}(s)>0$
        $$
        \int_{\mathfrak{t}\left(F\right)}D^{G}\left(X\right)^{1/2}c_{\theta}\left(X\right)\frac{\mu\left(\det\left(X\right)\right)}{\left|\det\left(X\right)\right|_{E}^{1/2-s}}dX
        $$
        and the following limit
        $$
        \underset{s\rightarrow0^{+}}{\lim}\int_{\mathfrak{t}\left(F\right)}D^{G}\left(X\right)^{1/2}c_{\theta}\left(X\right)\frac{\mu\left(\det\left(X\right)\right)}{\left|\det\left(X\right)\right|_{E}^{1/2-s}}dX
        $$
        exists. Moreover, we have
        $$
        m_{V,\text{geom}}^{\text{Lie}}(\theta_\lambda)=|\lambda|^{\delta(G)/2}m_{V_\lambda,\text{geom}}^{\text{Lie}}(\theta),
        $$
        for any $\theta\in \text{QC}_c(\mathfrak{g}(F))$ and $\lambda \in F^\times$. Here $\theta_\lambda(X)=\theta(\lambda^{-1}X)$ for any $X\in \mathfrak{g}_{\text{reg}}(F)$, and $V_\lambda$ is the skew-hermitian space corresponding to the skew-hermitian form $\lambda\langle\cdot,\cdot\rangle_V$.
        \item Let $\omega\subseteq \mathfrak{g}(F)$ be a $G$-excellent open neighborhood of $0$ and set $\Omega=\exp(\omega)$. Then
        $$
        m_{V,\text{geom}}(\theta)=m_{V,\text{geom}}^{\text{Lie}}(\theta_{\omega}),
        $$
        for any $\theta \in \text{QC}_c(\omega)$.
    \end{enumerate}
\end{proposition}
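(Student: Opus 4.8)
The plan is to establish the four assertions in the order (3), (1), (4), (2): the Lie-algebra convergence in (3) is the local model for (1), the exponential map then transports one to the other and yields (4), while the semisimple descent behind (2) is the most bookkeeping-heavy step. For the convergence statements in (1) and (3): since $\theta \in QC_c$, its germ function $c_\theta$ is supported on a set that is compact modulo conjugation, so after restriction to $T(F)$ (resp. to $\mathfrak t(F)$, using that $\mathfrak t$ is elliptic in $\mathfrak h_V$) the integrand has compact support; off the zero locus of $\det(1-x)$ (resp. of $\det X$) it is continuous by Proposition \ref{2.2}(ii), so the only issue is local integrability near the semisimple points of that locus. Near such a point $x_0$ I would apply Harish-Chandra semisimple descent (Proposition \ref{2.2}(iii), together with the Lie-algebra transfer of Proposition \ref{2.1}(ii)) to replace $D^G(y)^{1/2} c_\theta(y)$ by $D^{G_{x_0}}(Y)^{1/2} c_{\theta'}(Y)$ on a small neighbourhood of $0$ in the relevant torus Lie algebra, $\theta'$ being the descended quasi-character, while the factor $\mu(\det(1-x^{-1})) |\det(1-x)|_E^{-1/2+s}$ becomes a function of modulus one times $|P(Y)|^{-1/2+s}$ with $P$ the restriction to that Lie algebra of the appropriate determinant polynomial. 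Feeding in the germ expansion of $c_{\theta'}$ and the homogeneity and local bounds of the functions $\hat j(\mathcal O,\cdot)$, one reduces to finitely many elementary integrals $\int_{|Y| \le 1} |P(Y)|^{-1/2+s} |Y|^{-a}\, dY$, which — $P$ factoring along the torus — converge for $\mathrm{Re}(s) > 0$ and admit meromorphic continuation in $s$. The crux is that this continuation has at worst a simple pole at $s = 0$ whose residue vanishes, so that the limit $s \to 0^+$ exists; I would prove this along the lines of \cite[\S4]{Le25} (following the pattern of \cite{BP20}), the vanishing being forced by the interaction of the calibrated half-power, the sign $\mu(\det(1-x^{-1}))$ and the Weil index $\gamma_\psi(T)$, which together realise the weight as the restriction, up to a locally constant factor, of the (locally integrable) Harish-Chandra character of the Weil representation $\omega_{V,\psi,\mu}$ — cf. Proposition \ref{identity}. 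I expect this residue computation to be the main technical obstacle.

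For the homogeneity in (3), I would substitute $X \mapsto \lambda X$ in each torus integral and in the term $\tfrac12 c_\theta(0)$ and collect the homogeneity exponents of $\hat j(\mathcal O,\cdot)$ (which governs the scaling of $c_\theta$), of $D^G$, of the determinant character and of the Haar measure, together with the transformation law of $\gamma_\psi$ under rescaling the skew-hermitian form by $\lambda$ — the latter being precisely what replaces $V$ by $V_\lambda$ — the bookkeeping producing the stated factor $|\lambda|^{\delta(G)/2}$. For (4): on the $G$-excellent neighbourhood $\omega$, $\exp$ identifies $\mathfrak t(F) \cap \omega$ with $T(F) \cap \Omega$ measure-preservingly (Jacobian $j^T \equiv 1$, as $T$ is a torus), one has $c_\theta(1) = c_{\theta_\omega}(0)$ and $D^G(e^X)^{1/2} c_\theta(e^X) = D^G(X)^{1/2} c_{\theta_\omega}(X)$ by compatibility of the germ functions with the descent to the Lie algebra; and since $1 - e^{\pm X} = \mp X \cdot u(\pm X)$ with $u(0) = 1$ and $\det(e^{-X}) = 1$ (as $X$ has trace zero on $\mathfrak g$), one gets $\mu(\det(1 - e^{-X})) = \mu(\det X)$ and $|\det(1 - e^{X})|_E = |\det X|_E$ for $X$ small (the sign $\mu(-1)^{\dim_K V_K}$ being trivial since $\dim_K V_K = 2n$ is even). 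As $\gamma_\psi(T) = \gamma_\psi(\mathfrak t)$, $|W(H,T)| = |W(H_V,\mathfrak t)|$ and the prefactors $\tfrac12$ and $\mu(\det V)$ agree by definition, the two sides match term by term, the limits $s \to 0^+$ being equal by (1) and (3).

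Finally, for (2) I would expand $m_{V,\mathrm{geom}}(\theta)$ by its definition. For $x \ne 1$ the support condition on $\theta \in QC_c(\Omega)$ forces $c_\theta(1) = 0$, and for each $T \in \mathcal T_{\mathrm{ell}}(H_V)$ the integrand of the corresponding torus integral is supported near the finitely many points of the compact set $T(F)$ that are conjugate to $x$. On a small neighbourhood of such a point $x' \sim x$, Proposition \ref{2.2}(iii) descends $D^G(y)^{1/2} c_\theta(y)$ to $D^{G_{x'}}(y)^{1/2} c_{\theta_{x,\Omega_x}}(y)$; writing $1 - x' e^{-Y}$ in block form with respect to the eigenspace decomposition of $x'$ factors $\det(1 - x' e^{-Y})$ as a nonzero constant near $x'$ times the determinant attached to the Gan-Gross-Prasad sub-triple on $\ker(1 - x')$, and one factors the sign $\mu(\det(1-x^{-1}))$ and the Weil index $\gamma_\psi(T)$ likewise over the sub-triples composing $(G_{x'}, H_{x'}, \omega_{V,\psi,\mu}|_{H_{x'}})$ — this multiplicativity of $\gamma_\psi(T)$ along products of twisted triples is the structural fact set up in Section \ref{sec4.1}. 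Matching the resulting expression, after summing over $T$ and over the relevant points and tracking the Weyl-group indices, with the product of the linear forms $m_{-,\mathrm{geom}}$ defining $m_{V,x',\mathrm{geom}}$ gives the identity; the case $x = 1$ is immediate, the descent at $1$ being the identity map. The principal difficulty here is the combinatorial matching — of elliptic tori, of Weyl groups, and of the multiplicativity of $\gamma_\psi$ and of the $\mu$-factors — which I would carry out following the corresponding arguments in \cite[\S4]{Le25} and \cite{BP20}.
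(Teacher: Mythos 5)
Your proposal follows essentially the same approach the paper takes, namely adapting the argument behind \cite[Proposition~11.2.1]{BP20} (the paper simply cites that proposition and leaves it there), and the overall structure — descent to the singular locus, germ expansion, reduction to Tate-type local integrals for (1)/(3), the exponential for (4), semisimple descent plus multiplicativity for (2) — is the standard one. Two small inaccuracies worth flagging. First, in the convergence argument for (1)/(3): you say the pole cancellation at $s=0$ is ``forced by the interaction of the calibrated half-power, the sign $\mu(\det(1-x^{-1}))$ and the Weil index $\gamma_\psi(T)$.'' But $\gamma_\psi(T)$ is a constant on each torus and cannot influence convergence or residue cancellation; the operative player is solely the sign $\mu(\det(1-x^{-1}))$, or more precisely the ramification of $\mu$ (since $\mu|_{F^\times}=\omega_{E/F}$ is nontrivial): near a zero of $\det(1-x)$ the local integral is a Tate zeta integral against a ramified character, and that ramification alone kills the would-be pole at $s=0$. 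Second, in the homogeneity for (3): the replacement of $V$ by $V_\lambda$ does not come from a ``transformation law of $\gamma_\psi$ under rescaling.'' As defined in Section~\ref{sec4.1}, $\gamma_\psi(T)$ depends only on the abstract torus $T$, not on the skew-hermitian form; since $U(V)=U(V_\lambda)$ as subgroups of $\mathrm{GL}(V)$, the set $\mathcal T_{\mathrm{ell}}(\mathfrak h_V)$ and all the $\gamma_\psi(\mathfrak t)$ are literally unchanged. The $V$-dependence is carried entirely by the prefactor $\mu(\det V)$, and the extra factor $\mu(\lambda)^{n}$ produced by $\mu(\det(\lambda X))=\mu(\lambda)^n\mu(\det X)$ after the substitution $X\mapsto\lambda X$ is exactly the ratio $\mu(\det V_\lambda)/\mu(\det V)$; that is the whole mechanism. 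Neither point affects the validity of the proof, but the current phrasing misattributes the responsibility.
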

\begin{proof}
    The proof follows from \cite[Proposition 11.2.1]{BP20}.
\end{proof}
\subsection{Statement of the main theorem and rank one case}\label{sec4.2}
We now state our first main theorem, whose proof takes place in the remaining of section \ref{sec4} and \ref{sec5} via induction. \textbf{The proof only finishes at the end of section \ref{sec5.6}.}
\begin{theorem}\label{maintheorem1}
For any $f\in \mathcal{C}_{scusp}(G(F))$, we have 
$$
J_V\left(f\right)=m_{V,\text{geom}}\left(\theta_{f}\right).
$$
Therefore, for any tempered representation $\pi$ of $G(F)$, 
$$
m_V\left(\pi\right)=m_{V,\text{geom}}\left(\theta_{\pi}\right).
$$
\end{theorem}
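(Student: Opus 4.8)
The plan is to establish first the identity of distributions $J_V(f) = m_{V,\text{geom}}(\theta_f)$ on all of $\mathcal{C}_{\text{scusp}}(G(F))$, and then to deduce $m_V(\pi) = m_{V,\text{geom}}(\theta_\pi)$ by comparing with the spectral expansion of Theorem \ref{spectral}. Both sides of the first identity are continuous linear forms on $\mathcal{C}_{\text{scusp}}(G(F))$ depending only on the quasi-character $\theta_f$, so, following Waldspurger and Beuzart-Plessis, the strategy is to localize around semisimple conjugacy classes. By Harish-Chandra semisimple descent (Section \ref{sec2.4}) and the compatibility of $m_{V,\text{geom}}$ with descent recorded in Proposition \ref{4.1}(2), the comparison near a non-central semisimple $x$ reduces to the same comparison for the triple $(G_x, H_x, \omega_{V,\psi,\mu}|_{H_x})$; since this is a product of twisted Gan-Gross-Prasad triples of strictly smaller dimension (the general linear factors in $G_x$ contributing nothing), the induction hypothesis on $n$ handles it. Consequently the full identity reduces to a comparison in a neighborhood of $1 \in G(F)$, that is, to an identity on the Lie algebra.

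Accordingly, in Section \ref{sec4} I would introduce the infinitesimal linear form $J_V^{\text{Lie}}$ on $\mathcal{C}_{\text{scusp}}(\mathfrak{g}(F))$ together with its geometric counterpart $m^{\text{Lie}}_{V,\text{geom}}$, and use the exponential map together with Proposition \ref{4.1}(4) to transfer the remaining comparison to the statement $J_V^{\text{Lie}}(f) = m^{\text{Lie}}_{V,\text{geom}}(\theta_f)$ for $f$ supported near $0$. To prove this I would establish in Section \ref{sec5} a spectral expansion of $J_V^{\text{Lie}}$ analogous to Theorem \ref{spectral}. The decisive new ingredient, replacing the quasi-character harmonic analysis used for Bessel models, is Proposition \ref{identity}: the identity $\mathrm{Tr}(\omega_{V,\psi,\mu}(f\circ\log)) = \int_V (\hat f\circ\Phi)(v)\,dv$ expresses the infinitesimal trace of the Weil representation through the moment map $\Phi$, and it is exactly this formula that produces the factor $\mu(\det(1-x^{-1}))/|\det(1-x)|_E^{1/2}$ and the Weil indices $\gamma_\psi(T)$ appearing on the geometric side.

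Once both a spectral and a geometric expansion of $J_V^{\text{Lie}}$ are available, I would conclude by a scaling argument. Proposition \ref{4.1}(3) gives the homogeneity $m^{\text{Lie}}_{V,\text{geom}}(\theta_\lambda) = |\lambda|^{\delta(G)/2} m^{\text{Lie}}_{V_\lambda,\text{geom}}(\theta)$, and $J_V^{\text{Lie}}$ satisfies the matching homogeneity in $\lambda$; comparing the $\lambda$-expansions of the two sides and using the induction hypothesis to kill all terms except the leading one forces the Lie algebra identity, with the constant $\tfrac12$ in front of $c_\theta(0)$ pinned down by the rank-one base case of Section \ref{sec4.2}. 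Finally, to obtain the representation-theoretic statement, I would substitute the expansion $\theta_f = \int_{\mathcal{X}(G)} D(\pi)\hat\theta_f(\pi)\bar\theta_\pi\,d\pi$ of Proposition \ref{2.6} into the (now proved) geometric identity; comparing the result with $J_{V,\text{spec}}(f) = \int_{\mathcal{X}(G)} D(\pi)\hat\theta_f(\pi) m_V(\bar\pi)\,d\pi$ gives an equality of integrals against the coefficients $\hat\theta_f(\pi)$ valid for all strongly cuspidal $f$, and the separation of these coefficients (Corollary \ref{2.9}(3)) together with the continuity of both $m_V$ and $\pi\mapsto m_{V,\text{geom}}(\theta_\pi)$ on $\mathcal{X}(G)$ yields $m_V(\pi) = m_{V,\text{geom}}(\theta_\pi)$ for every tempered $\pi$.

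The hard part will be the Lie algebra comparison in Section \ref{sec5}: one must control the singular integrals $\lim_{s\to 0^+}\int_{\mathfrak{t}(F)} D^G(X)^{1/2} c_\theta(X)\,\mu(\det X)|\det X|_E^{-1/2+s}\,dX$ over elliptic tori and verify that the contribution of the Weil representation --- which, being infinite-dimensional, falls outside the germ-expansion formalism of \cite{BP20} --- assembles precisely into the weights $\gamma_\psi(\mathfrak{t})$ and the global sign $\mu(\det V)$. Proposition \ref{identity} is the tool that makes this possible, but integrating it against the weighted orbital integrals defining $J_V^{\text{Lie}}$ and tracking the exact normalizing constants is where the essential work lies.
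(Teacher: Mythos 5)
Your proposal captures the overall architecture of the paper's argument faithfully --- semisimple descent to reduce to central conjugacy classes, the passage to the Lie algebra version $J_V^{\text{Lie}}$, the spectral expansion of $J_V^{\text{Lie}}$, the homogeneity/scaling argument to isolate nilpotent coefficients, and the role of Proposition \ref{identity} as the engine producing the Weil-representation weights $\gamma_\psi(T)$ and $\mu(\det(1-x^{-1}))/|\det(1-x)|_E^{1/2}$. But there are two genuine gaps.

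First, your reduction is not to a neighborhood of $1$ alone. Harish-Chandra semisimple descent plus the induction hypothesis handles non-central semisimple $x \in H_V(F)$, but the center $Z_{H_V}(F) \simeq E^1$ of $H_V$ has infinitely many non-identity elements $s$, and for these $H_{V,s} = H_V$ and $G_s = G$, so descent does not reduce the rank. The paper treats this separately in Section \ref{sec4.4}: Proposition \ref{nonidentity} derives a local character expansion of $\omega_{V,\psi,\mu}$ near $s \neq 1$ (where the Weil representation behaves differently from near $1$ --- the germ is concentrated on the \emph{elliptic} locus rather than on the moment-map image), and Proposition \ref{central} then converts this into the matching of $J_{V,\text{qc}}$ and $m_{V,\text{geom}}$ on such a neighborhood by controlling a truncated integral along the lines of \cite[Theorem 4.1.1]{BP18}. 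Without this your reduction step fails.

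Second, the scaling argument alone does not pin down the residual nilpotent coefficient when $n$ is odd. Homogeneity kills all non-regular nilpotent contributions, but for $n$ odd the Lie algebra $\mathfrak{g}$ has \emph{two} regular nilpotent orbits $\mathcal{O}_1, \mathcal{O}_2$, and the rescaling $X \mapsto \lambda^{-1}X$ for $\lambda \notin N_{E/F}(E^\times)$ swaps them while also changing $V$ to $V'$, yielding only the relation $c_{V,\mathcal{O}_1} = c_{V',\mathcal{O}_2}$ rather than $c_{V,\mathcal{O}_1} = -c_{V,\mathcal{O}_2}$. To obtain the latter the paper invokes, in Proposition \ref{prop6.11}, the result of Chen--Gan \cite{CG22} on tempered $L$-parameters that are sums of one-dimensional conjugate self-dual characters --- an external input your proposal does not mention and cannot be replaced by the rank-one base case. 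Finally, the constant $\frac{1}{2}$ is not what must be pinned down: it is already part of the definition of $m_{V,\text{geom}}$. What remains to kill is the leftover constant $c_V$ in Proposition \ref{prop6.11}, and that is done by an explicit computation (end of Section \ref{sec5.6}) evaluating both sides on a test quasi-character supported on the quasi-split torus, which reduces to products of the rank-one identities (\ref{theta1}) and (\ref{theta2}), rather than by the rank-one case directly.
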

We consider the case when $\dim V=1$. In this case, the trace formula we consider is 
\[
J_V\left(f\right)=\text{Trace}\left(\omega_{V,\psi,\mu}\left(f\right)\right),
\]
where $f\in{\mathcal{C}}_{\text{scusp}}\left(G\left(F\right)\right)=C^\infty\left(L^{1}\right)$. Here $L^1$ is the subgroup containing elements $a\in L^\times$ such that $\text{Nm}_{L/K}(a)=1$.
\begin{proposition}\label{4.3}
    When $\dim V=1$, we have
    $$J_{V}(f)=\frac{1}{2}f(1)+\mu(\det V)\gamma_{\psi}(2Nm_{E/F})\lim_{s\rightarrow 0^+}\int_{E^1}f(x)\mu(1-x^{-1})|1-x|_E^{s-1/2}dx.$$
\end{proposition}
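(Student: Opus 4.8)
The plan is to reduce the statement to an explicit evaluation of the distribution–character of the Weil representation restricted to $U(V)(F)=E^1$, and then to match it against the regularized integral on the right‑hand side. First I would record the simplifications forced by $\dim V=1$: here $G=\text{Res}_{K/F}U(V_K)$ is an anisotropic one–dimensional torus, so $Z_G=G$, the outer integral defining $J_V$ is trivial and $J_V(f)=\text{Trace}(\omega_{V,\psi,\mu}(f))$ as stated; moreover $\theta_f=f$ and $D^G\equiv1$, while on the geometric side $H_V=U(V)$ is the unique elliptic maximal torus, $|W(H_V,H_V)|=1$, $\gamma_\psi(H_V)=\gamma_\psi(2Nm_{E/F})$, and $\det(1-x^{-1}),\det(1-x)$ reduce to $1-x^{-1},1-x\in E$. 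Thus Proposition \ref{4.3} is equivalent to the identity of invariant distributions on $E^1$
\[
\Theta_{\omega_{V,\psi,\mu}}=\tfrac12\,\delta_1+\mu(\det V)\,\gamma_\psi(2Nm_{E/F})\,\Theta_0,
\]
where $\delta_1$ is the Dirac mass at $1$ (note $\mathrm{vol}(E^1)=1$) and $\Theta_0$ is the distribution $f\mapsto\lim_{s\to0^+}\int_{E^1}f(x)\mu(1-x^{-1})|1-x|_E^{s-1/2}\,dx$, which is well defined by Proposition \ref{4.1}(1).

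Next I would compute $\Theta_{\omega_{V,\psi,\mu}}$ on the regular locus $E^1\setminus\{1\}$ — equivalently, where $1-x$ is invertible on $\text{Res}_{E/F}V$ — using the Schrodinger model and the Weil/Maslov index formalism of Sections \ref{secweilindex}–\ref{Schrodinger} (cf. \cite{Tho08,GKT25}). For such $x$ the operator $\omega_{V,\psi,\mu}(x)=\mu(x)\cdot M_\ell^{\mathrm{Sch}}[x]$ is $\mu(x)$ times a partial–Fourier–transform–type operator whose kernel is the character of second degree attached to the Cayley form $v\mapsto\langle(1+x)(1-x)^{-1}v,v\rangle$ on $\text{Res}_{E/F}V$; evaluating its trace gives
\[
\Theta_{\omega_{V,\psi,\mu}}(x)=\mu(\det V)\,\gamma_\psi(2Nm_{E/F})\,\frac{\mu(1-x^{-1})}{|\det(1-x)|^{1/2}}=\mu(\det V)\,\gamma_\psi(2Nm_{E/F})\,\frac{\mu(1-x^{-1})}{|1-x|_E^{1/2}},
\]
the last equality because $1-x$ acts on $\text{Res}_{E/F}V$ as multiplication by $1-x\in E$, so $|\det(1-x)|=|\text{Nm}_{E/F}(1-x)|_F=|1-x|_E$. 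The content of this step is the index bookkeeping, purely local at $E/F$: the Cayley form is $F$–equivalent to a scalar multiple of the trace form of $V$, its Weil index produces $\gamma_\psi(2Nm_{E/F})$ together with a quadratic character that combines with the splitting factor $\mu(x)$ into $\mu(1-x^{-1})$, while the dependence on which of the two rank‑one skew‑hermitian forms $V$ is enters through its Hasse invariant as $\mu(\det V)$.

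It remains to compare the two distributions. On $E^1\setminus\{1\}$ the function just computed is the $s=0$ integrand of $\Theta_0$, so the difference $\Theta_{\omega_{V,\psi,\mu}}-\mu(\det V)\gamma_\psi(2Nm_{E/F})\Theta_0$ is supported at $\{1\}$; since $\Theta_{\omega_{V,\psi,\mu}}$ is a quasi‑character and $\Theta_0$ has the same local type near $1$, it equals $c\,\delta_1$ for a scalar $c$, and one must show $c=\tfrac12$. For this I would pass to a small $G$–excellent neighborhood $\Omega=\exp(\omega)$ of $1$ (so $j^G=D^G\equiv1$) and invoke Proposition \ref{identity}: for $f\in C^\infty_c(\Omega)$,
\[
\text{Trace}(\omega_{V,\psi,\mu}(f))=\int_V(\widehat{f_\omega}\circ\Phi)(v)\,dv=\int_{\mathfrak{u}(V)(F)}f_\omega(X)\Bigl(\int_V\psi(\langle v,Xv\rangle_V)\,dv\Bigr)dX,
\]
and by the Weil formula the inner (suitably regularized) Gaussian over the two–dimensional space $V$ equals, on the trace‑zero line $\mathfrak{u}(V)(F)$, the constant $\mu(\det V)\gamma_\psi(2Nm_{E/F})$ times a translate of $\omega_{E/F}(\cdot)\,|\cdot|_F^{-1}$. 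Under $\exp$ its principal value matches the pullback of $\mu(\det V)\gamma_\psi(2Nm_{E/F})\Theta_0$, while the difference between this Gaussian distribution and its principal value is exactly one half of the Dirac mass at $0$ — a standard local functional‑equation computation (equivalently, a reflection of Fourier inversion $\sum_\chi\chi=\delta_1$ on the compact group $E^1$ of volume $1$, of which $\omega_{V,\psi,\mu}$ realizes ``half''). Hence $c=\tfrac12$, which finishes Proposition \ref{4.3}. I expect the Weil–index and Gauss–sum bookkeeping in the last two steps — pinning down both the constant $\mu(\det V)\gamma_\psi(2Nm_{E/F})$ and the coefficient $\tfrac12$ with all signs and normalizations correct — to be the main obstacle, since this is the base case of the induction and an error there propagates throughout.
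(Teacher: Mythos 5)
Your proof takes a genuinely different route from the paper's. The paper proves Proposition \ref{4.3} entirely on the spectral side: it writes $J_V(f)=\sum_{\chi\in\hat L^1}m_V(\chi)\,\hat f(\chi)$ (the rank-one case of Theorem \ref{spectral}), substitutes the closed formula $m_V(\chi)=\tfrac12\bigl(1+\mu(\det V)\chi(-1)\epsilon(\tfrac12,\mathrm{As}_{L/E}(\chi)\mu^{-1},\psi_E)\bigr)$ from \cite[Theorem 3.1]{GGP23}, converts the $\epsilon$-factor into a regularized orbital integral using \cite[Lemme A.1]{BP15}, and then applies Fourier inversion on the compact group $L^1$ to obtain the stated formula. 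It never touches the Schr\"odinger model or the Weil index directly at this point; in fact it uses Proposition \ref{4.3} later as an \emph{input} to deduce the rank-one character formula in the proof of Proposition \ref{nonidentity}, reversing the logical order you propose. Your approach computes the Harish-Chandra character $\Theta_{\omega_{V,\psi,\mu}}$ directly on the regular locus via the Maslov/Weil-index bookkeeping of Sections \ref{secweilindex}--\ref{Schrodinger}, and then isolates the germ at the identity through Proposition \ref{identity} together with a Tate-type local functional equation on the one-dimensional Lie algebra $\mathfrak u(V)(F)\cong F$, where the half comes from $\mathbb 1_{\mathrm{image}(\Phi)}=\tfrac12(1\pm\omega_{E/F})$ and the principal-value term is the Fourier transform of $\omega_{E/F}|\cdot|^{-1}$. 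This is a self-contained geometric proof and does not rely on GGP's rank-one epsilon dichotomy or on the BP15 integral formula, which is a genuine gain in independence; the price is that the two computations you flag at the end --- the precise Weil index $\mu(\det V)\gamma_\psi(2Nm_{E/F})$ with all signs, and the coefficient $\tfrac12$ --- are exactly the nontrivial parts and are only sketched, whereas the paper finesses them by citing. Two small cautions: first, $\omega_{V,\psi,\mu}$ is a representation of $H_V(F)=E^1$, so in the identification $J_V(f)=\mathrm{Trace}\,\omega_{V,\psi,\mu}(f)$ the function $f$ on $L^1$ must implicitly be restricted to $E^1$; second, your parenthetical remark that $\omega_{V,\psi,\mu}$ ``realizes half'' of the characters of $E^1$ is not an independent alternative to the Gaussian argument --- taken literally it is precisely the GGP rank-one dichotomy $m_V(\chi)+m_{V'}(\chi)=1$ that the paper uses, so invoking it would collapse your route back onto the paper's.
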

\begin{proof}
Let $\chi$ be a continuous character of $L^1$. When considering $\chi\times {}^\sigma\chi^{-1}$ as a character of $WD_L^{ab}\simeq L^\times$, we can see that $As_{L/E}(\chi)$ is its restriction to $E^\times$. Let $\delta$ be a nonzero $\text{Tr}_{E/F}$-zero element in $E$. Applying \cite[Lemme A.1 and (10) in pg. 1360]{BP15} for the character $\chi \times {}^\sigma\chi^{-1}\times \mu^{-1}$, we have 
$$
\epsilon\left(\frac{1}{2},As_{L/E}(\chi)\times \mu^{-1},\psi_E^\delta\right)=2\omega_{E/F}(-1)\gamma_{\psi}(2Nm_{E/F})\lim_{s\rightarrow 0^+}\int_{E^1}(\chi \times {}^\sigma\chi^{-1}\times \mu^{-1})(\delta^{-1}(1-x))|1-x|_E^{s-1/2}dx
$$
$$
=2\omega_{E/F}(-1)\mu(\delta)\gamma_{\psi}(2Nm_{E/F})\lim_{s\rightarrow 0^+}\int_{E^1}\chi(x)\mu(1-x^{-1})|1-x|_E^{s-1/2}dx.
$$
Since $\epsilon\left(\frac{1}{2},As_{L/E}(\chi)\times \mu^{-1},\psi_E^\delta\right)=\chi(-1)\omega_{E/F}(-1)\mu(\delta)\epsilon\left(\frac{1}{2},As_{L/E}(\chi)\times \mu^{-1},\psi_E\right)$, it follows that
$$
\epsilon\left(\frac{1}{2},As_{L/E}(\chi)\times \mu^{-1},\psi_E\right)=2\chi(-1)\lim_{s\rightarrow 0^+}\int_{E^1}(\chi \times {}^\sigma\chi^{-1}\times \mu^{-1})(\delta^{-1}(1-x))|1-x|_E^{s-1/2}dx.
$$
The spectral expansion in the case $n=1$ gives us 
$$
J_V(f)= \sum_{\chi\in \hat{L}^1}m_V(\chi)\int_{L^1}f(x)\chi(x)^{-1}dx.
$$
By \cite[Theorem 3.1]{GGP23}, we have
$$
m_V(\chi)=\frac{1+\mu(\det V)\chi(-1)\epsilon\left(\frac{1}{2},As_{L/E}(\chi)\times \mu^{-1},\psi_E\right)}{2}.
$$
From the above discussion, the right hand side is equal to
$$
\frac{1}{2}+\mu(\det V)\gamma_{\psi}(2Nm_{E/F})\lim_{s\rightarrow 0^+}\int_{E^1}\chi(x)\mu(1-x^{-1})|1-x|_E^{s-1/2}dx.
$$
Applying the Fourier inverse formula for $f$, we obtain
$$
J_V(f)=\sum_{\chi\in \hat{L}^1}m_V(\chi)\int_{L^1}f(x)\chi(x)^{-1}dx
$$
$$
=\frac{1}{2}f(1)+\mu(\det V)\gamma_{\psi}(2Nm_{E/F})\lim_{s\rightarrow 0^+}\int_{E^1}f(x)\mu(1-x^{-1})|1-x|_E^{s-1/2}dx.
$$
We have finished our proof for Proposition \ref{4.3}.
\end{proof}

We now consider the case $n\geq2$. We first prove a compatibility between the geometric multiplicity and parabolic induction. We denote by $E^\prime$ be the third quadratic subfield of $L$ when $K\neq E$ and $E^\prime = F$ when $K=E$. Let $P=LU$ be a parabolic subgroup of $G$. We can write the Levi subgroup $L$ as a product of $\prod_{i=1}^dL_i\times \tilde{G}$, where $L_i$ is a general linear group over $L$, for $i=\overline{1,d}$, and $\tilde{G}$ is a unitary group over $K$ of smaller rank. 

We recall the notion of $V$-relevant flag in \cite{GGP23}. A flag $\mathcal{L}=\{V_1,\ldots,V_d,V_{d+1}\}$ of $V_K$, where $V_i$ is a skew-hermitian space relative to $L/E^\prime$ if $i=\overline{1,d}$ and relative to $L/K$ if $i=d+1$, is call $V$-relevant if 
$$
\det(V)=\prod_i \det(\text{Res}_{L/E}(V_i)).
$$
Assume $P$ stabilizes $\mathcal{L}$. We define the linear form on $\text{QC}(L(F))=\otimes_{i=1}^d\text{QC}(L_i(F))\otimes \text{QC}(\tilde{G}(F))$ associated to $\mathcal{L}$ by
$$
m^{\mathcal{L}}_{V,\text{geom}}(\otimes_{i=1}^d\theta_i\otimes \tilde{\theta})=\left(\prod_{i=1}^d m^{L_i}_{V_i,\text{geom}}(\theta_i)\right)m_{V_{d+1},\text{geom}}^{\tilde{L}}(\tilde{\theta}),
$$
where $\theta_i\in \text{QC}(L_i(F))$ for $i=\overline{1,d}$ and $\tilde{\theta}\in\text{QC}(\tilde{G}(F))$. Noting that here $m^{L_i}_{V_i,\text{geom}}$ and $m_{V_{d+1},\text{geom}}^{\tilde{L}}$ are linear forms corresponding to the twisted Gan-Gross-Prasad triples $(L_i,U(V_i),\omega_{V_i,\psi,\mu})$ and $(\tilde{L},U(V_{d+1}),\omega_{V_{d+1},\psi,\mu})$. The following lemma gives us the compatibility of $m_{V,\text{geom}}$ with parabolic induction.

\begin{lemma}\label{4.4}
Let $\theta^{L}\in\text{QC}\left(L\left(F\right)\right)$ and $\theta=i_{L}^{G}\left(\theta^{L}\right)$. We have  
$$
m_{V\text{geom}}\left(\theta\right)=\sum_{\mathcal{L}}\,m_{V,\text{geom}}^{\mathcal{L}}\left(\theta^{L}\right),
$$
where $\mathcal{L}$ runs over the set of $V$-relevant flags stabilized by $P$.
\end{lemma}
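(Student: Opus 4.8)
The plan is to follow the strategy used for Bessel models in \cite{BP20}, the only genuinely new ingredients being the multiplicativity of the Weil index $\gamma_\psi$ and of the factor $\mu(\det(1-x^{-1}))|\det(1-x)|_E^{-1/2}$ along orthogonal decompositions. Since both sides of the identity are linear in $\theta^L$, I would first reduce to the case where $\theta^L$ is a pure tensor $\bigotimes_{i=1}^d\theta_i\otimes\tilde{\theta}$ relative to $L=\prod_{i=1}^d L_i\times\tilde{G}$. Next I would unfold $m_{V,\text{geom}}(i_L^G(\theta^L))$ and insert the descent formula for the regularized germ of an induced quasi-character: for $x\in G_{ss}(F)$ one has $D^G(x)^{1/2}c_{i_L^G(\theta^L)}(x)=\sum_y D^L(y)^{1/2}c_{\theta^L}(y)$, the sum running over the finitely many $L(F)$-conjugacy classes of semisimple $y\in L(F)$ that are $G(F)$-conjugate to $x$ (a standard consequence of the local character expansion and Harish--Chandra descent, together with the twisted analogue recalled in \cite{Le25}). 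Using that $c_{\theta^L}$ and $D^L$ factor through the product $L=\prod_i L_i\times\tilde{G}$, and interchanging the finite sum over $y$ with the sum over $T\in\mathcal{T}_{\text{ell}}(H_V)$ and the integral over $T(F)$ (justified by Proposition \ref{4.1}), I would rewrite the left-hand side as a sum indexed by maximal tori $T$ of $H_V$ that are $G(F)$-conjugate into $L$, together with the term $\frac{1}{2}c_{i_L^G(\theta^L)}(1)=\frac{1}{2}c_{\theta^L}(1)$ (only $y=1$ being $G(F)$-conjugate to $1$).

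The core of the argument is the combinatorial reorganization of this sum. The geometric input I would invoke is that giving a maximal torus $T$ of $H_V$ which is $G(F)$-conjugate into $L$, up to conjugacy, is the same as giving a $V$-relevant flag $\mathcal{L}=\{V_1,\ldots,V_{d+1}\}$ stabilized by $P$ together with elliptic maximal tori $T_i\subset U(V_i)$ (for $i\le d$, realized inside $L_i$) and $\tilde{T}\subset U(V_{d+1})$ (realized inside $\tilde{G}$); the orthogonal decomposition $V\cong\bigoplus_i\text{Res}_{L/E}(V_i)$ carried by $\mathcal{L}$ is precisely the one induced by $T=\prod_i T_i\times\tilde{T}$, and the condition $\det V=\prod_i\det(\text{Res}_{L/E}(V_i))$ defining $V$-relevance is exactly what makes $\mu(\det V)=\prod_i\mu(\det\text{Res}_{L/E}(V_i))$. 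Granting this, I would check that each factor of the integrand splits along $\mathcal{L}$: $\gamma_\psi(T)=\prod_i\gamma_\psi(T_i)\,\gamma_\psi(\tilde{T})$ by multiplicativity of the Weil index under orthogonal direct sums and the formula $\gamma_\psi(T)=\prod_i\gamma_\psi(2\text{Nm}_{E_i/F_i})$; the quantities $\det(1-x^{\pm1})$ factor over the blocks of the decomposition of $V$; and the Weyl-group cardinalities and Haar measures match the corresponding products over the factors, up to the Jacobian (a power of $|2|_F$) of the parametrizations in Section \ref{llc4}, which is already absorbed into the normalizations. Assembling the block contributions, those attached to the $L_i$ recombine into $m^{L_i}_{V_i,\text{geom}}(\theta_i)$ and those attached to $\tilde{G}$ into $m^{\tilde{L}}_{V_{d+1},\text{geom}}(\tilde{\theta})$, and summing over $\mathcal{L}$ yields the assertion.

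The step I expect to be the main obstacle is this last bookkeeping. One must keep track of which of the components of the semisimple elements $y$ appearing in the germ expansion are regular in the corresponding block and which are central: the terms in which several blocks are simultaneously central — together with the term $\frac{1}{2}c_{\theta^L}(1)$ — account for the \emph{mixed} contributions obtained when one expands the product $\prod_i m^{L_i}_{V_i,\text{geom}}(\theta_i)\cdot m^{\tilde{L}}_{V_{d+1},\text{geom}}(\tilde{\theta})$ into terms indexed by subsets of $\{1,\ldots,d+1\}$, and getting all the constants (in particular the factors of $\frac{1}{2}$) to come out right is the delicate point. The multiplicativity of $\gamma_\psi$ and of the $\mu$-factor along orthogonal sums, though elementary, is the feature specific to the Fourier--Jacobi setting and has to be checked carefully; Proposition \ref{4.1} and the analogous statements for the factor triples are the technical tools that make the manipulation rigorous, and I would lean on them throughout.
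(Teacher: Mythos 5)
Your proposal is correct and follows exactly the route the paper takes: the paper's proof is a one-line citation to \cite[Lemme 17.2.1]{BP16} (the Bessel-model analogue), and your outline — germ descent for induced quasi-characters, parametrization of the elliptic tori of $H_V$ meeting a given Levi by $V$-relevant flags together with block tori, multiplicativity of $\gamma_\psi$ and of the $\mu$- and $\Delta$-factors along orthogonal decompositions, and the bookkeeping that matches the $\frac{1}{2}c_\theta(1)$ and mixed-central contributions against the expanded product $\prod_i m^{L_i}_{V_i,\text{geom}}(\theta_i)\cdot m^{\tilde L}_{V_{d+1},\text{geom}}(\tilde\theta)$ — is precisely that adaptation. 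You correctly isolate the ingredients specific to the Fourier--Jacobi setting (multiplicativity of the Weil index and of $\mu(\det(1-x^{-1}))|\det(1-x)|_E^{-1/2}$, and the $V$-relevance condition $\mu(\det V)=\prod_i\mu(\det\mathrm{Res}_{L/E}(V_i))$), which is what the paper calls "without any difficulties."
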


\begin{proof}
    The proof can be adapted from \cite[Lemme 17.2.1]{BP16} without any difficulties.
\end{proof}

By \cite[Theorem 4.8, Theorem 10.1]{GGP23}, Lemma \ref{4.4} and the induction hypothesis, it follows that 
$$
m_V\left(\pi\right)=m_{V,\text{geom}}\left(\pi\right),
$$
for any $\pi\in R_{\text{ind}}\left(G\right)$. By Theorem \ref{spectral}, for any strongly cuspidal function $f$ on $G(F)$, 
$$
J_V\left(f\right)=J_{V,\text{spec}}\left(f\right)=\int_{{\mathcal{X}}\left(G\right)}D\left(\pi\right)\hat{\theta}_{f}\left(\pi\right)m_V\left(\bar{\pi}\right)d\pi
$$
\[
=\int_{{\mathcal{X}}_{\text{ind}}\left(G\right)}D\left(\pi\right)\hat{\theta}_{f}\left(\pi\right)m_V\left(\bar{\pi}\right)d\pi+\underset{\pi\in{\mathcal{X}}_{\text{ell}}\left(G\right)}{\sum}D\left(\pi\right)m_V\left(\bar{\pi}\right)\int_{\Gamma\left(G\right)_{\text{ell}}}D^{G}\left(x\right)\theta_{f}\left(x\right)\theta_{\pi}\left(x\right)dx
\]
\[
=m_{V,\text{geom}}\left(\theta_{f}\right)+\underset{\pi\in{\mathcal{X}}_{\text{ell}}\left(G\right)}{\sum}D\left(\pi\right)\left(m_V\left(\bar{\pi}\right)-m_{V,\text{geom}}\left(\bar{\pi}\right)\right)\int_{\Gamma\left(G\right)_{\text{ell}}}D^{G}\left(x\right)\theta_{f}\left(x\right)\theta_{\pi}\left(x\right)dx.
\]
We set  
\[
J_{V,\text{qc}}\left(\theta\right)=m_{V,\text{geom}}\left(\theta\right)+\underset{\pi\in{\mathcal{X}}_{\text{ell}}\left(G\right)}{\sum}D\left(\pi\right)\left(m_V\left(\bar{\pi}\right)-m_{V,\text{geom}}\left(\bar{\pi}\right)\right)\int_{\Gamma\left(G\right)_{\text{ell}}}D^{G}\left(x\right)\theta\left(x\right)\theta_{\pi}\left(x\right)dx,
\]
for $\theta\in\text{QC}\left(G\left(F\right)\right)$. Since $\text{Supp}\left(m_{V,\text{geom}}\right)\subseteq\Gamma\left(G\right)_{\text{ell}}$,
we can see that $\text{Supp}\left(J_{V,\text{qc}}\right)\subseteq\Gamma\left(G\right)_{\text{ell}}$.
Moreover, from the above computation, by substituting $\theta$ to
be $\theta_{\bar{\pi}}$ and the orthogonality relations of Arthur for elliptic representations, we can see that the statement 
\[
J_{V,\text{qc}}\left(\theta\right)=m_{V,\text{geom}}\left(\theta\right),\text{ for any }\theta\in\text{QC}\left(G\left(F\right)\right)
\]
implies 
\[
m_V\left(\pi\right)=m_{V,\text{geom}}\left(\pi\right),\text{ for any }\pi\in{\mathcal{X}}_{\text{ell}}\left(G\right),
\]
which is to say $J_V(f)=m_{V,\text{geom}}(\theta_f)$. Thus, from now it suffices to show that 
$$
J_{V,\text{qc}}(\theta)=m_{V,\text{geom}}(\theta),
$$
for any quasi-character $\theta$ on $G(F)$.

\subsection{Descent to non-central elliptic semisimple elements}\label{sec4.3}
In this section, we show that the two linear forms $J_{V,\text{qc}}$ and $m_{V,\text{geom}}$ coincide when supported outside central elements of $H_V(F)$.
\begin{proposition}\label{elliptic}
Let $\theta\in QC\left(G\left(F\right)\right)$ and assume that $Z_{H_V}\left(F\right)\cap\text{Supp}\left(\theta\right)=\emptyset$.
Then 
\[
J_{V,\text{qc}}\left(\theta\right)=m_{V,\text{geom}}(\theta).
\]
\end{proposition}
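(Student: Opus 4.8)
The plan is to combine Harish-Chandra semisimple descent with the inductive hypothesis (Theorem \ref{maintheorem1} for twisted Gan-Gross-Prasad triples of rank $<n$). First I would use the support facts already recorded above, namely $\text{Supp}(m_{V,\text{geom}})\subseteq\Gamma(G)_{\text{ell}}$ and $\text{Supp}(J_{V,\text{qc}})\subseteq\Gamma(G)_{\text{ell}}$, together with $\text{Supp}(m_{V,\text{geom}})\subseteq H_V(F)$ and the identity $Z_{H_V}(F)=Z_G(F)\cap H_V(F)$. Given $\theta\in QC(G(F))$ with $Z_{H_V}(F)\cap\text{Supp}(\theta)=\emptyset$, a partition of unity (after the standard reduction to the compactly supported situation) reduces us to proving that $J_{V,\text{qc}}$ and $m_{V,\text{geom}}$ agree on $QC_c(\Omega)$ with $\Omega=\Omega_x^G$, for a fixed non-central elliptic semisimple $x\in H_V(F)$ and a small $G$-good neighborhood $\Omega_x\subseteq G_x(F)$ of $x$; for such $x$ one has $G_x\subsetneq G$, $H_{V,x}\subsetneq H_V$, and the eigenspace decomposition $V=\bigoplus_a V_a$ is nontrivial.

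On the geometric side the descent is exactly Proposition \ref{4.1}(2): for $\Omega_x$ small enough, $m_{V,\text{geom}}(\theta)=\sum_{x'\sim x}m_{V,x',\text{geom}}(\theta_{x,\Omega_x})$, where $m_{V,x',\text{geom}}$ is the linear form attached to the triple $(G_{x'},H_{x'},\omega_{V,\psi,\mu}|_{H_{x'}})$, itself a product of twisted Gan-Gross-Prasad triples of rank $<n$. The crux is to prove the matching descent for the trace-formula side. For this I would realize $\theta=\theta_f$ with $f\in\mathcal{C}_{\text{scusp}}(G(F))$ supported in $\Omega$ (by the surjectivity of $f\mapsto\theta_f$ onto $QC_c(\Omega)$), use the identity $J_V(f)=J_{V,\text{qc}}(\theta_f)$ obtained above from the spectral expansion, and unfold $J_V(f)=\int_{Z_G(F)H_V(F)\backslash G(F)}K_V(f,g)\,dg$ with $K_V(f,g)=\text{Trace}(\omega_{V,\psi,\mu}({}^{g}f))$ by means of the Harish-Chandra integration formula of Section \ref{sec2.4}. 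The key structural input is that, for semisimple $x\in H_V(F)$ with $V=\bigoplus_a V_a$, the restriction $\omega_{V,\psi,\mu}|_{H_{V,x}(F)}$ is (the completion of) the tensor product of the Weil representations of the pieces $V_a$ — a consequence of Proposition \ref{3.1} and the Stone-von Neumann theorem — the general-linear pieces of $H_{V,x}$ contributing trivial factors and the unitary pieces honest Weil representations. Combined with the descent of strongly cuspidal functions and of their quasi-characters near $x$ (Proposition \ref{2.8}), this identifies the localization of $J_V$ near $x$ with $\sum_{x'\sim x}J_{V,x'}(f_{x'})$, the sum of the trace formulas of the descended triples.

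Both descent identities now reduce the comparison near $x$ to the triples $(G_{x'},H_{x'},\omega_{V,\psi,\mu}|_{H_{x'}})$, which are products of twisted Gan-Gross-Prasad triples of rank $<n$; by the induction hypothesis Theorem \ref{maintheorem1} holds for each of them, so $J_{V,x'}(f_{x'})=m_{V,x',\text{geom}}(\theta_{f_{x'}})$. Plugging this into the two displayed descent formulas gives $J_{V,\text{qc}}(\theta)=m_{V,\text{geom}}(\theta)$ on $QC_c(\Omega)$, and summing over the partition of unity yields the proposition. I expect the main obstacle to be the descent of the trace-formula side in the previous paragraph: unlike the purely group-theoretic descent of weighted orbital integrals, one must control the restriction of the infinite-dimensional Weil representation along the centralizer $H_{V,x}$ and its interaction with the Weil-index factors $\gamma_\psi$ built into the kernel, ensuring that the general-linear factors drop out and the unitary factors reproduce genuine twisted Gan-Gross-Prasad trace formulas; handling the elliptic-representation correction term in $J_{V,\text{qc}}$ compatibly with this descent, via the orthogonality relations of Arthur for elliptic characters, also requires care.
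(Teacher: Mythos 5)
Your proposal follows essentially the same route as the paper's proof: partition of unity localizing to $\Omega_x^G$ for $x\in G(F)_{\mathrm{ell}}$, Harish-Chandra descent of both the geometric side (Proposition~\ref{4.1}(2)) and of $J_V$ via strongly cuspidal lifts and the $\alpha$-function device of Proposition~\ref{2.8}, followed by the induction hypothesis applied to the descended triples $(G_{x'},H_{V,x'},\omega|_{H_{V,x'}})$. Two small remarks: the paper first dispatches the case where $x$ is not $G(F)$-conjugate to any element of $H_V(F)$ (both sides visibly vanish), which your reduction to $x\in H_V(F)$ silently assumes via the support of $J_{V,\mathrm{qc}}$; and your worry about general-linear factors in $H_{V,x}$ is not actually needed — ellipticity of $x$ in $G$ forces $A_{G_x}=1$, which rules out any $\mathrm{GL}$ (even $\mathrm{GL}_1$) factor in $G_x$, so $H_{V,x}$ is a genuine product of unitary groups and all descended triples are honest twisted GGP triples, exactly as the paper asserts.
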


\begin{proof}
Observe that $J_{V,\text{qc}}$ is supported in $\Gamma_{\text{ell}}\left(G\right)$. By a process of partition of unity, it suffices to prove the equality
for $\theta\in\text{QC}\left(\Omega\right)$,
where $\Omega$ is a completely stably $G\left(F\right)$-invariant
open subset of $G\left(F\right)$ of the form $\Omega_{x}^{G}$
for some noncentral elements $x\in G\left(F\right)_{\text{ell}}$ and $\Omega_{x}\subseteq G_{x}\left(F\right)$ is a $G$-good open neighborhood of $x$. We can assume that $\Omega_{x}$ is relatively compact modulo conjugation.

We first consider the case when $x$ is not $G\left(F\right)$-conjugate to any element of $H_{V}\left(F\right)$. Since the set $\Gamma_G\left(H_V\right)$ containing $G(F)$-conjugacy classes in $H_V(F)$ is closed in $\Gamma\left(G\right)$, when $\Omega_{x}$ is sufficiently small, we have $\Omega_{x}^{G}\cap\Gamma\left(H\right)=\emptyset$. In this case, it is easy to see that $\Omega_{x}^{G}$ has no contribution in both $J_{\text{qc},V}$ and $m_{V,\text{geom}}$.

There remains to consider the case when $x$ is $G\left(F\right)$-conjugate to some elements of $H_{V}\left(F\right)$. We may take $x\in H_{V}\left(F\right)$. In this case, we have  
\[
G_{x}=G_{1}\times\ldots\times G_{m}\ \ \ \ \text{and}\ \ \ \ H_{V,x}=H_{1}\times\ldots\times H_{m},
\]
where $H_{1},\ldots,H_{m}$ are certain unitary groups and $G_{i}=\text{Res}_{K/F}H_{i,K}$. We can choose $\Omega_x$ such that $\Omega_{x}\cap H_{x}\left(F\right)$ is a $H$-good
open neighborhood of $x$ and 
\[
\Omega\cap H(F)=\bigsqcup_{i=1}^{2^{m-1}}\left(g_{i}^{-1}(\Omega_{x}\cap H_{x}\left(F\right)\right)g_{i})^{H},
\]
for some $g_{i}\in G\left(F\right)$. We set $x_i=g_i^{-1}xg_i$ and $\Omega_{x_i}=g_i^{-1}\Omega_xg_i$.

By shrinking $\Omega_{x}$ further, we assume that $\Omega_{x}=\Omega_{1}\times\ldots\times\Omega_{m}$, where $\Omega_{i}\subseteq G_{i}\left(F\right)$ is open and completely $G_{i}\left(F\right)$-invariant. Since $\text{QC}\left(\Omega_{x}\right)=\underset{i}{\bigotimes}\,\text{QC}\left(\Omega_{i}\right)$,
it suffices to consider $\theta_{x,\Omega_{x}}=\otimes_i\theta_{i}$,
where $\theta_{i}$ lies in $\text{QC}\left(\Omega_{i}\right)$. For each $i=\overline{1,m}$, we choose $f_{x,i}\in \mathcal{C}_{\text{scusp}}(\Omega_i)$ such that $\theta_{f_{x,i}}=\theta_i$. We denote $f_x=\otimes_if_{x,i}$. Let $f=\otimes_if_i=\widetilde{f_x}\in{\mathcal{C}}_{\text{scusp}}\left(\Omega\right)$ be a lift defined in Proposition \ref{2.8}. We have  
\[
J_{V,\text{qc}}(\theta_f)=J_{V}\left(f\right)=\int_{Z_{G}\left(F\right)H_V\left(F\right)\backslash G\left(F\right)}\underset{i}{\sum}\int_{ H_V\left(F\right)}f\left(g^{-1}hg\right)\left\langle \phi_{i},\omega_{V,\psi,\mu}\left(h\right)\phi_{i}\right\rangle dhdg
\]
\[
=\sum_{j=1}^{2^{m-1}}\int_{Z_{G}\left(F\right)H_V\left(F\right)\backslash G\left(F\right)}\underset{i}{\sum}\int_{H_{V,x}\left(F\right)\backslash H_V\left(F\right)}\int_{ H_{V,x}\left(F\right)}\left(^{g_jhg}f\right)_{x,\Omega_{x}}\left(h_{x}\right)\left\langle \phi_{i},\omega_{V,\psi,\mu}\left(h_{x}\right)\phi_{i}\right\rangle dh_{x}dhdg.
\]
Assume one moment that the exterior double integral and the sum above
are absolutely convergent. Then each summand of $J_{V}\left(f\right)$ is equal to
\begin{equation}\label{ssdescent}
\int_{G_{x_j}\left(F\right)\backslash G\left(F\right)}\int_{Z_{G_{x_j}}\left(F\right)H_{V,x_j}\left(F\right)\backslash G_{x_j}\left(F\right)}\underset{i}{\sum}\int_{ H_{V,x}\left(F\right)}\left(^{g_{x_j}g}f\right)_{x_j,\Omega_{x_j}}\left(h_{x_j}\right)\left\langle \phi_{i},\omega_{V,\psi,\mu}\left(h_{x_j}\right)\phi_{i}\right\rangle dh_{x_j}dg_{x_j}dg.   
\end{equation}
We introduce a function $\alpha$ on $G_{x}\left(F\right)\backslash G\left(F\right)$
as in \cite[Proposition 5.7.1]{BP20}. Up to translating $g$ by an
element in $G_{x}\left(F\right)$, we may assume that $\left(^{g}f\right){}_{x,\Omega_{x}}=\alpha\left(g\right)f_{x}$.
Thus 
\[
\int_{Z_{G}\left(F\right)H_{V,x}\left(F\right)\backslash G_{x}\left(F\right)}\underset{i}{\sum}\int_{ H_{V,x}\left(F\right)}\left(^{g}f\right)_{x,\Omega_{x}}\left(g_{x}^{-1}h_{x}g_{x}\right)\left\langle \phi_{i},\omega_{V,\psi,\mu}\left(h_{x}\right)\phi_{i}\right\rangle dh_{x}dg_{x}
\]
\[
=\alpha\left(g\right)\int_{Z_{G}\left(F\right)H_{V,x}\left(F\right)\backslash G_{x}\left(F\right)}\underset{i}{\sum}\int_{ H_{V,x}\left(F\right)}f_x\left(g_{x}^{-1}h_{x}g_{x}\right)\left\langle \phi_{i},\omega_{V,\psi,\mu}\left(h_{x}\right)\phi_{i}\right\rangle dh_{x}dg_{x}
=\alpha\left(g\right)\prod_{i=1}^mJ^{H_{i}}\left(f_x^{i}\right),
\]
where $J^{H_{i}}$ is the linear form in style of $J_{V}$ for the
twisted Gan-Gross-Prasad triple $\left(G_{i},H_{i},\omega_{H_{i},\psi,\mu}\right)$.

We set $g_j^{-1}H_xg_j=H_{1,j}\times\ldots \times H_{m,j}$ and $G_{i,j}=\text{Res}_{K/F}(H_{i,j})_K$. Since the function $\alpha$ is compactly supported,
the exterior double integral of (\ref{ssdescent}) is absolutely convergent. Moreover, as $\int_{G_{x}\left(F\right)\backslash G\left(F\right)}\alpha\left(g\right)dg=1$, it follows that  
\[
J_{V}\left(f\right)=\sum_{i=1}^{2^{m-1}}\prod_{i=1}^mJ^{H_{i,j}}\left(^{g_j}f_{x,i}\right).
\]
By the induction hypothesis, we have
$$
J_{V}\left(f\right)=\sum_{i=1}^{2^{m-1}}\prod_{i=1}^mJ^{H_{i,j}}\left(^{g_j}f_{x,i}\right)=\sum_{i=1}^{2^{m-1}}\prod_{i=1}^m m_{H_{i,j},\text{geom}}(^{g_j}\theta_{f_{x,i}}),
$$
where $m_{H_{i,j},\text{geom}}$ is the linear form $m_{V,\text{geom}}$ for the twisted GGP triple $(G_{i,j},H_{i,j},\omega_{H_{i,j},\psi,\mu})$. Proposition \ref{4.1}(2) and a direct computation give us 
$$
J_{V,\text{qc}}(\theta_f)=J_V(f)=m_{V,x,\text{geom}}(\theta_{f_x})=m_{V,\text{geom}}(\theta_f).
$$
\end{proof}

\subsection{Descent to non-identity central elements}\label{sec4.4}
Let $s$ be a non-identity element in $Z_{H_V}(F)\simeq E^1$. In this section, we show that the two linear forms $J_{V,\text{qc}}$ and $m_{V,\text{geom}}$ coincide near $s$. We first examine the local character expansion of the Weil representation near $s$.
\begin{proposition}\label{nonidentity}
    For any sufficiently small neighborhood $\omega$ of $0$ in $\mathfrak{u}(V)$ and any smooth function $f\in C^\infty(\omega)$, we have
    $$
    \text{tr}(\omega_{V,\psi,\mu}(f\circ \log)\circ \omega_{V,\psi,\mu}(s))=\frac{\mu(\det V)\mu(1-s^{-1})^{n}}{|s-1|^{n/2}_E}\sum_{\mathfrak{t}\in \mathcal{T}_{\text{ell}}(\mathfrak{h}_V)}\frac{\gamma_\psi(\mathfrak{t})}{|W(H_V,\mathfrak{t})|}
    $$
    $$
    \int_{\mathfrak{t}(F)}D^{H_V}(X)\int_{T(F)\backslash H_V(F)}{}^hf(X)d\Dot{h}dX.
    $$
\end{proposition}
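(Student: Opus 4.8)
The plan is to reduce this to the local character expansion near the identity, namely Proposition \ref{identity}, via a "conjugation by $s$" trick combined with the descent formalism already set up in Section \ref{sec2.4}. First I would observe that since $s\in Z_{H_V}(F)$ is central in $H_V$ but \emph{not} central in $G$, the centralizer $G_s$ is a proper reductive subgroup, and conjugation by $s$ on $\mathfrak{u}(V)$ is trivial; the point is that $\omega_{V,\psi,\mu}(s)$ acts on the Weil representation space by the metaplectic avatar of $s$, whose trace against $\omega_{V,\psi,\mu}(f\circ\log)$ I want to compute. The key structural input is that $U(V)$ decomposes, relative to the eigenspace decomposition of $s$ (which has eigenvalues in $\{1,-1\}$ or a pair $\{\lambda,\bar\lambda\}$ over $E$), so that $V = V' \oplus V''$ with $s$ acting as a scalar on each piece, and the Weil representation factors as an (outer) tensor product $\omega_{V',\psi,\mu}\boxtimes\omega_{V'',\psi,\mu}$ compatibly with this splitting. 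Then $\text{tr}(\omega_{V,\psi,\mu}(f\circ\log)\circ\omega_{V,\psi,\mu}(s))$ becomes an integral over $\mathfrak{u}(V)(F)$ whose integrand, after restricting to the slice, is controlled by the character of $\omega_{V,\psi,\mu}$ at $s$ times the expansion near $1$ on the centralizer.

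Concretely, I would run the following steps. \textbf{Step 1:} Shrink $\omega$ so that the embedding $\mu:U(V)\to\overline{\mathrm{Sp}}(\mathrm{Res}_{E/F}V)$ has the explicit metaplectic form recalled in Section \ref{Schrodinger}, and so that $\exp(\omega)s$ lies in a good neighborhood of $s$; here I use that $s$ is elliptic in $H_V$ and the structure of $G_s$. \textbf{Step 2:} Write the trace using a Schrödinger model with a suitably chosen Lagrangian $\ell$, so that $\text{tr}(\omega_{V,\psi,\mu}(f\circ\log)\circ\omega_{V,\psi,\mu}(s)) = \int_{\mathfrak u(V)(F)} f(X)\,\Theta_{s}(X)\,dX$ where $\Theta_s(X)$ is an explicit oscillatory kernel built from the Maslov index $\tau(\Gamma_{s\exp X},\Gamma_1,\ell\oplus\ell)$ and a Weil-index normalization, exactly paralleling the computation in the proof of Proposition \ref{identity} but with $\exp X$ replaced by $s\exp X$. \textbf{Step 3:} Evaluate the Weil-index/Maslov contribution: when $X$ is small the quadratic form $\tau(\Gamma_{s\exp X},\Gamma_1,\ell\oplus\ell)$ splits (using property (4) of the Maslov index and the $s$-eigenspace decomposition) into the "constant in $X$" part governing the character of $\omega_{V,\psi,\mu}$ at $s$ — which by the Cartan/Weyl decomposition of $H_V$ yields the factor $\frac{\mu(\det V)\mu(1-s^{-1})^n}{|s-1|_E^{n/2}}$ — and the remaining part $\psi(\langle v, Xv\rangle)$ on the centralizer, producing the Fourier transform $\hat f\circ\Phi$ restricted to $\mathfrak h_V$. \textbf{Step 4:} Apply the Weyl integration formula on $H_V$ (using that the relevant orbital integrals concentrate on elliptic maximal tori, since the moment map $\Phi$ and the quadratic form localize the support) to rewrite $\int_V(\hat f\circ\Phi)$ as $\sum_{\mathfrak t\in\mathcal T_{\mathrm{ell}}(\mathfrak h_V)}\frac{\gamma_\psi(\mathfrak t)}{|W(H_V,\mathfrak t)|}\int_{\mathfrak t(F)}D^{H_V}(X)\int_{T(F)\backslash H_V(F)}{}^h f(X)\,d\dot h\,dX$; the Weil indices $\gamma_\psi(\mathfrak t)$ arise precisely as the normalization constants from the Gaussian integrals over the $\mathfrak t$-directions, as in the definition in Section \ref{sec4.1}.

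The main obstacle will be \textbf{Step 3}: tracking the metaplectic cocycle and Weil/Maslov indices through the $s$-twisted computation so that the constant prefactor comes out \emph{exactly} as $\frac{\mu(\det V)\mu(1-s^{-1})^n}{|s-1|_E^{n/2}}$, with the correct power of $\gamma_\psi(1)$ cancelling. This is the analogue of the delicate identity
$$
\gamma_\psi(A_{(\exp X)\ell,\ell})\,\gamma_\psi\big(\tau(\Gamma_{\exp X},\Gamma_1,\ell\oplus\ell)\big)=\gamma_\psi(A_{(-\exp X)\ell,\ell})\,\gamma_\psi\big(\tau(\Gamma_{-\exp X},\Gamma_1,\ell\oplus\ell)\big)\,\gamma_\psi(\tau(\Gamma_{-\exp X},\Gamma_{\exp X},\Gamma_1))
$$
used in the proof of Proposition \ref{identity}, but now with the extra factor $s$ inserted, so the cocycle bookkeeping no longer telescopes to $1$ and instead leaves the character value of the Weil representation at $s$. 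I expect this to be controlled using \cite[Theorem 9.12]{GKT25} (the explicit action formulas) together with the known formula for the character of the Weil representation of $U(V)$ at regular elements — in particular the factor $\mu(1-s^{-1})^n/|s-1|_E^{n/2}$ is exactly the "incarnation of the Weil character" flagged after Theorem \ref{1.3}. Once Step 3 is pinned down, Steps 1, 2 and 4 are routine adaptations of the identity-element case.
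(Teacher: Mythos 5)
Your Step 3 and Step 4 contain structural errors, and the root cause is an incorrect picture of how the $s\neq 1$ case relates to the identity case. First, a minor but telling slip: $s\in Z_{H_V}(F)\simeq E^1$ acts on the $E$-vector space $V$ by a \emph{single} scalar, so there is no eigenspace decomposition $V=V'\oplus V''$ and no outer tensor factorisation $\omega_{V',\psi,\mu}\boxtimes\omega_{V'',\psi,\mu}$ to exploit; the framework you set up around that decomposition is vacuous here. More seriously, the claim in Step 3 that the Maslov phase $\tau(\Gamma_{s\exp X},\Gamma_1,\ell\oplus\ell)$ splits into a ``constant'' prefactor plus the $s=1$ kernel $\psi(\langle v,Xv\rangle)$ is wrong: precisely because $(\exp X)s-1$ is \emph{invertible} for $X$ small (this is where $s\neq 1$ enters), the phase $Q_{(\exp X)s\ell,\ell}$ is a nondegenerate form on $V/\ell$ whose Gaussian converges absolutely to an $X$-dependent Weil-index, and the answer is $\int_{\mathfrak h_V}f(X)\,\Theta_{\omega_{V,\psi,\mu}}(s\exp X)\,dX$, not a constant times $\int_V(\hat f\circ\Phi)$. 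Consequently Step 4's plan to ``rewrite $\int_V(\hat f\circ\Phi)$ as a sum over elliptic tori'' cannot be made to work: $\Phi(V)$ consists of rank-at-most-one elements and misses generic elliptic tori, so $\int_V(\hat f\circ\Phi)$ is simply a different object from the right-hand side of the proposition, and the mechanism you invoke (``the moment map and the quadratic form localise the support'') is not the reason only elliptic tori appear.

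The paper's argument avoids these traps by a different order of operations. It applies the Weyl integration formula on $H_V$ \emph{before} evaluating any Gaussian, writing the trace as $\sum_{T\in\mathcal T(H_V)}|W(H_V,T)|^{-1}\,\mathrm{tr}_T\big(\omega_{V,\psi,\mu}(f_T\circ\log)\circ\omega_{V,\psi,\mu}(s)\big)$, where $\mathrm{tr}_T$ is the trace of the Weil representation restricted along the torus $T$ against the orbital integral $f_T$. Localisation to elliptic tori then comes from an elementary rank-one fact, not from any moment-map geometry: the Weil representation $\Omega_\mu$ of $\mathrm{GL}_1(E)\simeq E^\times$ decomposes as $\bigoplus_\chi\chi$, so $\mathrm{tr}(\Omega_\mu(F))=F(1)$ vanishes for any $F$ supported near a nonidentity element. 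Since every non-elliptic $T$ contains a $\mathrm{GL}_1$ factor, its contribution dies for $s\neq 1$. For elliptic $T\simeq\prod_i E_i^1$, the contribution per factor is read off from the rank-one formula (Proposition \ref{4.3}), which already packages the prefactor $\mu(\det V)\mu(1-s^{-1})/|s-1|_E^{1/2}$ and the Weil index $\gamma_\psi$; multiplying over factors gives exactly the claimed formula. This circumvents the Maslov-cocycle bookkeeping in higher rank that you correctly identified as the hard point, and it is the ingredient your proposal is missing: you would need, at minimum, to replace your Step 3 ``splitting of the phase'' by the Weyl-integration reduction and to replace your Step 4 localisation argument by the $\mathrm{GL}_1$-vanishing observation.
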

\begin{proof}
    We first consider character expansions near non-identity elements of Weil representations of $E^1$ and $E^\times$. When $\dim V=1$, by Proposition \ref{4.3}, for $\omega$ sufficiently small, we have
    $$
     \text{tr}(\omega_{V,\psi,\mu}(f\circ \log)\circ \omega_{V,\psi,\mu}(s))=\frac{\mu(\det V)\mu(1-s^{-1})\gamma_\psi(2Nm_{E/F})}{|s-1|^{1/2}_E}
    \int_{E^0}f(X)dX.
    $$
    Let $\Omega_\mu$ be the Weil representation of $\text{GL}_1(E)\simeq E^\times$. In this setting, since $\Omega_\mu=\bigoplus_{\chi \in \widehat{E^\times}}\chi$, it follows that 
    $$
    \text{tr}(\Omega_\mu(F))=F(1),
    $$
    for any $F\in C^\infty_c(E^\times)$. Thus, when $F$ is supported near $s$, we have $\text{tr}(\Omega_\mu(F))$ vanishes.

    We now consider the general case. Similar to Proposition \ref{identity}, let $\ell$ be a langrangian of the symplectic space $\text{Res}_{E/F}V$. Using the splitting defined in \cite[Section 11.4.4]{GKT25}, we can choose $\omega$ sufficiently small so that $\mu((\exp{X})s)=((\exp X)s,\mu(s)^n\gamma_\psi(1)^{n}\gamma_\psi(A_{(\exp{X})\ell,\ell})\cdot M_\ell^{\text{Sch}}[\exp X])$, for any $X\in \omega$. As in the proof of Proposition \ref{identity}, for a smooth function $f$ on $\omega$, we have
    $$
    \text{tr}(\omega_{V,\psi,\mu}(f\circ \log)\circ\omega_{V,\psi,\mu})=\mu(s)^{n}\gamma_\psi(1)^{n}\int_{V/\ell}\int_{\mathfrak{h}_V(F)}f(X) \gamma_\psi(A_{(\exp{X})\ell,\ell})\psi\left(\frac{Q_{(\exp X)s\ell,\ell}(x,x)}{2}\right)dXdx.
    $$
    Let $h$ be a compactly-supported smooth function on $V/\ell$ whose measure is positive and $h(0)=1$. We set $h_k(\cdot)=h(k\cdot)$, for $k\in F$. Then 
    $$
    \text{tr}(\omega_{V,\psi,\mu}(f\circ \log)\circ\omega_{V,\psi,\mu}(s))=\lim_{k\rightarrow 0}\text{tr}(h_k\cdot\omega_{V,\psi,\mu}(f\circ \log)\circ\omega_{V,\psi,\mu}).
    $$
    $$
    =\mu(s)^{n}\gamma_\psi(1)^{n}\lim_{k\rightarrow 0} \int_{\mathfrak{h}_V(F)}f(X) \gamma_\psi(A_{(\exp{X})\ell,\ell})\int_{V/\ell}h_k(x)\psi\left(\frac{Q_{(\exp X)s\ell,\ell}(x,x)}{2}\right)dxdX.
    $$
    By Weyl integration formula, we have
    $$
    \text{tr}(h_t\cdot\omega_{V,\psi,\mu}(f\circ \log)\circ\omega_{V,\psi,\mu}(s))=\mu(s)^{n}\gamma_\psi(1)^{n}\sum_{T\in \mathcal{T}(H_V)}|W(H_V,T)|^{-1}
    $$
    $$\int_{\mathfrak{t}(F)}f_T(X) \gamma_\psi(A_{(\exp{X})\ell,\ell})\int_{V/\ell}h_k(x)\psi\left(\frac{Q_{(\exp X)s\ell,\ell}(x,x)}{2}\right)dxdX,
    $$
    where $\mathcal{T}(H_V)$ is the set containing representatives of $H_V(F)$-conjugacy classes of maximal tori in $H_V$, and $\mathfrak{t}$ is the Lie algebra of $T$ and $f_T(X)=D^{H_V}(X)\int_{T(F)\backslash H_V(F)}f(h^{-1}Xh)d\Dot{h}$ as a function on $\mathfrak{t}(F)$. Therefore
    $$
    \text{tr}(\omega_{V,\psi,\mu}(f\circ \log)\circ\omega_{V,\psi,\mu}(s))=\sum_{T\in \mathcal{T}(H_V)}|W(H_V,T)|^{-1}\text{tr}_T(\omega_{V,\psi,\mu}(f_T\circ \log)\circ\omega_{V,\psi,\mu}(s)).
    $$
    If $T$ is not elliptic, then $T$ contains at least one copy of $\text{GL}_1$, thus $\text{tr}_T(\omega_{V,\psi,\mu}(f_T\circ \log)\circ\omega_{V,\psi,\mu}(s))$ vanishes. When $T$ is elliptic, we have
    $$
    \text{tr}_T(\omega_{V,\psi,\mu}(f_T\circ \log)\circ\omega_{V,\psi,\mu}(s))=\frac{\mu(\det V)\mu(1-s^{-1})^{n}\gamma_\psi(\mathfrak{t})}{|s-1|^{n/2}_E}
    \int_{\mathfrak{t}(F)}f_T(X)dX,
    $$
    which is to say
    $$
    \text{tr}(\omega_{V,\psi,\mu}(f\circ \log)\circ \omega_{V,\psi,\mu}(s))=\frac{\mu(\det V)\mu(1-s^{-1})^{n}}{|s-1|^{n/2}_E}\sum_{\mathfrak{t}\in \mathcal{T}_{\text{ell}}(\mathfrak{h}_V)}\frac{\gamma_\psi(\mathfrak{t})}{|W(H_V,\mathfrak{t})|}
    $$
    $$
    \int_{\mathfrak{t}(F)}D^{H_V}(X)\int_{T(F)\backslash H_V(F)}{}^hf(X)d\Dot{h}dX.
    $$
\end{proof}
Let $\omega$ be a $G$-excellent neighborhood of $0$ in $\mathfrak{g}(F)$ and $\Omega=\exp(\omega)s$. Let $\theta$ be a quasi-character of $G(F)$ supported in $\Omega$ and $\theta_{s,\omega}$ be a quasi-character on $\omega$ such that $D^G(X)^{1/2}\theta_{s,\omega}(X)=D^G(x)^{1/2}\theta(s\cdot\exp(X))$, for all $X\in \omega$. When $\omega$ is sufficiently small, we have 
$$
m_{V,\text{geom}}(\theta)=\frac{1}{2}c_{\theta}\left(1\right)+\mu\left(\det V\right)
\underset{T\in{\mathcal{T}}_{\text{ell}}\left(H_V\right)}{\sum}\frac{\gamma_\psi(T)}{\left|W\left(H_V,T\right)\right|}
\underset{s\rightarrow0^{+}}{\lim}\int_{T\left(F\right)}D^{G}\left(x\right)^{1/2}c_{\theta}\left(x\right)\frac{\mu\left(\det\left(1-x^{-1}\right)\right)}{\left|\det\left(1-x\right)\right|_{E}^{1/2-s}}dx
$$
$$
=\frac{\mu\left(\det V\right)\mu(1-s^{-1})^n}{\left|s-1\right|_{E}^{n/2}}
\underset{\mathfrak{t}\in{\mathcal{T}}_{\text{ell}}\left(\mathfrak{h}_V\right)}{\sum}\frac{\gamma_\psi(\mathfrak{t})}{\left|W\left(H_V,\mathfrak{t}\right)\right|}
\int_{\mathfrak{t}\left(F\right)}D^{G}\left(X\right)^{1/2}c_{\theta_{s,\omega}}\left(X\right)dx.
$$
Thus, there remains to prove the following proposition.
\begin{proposition}\label{central}
    We keep the same notations as above. When $\omega$ is sufficiently small, for any $\theta\in \text{QC}(\Omega)$, we have 
    $$
    J_{V,\text{qc}}(\theta)=\frac{\mu\left(\det V\right)\mu(1-s^{-1})^n}{\left|s-1\right|_{E}^{n/2}}
\underset{\mathfrak{t}\in{\mathcal{T}}_{\text{ell}}\left(\mathfrak{h}_V\right)}{\sum}\frac{\gamma_\psi(\mathfrak{t})}{\left|W\left(H_V,\mathfrak{t}\right)\right|}
\int_{\mathfrak{t}\left(F\right)}D^{G}\left(X\right)^{1/2}c_{\theta_{s,\omega}}\left(X\right)dx.
    $$
\end{proposition}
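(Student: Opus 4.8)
The strategy is to evaluate $J_{V,\mathrm{qc}}(\theta)$ directly from the trace formula $J_V$ and to match it with the displayed geometric expression using the local character expansion of the Weil representation near $s$ (Proposition~\ref{nonidentity}).

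First, by Corollary~\ref{2.9}(1) the map $f\mapsto\theta_f$ carries $\mathcal{C}_{\mathrm{scusp}}(\Omega)$ onto $QC_c(\Omega)$, and for strongly cuspidal $f$ one has $J_{V,\mathrm{qc}}(\theta_f)=J_V(f)$ by the very definition of $J_{V,\mathrm{qc}}$ in Section~\ref{sec4.2}; thus it suffices to treat $\theta=\theta_f$ with $f\in\mathcal{C}_{\mathrm{scusp}}(\Omega)$. Since $s\in E^1=Z_{H_V}(F)\subseteq Z_G(F)$ is central in $G$, we may replace $f$ by $f'=f(\,\cdot\,s)$, which is again strongly cuspidal, is supported in $\exp(\omega)$, and satisfies $\theta_{f,s,\omega}=(\theta_{f'})_\omega$; using that $s\in Z_{H_V}(F)$ and the unitarity of $\omega_{V,\psi,\mu}$, the kernel becomes $K_V(f,g)=\mathrm{Trace}\!\left(\omega_{V,\psi,\mu}({}^gf')\circ\omega_{V,\psi,\mu}(s)\right)$, where ${}^gf'(h)=f'(g^{-1}hg)$.

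For $\omega$ small enough the restriction of ${}^gf'$ to $H_V(F)$ is supported near the identity, so Proposition~\ref{nonidentity} applies and, writing $\phi'=f'\circ\exp$ and $C_s=\mu(\det V)\mu(1-s^{-1})^n|s-1|_E^{-n/2}$, identifies $K_V(f,g)$ with
\[
C_s\sum_{\mathfrak{t}\in\mathcal{T}_{\mathrm{ell}}(\mathfrak{h}_V)}\frac{\gamma_\psi(\mathfrak{t})}{|W(H_V,\mathfrak{t})|}\int_{\mathfrak{t}(F)}D^{H_V}(X)\int_{T(F)\backslash H_V(F)}\phi'\!\left(\mathrm{Ad}((hg)^{-1})X\right)d\dot{h}\,dX .
\]
Inserting this into $J_V(f)=\int_{Z_G(F)H_V(F)\backslash G(F)}K_V(f,g)\,dg$ and using the absolute convergence statements of \cite[Theorem 5.1]{Le25}, we are reduced, for each (of the finitely many) $\mathfrak{t}$, to analysing
\[
I_{\mathfrak{t}}(X)=\int_{Z_G(F)H_V(F)\backslash G(F)}\int_{T(F)\backslash H_V(F)}\phi'\!\left(\mathrm{Ad}((hg)^{-1})X\right)d\dot{h}\,dg ,\qquad X\in\mathfrak{t}_{\mathrm{reg}}(F),
\]
and to showing $\int_{\mathfrak{t}(F)}D^{H_V}(X)\,I_{\mathfrak{t}}(X)\,dX=\int_{\mathfrak{t}(F)}D^G(X)^{1/2}c_{\theta_{f,s,\omega}}(X)\,dX$.

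The key structural observation is that for $X$ regular in $H_V$ the centralizer $G_X$ is a maximal torus of $G$ — this follows from the description of regular semisimple conjugacy classes in Section~\ref{llc4} — so $X$ is regular in $G$, $G_X$ is quasi-split, and Proposition~\ref{2.2}(1) together with the defining formula for $\theta_{f'}$ gives $D^G(X)^{1/2}c_{\theta_{f,s,\omega}}(X)=(-1)^{a_G-a_{M(X)}}\nu(G_X)^{-1}J^G_{M(X)}(\exp X,f')$, where $M(X)$ is the centralizer of $A_{G_X}$. On the other side, after passing through the tower $T(F)\backslash H_V(F)\cong (Z_GT)(F)\backslash(Z_GH_V)(F)$ the double integral $I_{\mathfrak{t}}(X)$ becomes a partial orbital integral over $(Z_GT)(F)\backslash G(F)$: when $\exp X$ is elliptic in $G$ it is genuinely convergent and equals $\mathrm{vol}((Z_GT)(F)\backslash G_X(F))$ times the ordinary orbital integral (here $M(X)=G$, $v_G\equiv1$), while when $\exp X$ is non-elliptic in $G$ the residual integration over $G_X/(Z_GT)$ is precisely what manufactures Arthur's weight $v_{M(X)}$, so that in both cases $I_{\mathfrak{t}}(X)$ recovers $J^G_{M(X)}(\exp X,f')$ up to the discriminant and measure normalizations, which one verifies agree. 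I expect this last point — reconciling the combined, a priori only conditionally convergent, integration over $(Z_GT)(F)\backslash G(F)$ with the weighted orbital integral $J^G_{M(X)}(\exp X,f')$, through the $(G,M)$-family and truncation bookkeeping exactly as in the Bessel case of \cite{BP14,BP20} — to be the main obstacle; the Weil-representation–specific input has already been isolated in Proposition~\ref{nonidentity}, so what remains is analysis of the same type as in the classical Gan–Gross–Prasad setting, with the needed convergence and descent ingredients supplied by Proposition~\ref{4.1} and Proposition~\ref{2.2}.
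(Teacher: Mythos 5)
Your overall strategy matches the paper's: reduce $J_{V,\mathrm{qc}}(\theta)$ to $J_V(f)$ for a strongly cuspidal lift, invoke Proposition~\ref{nonidentity} to replace the inner $H_V(F)$-integral against the Weil kernel by a sum of integrals over elliptic Cartan subalgebras of $\mathfrak{h}_V$, and then recognise the resulting double integral over $(Z_G(F)T(F)\backslash G(F))$ as producing the weighted orbital integrals that define $\theta_{f_{s,\omega}}$. Your replacement of the Proposition~\ref{2.8} lift by the direct translation $f'(\cdot)=f(\cdot\,s)$ is a legitimate simplification here, since $s$ is central in $G$ (so $Z_G(s)=G$ and the lift is essentially just translation). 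The structural observation that $G_X=T^G$ is a maximal torus of $G$ for $X$ regular in $\mathfrak{h}_V$ is also correct and is what ultimately lets one identify the integral against a weighted orbital integral for the Levi $M(X)$.

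However, the final step — which you flag yourself as the ``main obstacle'' — is left as a heuristic rather than a proof, and the ingredients you cite are not the right ones to close it. Proposition~\ref{4.1} only gives well-definedness and descent compatibilities of $m_{V,\mathrm{geom}}$, and Proposition~\ref{2.2} is about local boundedness and descent of $c_\theta$; neither supplies the convergence of the a priori divergent integral over $T^G(F)\backslash G(F)$ nor its identification with the weighted orbital integral. Your picture that the ``residual integration over $G_X/(Z_GT)$ manufactures Arthur's weight $v_{M(X)}$'' is morally right, but it requires a truncation argument to be made rigorous, precisely because that integral does not converge absolutely when $T^G$ is not elliptic in $G$. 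The paper introduces the truncation sequence $\kappa_N$ on $Z_G(F)H_V(F)\backslash G(F)$, defines $\kappa_{N,T}(x)=\int_{Z_G(F)T(F)\backslash T^G(F)}\kappa_N(ax)\,da$, rewrites $J_{V,N}(f_{s,\omega})$ as an integral of $f_{s,\omega}(x^{-1}Xx)\kappa_{N,T}(x)$, and then applies \cite[Theorem 4.1.1]{BP18} together with its Lie-algebra descent to evaluate $\lim_{N\to\infty}$ as $\int_{\mathfrak{t}(F)}D^{H_V}(X)\theta_{f_{s,\omega}}(X)\,dX$. That theorem, not anything in \cite{BP14,BP20} directly, is the precise analytic input; without invoking it (or reproducing its proof) the argument has a genuine gap at exactly the point you identify.
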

\begin{proof}
    Let $f_{s,\omega}\in \mathcal{C}_{\text{scusp}}(\omega)$ such that $\theta_{s,\omega}=\theta_{f_{s,\omega}}$. Let $f\in\mathcal{C}_{scusp}(\Omega)$ be a lift of $f_{s,\omega}$ in the sense of Proposition \ref{2.8}. Then $\theta_{f}=\theta$. Observe
    $$
    J_{V,\text{qc}}(\theta)=J_V(f)=\int_{Z_G(F)H_V(F)\backslash G(F)}
    \sum_i\int_{H_V(F)}{}^{x}f(h)\langle \phi_i,\omega_{V,\psi,\mu}(h)\phi_i\rangle dhdx
    $$
    $$
    =\int_{Z_G(F)H_V(F)\backslash G(F)}
    \sum_i\int_{\mathfrak{h}_V(F)}{}^{x}f_{s,\omega}(X)\langle \phi_i,\omega_{V,\psi,\mu}(s\exp(X))\phi_i\rangle dXdx.
    $$
    We fix a sequence $(\kappa_N)_{N\geq 1}$ of functions $\kappa_N:Z_G(F)H_V(F)\backslash G(F)\rightarrow \{0,1\}$ satisfying the two following conditions:
    \begin{enumerate}
        \item There exists $c_1,c_2>0$ such that for all $x\in Z_G(F)H_V(F)\backslash G(F)$ and $N\geq 1$, we have
        $$
        \sigma_{Z_GH_V\backslash G}(x)\leq c_1N \Rightarrow \kappa_N(x)=1
        $$
        and
        $$
        \kappa_N(x)=1 \Rightarrow \sigma_{Z_GH_V\backslash G}(x)\leq c_2N
        $$
        \item There exists an open-compact subgroup $K^\prime \subseteq G(F)$ such that the function $\kappa_N$ is right-invariant by $K^\prime$ for all $N\geq 1$.
    \end{enumerate}
    We set 
    $$
    J_{V,N}(f_{s,\omega})=\int_{Z_G(F)H_V(F)\backslash G(F)}\kappa_N(x)
    \sum_i\int_{\mathfrak{h}_V(F)}{}^{x}f_{s,\omega}(X)\langle \phi_i,\omega_{V,\psi,\mu}(s\exp(X))\phi_i\rangle dXdx,
    $$
    for $N\geq 1$. Then
    $$
    J_V(f)=\lim_{N\rightarrow \infty}J_{V,N}(f_{s,\omega}).
    $$
    When $\omega$ is sufficiently small, by Proposition \ref{nonidentity}, we have
    $$
    \sum_i\int_{\mathfrak{h}_V(F)}{}^{x}f_{s,\omega}(X)\langle \phi_i,\omega_{V,\psi,\mu}(s\exp(X))\phi_i\rangle dX=\frac{\mu\left(\det V\right)\mu(1-s^{-1})^n}{\left|s-1\right|_{E}^{n/2}}\sum_{\mathfrak{t}\in \mathcal{T}_{\text{ell}}(\mathfrak{h}_V)}\frac{\gamma_\psi(\mathfrak{t})}{|W(H_V,\mathfrak{t})|}
    $$
    $$
    \int_{\mathfrak{t}(F)}D^{H_V}(X)\int_{T(F)\backslash H_V(F)}{}^{hx}f_{s,\omega}(X)d\Dot{h}dX.
    $$
    For each maximal torus $T$ of $H_V$, we set $T^G=\text{Cent}_{G}(T)$ and
    $$
    \kappa_{N,T}(x)=\int_{Z_G(F)T(F)\backslash T^G(F)}\kappa_{N}(ax)da.
    $$
    This gives us
    $$
    J_{V,N}(f_{s,\omega})=\frac{\mu\left(\det V\right)\mu(1-s^{-1})^n}{\left|s-1\right|_{E}^{n/2}}\sum_{\mathfrak{t}\in \mathcal{T}_{\text{ell}}(\mathfrak{h}_V)}\frac{\gamma_\psi(\mathfrak{t})}{|W(H_V,\mathfrak{t})|}\int_{\mathfrak{t}(F)}D^{H_V}(X)
    $$
    $$
    \int_{T^G(F)\backslash G(F)}f_{s,\omega}(x^{-1}Xx)\kappa_{N,T}(x)dxdX.
    $$
    By \cite[Theorem 4.1.1]{BP18} and a descent-to-Lie-algebra statement, we have
    $$
    J_V(f)=\lim_{N\rightarrow \infty}J_{V,N}(f_{s,\omega})=\frac{\mu\left(\det V\right)\mu(1-s^{-1})^n}{\left|s-1\right|_{E}^{n/2}}\sum_{\mathfrak{t}\in \mathcal{T}_{\text{ell}}(\mathfrak{h}_V)}\frac{\gamma_\psi(\mathfrak{t})}{|W(H_V,\mathfrak{t})|}\int_{\mathfrak{t}(F)}D^{H_V}(X)
    \theta_{f_{s,\omega}}(X)dX
    $$
    as desired.
\end{proof}

\section{The spectral expansion of $J_V^{\text{Lie}}$ and strong multiplicity one property}\label{sec5} 
In the previous section, we have showed that
$$
J_{V,\text{qc}}(\theta)=m_{V,\text{geom}}(\theta)+\sum_{\mathcal{O}\in \text{Nil}(\mathfrak{g})}c_{\mathcal{O}}\cdot c_{\theta,\mathcal{O}}(1),
$$
for any $\theta \in \text{QC}_c(G(F))$. The main result in this section is to show $c_{\mathcal{O}}=0$, for all nilpotent orbit $\mathcal{O}$, and thus finish the proof of Theorem \ref{maintheorem1} and Theorem \ref{maintheorem}(i).
\subsection{An infinitesimal trace formula $J^{\text{Lie}}_V$}\label{sec5.1}

Let $h_V:V\times V\rightarrow E$ be the skew-hermitian form of $V$. We define the following linear form on ${\mathcal{C}}_{\text{scusp}}\left(\mathfrak{g}(F)\right)$ as follows 
$$
J_V^{\text{Lie}}\left(f\right)=\int_{Z_{G}\left(F\right)H_V\left(F\right)\backslash G\left(F\right)}\int_{V(F)}\int_{\mathfrak{h}_V^{\perp}\left(F\right)}
\hat{f}\left(x^{-1}Xx+\Phi\left(x^{-1}\cdot v\right)\right)dXdvdx,
$$
where $\Phi$ is the moment map of the $H_V$-symplectic space $\text{Res}_{E/F}V$ given by 
\[
\begin{array}{ccccc}
\Phi & : & V & \longrightarrow & \mathfrak{h}_V^{*}\\
 &  & v & \mapsto & \left(X\mapsto h_V(v,Xv) \right).
\end{array}
\]
We can think about $J_V^{\text{Lie}}$ as an infinitesimal variant of the trace formula $J_V$.
\begin{proposition}
For any $f\in{\mathcal{C}}_{\text{scusp}}\left(\mathfrak{g}(F)\right)$, the linear form $J_V^{\text{Lie}}\left(f\right)$ is convergent.
\end{proposition}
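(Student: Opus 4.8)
The plan is to dominate $J_V^{\text{Lie}}(f)$ by its absolute value, reduce the convergence to a pointwise majorization of the inner double integral, and then invoke the very same integrability estimate on $Z_{G}(F)H_V(F)\backslash G(F)$ that already underlies the absolute convergence of $J_V$ in Theorem \ref{spectral}.

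First, since the Fourier transform preserves the relevant Schwartz space, $\hat f$ is again a Schwartz function on $\mathfrak g(F)$; note that the strong cuspidality of $f$ plays no role in the mere question of convergence. Hence it suffices to show, for an arbitrary Schwartz function $\varphi$ on $\mathfrak g(F)$, that
$$
\int_{Z_{G}(F)H_V(F)\backslash G(F)}\ \int_{V(F)}\int_{\mathfrak h_V^{\perp}(F)}\bigl|\varphi\bigl(x^{-1}Xx+\Phi(x^{-1}\cdot v)\bigr)\bigr|\,dX\,dv\,dx<\infty .
$$

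Second, fix $x$ and estimate the inner double integral. The $v$-integral is handled exactly as in the proof of Proposition \ref{identity}: because $v\mapsto\Phi(v)$ is a vector-valued quadratic map, integrating $\varphi\bigl(Z+\Phi(x^{-1}\cdot v)\bigr)$ over $v$ is a partial Fourier transform against the character of second degree attached to $\Phi$, whose Weil index has modulus one; consequently $\int_{V(F)}\bigl|\varphi\bigl(Z+\Phi(x^{-1}\cdot v)\bigr)\bigr|\,dv$ is bounded by a Schwartz function of $Z$, all dependence on $x$ entering only through quantities comparable to $\sigma_{Z_GH_V\backslash G}(x)$. There then remains $\int_{\mathfrak h_V^{\perp}(F)}$ of a Schwartz function evaluated at $x^{-1}Xx$ shifted by a term bounded in terms of $\sigma_{Z_GH_V\backslash G}(x)$, and this last integral is controlled by the standard bounds for integrals of Schwartz functions over $\mathrm{Ad}(x^{-1})\bigl(\mathfrak h_V^{\perp}(F)\bigr)$, as in \cite[Section 5]{Le25} and \cite{BP20}. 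Putting these together, the inner double integral is dominated by the same kind of majorant, valid for every $N$, that bounds $|K_V(f,x)|$ in the proof of Theorem \ref{spectral}.

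Third and last, that majorant is integrable over $Z_{G}(F)H_V(F)\backslash G(F)$ for $N$ large enough; this is precisely the convergence estimate established in \cite[Theorem 5.1]{Le25} (and reproved as Theorem \ref{spectral} here), whence the absolute convergence of $J_V^{\text{Lie}}(f)$. The main obstacle is the pointwise estimate of the second step: inside a single evaluation of $\varphi$ the integration over $\mathfrak h_V^{\perp}(F)$ twisted by $\mathrm{Ad}(x^{-1})$ and the quadratic integration over $V(F)$ are coupled, and one must decouple them while tracking every dependence on $x$ uniformly. This is carried out by the truncation-and-descent techniques of Beuzart-Plessis; once it is in hand, the remaining steps are routine.
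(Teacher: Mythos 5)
Your proposal contains a genuine gap, and it is located precisely where you flag the main obstacle. You assert that ``the strong cuspidality of $f$ plays no role in the mere question of convergence,'' but this is false: the integrability over $Z_G(F)H_V(F)\backslash G(F)$ ultimately requires a majorant of the form $\Xi^{H_V\backslash G}(x)^2\sigma_{H_V\backslash G}(x)^{-d}$ for $d$ arbitrarily large, and that arbitrary polynomial decay in $\sigma(x)$ is exactly the manifestation of strong cuspidality (via the cancellation in weighted orbital integrals). For a general Schwartz $\varphi$ in place of $\hat f$ the inner double integral does \emph{not} decay in $x$ at that rate, so the inequality you want in step two cannot be established by Schwartz bounds and a heuristic appeal to ``truncation-and-descent.'' Internally your argument is also inconsistent: you discard strong cuspidality at the outset but then invoke \cite[Theorem 5.1]{Le25} at the end, whose estimate is only available because $f$ is strongly cuspidal.

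The paper's proof avoids this entirely and is worth contrasting. It first shrinks the support: choose a $G$-excellent neighborhood $\omega$ of $0$ small enough that Proposition \ref{identity} applies. For $f\in\mathcal{C}_{\text{scusp}}(\omega)$, Proposition \ref{identity} identifies
$$
\int_{V(F)}\int_{\mathfrak h_V^\perp(F)}\hat f\bigl(x^{-1}Xx+\Phi(x^{-1}\cdot v)\bigr)\,dX\,dv \;=\; K_V(f\circ\log,\,x),
$$
and then the strong-cuspidality estimate \cite[Theorem 5.2]{Le25} gives $K_V(f\circ\log,x)\ll\Xi^{H_V\backslash G}(x)^2\sigma_{H_V\backslash G}(x)^{-d}$ for every $d$. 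The passage to general $f\in\mathcal{C}_{\text{scusp}}(\mathfrak g(F))$ is then a one-line dilation argument: replacing $f$ by $f_\lambda(X)=f(\lambda^{-1}X)$, $\lambda\in F^{\times 2}$, multiplies the whole inner integral by $|\lambda|^{n^2-n}$, so the bound propagates from $f$ supported in $\omega$ to all of $\mathcal{C}_{\text{scusp}}(\mathfrak g(F))$. Your proposal is missing both ingredients — the identification with $K_V(f\circ\log,x)$ via the local character expansion of the Weil representation, and the scaling trick that removes the support restriction — and without them the pointwise estimate you need does not follow.
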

\begin{proof}
    It suffices to show that for all $d>0$, we have   
    $$
    \int_{V(F)}\int_{\mathfrak{h}_V^{\perp}\left(F\right)}
    \hat{f}\left(x^{-1}Xx+\Phi\left(x^{-1}\cdot v\right)\right)dXdv 
    \ll \Xi^{H_V\backslash G}(x)^2\sigma_{H_V\backslash G}(x)^{-d},
    $$
    for any $f\in \mathcal{C}_{\text{scusp}}(\mathfrak{g}(F))$ and $x\in Z_G(F)H_V(F)\backslash G(F)$. Let $\omega \subseteq \mathfrak{g}(F)$ be a $G$-excellent neighborhood of $0$ such that $\omega_{H_V}=\omega \cap \mathfrak{h}_V(F)$ satisfies Proposition \ref{identity}. Thus, for any $f\in \mathcal{C}_{\text{scusp}}(\omega)$, it follows that
    $$
    \int_{V(F)}\int_{\mathfrak{h}_V^{\perp}\left(F\right)}
    \hat{f}\left(x^{-1}Xx+\Phi\left(x^{-1}\cdot v\right)\right)dXdv = K_V(f\circ \log,x).
    $$
    By \cite[Theorem 5.2]{Le25}, for any $d>0$, we have 
    $$
    K_V(f\circ \log,x) \ll \Xi^{H_V\backslash G}(x)^2\sigma_{H_V\backslash G}(x)^{-d}.
    $$
    This gives us 
    \begin{equation}\label{estimate}
     \int_{V(F)}\int_{\mathfrak{h}_V^{\perp}\left(F\right)}
    \hat{f}\left(x^{-1}Xx+\Phi\left(x^{-1}\cdot v\right)\right)dXdv \ll \Xi^{H_V\backslash G}(x)^2\sigma_{H_V\backslash G}(x)^{-d},   
    \end{equation}
    for any $f\in \mathcal{C}_{\text{scusp}}(\omega)$ and $x\in Z_G(F)H_V(F)\backslash G(F)$. Let $\lambda \in F^{\times 2}$ and we set $f_\lambda(X)=f(\lambda^{-1}X)$ for any $X\in \mathfrak{g}(F)$. Since 
    $$
    \int_{V(F)}\int_{\mathfrak{h}_V^{\perp}\left(F\right)}
    \hat{f}_\lambda\left(x^{-1}Xx+\Phi\left(x^{-1}\cdot v\right)\right)dXdv
    $$
    $$
    =|\lambda|^{n^2-n}\int_{V(F)}\int_{\mathfrak{h}_V^{\perp}\left(F\right)}
    \hat{f}\left(x^{-1}Xx+\Phi\left(x^{-1}\cdot v\right)\right)dXdv,
    $$
    the inequality (\ref{estimate}) holds for all $f\in \mathcal{C}_{\text{scusp}}(\mathfrak{g}(F))$. Therefore, the linear form $J_V^{\text{Lie}}$ is convergent.
\end{proof}

\subsection{The space $\mathfrak{h}_V^{\perp}\oplus \Phi(V)$ and characteristic polynomials}\label{sec5.2}

We set $\Sigma_V =\mathfrak{h}_V^{\perp}\oplus \Phi(V)$
as a subvariety of $\mathfrak{g}$. In this subsection, we study the sets of $H_V$-conjugacy classes and $G$-conjugacy classes of $\Sigma_V$. 

To be more precise, we expect the $H_V$-action is free on an open subvariety of $\Sigma_V$ characterized by their characteristic polynomials. We have the following natural map via the inclusion and $\Phi$, which is also denoted by $\Phi$
\[
\Phi:\mathfrak{h}_V^{\perp}\oplus V\longrightarrow\mathfrak{g}.
\]
Let $X=X_{V}+\Phi(v)\in\Sigma_V$, where $X_{V}\in\mathfrak{h}_V^{\perp}$ and $v\in V$. Let $P_{X_{V}}$ be the characteristic polynomial of $X_{V}$ acting on $V_{\bar{F}}$.
Then $P_{X_{V}}$ is an element of $\bar{E}\left[T\right]$. We denote by $D$ the $\bar{E}$-linear endomorphism on $\bar{E}\left[T\right]$ given by $D\left(T^{i+1}\right)=T^{i}$, for $i\geq0$ and $D\left(1\right)=0$.
\begin{proposition}\label{5.2}
We have 
\[
P_{X}\left(T\right)=P_{X_{V}}\left(T\right)-\left[\sum_{i=0}^{n-1} h_V(v,X_{V}^iv)\cdot D^{i+1}\left(P_{X_{V}}\left(T\right)\right)\right].
\]
\end{proposition}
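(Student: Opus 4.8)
The plan is to reduce Proposition~\ref{5.2} to a rank-one perturbation formula for characteristic polynomials, i.e.\ to the matrix determinant lemma combined with the explicit Horner-type expansion of the adjugate matrix. The whole content is linear algebra over $\bar F$ once one understands how $\Phi(v)$ acts.

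First I would spell out the definitions of $\Phi$ and of $\mathfrak{h}_V^{\perp}$ in order to describe the endomorphism $\Phi(v)$ concretely. Fixing an embedding $E \hookrightarrow \bar F$ and passing to $V_{\bar F}=V\otimes_E\bar F$, the element $X_V$ acts $\bar F$-linearly on this $n$-dimensional space with characteristic polynomial $P_{X_V}$, and the point is that $\Phi(v)$ acts on $V_{\bar F}$ as the rank-one operator $v\otimes\phi:w\mapsto\phi(w)\,v$, where $\phi=h_V(v,\cdot)$ (with $h_V$ extended sesquilinearly through the chosen embedding). This is forced by the definition $\Phi:v\mapsto\big(Y\mapsto h_V(v,Yv)\big)$ once one uses the identification $\mathfrak{h}_V\simeq\mathfrak{h}_V^{*}$ given by the trace pairing $\big\langle Y,Y'\big\rangle=\operatorname{tr}(YY')$: for the operator $R_v:w\mapsto h_V(v,w)\,v$ one has $\operatorname{tr}(R_vY)=h_V(v,Yv)$ for all $Y$, so $R_v$ represents the functional $\Phi(v)$. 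I expect this identification to be the main (albeit routine) obstacle: one must track the Weil restriction $\mathrm{Res}_{K/F}$, the splitting of $L\otimes_F\bar F$, and which Galois conjugation enters $h_V$, so that the final moments come out as $h_V(v,X_V^i v)$ with the correct sign and no stray twist.

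Granting this, one sets $A(T)=T\,\mathrm{Id}_{V_{\bar F}}-X_V$, so that $T\,\mathrm{Id}-X=A(T)-v\otimes\phi$ with $v\otimes\phi$ of rank $\le 1$. By the matrix determinant lemma,
$$
P_X(T)=\det\!\big(A(T)-v\otimes\phi\big)=\det A(T)\cdot\big(1-\phi(A(T)^{-1}v)\big)=P_{X_V}(T)-\phi\big(\operatorname{adj}(A(T))\,v\big),
$$
using $P_{X_V}(T)\,A(T)^{-1}=\operatorname{adj}(A(T))$. Then it remains to expand $\operatorname{adj}(A(T))\,v$. Writing $P_{X_V}(T)=\sum_{k=0}^{n}p_kT^{k}$ (monic, $p_n=1$), the relation $\operatorname{adj}(A(T))\,A(T)=P_{X_V}(T)\,\mathrm{Id}$ together with Cayley--Hamilton gives the standard identity
$$
\operatorname{adj}(A(T))=\sum_{k=1}^{n}p_k\big(T^{k-1}+T^{k-2}X_V+\cdots+X_V^{k-1}\big),
$$
so applying it to $v$ and regrouping by powers of $X_V$ yields
$$
\operatorname{adj}(A(T))\,v=\sum_{i\ge 0}X_V^{i}v\cdot\Big(\sum_{k\ge i+1}p_kT^{\,k-i-1}\Big)=\sum_{i=0}^{n-1}X_V^{i}v\cdot D^{i+1}\big(P_{X_V}(T)\big),
$$
because $D^{i+1}(T^k)=T^{k-i-1}$ for $k\ge i+1$ and $D^{i+1}(T^k)=0$ otherwise, and $D^{i+1}P_{X_V}=0$ once $i\ge n$. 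Applying $\phi=h_V(v,\cdot)$ (so that $\phi(X_V^iv)=h_V(v,X_V^iv)$) and substituting back into the formula for $P_X(T)$ gives exactly the claimed identity. One may instead avoid the adjugate expansion: note that $\phi(\operatorname{adj}(A(T))v)$ is a polynomial, expand $A(T)^{-1}=\sum_{i\ge 0}T^{-i-1}X_V^i$ as a formal Laurent series in $T^{-1}$, and match it with the polynomial part of $P_{X_V}(T)\sum_{i\ge0}T^{-i-1}\phi(X_V^iv)$, which is $\sum_{i}\phi(X_V^iv)\,D^{i+1}P_{X_V}(T)$ by the same bookkeeping.
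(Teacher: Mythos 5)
Your proposal is correct, and since the paper's own proof is nothing more than ``the computation is direct but tedious,'' there is no detailed argument to compare against; your organization of that computation via the matrix determinant lemma and the Horner/Faddeev expansion $\operatorname{adj}(T-X_V)=\sum_{k\ge 1}p_k\sum_{j=0}^{k-1}T^jX_V^{k-1-j}$, regrouped as $\operatorname{adj}(T-X_V)v=\sum_{i=0}^{n-1}X_V^iv\cdot D^{i+1}(P_{X_V})$, reproduces the stated identity exactly after applying $h_V(v,\cdot)$. The one point you flag but do not carry out --- pinning down $V_{\bar F}$ through the Weil restriction $\mathrm{Res}_{K/F}$, why $X_V\in\mathfrak{h}_V^{\perp}$ (an element of $\mathfrak g$, not $\mathfrak h_V$) acts on that $n$-dimensional piece, and the normalization making $\Phi(v)$ correspond to $R_v=v\otimes h_V(v,\cdot)$ rather than a nonzero multiple --- is routine and is forced in any case by the unit coefficient in the proposition, so there is no gap in principle, only in exposition.
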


\begin{proof}
The computation is direct but tedious.
\end{proof}
From the above computation, we can deduce the following corollary.
\begin{corollary}\label{5.3}
The $H_V$-invariant polynomial functions on $\Sigma_V$
\[
\left(X_{V},v\right)\in\mathfrak{h}_V^{\perp}\oplus V\mapsto h_V(v,X_{V}^{j}v) 
\]
extend to $G$-invariant polynomials functions on $\mathfrak{g}$ defined over $F$.
\end{corollary}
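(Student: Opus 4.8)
The plan is to combine Proposition \ref{5.2} with the elementary fact that coefficients of characteristic polynomials are conjugation invariants. Recall first that if $X\in\mathfrak{g}(F)$ and $P_X(T)$ is the characteristic polynomial appearing in Proposition \ref{5.2}, i.e.\ the characteristic polynomial of $X$ acting on $V_K$ in the relevant sense, then its coefficients are $G$-conjugation-invariant polynomial functions of $X$, and since $\mathfrak{g}$ and the adjoint action are defined over $F$ they are defined over $F$; the same applies to $X_V$, so the coefficients of $P_{X_V}(T)$ are $G$-invariant polynomial functions of $X_V\in\mathfrak{h}_V^{\perp}$. The subtlety is that in the Corollary we need these to be restrictions to $\Sigma_V$ of $G$-invariant polynomials on all of $\mathfrak{g}$, that is, functions of $X=X_V+\Phi(v)$ and not merely of $X_V$.

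The key step is therefore to show that the coefficients of $P_{X_V}$, viewed as functions on $\Sigma_V$ via $X=X_V+\Phi(v)\mapsto P_{X_V}$, extend to $G$-invariant $F$-polynomials on $\mathfrak{g}$. Here one exploits that $\Phi(v)$ is a rank-one, and in fact (since $\Phi(v)^2=h_V(v,v)\Phi(v)$) semisimple rank-one, perturbation of $X_V$: after base change to $\bar F$ the group $G$ becomes a product of two copies of $\mathrm{GL}_n$ and $X$ a pair whose two characteristic polynomials are $G$-invariant, and comparing these with the characteristic polynomial of $X_V$ while using the rank-one shape of $\Phi(v)$ (matrix-determinant-lemma identities) lets one recover $P_{X_V}$, on $\Sigma_V$, as a polynomial in $G$-invariants of $X$; throughout one checks rationality over $F$ by working with Galois-stable collections of the relevant quantities. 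I expect this to be the main obstacle, namely making precise that the ``$X_V$-part'' of an element of $\Sigma_V$ has characteristic polynomial determined by the $G$-orbit of the whole element.

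Granting this, the conclusion is a triangular elimination. Writing $P_{X_V}(T)=\sum_{k=0}^{n}a_kT^k$ with $a_n=1$ and noting that $D^{i+1}(P_{X_V})$ has degree $n-i-1$ with top coefficient $a_n=1$, Proposition \ref{5.2} gives, for each $m$, that the coefficient of $T^m$ in $P_X$ equals $a_m-\sum_{i=0}^{n-1-m}h_V(v,X_V^iv)\,a_{m+i+1}$. Reading these relations off for $m=n-1,n-2,\dots$ produces a unipotent triangular system: the $T^{n-1}$-coefficient yields $h_V(v,v)$ as a polynomial in the coefficients of $P_X$ and $P_{X_V}$, and inductively the $T^{n-1-j}$-coefficient yields $h_V(v,X_V^jv)$ as such a polynomial modulo the already-expressed $h_V(v,X_V^iv)$ for $i<j$. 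Since the coefficients of $P_X$ and, by the key step, those of $P_{X_V}$ are restrictions to $\Sigma_V$ of $G$-invariant $F$-polynomials on $\mathfrak{g}$, so is each $h_V(v,X_V^jv)$, which is exactly the assertion of Corollary \ref{5.3}.
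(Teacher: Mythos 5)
Your proposal is essentially correct, and you have put your finger on the one genuine point: the paper presents Corollary~\ref{5.3} as an immediate consequence of Proposition~\ref{5.2}, but the triangular elimination you describe produces each $h_V(v,X_V^jv)$ only as a polynomial in the coefficients of $P_X$ \emph{and} those of $P_{X_V}$, so one does have to show separately that the coefficients of $P_{X_V}$ themselves restrict from $G$-invariant $F$-polynomials on $\mathfrak{g}$. Your sketch of how to do this is the right idea, and it can be made clean as follows. Over $\bar F$ one has $G_{\bar F}\cong \mathrm{GL}_n\times\mathrm{GL}_n$ with $\mathfrak{h}_{V,\bar F}$ the graph of (minus) the transpose and $\mathfrak{h}_{V,\bar F}^\perp$ the graph of the transpose, so an element $X=X_V+\Phi(v)$ of $\Sigma_V$ becomes a pair $X=(A,B)$ with $A=Y+Z$ and $B=(Y-Z)^{\mathrm t}$, where $Y$ represents $X_V$ and $Z=\Phi(v)$ is the rank-one operator $w\mapsto h_V(v,w)v$. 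The matrix-determinant lemma then gives
\[
P_A(T)=P_Y(T)-\sum_{i}h_V(v,X_V^iv)\,D^{i+1}\bigl(P_Y\bigr)(T),
\qquad
P_B(T)=P_Y(T)+\sum_{i}h_V(v,X_V^iv)\,D^{i+1}\bigl(P_Y\bigr)(T),
\]
the first identity being Proposition~\ref{5.2} itself and the second its Galois conjugate (using that $\overline{h_V(v,X_V^iv)}=-h_V(v,X_V^iv)$ because $h_V$ is skew-hermitian and $X_V$ is hermitian). Adding and subtracting, $P_{X_V}=\tfrac12(P_A+P_B)$ and $\sum_i h_V(v,X_V^iv)\,D^{i+1}(P_{X_V})=\tfrac12(P_B-P_A)$. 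Since $P_A$ and $P_B$ are the two constituents of the characteristic polynomial of $X$ and are permuted by $\mathrm{Gal}(\bar F/F)$ in a way compatible with the $F$-structure of $\mathfrak{g}$, the symmetric combination $\tfrac12(P_A+P_B)$ descends to a $G$-invariant $F$-polynomial on $\mathfrak{g}$: this is precisely your ``key step.'' The antisymmetric combination then hands you the $h_V(v,X_V^iv)$ by the unipotent-triangular inversion you describe, since $D^{i+1}(P_{X_V})$ is monic of degree $n-1-i$. This both closes the gap you identify and in fact makes the triangular elimination shorter, since one solves against $P_{X_V}=\tfrac12(P_A+P_B)$ directly. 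Two minor slips in the write-up: the coefficients of $P_{X_V}$ as functions of $X_V\in\mathfrak{h}_V^\perp$ are $H_V$-invariant, not $G$-invariant (the group $G$ does not preserve $\mathfrak{h}_V^\perp$); and the relevant semisimplicity of $\Phi(v)$ is not actually used, only its rank-one shape matters.
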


In particular, the polynomial function
$$
\left(X_{V},v\right)\in\mathfrak{h}_V^{\perp}\oplus V\mapsto \det(h_V(X_V^{i}v,X_{V}^{j}v))_{0\leq i,j \leq n-1}\in \bar{F}
$$
extends to a $G$-invariant polynomial function on $\mathfrak{g}$ defined over $F$. We denote such extension by $Q_0$. We set $d^G(X)=\det(1-\text{Ad}(X))_{|\mathfrak{g}/\mathfrak{g}_X}$ for $X\in \mathfrak{g}_{\text{reg}}$. Let $Q=Q_0d^G$ and $\Sigma_V^{\prime}$ be the nonvanishing locus of $Q$ in $\Sigma_V$. The subvariety $\Sigma_V^{\prime}$ is characterized by the following
property.
\begin{proposition}\label{5.4}
For any $X=X_{V}+\Phi(v)\in\Sigma_V$, we have $X\in \Sigma_V^\prime$ if and only if $X\in \mathfrak{g}_{\text{reg}}$ and the set 
\[
\left\{ v,X_{V}v,\ldots,X_{V}^{n-1}v\right\} 
\]
generates $V_{\bar{F}}$ as an $\bar{E}$-module.
\end{proposition}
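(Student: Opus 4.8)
The plan is to read off $\Sigma_V^\prime$ from the definition $Q=Q_0 d^G$ and to treat the two factors separately. By construction $X\in\Sigma_V^\prime$ precisely when $Q(X)\neq 0$, which happens if and only if both $d^G(X)\neq 0$ and $Q_0(X)\neq 0$. The factor $d^G$ extends to a polynomial on $\mathfrak{g}$ whose nonvanishing locus is exactly $\mathfrak{g}_{\text{reg}}$, so $d^G(X)\neq 0$ is equivalent to $X\in\mathfrak{g}_{\text{reg}}$; this supplies the first of the two conditions in the statement. It then remains to prove, for $X=X_V+\Phi(v)\in\Sigma_V$, that $Q_0(X)\neq 0$ if and only if the family $\mathcal{F}=\{v,X_Vv,\ldots,X_V^{n-1}v\}$ generates $V_{\bar F}$ as an $\bar E$-module.

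For this I would use Corollary \ref{5.3} and the paragraph following it: by definition $Q_0$ is the $G$-invariant polynomial on $\mathfrak{g}$ whose pullback along $\Phi\colon\mathfrak{h}_V^\perp\oplus V\to\mathfrak{g}$ is $(X_V,v)\mapsto\det\bigl(h_V(X_V^iv,X_V^jv)\bigr)_{0\leq i,j\leq n-1}$. Since $Q_0$ is a genuine function on $\mathfrak{g}$ and $\Sigma_V$ is the image of $\Phi$, for $X\in\Sigma_V$ the quantity $\det\bigl(h_V(X_V^iv,X_V^jv)\bigr)_{0\leq i,j\leq n-1}$ does not depend on the chosen preimage $(X_V,v)$ of $X$, and it equals $Q_0(X)$; this is exactly the Gram determinant of $\mathcal{F}$ relative to $h_V$, base-changed to $\bar F$. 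Now I would invoke the elementary fact that, since $h_V$ stays non-degenerate after base change, the Gram determinant of any $n$ vectors of $V_{\bar F}$ vanishes if and only if those vectors are $\bar E$-linearly dependent: a linear dependence among the $X_V^iv$ forces the rows of the Gram matrix to be dependent, so $Q_0(X)\neq 0$ implies $\mathcal{F}$ is independent; conversely, if $\mathcal{F}$ is a basis then $\bigl(h_V(X_V^iv,X_V^jv)\bigr)$ is the matrix of the non-degenerate form $h_V$ in this basis and is therefore invertible. Since $\mathcal{F}$ consists of exactly $n=\dim_{\bar E}V_{\bar F}$ elements, being $\bar E$-linearly independent is the same as generating $V_{\bar F}$, and combining this with the first paragraph gives the stated description of $\Sigma_V^\prime$.

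The only genuinely substantive ingredient is the identity $Q_0(X)=\det\bigl(h_V(X_V^iv,X_V^jv)\bigr)$ on $\Sigma_V$, which is the content of Corollary \ref{5.3} and rests in turn on the characteristic-polynomial computation of Proposition \ref{5.2} expressing $P_X$ through $P_{X_V}$ and the invariants $h_V(v,X_V^iv)$; granting this, together with the standard fact that $d^G$ cuts out $\mathfrak{g}_{\text{reg}}$, the remainder is the Gram-determinant argument above, and I do not anticipate a serious difficulty beyond keeping the base change of $h_V$ and of the action of $X_V$ to $\bar F$ straight and checking that $\mathcal{F}$ has precisely the cardinality $n=\dim_{\bar E}V_{\bar F}$.
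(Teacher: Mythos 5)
Your proposal is correct and takes essentially the same route as the paper, which likewise reduces the claim to the two facts that $d^G(X)\neq 0$ iff $X\in\mathfrak{g}_{\mathrm{reg}}$ and $Q_0(X)\neq 0$ iff the Gram determinant of $\{v,X_Vv,\ldots,X_V^{n-1}v\}$ with respect to $h_V$ is nonzero, hence iff that family is a basis. You merely spell out the Gram-determinant step that the paper leaves implicit.
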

\begin{proof}
    Let $X=X_V+\Phi(v)$, where $X_V \in \mathfrak{h}_V^\perp$ and $v\in V$. The above proposition follows from the facts that $Q_0(X)\neq 0$ if and only if the set
    $$
    \{v,X_Vv,\ldots,X_V^{n-1}v\}
    $$
    generates $V_{\bar{F}}$ and $d^G(X)\neq 0$ if and only if $X\in \mathfrak{g}_{\text{reg}}$.
\end{proof}
The above proposition gives us information about conjugacy classes
in $\Sigma^{\prime}$.
\begin{proposition}\label{5.5}
The action by conjugation of $H_V$ on $\Sigma^{\prime}$ is free and two elements in $\Sigma_V^{\prime}$ whose image via $\Phi$ are $G$-conjugate
if and only if they are $H_V$-conjugate.
\end{proposition}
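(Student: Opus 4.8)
The plan is to follow the pattern of the analogous ``slice'' statements in the Bessel case, due to Waldspurger \cite{Wal12a} and Beuzart-Plessis \cite{BP20}, the main inputs being Propositions \ref{5.2} and \ref{5.4}. Two preliminary remarks will be used. First, the moment map is $H_V$-equivariant: $h\Phi(v)h^{-1}=\Phi(hv)$ for $h\in H_V$, since $h$ is an isometry of $h_V$. Second, because $\mathfrak g=\mathfrak h_V\oplus\mathfrak h_V^\perp$, an element of $\Sigma_V$ is written uniquely as $X=X_V+\Phi(v)$ with $X_V$ the $\mathfrak h_V^\perp$-component and $\Phi(v)$ the $\mathfrak h_V$-component, so the pair $(X_V,v)$ is determined by $X$ up to the norm-one torus $E^1=Z_{H_V}(F)$ acting by scalars on $v$. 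In particular $H_V$ preserves $\Sigma_V$ and $\Sigma_V'$, and $\operatorname{Stab}_G(V)=H_V$ once $H_V=U(V)$ is realized inside $G$ as the $L$-linear extension of isometries of $V$.

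Freeness is immediate: if $h\in H_V$ fixes $(X_V,v)\in\Sigma_V'$, then $h$ commutes with $X_V$ and fixes $v$, hence fixes the basis $\{v,X_Vv,\dots,X_V^{n-1}v\}$ of $V_{\bar F}$ furnished by Proposition \ref{5.4}, so $h=1$. For the implication ``$H_V$-conjugate $\Rightarrow$ images $G$-conjugate'', if $h\in H_V$ carries $(X_V,v)$ to $(X'_V,v')$ then equivariance of $\Phi$ shows $h$ conjugates $X_V+\Phi(v)$ to $X'_V+\Phi(v')$, and $h\in H_V\subset G$.

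For the converse, call a skew-hermitian $E$-subspace $W\subset V_K$ \emph{good for} $X'\in\mathfrak g$ if $W$ is isometric to $V$, $W\otimes_EL=V_K$, and $X'\in\Sigma_W'$ (the analogue of $\Sigma_V'$ for $W$ in place of $V$). The centralizer $Z_G(X')$ permutes the good subspaces of $X'$; I claim it acts transitively. Granting this: if $g\in G$ conjugates $X=X_V+\Phi(v)\in\Sigma_V'$ to $X'=X'_V+\Phi(v')\in\Sigma_V'$, then $gV$ and $V$ are both good for $X'$, so there is $z\in Z_G(X')$ with $z(gV)=V$; then $h:=zg$ satisfies $hXh^{-1}=zX'z^{-1}=X'$ and $hV=V$, hence $h\in H_V$, and after multiplying $h$ by the element of $E^1$ that identifies $hv$ with $v'$ we get an element of $H_V$ carrying $(X_V,v)$ to $(X'_V,v')$, which is what we want.

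The remaining transitivity claim is the main obstacle. A dimension count makes it plausible: the set of copies of $V$ inside $V_K$ with $W\otimes_EL=V_K$ is the single $G$-orbit $G\cdot V\cong G/H_V$, and imposing $X'\in\Sigma_W'$ cuts out a subset of codimension $\dim\mathfrak h_W-\dim\Phi_W(W)$, where $\Phi_W(W)$ has dimension $\dim_FW-1$ because the fibers of $\Phi_W$ are the $E^1$-scaling orbits; on the other hand the stabilizer of a good $W$ in $Z_G(X')$ is $Z_{H_W}(X')$, which is reduced to $E^1$ exactly as in the freeness argument (Proposition \ref{5.4} applied to $X'_W$), so the $Z_G(X')$-orbit of $W$ has the same dimension as the whole set of good subspaces. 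What genuinely has to be checked is that there is no residual discrete invariant, i.e. that the orbit is also open and closed; here one invokes Proposition \ref{5.2} and Corollary \ref{5.3}, whereby $P_{X'}$ together with the $G$-invariant functions $h_V(v',(X'_V)^jv')$ determine the $H_V$-conjugacy class of $(X'_V,v')$ up to a finite ambiguity that the orthogonality of the skew-hermitian form removes. I expect this part of the Bessel-case argument of \cite{Wal12a} and \cite{BP20} to transpose with only notational changes.
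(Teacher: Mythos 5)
There is a genuine gap. The crux of your argument is the claim that $Z_G(X')$ acts transitively on the ``good subspaces'' of $X'$, but this is essentially the hard content of the proposition restated in different language, and you do not prove it: the dimension count only shows that the $Z_G(X')$-orbit of a good subspace is open in the variety of good subspaces, and, as you acknowledge yourself, ruling out a residual discrete invariant is exactly what still has to be done. The appeal at that point to Proposition \ref{5.2} and Corollary \ref{5.3} is too vague to fill it. Moreover, the reference to the Bessel-case pattern of Waldspurger and Beuzart-Plessis is misleading: the analogous slice statements there are not established via centralizer transitivity — they construct the conjugating element directly from the conjugation invariants, which is also what the present paper does.

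The paper's proof avoids the detour entirely. Given $X=X_V+\Phi(v)$ and $X'=X_V'+\Phi(v')$ in $\Sigma_V'$ with the same characteristic polynomial (a necessary condition for $G$-conjugacy), Proposition \ref{5.2} forces $P_{X_V}=P_{X_V'}$ and $h_V(v,X_V^iv)=h_V(v',X_V'^iv')$ for $0\le i\le n-1$. By Proposition \ref{5.4}, the families $\{X_V^iv\}_{0\le i<n}$ and $\{X_V'^iv'\}_{0\le i<n}$ are bases of $V_{\bar F}$, so one lets $g$ be the unique $\bar E$-linear map with $g(X_V^iv)=X_V'^iv'$. The matching of the Gram pairings means $g$ preserves $h_V$, hence $g\in H_V$, and by construction $gX_Vg^{-1}=X_V'$ and $gv=v'$, so $g$ carries $(X_V,v)$ to $(X_V',v')$. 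Freeness follows by the same token: an element of $H_V$ fixing $(X_V,v)$ fixes the basis $\{X_V^iv\}$ and is therefore the identity — that part of your argument is correct and does coincide with the paper's.
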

\begin{proof}
    Let $X=X_V+\Phi(v)$ and $X^\prime=X_V^\prime+\Phi(v^\prime)$ be two elements in $\Sigma_V^\prime$ such that $P_X=P_{X^\prime}$. By Proposition \ref{5.2}, it follows that $P_{X_V}=P_{X_V^\prime}$ and 
    $$
    h_V(v,X_V^iv)=h_V(v^\prime,(X_V^{\prime})^iv^\prime),
    $$
    for any $i=\overline{0,n-1}$. By definition of $\Sigma_V^\prime$, $\{v,X_Vv,\ldots,X_V^{n-1}v\}$ and $\{v^\prime,X^\prime_Vv^\prime,\ldots,X_V^{\prime,n-1}v^\prime\}$ are two basis for $V$. Let $g$ be the unique $\bar{E}$-linear automorphism of $V_{\bar{F}}$ mapping $X^i_Vv\mapsto X^{\prime,i}_Vv^\prime$, for any $i=\overline{0,n-1}$. Since $h_V(v,X_V^iv)=h_V(v^\prime,(X_V^{\prime})^iv^\prime)$, it follows that $g\in H_V$. Moreover, we can see that $gX_Vg^\prime=X_V^\prime$. Therefore, we have $gXg^{-1}=X^\prime$. 
    
    Conversely, if $X^\prime=gXg^{-1}$ for some element $g\in H_V$, then $P_X=P_{X^\prime}$. Thus, a similar to above gives us $g$ is the unique element in $H_V$ satisfying this property.
\end{proof}
\begin{corollary}\label{5.6}
We have 
\[
\sigma_{G}\left(t\right)\ll\sigma_{H\backslash G}\left(t\right)\sigma_{\Sigma^{\prime}}\left(X\right),
\]
for any $X\in\Sigma^{\prime}$ and $t\in G_{X}$.
\end{corollary}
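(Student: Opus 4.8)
The plan is to read the desired inequality as a comparison of log-norms along a single injective morphism of $F$-varieties, where the injectivity is precisely the freeness half of Proposition \ref{5.5}. Write $H=H_V$, $\Sigma'=\Sigma_V'$, and introduce the closed $F$-subvariety
$$
\mathcal{Z}=\left\{(X,t)\in\Sigma'\times G:\ tXt^{-1}=X\right\}
$$
of $\Sigma'\times G$, which contains every pair $(X,t)$ with $X\in\Sigma'$ and $t\in G_X$. Since $\Sigma'$ is locally closed in $\mathfrak{g}$ and $G$ is affine, one has $\sigma_{\mathcal{Z}}(X,t)\sim\sigma_{\Sigma'}(X)+\sigma_G(t)$, so it is enough to bound $\sigma_{\mathcal{Z}}(X,t)$. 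For $X\in\Sigma'$, Proposition \ref{5.5} says the $H$-stabiliser of $X$ is trivial, i.e. $H\cap Z_G(X)=\{1\}$; hence the morphism
$$
\iota:\mathcal{Z}\longrightarrow\Sigma'\times\left(H\backslash G\right),\qquad(X,t)\longmapsto(X,Ht)
$$
is injective on $F$-points, and one checks directly that it is a monomorphism of $F$-varieties (note $H\backslash G$ is affine, $H$ and $G$ being reductive).

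Granting that $\iota$ is a finite morphism onto a closed $F$-subvariety — equivalently, since we are in characteristic zero, a closed immersion — the standard behaviour of log-norms under such maps, combined with the product formula $\sigma_{\Sigma'\times(H\backslash G)}(X,Hg)\sim\sigma_{\Sigma'}(X)+\sigma_{H\backslash G}(Hg)$, would give
$$
\sigma_G(t)\ll\sigma_{\mathcal{Z}}(X,t)\ll\sigma_{\Sigma'}(X)+\sigma_{H\backslash G}(t)
$$
uniformly in $(X,t)\in\mathcal{Z}$. Because every log-norm is bounded below by a positive constant, $\sigma_{\Sigma'}(X)+\sigma_{H\backslash G}(t)\ll\sigma_{\Sigma'}(X)\,\sigma_{H\backslash G}(t)$, which is the asserted estimate; the uniformity of the implied constant is automatic, since it arises from the single morphism $\iota$ rather than from a family indexed by $X$.

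The one substantial point, and the step I expect to be the main obstacle, is the finiteness (or closed-immersion property) of $\iota$. Its image is $\{(X,Hg):X\in\Sigma',\ g^{-1}Xg=X\}$, and I would establish closedness of this image and properness of $\iota$ by a valuative-criterion argument in the $F$-analytic topology: a convergent sequence $(X_n,Hg_n)\to(X,Hg)$ with $X_n,X\in\Sigma'$ lifts to a convergent sequence $g_n\to g$ in $G$ (as $H$ is closed), and passing to the limit in $g_n^{-1}X_ng_n=X_n$ forces $g\in Z_G(X)$, so the limit lies in the image; freeness then gives uniqueness of the lift, hence $\iota$ is a homeomorphism onto its image, and a routine argument upgrades this to a closed immersion in characteristic zero. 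Alternatively — and this is probably the cleanest route to write down — one adapts the proof of the analogous log-norm estimate for the variety attached to Bessel models in \cite{BP20}, which transposes verbatim once Corollary \ref{5.3} and Proposition \ref{5.5} are used in place of their Bessel counterparts, in the same spirit as the other lemmas in this section.
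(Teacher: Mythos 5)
Your overall framing --- reading the estimate as a log-norm comparison along the injective map $\iota$, with injectivity supplied by the freeness in Proposition~\ref{5.5} --- is sound, and the reduction from a closed-immersion hypothesis on $\iota$ to the stated inequality is correct. However, your verification of closedness of the image has a genuine gap. You take a sequence $(X_n,Hg_n)\to(X,Hg)$ of points of $\iota(\mathcal{Z})$, lift the cosets to a convergent sequence $g_n\to g$ via a local section of $G(F)\to(H\backslash G)(F)$, and then ``pass to the limit in $g_n^{-1}X_ng_n=X_n$.'' But membership of $(X_n,Hg_n)$ in $\iota(\mathcal{Z})$ only guarantees that \emph{some} representative $t_n\in Hg_n$ centralises $X_n$; the $g_n$ produced by the local section need not be $t_n$, and in general $g_n^{-1}X_ng_n\neq X_n$. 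Writing $t_n=h_n^{-1}g_n$ with $h_n\in H(F)$, the identity you actually have is $g_nX_ng_n^{-1}=h_nX_nh_n^{-1}$, and you must first show that $(h_n)$ is bounded before passing to the limit. That boundedness is precisely the analytic content of the corollary: it follows from the closed immersion $H\times\Sigma'\hookrightarrow\Sigma'\times\Sigma'$, $(h,Y)\mapsto(Y,hYh^{-1})$ (a direct consequence of freeness and separatedness of $\Sigma'/H$), together with the $G$-invariance of $Q$, which keeps $\log(2+|Q(h_nX_nh_n^{-1})|^{-1})=\log(2+|Q(X_n)|^{-1})$ under control.

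Once the embedding $H\times\Sigma'\hookrightarrow\Sigma'\times\Sigma'$ and the invariance of $Q$ are in hand, the detour through properness of $\iota$ is superfluous: for $X\in\Sigma'(F)$, $t\in G_X(F)$ and arbitrary $h\in H(F)$, set $g=ht$, so that $gXg^{-1}=hXh^{-1}\in\Sigma'$; the closed immersion yields $\sigma_G(h)\ll\sigma_{\Sigma'}(X)+\sigma_{\Sigma'}(gXg^{-1})\ll\sigma_G(g)\,\sigma_{\Sigma'}(X)$ (the last step using $Q(gXg^{-1})=Q(X)$), hence $\sigma_G(t)\ll\sigma_G(h)+\sigma_G(g)\ll\sigma_G(g)\,\sigma_{\Sigma'}(X)$, and the corollary follows by taking the infimum over $h\in H(F)$. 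This is the direct argument of \cite[Proposition~10.5.2]{BP20} that the paper invokes and that your ``alternative'' route correctly points to; the alternative is the fix, and the primary route ends up needing the same ingredients while being slightly more roundabout.
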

\begin{proof}
    The proof of \cite[Proposition 10.5.2]{BP20} works verbatim.
\end{proof}

\subsection{The quotient $\Sigma_V^{\prime}\left(F\right)/H_V\left(F\right)$}\label{sec5.3}

We prove a genericity property for the Borel subalgebras
intersecting $\Sigma_V^\prime$.
\begin{proposition}\label{5.7}
Let $X=X_V+\Phi(v)\in\Sigma^{\prime}$, where $X_V\in \mathfrak{h}_V$ and $v\in V$. Let $\mathfrak{b}=\mathfrak{t}\oplus\mathfrak{u}$
be a Borel algebra of $\mathfrak{g}$ defined over $\bar{F}$ containing $X$. Then 
\[
\mathfrak{g}=\mathfrak{h}_V^\perp\oplus d\Phi_{v}(V)\oplus\mathfrak{u}.
\]
\end{proposition}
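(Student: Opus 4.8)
The plan is to base–change the identity to $\bar F$, convert the asserted direct sum into a transversality by means of an invariant form on $\mathfrak g$, and then extract that transversality from the cyclic–vector description of $\Sigma_V^\prime$ in Proposition \ref{5.4} together with the freeness/orbit statement of Proposition \ref{5.5}.

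First, the three subspaces $\mathfrak h_V^\perp$, $d\Phi_v(V)$, $\mathfrak u$ are all cut out over $\bar F$, so it suffices to prove the identity after extending scalars; thus we may assume $F=\bar F$. Since $X=X_V+\Phi(v)$ lies in $\Sigma_V^\prime$, the nonvanishing locus of $Q=Q_0\,d^G$, Proposition \ref{5.4} gives that $X\in\mathfrak g_{\mathrm{reg}}$ and that $v,X_Vv,\dots,X_V^{n-1}v$ generate $V$ over $\bar E$; in particular $X$ is regular semisimple. Hence $\mathfrak t=\mathfrak g_X$ is the unique Cartan subalgebra through $X$, every Borel containing $X$ contains $\mathfrak t$, and for our $\mathfrak b=\mathfrak t\oplus\mathfrak u$ one has $\operatorname{Im}(\operatorname{ad}X)=\bigoplus_{\alpha\neq0}\mathfrak g_\alpha$ and $\mathfrak u=\bigoplus_{\alpha>0}\mathfrak g_\alpha$. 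Fix a $G$–invariant nondegenerate symmetric bilinear form $\langle\cdot,\cdot\rangle$ on $\mathfrak g$ (a trace form); it restricts nondegenerately to $\mathfrak h_V$, so $\mathfrak g=\mathfrak h_V\oplus\mathfrak h_V^\perp$, and the standard root–space computation gives $\mathfrak u^\perp=\mathfrak b$.

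Next I would dualize. One has $\mathfrak h_V^\perp\cap d\Phi_v(V)=0$, because $d\Phi_v(V)\subseteq\mathfrak h_V$ by the moment–map construction, so the point is to prove $\mathfrak g=(\mathfrak h_V^\perp+d\Phi_v(V))\oplus\mathfrak u$; since $A\oplus B=\mathfrak g$ is equivalent to $A^\perp\oplus B^\perp=\mathfrak g$, and $(\mathfrak h_V^\perp+d\Phi_v(V))^\perp=\mathfrak h_V\cap d\Phi_v(V)^\perp$ while $\mathfrak u^\perp=\mathfrak b$, this is equivalent to
\[
\mathfrak g=\big(\mathfrak h_V\cap d\Phi_v(V)^\perp\big)\oplus\mathfrak b .
\]
The first summand is computed from the moment–map identity: for $Z\in\mathfrak h_V$ and $w\in V$ one has $\langle Z,d\Phi_v(w)\rangle=h_V(v,Zw)+h_V(w,Zv)$, and since elements of $\mathfrak h_V=\mathfrak u(V)$ are skew–self–adjoint for $h_V$ this equals $\TR_{E/F}\,h_V(w,Zv)$; as $w$ varies this vanishes iff $Zv=0$. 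Thus $\mathfrak h_V\cap d\Phi_v(V)^\perp=\{Z\in\mathfrak h_V:\ Zv=0\}$, the Lie algebra attached to the $H_V$–stabilizer of $v$, and the statement reduces to a transversality between $\mathfrak b$ and this stabilizer algebra inside $\mathfrak h_V$; the matching of dimensions here is exactly what Proposition \ref{5.5} provides, since the freeness of the $H_V$–action on $\Sigma_V^\prime$ and the coincidence of $G$– and $H_V$–conjugacy there pin down $\dim\Sigma_V$ and hence $\dim(\mathfrak h_V^\perp+d\Phi_v(V))$.

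Finally, to prove the transversality I would use the cyclic vector decisively. In the basis $v,X_Vv,\dots,X_V^{n-1}v$ of $V$ furnished by Proposition \ref{5.4}, $X_V$ is a companion matrix and $v$ is its first vector; moreover $\Phi(v)\in\mathfrak u(V)$ acts on $V$ through the line $\bar E\,v$ (it is the image of the moment map, hence essentially a rank–one operator supported at $v$), so $X=X_V+\Phi(v)$ acts on $V$ with the same cyclic flag, i.e. $v$ is cyclic for $X|_V$ as well. If $0\neq Z$ lay in both $\mathfrak b$ and $\{Z\in\mathfrak h_V:\ Zv=0\}$, one decomposes $Z=Z_0+Z_+$ along $\mathfrak b=\mathfrak g_X\oplus\mathfrak u$; since $X$ is regular, $\operatorname{ad}X$ is invertible on $\mathfrak u$, and iterating $\operatorname{ad}X$ on $Z$ and evaluating the resulting operators along the flag $v,Xv,\dots,X^{n-1}v$ forces $Z$ to act on $V$ as something commuting with $X|_V$ that annihilates the cyclic generator $v$ — hence $Z|_V=0$, and so $Z=0$ in $\mathfrak g$. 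The main obstacle is precisely this last step: reconciling membership in $\mathfrak b$, which is governed by the regularity of $X$, with the annihilation of $v$, which is governed by the cyclicity of $v$. This is the analogue in the twisted Fourier–Jacobi setting of the transversality arguments of \cite[Proposition 10.5.2]{BP20}, and carrying it out requires a careful though elementary bookkeeping of root spaces along the cyclic flag.
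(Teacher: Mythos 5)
Your reduction by duality is sound as far as it goes, but the argument does not reach the goal: you explicitly leave the decisive transversality unproven, and the sketch you offer for it contains a structural error.

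The good part: fixing a $G$-invariant nondegenerate form, using $\mathfrak u^\perp=\mathfrak b$, $(\mathfrak h_V^\perp)^\perp=\mathfrak h_V$, and the moment-map identity $\mathfrak h_V\cap d\Phi_v(V)^\perp=\{Z\in\mathfrak h_V:Zv=0\}$ to convert the claimed decomposition into $\big(\{Z\in\mathfrak h_V:Zv=0\}\big)\cap\mathfrak b=0$ is a correct and genuinely different reformulation from the paper's, which instead works directly with $(\mathfrak h_V^\perp\oplus d\Phi_v(V))\cap\mathfrak u=0$ and adapts \cite[Proposition 10.6.1]{BP20}. The two intersections are indeed equivalent once the dimension count is in place, but that dimension count is elementary linear algebra (it is what makes $\Sigma_V$ have the right codimension), and your appeal to Proposition \ref{5.5} for it is off target: freeness of the $H_V$-action is a statement about $\{Z:[Z,X]=0\}$, not $\{Z:Zv=0\}$, and you never actually relate the two.

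The genuine gap is your final paragraph, where you yourself write ``the main obstacle is precisely this last step'' and defer to ``careful though elementary bookkeeping of root spaces.'' But the sketch that precedes this caveat cannot be salvaged by mere bookkeeping, because it conflates the spaces on which $Z$ and $X$ act. You write of ``the basis $v, X_Vv,\dots,X_V^{n-1}v$ of $V$'' and of ``$Z$ commuting with $X|_V$,'' yet $X_V\in\mathfrak h_V^\perp$ and $X$ itself lie in $\mathfrak g=\mathrm{Lie}(\mathrm{Res}_{K/F}U(V_K))$, which acts on $V_K$, not on $V$; the orbit $v,X_Vv,X_V^2v,\dots$ lives in $V_K\otimes\bar F$ and does not stay inside $V\otimes\bar F$ (it oscillates between the two $\bar F$-eigenspaces of the $K$-action). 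Proposition \ref{5.4} makes its cyclicity assertion only after an implicit identification coming from the splitting of $K\otimes_F\bar F$, so the ``cyclic flag in $V$'' you use is not well-defined as stated, and ``$Z|_V=0$ hence $Z=0$'' does not follow from the chain you describe. This is exactly the point at which the argument must be reworked (for example, by writing $\mathfrak g_{\bar F}\cong\mathfrak{gl}_n\times\mathfrak{gl}_n$ with $\mathfrak h_V$ the diagonal, $\mathfrak h_V^\perp$ the antidiagonal, $X=(A+B,-A+B)$ with $B$ the rank-one operator $\Phi(v)$, and then analyzing $Y\in\mathfrak b_1\cap\mathfrak b_2$ with $Yv=0$), and until that is done the proof is incomplete. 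Note also that the reference should be \cite[Proposition 10.6.1]{BP20}, not \cite[Proposition 10.5.2]{BP20}, which the paper uses for the log-norm estimate in Corollary \ref{5.6}.
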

\begin{proof}
    Since $\dim(\mathfrak{h}_V^\perp)+\dim(d\Phi_{v}(V))+\dim(\mathfrak{u})=\dim(\mathfrak{g})$ and $\mathfrak{h}_V^\perp\cap d\Phi_{v}(V)=0$, it suffices to show 
    $$
    (\mathfrak{h}_V^\perp\oplus d\Phi_{v}(V)) \cap \mathfrak{u}=0,
    $$
    whose proof can be adapted from \cite[Proposition 10.6.1]{BP20} without any difficulties.
\end{proof}
We denote by $\mathfrak{g}^{\prime}$ be the nonvanishing locus of $Q$ in $\mathfrak{g}$. Let $\mathfrak{g}^{\prime}/G$ be the geometric quotient of $\mathfrak{g}^{\prime}$ by $G$-adjoint action. By Proposition \ref{5.5}, the map $\Sigma^\prime \rightarrow \mathfrak{g}^\prime/G$ factors through $\Sigma_V^{\prime}/H_V$, hence gives us the following morphism
\[
\pi:\Sigma_V^{\prime}/H_V\longrightarrow\mathfrak{g}^{\prime}/G.
\]
We study the $F$-analytic counterpart 
$$
\pi_{F}:\Sigma_V^{\prime}(F)/H_V(F)\longrightarrow\mathfrak{g}^{\prime}(F)/G(F)
$$
of the above map. Since $H_V(F)$ acts freely on $\Sigma_V^\prime(F)$, we set $\mu_{\Sigma_V^\prime/H_V}$ to be the quotient measure associated to $\mu_{\Sigma_V}$ and $\mu_{H_V}$. This measure is characterized by the following equality 
\[
\int_{\Sigma_V\left(F\right)}\phi\left(X\right)d\mu_{\Sigma}\left(X\right)=\int_{\Sigma_V^{\prime}\left(F\right)/H_V\left(F\right)}\int_{H_V\left(F\right)}\phi\left(h^{-1}Xh\right)dhd\mu_{\Sigma_V^{\prime}/H_V}\left(X\right)
\]
for all $\phi\in C_{c}\left(\Sigma\left(F\right)\right)$. We denote
by $dX$ the measure on $\mathfrak{g}^{\prime}\left(F\right)/G\left(F\right)$
inherited from one on $\mathfrak{g}_{\text{reg}}\left(F\right)/G\left(F\right)=\Gamma_{\text{reg}}(\mathfrak{g})$.
\begin{proposition}\label{5.8}
\begin{enumerate}
\item $\pi$ is an isomorphism of algebraic varieties and $\pi_{F}$ is
an open embedding of $F$-analytic spaces.
\item $\pi_{F}$ sends the measure $\mu_{\Sigma_V^{\prime}/H_V}\left(X\right)$
to $D^{G}\left(X\right)^{1/2}dX$.
\item The natural projection $p:\Sigma_V^{\prime}\rightarrow\Sigma_V^{\prime}/H_V$
has the norm descent property.
\end{enumerate}
\end{proposition}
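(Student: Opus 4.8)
The plan is to transcribe the argument of \cite[Section 10]{BP20}, where the analogous statement is proved for the Bessel model, with the pair $(\mathfrak{h}_V^\perp,\Phi(V))$ and the moment map $\Phi$ here playing the role of the Bessel datum there. Essentially all of the structural input has already been isolated in Propositions \ref{5.2}--\ref{5.7} and Corollary \ref{5.6}, so the remaining task is to put these pieces together.

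\textbf{Part (1).} I would first note that injectivity of $\pi$, and of $\pi_F$ on $F$-points, is precisely the last assertion of Proposition \ref{5.5}. Since $\mathfrak{g}'/G$ is open in the smooth variety $\Gamma_{\text{reg}}(\mathfrak{g})$, and since freeness of the $H_V$-action on $\Sigma_V'$ (Proposition \ref{5.5}) gives $\dim\Sigma_V'/H_V=\dim\Sigma_V-\dim H_V$, which an elementary dimension count shows equals $\dim\mathfrak{g}'/G$, it is enough to prove that $\pi$ is \'{e}tale: a bijective morphism $X\to Y$ of $\bar F$-varieties with $Y$ smooth whose differential is an isomorphism at every point of $X$ is an isomorphism in characteristic zero, and the same local structure identifies $\pi_F$ with an open immersion of $F$-analytic spaces. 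To verify \'{e}taleness at $X=X_V+\Phi(v)\in\Sigma_V'$, let $\mathfrak{g}_X=\mathfrak{t}$ be its centralizer (a maximal torus, as $X$ is regular semisimple), sitting inside a Borel $\mathfrak{b}=\mathfrak{t}\oplus\mathfrak{u}$ of $\mathfrak{g}_{\bar F}$; the tangent map of $\pi$ at the image of $X$ is the arrow
$$
d\pi\colon\ \bigl(\mathfrak{h}_V^\perp\oplus d\Phi_v(V)\bigr)/[\mathfrak{h}_V,X]\ \longrightarrow\ \mathfrak{g}/[\mathfrak{g},X]
$$
induced by the inclusion $\mathfrak{h}_V^\perp\oplus d\Phi_v(V)=T_X\Sigma_V\hookrightarrow\mathfrak{g}$. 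Proposition \ref{5.7} gives $\mathfrak{h}_V^\perp\oplus d\Phi_v(V)\oplus\mathfrak{u}=\mathfrak{g}$ with $\mathfrak{u}\subseteq[\mathfrak{g},X]$, which makes $d\pi$ surjective; comparing dimensions---again using Proposition \ref{5.7}, together with $\dim[\mathfrak{h}_V,X]=\dim\mathfrak{h}_V$ coming from freeness---makes $d\pi$ injective. Finally, for surjectivity of $\pi$ onto $\mathfrak{g}'/G$ I would invoke Proposition \ref{5.2} and Corollary \ref{5.3}: these show that the functions on $\Sigma_V'$ pulled back from $F[\mathfrak{g}]^G$ already separate $H_V$-orbits and realize every characteristic polynomial occurring on $\mathfrak{g}'$, so $\pi$ is dominant; being also \'{e}tale and injective it is an open immersion, hence surjective.

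\textbf{Parts (2) and (3).} Part (2) is a Jacobian computation. Having fixed a nondegenerate $G$-invariant symmetric bilinear form on $\mathfrak{g}$ restricting nondegenerately to $\mathfrak{h}_V$ (so that $\mathfrak{g}=\mathfrak{h}_V\oplus\mathfrak{h}_V^\perp$ and the identification $\mathfrak{h}_V^*\cong\mathfrak{h}_V$ behind the definition of $\Phi$ is fixed), I would use the decomposition of Proposition \ref{5.7} to write, near $X\in\Sigma_V'$, both the orbit direction $[\mathfrak{g},X]$ and the slice direction $\mathfrak{h}_V^\perp\oplus d\Phi_v(V)$ in compatible coordinates, and then compare $\mu_{\Sigma_V}=\mu_{H_V}\cdot\mu_{\Sigma_V'/H_V}$ with the measure $dX$ on $\Gamma_{\text{reg}}(\mathfrak{g})$ coming from the Weyl integration formula; the resulting Jacobian is exactly $D^G(X)^{1/2}$, as in \cite[Section 10]{BP20}. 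For part (3), the norm descent property of $p\colon\Sigma_V'\to\Sigma_V'/H_V$ follows from Corollary \ref{5.6}: given $\bar X$ in the image of $p$ on $F$-points and any lift $X\in\Sigma_V'(F)$, Corollary \ref{5.6} controls $\sigma_G(t)$ for $t\in G_X(F)$ in terms of $\sigma_{H_V\backslash G}(t)$ and $\sigma_{\Sigma_V'}(X)$; combined with the norm descent property of the Chevalley quotient $\mathfrak{g}\to\mathfrak{g}/\!/G$ (for instance via a Kostant section) and the isomorphism of part (1), this lets one replace $X$ by an $H_V(F)$-conjugate of norm $\ll\sigma_{\Sigma_V'/H_V}(\bar X)$, exactly as in \cite[Section 10]{BP20}.

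\textbf{Main obstacle.} I expect part (2), the measure comparison, to need the most care: the isomorphism in part (1) is essentially forced once Propositions \ref{5.5} and \ref{5.7} are in hand, but pinning down the Jacobian so that it comes out to be $D^G(X)^{1/2}$---with no extraneous power of $2$ or of a discriminant sneaking in---requires keeping precise track of the chosen invariant form and of the identification $\mathfrak{h}_V^*\cong\mathfrak{h}_V$ underlying the moment map. A secondary point, already needed in part (1), is to make sure that the dimension counts are the ones that genuinely occur for $G=\text{Res}_{K/F}\,U(V_K)$ and a skew-Hermitian (rather than Hermitian) form, so that the decomposition in Proposition \ref{5.7} is an equality of spaces of the expected dimensions; and the surjectivity of $\pi$ onto $\mathfrak{g}'/G$, a realizability statement for characteristic polynomials, deserves to be checked rather than assumed.
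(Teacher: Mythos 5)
Your proposal matches the paper's proof in both structure and ingredients: part (1) from Propositions \ref{5.2}, \ref{5.5}, \ref{5.7}; part (2) by a Jacobian comparison relative to the decomposition of Proposition \ref{5.7}; part (3) from Corollary \ref{5.6} and the argument of \cite[Proposition~10.7.1(iii)]{BP20}. The only notable stylistic divergence is in part (1): you verify bijectivity of $d\pi_X$ by showing surjectivity and then comparing dimensions, whereas the paper shows surjectivity and injectivity separately, with injectivity obtained from the chain $\mathrm{ad}(X)(\mathfrak g)=\mathrm{ad}(X)(\mathfrak h_V)+\mathfrak u$ and $(\mathfrak h_V^\perp\oplus d\Phi_v(V))\cap\mathfrak u=0$; the two are equivalent in substance.

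Two small points worth flagging. First, the clause ``being also \'etale and injective it is an open immersion, hence surjective'' is not a valid inference on its own: an open immersion need not be onto, so the dominance step has to do the work, and (as you yourself note under ``main obstacle'') the realizability of every characteristic polynomial via Proposition \ref{5.2} should be checked rather than inferred from $\pi$ being an open immersion. The paper itself simply asserts surjectivity from Proposition \ref{5.2}, so there is no real gap relative to the paper, but the chain of implications as you wrote it is off. Second, for part (2) you correctly identify that the issue is the Jacobian of $d\pi_{F,X}$, but the paper has a specific trick you did not reproduce: it fixes $\mu_{\mathfrak u}$ by $\mu_{\mathfrak g}=\mu_{\mathfrak h_V^\perp}\otimes\mu_V\otimes\mu_{\mathfrak u}$, introduces the endomorphism $T$ of $\mathfrak g$ equal to $\mathrm{ad}(X)$ on $(\mathfrak h_V\cap(d\Phi_v(V))^\perp)\oplus\mathfrak u$ and the identity on $\mathfrak g_X$, uses $|\det T|=D^G(X)$ and $\mathrm{ad}(X)_*\mu_{\mathfrak u}=D^G(X)^{1/2}\mu_{\mathfrak u}$, and reads off $F(X)=D^G(X)^{1/2}$ from $T_*\mu_{\mathfrak g}=D^G(X)\mu_{\mathfrak g}$. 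Spelling this out is exactly what removes the extraneous-factor worry you raised, so this is the computation you would need to transcribe rather than re-derive from scratch.
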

\begin{proof}
    \begin{enumerate}
        \item By Proposition \ref{5.5}, it follows that $\pi$ and $\pi_F$ are injective. Moreover, by Proposition \ref{5.2}, we have $\pi$ is surjective and thus bijective. To prove $\pi_F$ is an open embedding, it suffices to show that $\pi$ is a local isomorphism. Let $X=X_V+\Phi(v)\in \Sigma_V^\prime$, where $X_V\in \mathfrak{h}_V^\perp$ and $v\in V$. We need to prove $d\pi_X$ is an isomorphism. Observe
        $$
        T_X(\Sigma^\prime_V/H_V)=(\mathfrak{h}_V^\perp+d\Phi_v(V))/\text{ad}(X)(\mathfrak{h}_V)
        $$
        and 
        $$
        T_X(\mathfrak{g}^\prime/G)=\mathfrak{g}/\text{ad}(X)(\mathfrak{g}).
        $$
        The differential $d\pi_X$ is the natural inclusion of $(\mathfrak{h}_V^\perp+d\Phi_v(V))/\text{ad}(X)(\mathfrak{h}_V)$ in $\mathfrak{g}/\text{ad}(X)(\mathfrak{g})$. We choose a Borel subalgebra $\mathfrak{b}$ of $\mathfrak{g}$ containing $X$. Let $\mathfrak{u}$ be its nilpotent radical. By Proposition \ref{5.7}, it follows that $\mathfrak{g}=\mathfrak{h}_V^\perp\oplus d\Phi_{v}(V)\oplus\mathfrak{u}$. Since $\mathfrak{u}=\text{ad}(X)(\mathfrak{b})\subset \text{ad}(X)(\mathfrak{g})$, we can see that $d\pi_X$ is surjective. Moreover, since 
        $$
        \text{ad}(X)(\mathfrak{g})=\text{ad}(X)(\mathfrak{h}_V)+\text{ad}(X)(\mathfrak{b})=\text{ad}(X)(\mathfrak{h}_V)+\mathfrak{u}
        $$
        and $(\mathfrak{h}_V^\perp\oplus d\Phi_v(V))\cap \mathfrak{u}=0$, it follows that $(\mathfrak{h}_V^\perp\oplus d\Phi_v(V))\cap \text{ad}(X)(\mathfrak{g})\subseteq \text{ad}(X)(\mathfrak{h}_V)$. Therefore, $d\pi_X$ is injective, which is to say $\pi$ is a local isomorphism.

        \item Let $X\in \Sigma_V^\prime(F)$. We denote $\mathfrak{g}_X=\ker (\text{ad}(X))$ and $\mathfrak{g}^X=\text{im} (\text{ad}(X))$. We have the following isomorphism
        $$
        d\pi_{F,X}:(\mathfrak{h}_V^\perp(F)+d\Phi_v(V)(F))/\text{ad}(X)(\mathfrak{h}_V)  \overset{\sim}{\longrightarrow}   \mathfrak{g}(F)/\mathfrak{g}^X(F).
        $$
        Let $F(X)\in \mathbb{R}^+$ such that
        $$
        (d\pi_{F,X})_*(\mu_{\mathfrak{h}_V}^\perp\otimes\mu_V/\text{ad}(X)_*\mu_{\mathfrak{h}})=F(X)\mu_{\mathfrak{g}}/\mu_{\mathfrak{g}^X}.
        $$
        It suffices to show $F(X)=D^G(X)^{1/2}$. We choose a Borel subalgebra $\mathfrak{b}$ of $\mathfrak{g}$ containing $X$ and let $\mathfrak{u}$ be its nilpotent radical. We choose the measure $\mu_{\mathfrak{u}}$ on $\mathfrak{u}$ such that
        $$
        \mu_{\mathfrak{g}}=\mu_{\mathfrak{h}_V}^\perp\otimes\mu_V\otimes \mu_{\mathfrak{u}}.
        $$
        This gives us
        $$
        \mu_{\mathfrak{g}}=(\mu_{\mathfrak{h}_V}^\perp\otimes\mu_V)^\perp\otimes \mu_{\mathfrak{u}}^\perp=(\mu_{\mathfrak{h}_V}^\perp\otimes\mu_V)^\perp\otimes \mu_{\mathfrak{g}_X}\otimes \mu_{\mathfrak{u}}.
        $$
        Let $T\in \text{End}(\mathfrak{g})$ which is equal to $\text{ad}(X)$ on $(\mathfrak{h}_V\cap (d\Phi_v(V))^\perp)\oplus \mathfrak{u}$ and $\text{Id}$ on $\mathfrak{g}_X$. Then we have $D^G(X)=|\det(T)|$. Observe
        $$
        D^G(X)\mu_{\mathfrak{g}}=T_*\mu_{\mathfrak{g}}=\text{ad}(X)_*(\mu_{\mathfrak{h}_V}^\perp\otimes\mu_V)^\perp\otimes \mu_{\mathfrak{g}_X} \otimes \text{ad}(X)_*\mu_{\mathfrak{u}}
        $$
        $$
        =D^G(X)^{1/2}(\text{ad}(X)_*(\mu_{\mathfrak{h}_V}^\perp\otimes\mu_V)^\perp\otimes \mu_{\mathfrak{g}_X} \otimes \mu_{\mathfrak{u}}).
        $$
        Since $\mu_{\mathfrak{g}}=\mu_{\mathfrak{g_X}}\otimes \mu_{\mathfrak{g}^X}$, we obtain 
        $$
        \mu_{\mathfrak{g}^X}=D^G(X)^{1/2}(\text{ad}(X)_*(\mu_{\mathfrak{h}_V}^\perp\otimes\mu_V)^\perp\otimes \mu_{\mathfrak{u}}).
        $$
        Therefore, we deduce that $F(X)=D^G(X)^{1/2}$ as desired.
        \item Part (iii) follows from the proof of \cite[Proposition 10.7.1(iii)]{BP20} without any difficulties.
    \end{enumerate}
\end{proof}

\subsection{A spectral expansion of $J_V^{\text{Lie}}$}\label{sec5.4}

Let $\Gamma\left(\Sigma_V\right)$ be the subset of $\Gamma\left(\mathfrak{g}\right)$
consisting of the conjugacy classes of the semisimple parts of the
elements in $\Sigma_V\left(F\right)$. For choices of measure of $\Gamma(\Sigma_V)$, we take the restriction of the measure on $\Gamma\left(\mathfrak{g}\right)$. Let ${\mathcal{T}}\left(G\right)$ be a set of representatives of $G\left(F\right)$-conjugacy classes of maximal tori in $G$. For each $T\in {\mathcal{T}}\left(G\right)$, we denote by $\mathfrak{t}\left(F\right)_{\Sigma_V}$ the subset of elements $X\in\mathfrak{t}\left(F\right)$ whose conjugacy class belongs to $\Gamma\left(\Sigma_V\right)$. We have 
\[
\int_{\Gamma\left(\Sigma_V\right)}\phi\left(X\right)dX=\underset{T\in{\mathcal{T}}\left(G\right)}{\sum}\left|W\left(G,T\right)\right|^{-1}\int_{\mathfrak{t}\left(F\right)_{\Sigma_V}}\phi\left(X\right)dX
\]
for all $\phi\in C_{c}^{\infty}\left(\Gamma\left(\Sigma_V\right)\right)$.
The following theorem gives us a spectral expansion of $J_V^{\text{Lie}}$.
\begin{theorem}\label{Liespectral}
For any $f\in{\mathcal{C}}_{\text{scusp}}\left(\mathfrak{g}\left(F\right)\right)$, we have 
\[
J_V^{\text{Lie}}\left(f\right)=\int_{\Gamma\left(\Sigma_V\right)}D^{G}\left(X\right)^{1/2}\hat{\theta}_{f}\left(X\right)dX.
\]
\end{theorem}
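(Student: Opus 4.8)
The plan is to mimic the spectral-expansion argument for the group case (Theorem \ref{spectral}) but now on the Lie algebra, using the Fourier transform to convert the integral defining $J_V^{\text{Lie}}$ into an integral over $\Sigma_V(F)$. First I would unfold the definition: by Fubini (justified by the convergence just proved) write
$$
J_V^{\text{Lie}}(f)=\int_{Z_G(F)H_V(F)\backslash G(F)}\int_{\mathfrak{h}_V^\perp(F)}\int_{V(F)}\widehat f\big(x^{-1}(X+\Phi(v))x\big)\,dv\,dX\,dx,
$$
and recognize the inner double integral over $\mathfrak h_V^\perp(F)\oplus V(F)$ as an integral over $\Sigma_V(F)$ against the measure $\mu_{\Sigma_V}$ (here one uses that $\Phi\colon \mathfrak h_V^\perp\oplus V\to \Sigma_V$ is the obvious parametrization). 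Then I would use Proposition \ref{5.8}: since $H_V(F)$ acts freely on $\Sigma_V'(F)$ and $\Sigma_V\setminus\Sigma_V'$ is $\mu_{\Sigma_V}$-negligible (it is the zero locus of the nonzero polynomial $Q$), the integral over $\Sigma_V(F)$ disintegrates as $\int_{\Sigma_V'(F)/H_V(F)}\int_{H_V(F)}(\cdots)\,dh\,d\mu_{\Sigma_V'/H_V}$. The $H_V(F)$-integral combines with the $H_V(F)\backslash G(F)$-integral to an integral over $Z_G(F)\backslash G(F)$, giving
$$
J_V^{\text{Lie}}(f)=\int_{\Sigma_V'(F)/H_V(F)}\int_{Z_G(F)\backslash G(F)}\widehat f\big(x^{-1}Xx\big)\,dx\,d\mu_{\Sigma_V'/H_V}(X).
$$

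Next I would push the measure forward along the open embedding $\pi_F\colon \Sigma_V'(F)/H_V(F)\hookrightarrow \mathfrak g'(F)/G(F)$. By Proposition \ref{5.8}(2), $\pi_F$ sends $\mu_{\Sigma_V'/H_V}$ to $D^G(X)^{1/2}\,dX$; and the image of $\pi_F$ is exactly the set of regular semisimple $G$-orbits meeting $\Sigma_V'$, which up to measure zero is $\Gamma(\Sigma_V)_{\mathrm{reg}}$ (using Proposition \ref{5.5} to see the orbit of $X\in\Sigma_V'$ determines and is determined by its characteristic polynomial). So the expression becomes
$$
J_V^{\text{Lie}}(f)=\int_{\Gamma(\Sigma_V)} D^G(X)^{1/2}\Big(\int_{G_X(F)\backslash G(F)}\int_{A_G\backslash Z_G}\widehat f(x^{-1}Xx)\Big)\,dX,
$$
and the inner orbital-type integral is, by the definition of $\theta_{\widehat f}$ together with the strong cuspidality of $f$ (hence of $\widehat f$) and Lemma \ref{2.5}, equal to $\theta_{\widehat f}(X)$; but for strongly cuspidal $f$ one has the Lie-algebra identity $\theta_{\widehat f}=\widehat\theta_f$ in the appropriate sense (the Fourier transform of the quasi-character $\theta_f$ localized via $\widehat{j}(\mathcal O,\cdot)$), so the $X$-integrand is $D^G(X)^{1/2}\widehat\theta_f(X)$, which is what is claimed.

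The main obstacle I anticipate is the convergence/interchange bookkeeping: the naive application of Fubini to reintroduce the full $Z_G(F)\backslash G(F)$ integral is not obviously legitimate, because the inner integrals need not be absolutely convergent after un-disintegrating. The right way around this, following \cite[Section 10]{BP20}, is to introduce truncation functions $\kappa_N$ on $H_V(F)\backslash G(F)$ (or on $\Sigma_V'(F)/H_V(F)$ via $\pi_F$), prove the truncated identity $J_{V,N}^{\text{Lie}}(f)=\int_{\Gamma(\Sigma_V)}D^G(X)^{1/2}\big(\int\kappa_N\,\widehat f(x^{-1}Xx)\big)dX$ where everything converges, and then pass to the limit $N\to\infty$ using the decay estimate $\Xi^{H_V\backslash G}(x)^2\sigma_{H_V\backslash G}(x)^{-d}$ established in the convergence proof together with Corollary \ref{5.6} to control $\sigma_G$ on $G_X$ in terms of $\sigma_{\Sigma_V'}(X)$ and $\sigma_{H_V\backslash G}$. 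This is exactly the template of \cite[Proposition 10.4.1 and Section 10.7]{BP20}, and I expect it to carry over with only cosmetic changes, the only genuinely new input being Proposition \ref{identity} (used already in the convergence proof) and Proposition \ref{5.8}. The remaining steps — disintegration of measures, the pushforward formula, and the identification with $\widehat\theta_f$ — are routine given Propositions \ref{5.5} and \ref{5.8} and the strong cuspidality formalism of Section \ref{sec2.6}.
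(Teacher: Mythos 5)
Your proposal is correct and follows essentially the same route as the paper, which simply invokes \cite[Theorem 10.8.1]{BP20} after establishing Proposition \ref{5.8}. You have reconstructed the argument of that reference in slightly more detail: disintegrate the integral over $\Sigma_V(F)$ along the free $H_V(F)$-action on $\Sigma_V'(F)$ using Proposition \ref{5.8}, push the measure forward along $\pi_F$ to get $D^G(X)^{1/2}\,dX$ on $\Gamma(\Sigma_V)$, and identify the resulting (truncated, then limiting) orbital integral with the quasi-character of $\widehat f$ via the strong-cuspidality formalism; and you correctly flag that the truncation $\kappa_N$ step is the non-obvious technical input, just as in BP20.
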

\begin{proof}
    Once we have Proposition \ref{5.8}, the proof follows one of \cite[Theorem 10.8.1]{BP20} verbatim.
\end{proof}
\subsection{Comparison near the identity element}\label{sec5.5}
Let $\omega$ be a $G$-excellent neighborhood near $0$ in $\mathfrak{g}(F)$. We set $\Omega=\exp(\omega)$. Recall that for any quasi-character $\theta\in QC\left(\mathfrak{g}\left(F\right)\right)$
and $\lambda\in F^{\times}$, we denote by $\theta_{\lambda}$ the
quasi-character given by $\theta_{\lambda}=\theta\left(\lambda^{-1}X\right)$ for each $X\in\mathfrak{g}_{\text{reg}}\left(F\right)$.
\begin{proposition}\label{5.10}
Assume the induction hypothesis. Then
\begin{enumerate}
\item If $\omega$ is sufficiently small, then for any $f\in \mathcal{C}_{\text{scusp}}(\Omega)$, we have 
    \[
    J_V\left(f\right)=J_V^{\text{Lie}}\left(f_\omega\right).
    \]
\item There exists a unique continuous linear form $J^{\text{Lie}}_{V,\text{qc}}$ such that 
$$
J^{\text{Lie}}_{V}(f)=J^{\text{Lie}}_{V,\text{qc}}(\theta_f),
$$
for all $f\in \mathcal{C}_{\text{scusp}}(\mathfrak{g}(F))$. Moreover, we have
$$
J^{\text{Lie}}_{V,\text{qc}}(\theta_\lambda)=|\lambda|^{\delta(G)/2}J^{\text{Lie}}_{V_\lambda,\text{qc}}(\theta),
$$
for any $\theta\in \text{QC}_{c}(\mathfrak{g}(F))$ and $\lambda\in F^\times$.
\item Let $\theta\in \text{QC}_c(\mathfrak{g}(F))$ supported outside $0$. Then
$$
J^{\text{Lie}}_{V,\text{qc}}(\theta)=m_{V,\text{geom}}^{\text{Lie}}(\theta).
$$
\end{enumerate}
\end{proposition}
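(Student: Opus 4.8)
The plan is to imitate the structure of the final part of \cite[Chapter 10]{BP20}, combining the two spectral expansions at our disposal -- the geometric/qc expansion of $J_V$ coming from Theorem \ref{spectral} and Lemma \ref{4.4}, and the spectral expansion of $J_V^{\text{Lie}}$ in Theorem \ref{Liespectral} -- together with the induction hypothesis and a homogeneity argument. Concretely, set $\omega$ to be a $G$-excellent neighborhood of $0$ and $\Omega=\exp(\omega)$. By part (i), $J_V(f)=J_V^{\text{Lie}}(f_\omega)$ for $f\in\mathcal{C}_{\text{scusp}}(\Omega)$, and by part (ii) together with Proposition \ref{2.1}(ii) this passes to quasi-characters: $J_{V,\text{qc}}(\theta)=J_{V,\text{qc}}^{\text{Lie}}(\theta_\omega)$ for $\theta\in\text{QC}_c(\Omega)$. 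On the other side, Proposition \ref{4.1}(iv) gives $m_{V,\text{geom}}(\theta)=m_{V,\text{geom}}^{\text{Lie}}(\theta_\omega)$. Therefore the assertion for $\theta$ supported on $\mathfrak{g}(F)\setminus\{0\}$ is equivalent, after scaling by an element of $F^\times$ (using the homogeneity formulas in Proposition \ref{4.1}(iii) and Proposition \ref{5.10}(ii), and the fact that the $V_\lambda$ range over both skew-hermitian spaces), to the comparison $J_{V,\text{qc}}=m_{V,\text{geom}}$ on $\Omega$ away from the identity.

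The next step is to reduce that comparison on $\Omega$ to the semisimple-descent statements already proved in Section \ref{sec4}. A quasi-character $\theta\in\text{QC}_c(\mathfrak{g}(F))$ supported outside $0$ is, by a partition of unity, a finite sum of quasi-characters each supported in a set of the form $(\omega_X^G)$ for a nonzero semisimple $X\in\mathfrak{g}_{ss}(F)$; transporting to the group via $\exp$, each of these is supported near a semisimple element of $G(F)$ which is either non-central, or central but non-identity (the identity being excluded since $X\neq 0$). For non-central semisimple elements in $\Gamma_{\text{ell}}(G)$, Proposition \ref{elliptic} gives $J_{V,\text{qc}}(\theta)=m_{V,\text{geom}}(\theta)$ directly; for non-central elements not lying in any elliptic torus, both sides are supported in $\Gamma_{\text{ell}}(G)$ so both vanish on such a neighborhood. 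For non-identity central elements $s\in Z_{H_V}(F)\simeq E^1$, Proposition \ref{central}, combined with the identity for $m_{V,\text{geom}}$ recorded just before that proposition (which expresses $m_{V,\text{geom}}(\theta)$ near $s$ in terms of $\theta_{s,\omega}$ and the same limit of integrals over elliptic tori of $\mathfrak{h}_V$), yields $J_{V,\text{qc}}(\theta)=m_{V,\text{geom}}(\theta)$ there as well. Assembling these cases over the partition of unity gives the result on $\Omega$ away from the identity, hence, by the reduction in the first paragraph, on all of $\mathfrak{g}(F)\setminus\{0\}$.

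I expect the main obstacle to be verifying carefully that the reduction in the first paragraph is clean: one must check that the limit $s\to 0^+$ defining both $m_{V,\text{geom}}^{\text{Lie}}$ and the $\text{qc}$-form commutes with the descent to a neighborhood of $0$, that $J_{V,\text{qc}}^{\text{Lie}}$ is genuinely continuous and local (so that a partition of unity argument is legitimate), and that the homogeneity relations match up with the right indexing of the two skew-hermitian spaces $V_\lambda$ -- in particular that scaling by $\lambda\in F^\times$ with $\omega_{E/F}(\lambda)=-1$ swaps $V$ and its companion, so that the case analysis really covers every nonzero support. A secondary technical point is that the equality in part (i) and the support statement $\text{Supp}(J_{V,\text{qc}})\subseteq\Gamma_{\text{ell}}(G)$ were established only for $\theta\in\text{QC}(G(F))$ globally; one needs their evident localizations to the excellent neighborhood $\omega$, which follow from Proposition \ref{2.1} but should be spelled out. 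None of the individual steps requires new harmonic analysis beyond what is proved in Sections \ref{sec4}--\ref{sec5}; the content is the bookkeeping that glues Proposition \ref{elliptic}, Proposition \ref{central}, Theorem \ref{Liespectral}, and the homogeneity of Proposition \ref{4.1}(iii) into a single statement.
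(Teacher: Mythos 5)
Your proposal is correct and takes essentially the same route as the paper: establish the equality $J_{V,\text{qc}}^{\text{Lie}}=m_{V,\text{geom}}^{\text{Lie}}$ on a small excellent neighborhood $\omega$ of $0$ (minus $0$) by transporting via parts (i)--(ii) and Proposition \ref{4.1}(iv) the descent results Propositions \ref{elliptic} and \ref{central}, then extend to all of $\mathfrak{g}(F)\setminus\{0\}$ using the homogeneity relations of part (ii) and Proposition \ref{4.1}(iii). One small remark on your flagged concern about the indexing of $V_\lambda$: for the extension step one does not need $\lambda$ to range over both skew-hermitian forms, but rather to choose $\lambda\in N_{E/F}(E^\times)$ of small absolute value so that $V_\lambda\cong V$, which keeps both sides of the homogeneity identities attached to the same form $V$.
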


\begin{proof}
\begin{enumerate}
    \item Let $\omega_{\mathfrak{h}_V}=\omega\cap\mathfrak{h}_V\left(F\right)\subseteq\mathfrak{h}_V\left(F\right)$.
We can see that $\omega_{\mathfrak{h}_V}$ is an $H_V$-excellent open
neighborhood of $0$. We have 
\[
\int_{H_V\left(F\right)}f\left(h\right)\left\langle \phi_{i},\omega_{V,\psi,\mu}\left(h\right)\phi_{i}\right\rangle dh
=\int_{\omega_{\mathfrak{h}_V}}j^{H_V}\left(X\right)f\left(e^{X}\right)\left\langle \phi_{i},\omega_{V,\psi,\mu}\left(\exp X\right)\phi_{i}\right\rangle dX
\]
\[
=\int_{\mathfrak{h}_V\left(F\right)}f_{\omega}\left(X\right)\left\langle \phi_{i},\omega_{V,\psi,\mu}\left(\exp X\right)\phi_{i}\right\rangle dh,
\]
for all $f\in{\mathcal{C}}_{\text{scusp}}\left(\Omega\right)$. This gives us 
$$
J_V\left(f\right)=\int_{Z_G(F)H_V(F)\backslash G(F)}
\sum_i \int_{\mathfrak{h}_V(F)}f_\omega(x^{-1}Xx)\langle \phi_i,\omega_{\psi,\mu}(\exp(X))\phi_i\rangle dXdx.
$$
By Proposition \ref{identity}, when $\omega$ is sufficiently small, the inner sum of the above formula is equal to
$$
\int_{V(F)} \widehat{^x f_\omega|_{\mathfrak{h}_V(F)}}(\Phi(v))dv,
$$
where $\Phi$ is the moment map of $H_V$-space $\text{Res}_{E/F}V$. By the Fourier inverse formula, it follows that 
$$
\int_{V(F)} \widehat{^x f_{|\mathfrak{h}_V(F)}}(\Phi(v))dv=\int_{V(F)}\int_{\mathfrak{h}_V^\perp(F)} \hat{f_\omega}(x^{-1}Xx+\Phi(x^{-1}\cdot v))dXdv,
$$
which is to say $J_V(f)=J^{\text{Lie}}_V(f_\omega)$ as desired.

\item We set
$$
J^{\text{Lie}}_{V,\text{qc}}(\theta)=\int_{\Gamma(\Sigma_V)}D^G(X)^{1/2}\hat{\theta}(X)dX,
$$
for $\theta \in \text{QC}_c(\mathfrak{g}(F))$. By Theorem \ref{Liespectral}, we have
$$
J^{\text{Lie}}_{V}(f)=J^{\text{Lie}}_{V,\text{qc}}(\theta_f),
$$
for any $f\in \mathcal{C}_{\text{scusp}}(\mathfrak{g}(F))$, which gives us the statement of the existence. The uniqueness follows from the surjectivity of the map $f \mapsto \theta_f$. Using the formula in Theorem \ref{Liespectral}, we obtain  
$$
J^{\text{Lie}}_{V,\text{qc}}(\theta_\lambda)=|\lambda|^{\delta(G)/2}J^{\text{Lie}}_{V_\lambda,\text{qc}}(\theta),
$$
for all $\theta \in \text{QC}_c(\mathfrak{g}(F))$ and $\lambda \in F^\times$.

\item By Proposition \ref{elliptic} and Proposition \ref{central}, together with part (i) and part (ii), there exists an open neighborhood $\omega$ of $0$ in $\mathfrak{g}(F)$ such that
$$
J^{\text{Lie}}_{V,\text{qc}}(\theta)=m_{V,\text{geom}}^{\text{Lie}}(\theta),
$$
for any $\theta\in \text{QC}_c(\omega)$ supported outside $0$. Moreover, by the homogeneity properties in part (ii) and Proposition \ref{4.1}(iii), we can extend the above statement to any $\theta\in \text{QC}_c(\mathfrak{g}(F))$ supported outside $0$.
\end{enumerate}
\end{proof}
\subsection{End of the proof of Theorem \ref{maintheorem1}}\label{sec5.6}

We give the first approximation for $J_{V,\text{qc}}^{\text{Lie}}$.
\begin{proposition}\label{prop6.11}
Assume the induction hypothesis. There exists a constant $c_V\in\mathbb{C}$
such that 
\begin{enumerate}
    \item When $n$ is even,
    $$
    J_{V,\text{qc}}^{\text{Lie}}\left(\theta\right)=c_V\cdot c_{\theta}\left(0\right)+m_{V,\text{geom}}^{\text{Lie}}\left(\theta\right),
    $$
    for all $\theta\in QC_{c}\left(\mathfrak{g}\left(F\right)\right)$.
    \item When $n$ is odd,
    $$
    J_{V,\text{qc}}^{\text{Lie}}\left(\theta\right)=c_V\cdot (c_{\theta,\mathcal{O}_1}\left(0\right)-c_{\theta,\mathcal{O}_2}\left(0\right))+m_{V,\text{geom}}^{\text{Lie}}\left(\theta\right),
    $$
    for all $\theta\in QC_{c}\left(\mathfrak{g}\left(F\right)\right)$. Here $\mathcal{O}_1$ and $\mathcal{O}_2$ are two regular nilpotent orbits in $\mathfrak{g}$.
\end{enumerate}
\end{proposition}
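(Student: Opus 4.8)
The plan is to study the defect $D_V:=J^{\text{Lie}}_{V,\text{qc}}-m^{\text{Lie}}_{V,\text{geom}}$, show that it is supported at the origin, and then use homogeneity to reduce it to a combination of regular nilpotent germ coefficients. First, Proposition \ref{5.10}(iii) gives $D_V(\theta)=0$ for every $\theta\in\text{QC}_c(\mathfrak g(F))$ whose support avoids $0$. Since $D_V$ is a continuous linear form on $\text{QC}_c(\mathfrak g(F))$, the structure theory of quasi-characters developed in \cite{BP20} then shows that $D_V$ factors through the germ-at-$0$ map $\theta\mapsto(c_{\theta,\mathcal O}(0))_{\mathcal O\in\text{Nil}(\mathfrak g)}$; that is, there are constants $c_{\mathcal O}(V)\in\mathbb C$ with $D_V(\theta)=\sum_{\mathcal O\in\text{Nil}(\mathfrak g)}c_{\mathcal O}(V)\,c_{\theta,\mathcal O}(0)$.

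Next I would exploit homogeneity. Combining Proposition \ref{4.1}(iii) with Proposition \ref{5.10}(ii) yields $D_V(\theta_\lambda)=|\lambda|^{\delta(G)/2}D_{V_\lambda}(\theta)$ for all $\lambda\in F^\times$. On the other hand, by the homogeneity of the Fourier transforms $\hat j(\mathcal O,\cdot)$ recalled in Section \ref{sec2.5}, the coefficient $c_{\theta_\lambda,\mathcal O}(0)$ equals, for each $\mathcal O$, a fixed power $|\lambda|^{e(\mathcal O)}$ times a germ coefficient of $\theta$ attached to the scaled orbit $\lambda^{-1}\cdot\mathcal O$, where $e(\mathcal O)$ depends only on $\dim_F\mathcal O$ and takes the value $\delta(G)/2$ exactly on the regular orbits (and a strictly smaller value otherwise). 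Matching, for each $\theta$, the power of $|\lambda|$ carried by the coefficient of $c_{\theta,\mathcal O}(0)$ on the two sides of $D_V(\theta_\lambda)=|\lambda|^{\delta(G)/2}D_{V_\lambda}(\theta)$ forces $c_{\mathcal O}(V)=0$ whenever $\mathcal O$ is not regular. Hence $D_V(\theta)=\sum_{\mathcal O\in\text{Nil}_{\text{reg}}(\mathfrak g)}c_{\mathcal O}(V)\,c_{\theta,\mathcal O}(0)$.

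It remains to pin down the combination, and this is where the parity of $n$ enters. The set $\text{Nil}_{\text{reg}}(\mathfrak g)$ depends only on $V_K$, not on $V$, and the scaling action of $\lambda\in F^\times$ permutes it; meanwhile $V_\lambda\simeq V$ if and only if $\mu(\det V_\lambda)=\mu(\det V)$, i.e. if and only if $\mu(\lambda)^n=1$. When $n$ is even this holds for every $\lambda$, so choosing $\lambda=\varpi$ a uniformiser (or, in the ramified case, a suitable unit) one gets an honest invariance $D_V(\theta_\varpi)=|\varpi|^{\delta(G)/2}D_V(\theta)$; feeding in the induced permutation of $\text{Nil}_{\text{reg}}(\mathfrak g)$ forces all the constants $c_{\mathcal O}(V)$ to coincide, and since $c_\theta(0)$ is by definition the average of the $c_{\theta,\mathcal O}(0)$ over $\text{Nil}_{\text{reg}}(\mathfrak g)$ this gives $D_V(\theta)=c_V\,c_\theta(0)$. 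When $n$ is odd, $V_\lambda\simeq V$ only for $\lambda\in\text{Nm}_{E/F}(E^\times)$; for a non-norm $\lambda$ the relation becomes $D_V(\theta_\lambda)=|\lambda|^{\delta(G)/2}D_{V'}(\theta)$ with $V'$ the other $n$-dimensional skew-hermitian space, and combined with the accompanying swap of the two regular nilpotent orbits $\mathcal O_1,\mathcal O_2$ this ties $c_{\mathcal O_1}(V),c_{\mathcal O_2}(V)$ to $c_{\mathcal O_1}(V'),c_{\mathcal O_2}(V')$; together with the vanishing of the stable (symmetric) part of the defect, one concludes $c_{\mathcal O_1}(V)=-c_{\mathcal O_2}(V)$, i.e. $D_V(\theta)=c_V\,(c_{\theta,\mathcal O_1}(0)-c_{\theta,\mathcal O_2}(0))$.

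The hard part is this last step: one must understand the regular nilpotent orbits of $\mathfrak u(V_K)$ viewed as an $F$-group — in particular, that in the odd case there are exactly two of them carrying the defect — and match the $F^\times$-scaling action on them against the dependence of the isometry class of $V_\lambda$ on $\lambda$, as well as identify the ``unstable'' nature of the defect when $n$ is odd. This is the analogue of the computation in \cite[\S11]{BP20}, but because the Weil representation has already been folded into $J^{\text{Lie}}_V$ (via the local character expansions of Section \ref{sec3.2} and Proposition \ref{5.10}) and because $G$ is a Weil restriction, the relevant nilpotent geometry differs and the bookkeeping has to be carried out afresh.
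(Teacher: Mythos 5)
Your first three steps track the paper's argument faithfully: by Proposition \ref{5.10}(iii) the defect $D_V=J^{\text{Lie}}_{V,\text{qc}}-m^{\text{Lie}}_{V,\text{geom}}$ is supported at the origin, hence factors through the germ coefficients $c_{\theta,\mathcal O}(0)$, and the homogeneity comparison from Propositions \ref{4.1}(iii) and \ref{5.10}(ii) kills $c_{V,\mathcal O}$ for every non-regular $\mathcal O$ since $\dim\mathcal O<\delta(G)$ strictly. For $n$ even you then argue via an $F^\times$-invariance that all regular coefficients coincide; the paper is simpler here, observing directly that $\mathfrak g$ has a unique regular nilpotent orbit when $n$ is even, which avoids having to know that scaling acts transitively on $\text{Nil}_{\text{reg}}(\mathfrak g)$.

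The genuine gap is in the odd case. The scaling relation only yields $c_{V,\mathcal O_1}=c_{V',\mathcal O_2}$ and $c_{V,\mathcal O_2}=c_{V',\mathcal O_1}$, which do not fix the sign; you then appeal to ``the vanishing of the stable (symmetric) part of the defect,'' gesturing at a BP20-style computation, but you give no mechanism for this vanishing --- and the paper explicitly declines the local harmonic analysis route in this setting. What the paper actually does is bring in an external theorem: it chooses the elliptic tempered $L$-parameter $M=M_1+\cdots+M_n$ with $M_i$ pairwise distinct one-dimensional conjugate self-dual characters of parity $(-1)^{n-1}$, invokes the main theorem of \cite{CG22} to get $\sum_V\sum_{\pi\in\Pi_M}m_V(\pi)=1$, combines this with the already-available identity $\sum_V m_{V,\text{geom}}(\theta_M)=c_{\theta_M}(1)=1$, and extracts $c_{V,\mathcal O_1}+c_{V,\mathcal O_2}=0$ from the resulting equation $2(c_{V,\mathcal O_1}+c_{V,\mathcal O_2})\,c_{\theta_M}(1)=0$. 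This reliance on \cite{CG22} is precisely the ``shortcut via a twisted endoscopic transfer and the result in [CG22]'' highlighted in the introduction; without supplying an argument for the stable-part vanishing, your sketch does not close the odd case.
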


\begin{proof}
By Proposition \ref{elliptic} and \ref{central}, for any $\theta\in \text{QC}_c(G(F))$, we have 
$$
J_{V,\text{qc}}^{\text{Lie}}(\theta)=m_{V,\text{geom}}^{\text{Lie}}(\theta)+\sum_{\mathcal{O}\in\text{Nil}(\mathfrak{g})}c_{V,\mathcal{O}}\cdot c_{\theta,\mathcal{O}}(0).
$$
By Proposition \ref{5.10}(ii) and Proposition \ref{4.1}(iii), by substituting $\theta_\lambda$ to $\theta$, for some $\lambda\in F^\times$, we obtain
$$
|\lambda|^{\delta(G)/2}J_{V_\lambda,\text{qc}}^{\text{Lie}}(\theta)=|\lambda|^{\delta(G)/2}m_{V_\lambda,\text{geom}}^{\text{Lie}}(\theta)+\sum_{\mathcal{O}\in\text{Nil}(\mathfrak{g})}|\lambda|^{\dim(\mathcal{O})/2}\cdot c_{V,\mathcal{O}}\cdot c_{\theta,\mathcal{O}_\lambda}(0).
$$
Since $\dim(\mathcal{O})\leq \delta(G)$, we can see that $c_{V,\mathcal{O}}=0$ unless $\mathcal{O}$ is a regular nilpotent orbit. When $n$ is even, $\mathfrak{g}$ only has one regular nilpotent orbit, thus we are able to deduce (i). When $n$ is odd, we can choose $\lambda \in F^\times \backslash N(E^\times)$ to exchange the two regular nilpotent orbits in $\mathfrak{g}$. Let $V^\prime$ be the remaining $n$-dimensional skew-hermitian space. In this case, we have
$$
c_{V,\mathcal{O}_1}=c_{V^\prime,\mathcal{O}_2}\ \ \ \ \text{ and }\ \ \ \ c_{V,\mathcal{O}_2}=c_{V^\prime,\mathcal{O}_1}.
$$
Let $M$ be a tempered $L$-parameter for $G(F)$ such that $M$ is of the form
$$
M=M_1+\ldots +M_n,
$$
where $M_i$ is one-dimensional and conjugate self-dual of parity $(-1)^{n-1}$ for all $i=\overline{1,n}$. We set $\theta_M=\sum_{\pi\in \Pi_M}\theta_\pi$. By the main theorem in \cite{CG22}, we have
$$
\sum_{V}J_{V,\text{qc}}(\theta_M)=\sum_{V}\sum_{\pi\in \Pi_M}m_{V}(\pi)=1.
$$
This gives us
$$
\sum_{V}m_{V,\text{geom}}(\theta_M)+\sum_{V}\sum_{\mathcal{O}\in \text{Nil}_{\text{reg}}(\mathfrak{g})}c_{V,\mathcal{O}}\cdot c_{\theta_M,\mathcal{O}}(1)=1.
$$
Since $\sum_{V}m_{V,\text{geom}}(\theta_M)=c_{\theta_{M}}(1)=1$ (see Section \ref{sec5.7} for an explanation), it follows that
$$
\sum_{V}\sum_{\mathcal{O}\in \text{Nil}_{\text{reg}}(\mathfrak{g})}c_{V,\mathcal{O}}\cdot c_{\theta_M,\mathcal{O}}(1)=0.
$$
As the LHS is equal to $2(c_{V,\mathcal{O}_1}+c_{V,\mathcal{O}_2})\cdot c_{\theta_M}(1)$, we have $c_{V,\mathcal{O}_1}=-c_{V,\mathcal{O}_2}$, which gives us the statement in (ii).
\end{proof}
We now finish our proof for Theorem \ref{maintheorem1}.
\begin{proof}
There remains to show the coefficient $c_V$ is zero. Fix a Borel subgroup $B\subset G$ and a maximal torus $T_{\text{qd}}\subset B$ defined over $F$. Denote by $\Gamma_{\text{qd}}\left(\mathfrak{g}\right)$
the subset of $\Gamma\left(\mathfrak{g}\right)$ consisting of the
conjugacy classes that meet $\mathfrak{t}_{\text{qd}}\left(F\right)$.
We recall the subset $\Gamma\left(\Sigma_V\right)$ of $\Gamma\left(\mathfrak{g}\right)$ consisting of semisimple parts of representatives of $G(F)$-conjugacy classes of $\mathfrak{h}_V^\perp(F)+\Phi(V(F))$ defined in subsection \ref{sec5.4}. 

Assume that $B$ is a good Borel subgroup and $\mathfrak{b}\cap\Sigma_V\neq \emptyset$. By Proposition \ref{5.7}, we have 
\[
\mathfrak{g}=\mathfrak{h}_V^\perp\oplus d\Phi(V)\oplus\mathfrak{u},
\]
where $\mathfrak{u}$ is the nilpotent radical of $\mathfrak{b}$. Moreover, since $\mathfrak{g}=\mathfrak{h}_V^\perp\oplus(\mathfrak{h}_V\cap \mathfrak{b})\oplus \mathfrak{u}$, it follows that the restriction of the natural projection $\mathfrak{b}\rightarrow \mathfrak{t}_{\text{qd}}$ to $\Sigma_V \cap \mathfrak{b}$ is $\mathfrak{h}_V\cap \mathfrak{b}$.

We can identify $\mathfrak{t}_{\text{qd}}(F)$ with $E^n$ when $E=K$ or $L^{[(n-1)/2]}\times (L^0)^{n-2[(n-1)/2]}$ when $E\neq K$. Here $L^0$ is the subset $\ker(\text{Tr}_{L/K})$ of $L$. Let $\theta_{0}\in C_{c}^{\infty}\left(\mathfrak{t}_{\text{qd,reg}}\left(F\right)\right)$
be $W\left(G,T_{\text{qd}}\right)$-invariant and such that 
\[
\int_{\mathfrak{t}_{\text{qd}}\left(F\right)}D^{G}\left(X\right)^{1/2}\theta_{0}\left(X\right)dX\neq0.
\]
By the above identification of $\mathfrak{t}_{\text{qd}}(F)$, we can assume $\theta_0=\theta_1^{\otimes n}$ when $E=K$ or $\theta_0=\theta_1^{\otimes [(n-1)/2]}\otimes \theta_2^{\otimes (n-2[(n-1)/2])}$ when $E\neq K$. We extend $\theta_{0}$ to a smooth invariant function on $\mathfrak{g}_{\text{reg}}$ which is zero outside $\mathfrak{t}_{\text{qd,reg}}\left(F\right)^{G}$.
Then $\theta_{0}$ is a compactly supported quasi-character. Let $\theta=\hat{\theta}_{0}$.
Since 
\[
\theta=\int_{\Gamma\left(\mathfrak{g}\right)}D^{G}\left(X\right)^{1/2}\theta_{0}\left(X\right)\hat{j}\left(X,\cdot\right)dX
\]
and 
\[
D^{G}\left(Y\right)^{1/2}\hat{j}\left(X_{\text{qd}},Y\right)=\begin{cases}
\underset{w\in W\left(G,T_{\text{qd}}\right)}{\sum}\psi\left(B\left(X_{\text{qd}},wY\right)\right) & \text{if }Y\in\mathfrak{t}_{\text{qd,reg}}\left(F\right)\\
0 & \text{otherwise},
\end{cases}
\]
we have $\text{Supp}\left(\theta\right)\subseteq\Gamma_{\text{qd}}\left(\mathfrak{g}\right)$.
This gives us 
\[
c_{\theta}\left(0\right)=\int_{\Gamma\left(\mathfrak{g}\right)}D^{G}\left(X\right)^{1/2}\theta_{0}\left(X\right)c_{\hat{j}\left(X,\cdot\right)}\left(0\right)dX=\int_{\Gamma_{\text{qd}}\left(\mathfrak{g}\right)}D^{G}\left(X\right)^{1/2}\theta_{0}\left(X\right)dX.
\]
We need to show
$$
J^{\text{Lie}}_{V,\text{qc}}(\theta)=m_{V,\text{geom}}^{\text{Lie}}(\theta),
$$
i.e.
\begin{equation}\label{keyeqn}
    \int_{\Gamma_{\text{qd}}(\Sigma)}D^G(X)^{1/2}\theta_0(X)dX
=\frac{1}{2}\int_{\Gamma_{\text{qd}}\left(\mathfrak{g}\right)}D^{G}\left(X\right)^{1/2}\theta_{0}\left(X\right)dX
\end{equation}
$$
+\frac{\mu(\det V)\gamma_\psi(2Nm_{E/F})^n}{|W(G,T_{\text{qd}})|}
\lim_{s\rightarrow 0^+}\int_{\mathfrak{t}_{\text{qd}}(F)\cap \mathfrak{h}_V(F)}D^G(X)^{1/2}\hat{\theta}_0(X)\frac{\mu(\det X)}{|\det X|^{1/2-s}}dX.
$$
Let $\delta$ be a $\text{Tr}_{E/F}$-$0$ element in $E$. We pick a $\text{Tr}_{K/F}$-$0$ element $\tau$ in $K$ such that $\omega_{E/F}(N_{K/F}(\tau))=1$. By Theorem \ref{Liespectral} when $n=1$ and Proposition \ref{4.3}, together with a descent-to-Lie-algebra statement, we have
\begin{equation}\label{theta1}
    \int_{F+\delta\cdot \text{N}_{E/F}(E)}\theta_1(X)dX=\frac{1}{2}\int_{E}\theta_{1}\left(X\right)dX
+\mu(\delta)\gamma_\psi(2Nm_{E/F})
\lim_{s\rightarrow 0^+}\int_{\delta \cdot F}\hat{\theta}_1(X)\frac{\mu(X)}{|X|_E^{1/2-s}}dX
\end{equation}
and
\begin{equation}\label{theta2}
    \int_{\delta \tau F+\delta\cdot \text{N}_{E/F}(E)}\theta_2(X)dX=\frac{1}{2}\int_{\delta K}\theta_{2}\left(X\right)dX
+\mu(\delta)\gamma_\psi(2Nm_{E/F})
\lim_{s\rightarrow 0^+}\int_{\delta \cdot F}\hat{\theta}_2(X)\frac{\mu(X)}{|X|_E^{1/2-s}}dX.
\end{equation}
Under the identification of $\mathfrak{t}_{\text{qd}}(F)$, observe 
$$
\Gamma_{\text{qd}}(\Sigma_V)=\bigsqcup_{\stackrel{\delta_i\in E^0/N_{E/F}(E)}{\prod_i \mu(\delta_i)=\mu(\det V)}}\prod_i (\delta_i\tau F+\delta_i\cdot Nm_{E/F}(E)).
$$ 
Therefore, by combining the equations (\ref{theta1}) and (\ref{theta2}), we obtain the equation (\ref{keyeqn}) as desired. Moreover, since $c_{\theta}\left(0\right)\neq0$, we
have $c=0$, thus finish the proof of Theorem \ref{maintheorem1}.
\end{proof}

\subsection{Proof of Theorem \ref{maintheorem}(i)}\label{sec5.7}
We now prove part 1 of Theorem \ref{maintheorem}. Let $\varphi$ be a tempered L-parameter of $G(F)$. We want to show that
$$
\sum_{V}\sum_{\pi \in \Pi_{\varphi}}m_V(\pi)=1,
$$
where the first sum runs over the two skew-hermitian spaces over $E$ of dimension $n$. We fix a skew-hermitian space $V$. By Theorem \ref{maintheorem1}, we have
$$
\sum_{\pi \in \Pi_{\varphi}}m_V(\pi)=\frac{1}{2}c_\varphi(1)+\mu\left(\det V\right)
\underset{T\in{\mathcal{T}}_{\text{ell}}\left(H_V\right)}{\sum}\frac{\gamma_\psi(T)}{\left|W\left(H_V,T\right)\right|}
\underset{s\rightarrow0^{+}}{\lim}\int_{T\left(F\right)}D^{G}\left(x\right)^{1/2}c_{\varphi}\left(x\right)\frac{\mu\left(\det\left(1-x^{-1}\right)\right)}{\left|\det\left(1-x\right)\right|_{E}^{1/2-s}}dx,
$$
where $c_\varphi=\sum_{\pi\in \Pi_\varphi}c_{\theta_\pi}$. We denote by $\mathcal{T}_{\text{ell}}^{\text{stab}}(H_V)$ the set of representatives of stable $H_V$-conjugacy classes of elliptic maximal tori in $H_V$. Let $p_{V,\text{stab}}:\mathcal{T}_{\text{ell}}(H_V)\rightarrow \mathcal{T}_{\text{ell}}^{\text{stab}}(H_V)$ be the natural projection map. Since $\theta_\varphi$ is stably invariant (see \textbf{(Stab)} in section \ref{stab}), it follows that 
$$
\sum_{\pi \in \Pi_{\varphi}}m_V(\pi)=\frac{1}{2}c_\varphi(1)+\mu\left(\det V\right)
\underset{T\in{\mathcal{T}}^{\text{stab}}_{\text{ell}}\left(H_V\right)}{\sum}\frac{\gamma_\psi(T)|p_{V,\text{stab}}^{-1}(T)|}{\left|W\left(H_V,T\right)\right|}
$$
$$\underset{s\rightarrow0^{+}}{\lim}\int_{T\left(F\right)}D^{G}\left(x\right)^{1/2}c_{\varphi}\left(x\right)\frac{\mu\left(\det\left(1-x^{-1}\right)\right)}{\left|\det\left(1-x\right)\right|_{E}^{1/2-s}}dx.
$$
Let $V^\prime$ is the other skew-hermitian space over $E$ of dimension $n$. Since ${\mathcal{T}}^{\text{stab}}_{\text{ell}}\left(H_V\right)$ is bijective to ${\mathcal{T}}^{\text{stab}}_{\text{ell}}\left(H_{V^\prime}\right)$, and $|p_{V,\text{stab}}^{-1}(T)|=|p_{V^\prime,\text{stab}}^{-1}(T^\prime)|$ whenever $T\in {\mathcal{T}}^{\text{stab}}_{\text{ell}}\left(H_V\right)$ matches $T^\prime\in {\mathcal{T}}^{\text{stab}}_{\text{ell}}\left(H_{V^\prime}\right)$, it follows that  
$$
\sum_{V}\sum_{\pi \in \Pi_{\varphi}}m_V(\pi)=c_\varphi(1)=1,
$$
where the last equality follows from the genericity of $\varphi$. We have finished our proof for Theorem \ref{maintheorem}(i).

\section{Geometric expansion of the twisted local trace formula}\label{sec8}

Recall the twisted trace formula $\tilde{J}_{\chi}$ formulated in \cite{Le25}. We define a linear form $\epsilon_{\text{geom}}$ on the space of quasi-characters $\text{QC}(\tilde{M}(F))$. The main objective of this section is to show that $\tilde{J}_{\chi}(\tilde{f})=\epsilon_{\text{geom}}(\theta_{\tilde{f}})$ for any $\tilde{f}\in \mathcal{C}_{\text{scusp}}(Z_M(F)\backslash \tilde{M}(F),\chi)$.

\subsection{The twisted trace formula $\tilde{J}_{\chi}$}\label{sec7.1}
Let $F$ be a non-archimedean local field of characteristic $0$ and
$E$ and $K$ be quadratic extensions of $F$. We set $L=K\otimes_{F}E$ and $M=\text{Res}_{L/F}\text{GL}_{n}$. Let
$\theta_{n}:\left(g,h\right)\mapsto\left(J_{n}{}^{t}\bar{h}^{-1}J_{n}^{-1},J_{n}{}^{t}\bar{g}^{-1}J_{n}^{-1}\right)$ when $K=E$, or $g\mapsto J_{n}{}^{t}\bar{g}^{-1}J_{n}^{-1}$ when
$K\neq E$, be an involution on $M$. Here $\bar{\cdot}$ is the action
of the nontrivial element in $\text{Gal}\left(E/F\right)$ and 
\[
J_{n}=\left(\begin{array}{cccc}
0 & \cdots & 0 & -1\\
\vdots & 0 & 1 & 0\\
0 & \ddots & 0 & \vdots\\
\left(-1\right)^{n} & 0 & \cdots & 0
\end{array}\right).
\]
The restriction of $\theta_{n}$ to the subgroup $N=\text{Res}_{E/F}\text{GL}_{n}$ deduces an involution on $N$. We set $\tilde{M}=M\theta_{n}$ and $\tilde{N}=N\theta_{n}$.
Let $V$ be an $n$-dimensional vector space over $E$ and $\mathcal{S}\left(V\right)$
be the space of Schwarz functions on $V$. We define the Weil representation
$\omega_{\mu}$ of $N\left(F\right)$ realized on $\mathcal{S}\left(V\right)$
by 
\[
\left(\omega_{\mu}\left(g\right)\phi\right)\left(v\right)=\left|\det g\right|_{E}^{\frac{1}{2}}\mu\left(\det g\right)\phi\left(vg\right),
\]
for any $g\in N\left(F\right)$ and $\phi\in\mathcal{S}\left(V\right)$.
We give an extension of $\omega_{\mu}$ to $\tilde{N}\left(F\right)$ by taking
$\tilde{\omega}_{\psi,\mu}\left(\theta_{n}\right)\phi=\widehat{\overline{\phi}\left(\cdot J_{n}\right)}$,
where $\bar{\phi}\left(v\right)=\phi\left(\bar{v}\right)$ and $\hat{\phi}$
is the Fourier transform of $\phi$ with respect to $\psi_{E}=\psi\circ\text{Tr}_{E/F}$.
We fix a central character $\chi$ of $M\left(F\right)$ such that
$\chi$ is invariant under the action of $\theta_{n}$. For simplicity, we also denote by $\chi$ its restriction to $Z_{N}\left(F\right)$. Let $\omega_{\mu,\chi}$ be the $\chi$-isotypic summand of $\omega_{\mu}$. We denote by $\tilde{\omega}_{\psi,\mu,\chi}$
its extension to $\tilde{N}\left(F\right)$. Let $\left\{ \phi_{i}\right\} _{i\in I}$
be an orthonormal basis for $\tilde{\omega}_{\psi,\mu,\chi}$. For
any $m\in M\left(F\right)$, we set 
\[
K_{\chi}\left(\tilde{f},x\right)=\underset{i}{\sum}\int_{Z_{N}\left(F\right)\backslash\tilde{N}\left(F\right)}\tilde{f}\left(m^{-1}\tilde{n}m\right)\left\langle \phi_i,\tilde{\omega}_{\psi,\mu,\chi}\left(\tilde{n}\right)\phi_{i}\right\rangle d\tilde{n},
\]
where $\tilde{f}\in\mathcal{C}_{\text{scusp}}\left(Z_{M}\left(F\right)\backslash \tilde{M}\left(F\right),\chi\right)$. The above kernel function is locally constant and invariant under $N\left(F\right)Z_{M}\left(F\right)$.
We define the following linear form 
\[
\tilde{J}_{\chi}\left(\tilde{f}\right)=\int_{N\left(F\right)Z_{M}\left(F\right)\backslash M\left(F\right)}K_\chi\left(\tilde{f},m\right)dm,
\]
for $\tilde{f}\in\mathcal{C}_{\text{scusp}}\left(Z_{M}\left(F\right)\backslash \tilde{M}\left(F\right),\chi\right)$. Similar to Theorem \ref{spectral}, the integrals defining $K_\chi$ and $\tilde{J}_{\chi}$ are absolutely convergent.

Let $\left(\pi,\tilde{\pi},E_{\pi}\right)\in\text{Temp}\left(\tilde{M}\left(F\right)\right)$.
For $e,e^{\prime}\in E_{\pi}$ and $\phi,\phi^{\prime}\in\omega_{\mu,\chi}$,
we define 
\[
\mathcal{L}_{\pi}\left(e\otimes\phi,e^{\prime}\otimes\phi^{\prime}\right)=\int_{N\left(F\right)}\left\langle \pi\left(g\right)e,e^{\prime}\right\rangle \left\langle \phi,\omega_{\mu,\chi}\left(g\right)\phi^{\prime}\right\rangle dg.
\]
By using some estimates in \cite[Appendix D.1]{Xue16}, the above expression is absolutely convergent. We have 
\[
\mathcal{L}_{\pi}\left(\pi\left(g\right)e\otimes\phi,e^{\prime}\otimes\omega_{\mu,\chi}\left(g\right)\phi^{\prime}\right)=\mathcal{L}_{\pi}\left(e\otimes\phi,e^{\prime}\otimes\phi^{\prime}\right),
\]
for any $g\in N\left(F\right)$. As in \cite[Remark 1]{GGP23}, observe $\dim\text{Hom}_{N}\left(\pi,\omega_{\mu}\right)=1$. Let $e$ be a nonzero trace-$0$ element in $E$. We set 
\[
\epsilon_{\psi}\left(\widetilde{\pi}\right)=\omega_\pi\left(e\right)^{n}\omega_{K/F}\left(-1\right)^{\frac{n\left(n-1\right)}{2}}\epsilon\left(\frac{1}{2},\text{As}_{L/E}\left(\pi\right)\times\mu^{-1},\psi_{E}\right),
\]
where $\omega_\pi$ is the central character of $\pi$. We have the following intertwining relation.
\begin{proposition}\label{intertwining}
For any $e,e^{\prime}\in E_{\pi}$ and $\phi,\phi^{\prime}\in\omega_{\mu,\chi}$
and $\tilde{y}\in\tilde{N}\left(F\right)$, we have 
\[
\mathcal{L}_{\pi}\left(\widetilde{\pi}\left(\tilde{y}\right)e\otimes\phi,e^{\prime}\otimes\tilde{\omega}_{\psi,\mu,\chi}\left(\tilde{y}\right)\phi^{\prime}\right)=\epsilon_{\psi}\left(\widetilde{\pi}\right)\mathcal{L}_{\pi}\left(e\otimes\phi,e^{\prime}\otimes\phi^{\prime}\right).
\]
\end{proposition}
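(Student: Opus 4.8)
The plan is to reduce the identity to the single element $\tilde y=\theta_n$ and then to read off the scalar from the local functional equation of the Asai $L$-function. Since $\tilde N(F)=N(F)\sqcup N(F)\theta_n$, writing $\tilde y=n\theta_n$ with $n\in N(F)$ and using $\tilde\pi(n\theta_n)=\pi(n)\tilde\pi(\theta_n)$ and $\tilde\omega_{\psi,\mu,\chi}(n\theta_n)=\omega_{\mu,\chi}(n)\tilde\omega_{\psi,\mu,\chi}(\theta_n)$, the $N(F)$-invariance recorded in the display preceding the statement reduces everything to
$$\mathcal L_{\pi}\bigl(\tilde\pi(\theta_n)e\otimes\phi,\ e'\otimes\tilde\omega_{\psi,\mu,\chi}(\theta_n)\phi'\bigr)=\epsilon_{\psi}(\tilde\pi)\,\mathcal L_{\pi}\bigl(e\otimes\phi,\ e'\otimes\phi'\bigr).$$
Because $\dim\text{Hom}_{N}(\pi,\omega_{\mu})=1$, the nonzero pairing $\mathcal L_{\pi}$ is quasi-invariant under $\theta_n$, so the left-hand side is a scalar $c(\tilde\pi)$ times the right-hand side; this scalar has absolute value $1$, and since $\theta_n^{2}$ acts on $E_{\pi}$ through $\omega_{\pi}(Z_{M}(F))$ its square is already determined. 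It remains to compute $c(\tilde\pi)$ precisely.

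For this, substitute $\langle\phi,\omega_{\mu}(g)\phi'\rangle=\int_{V}\phi(v)\,\overline{|\det g|_{E}^{1/2}\mu(\det g)\phi'(vg)}\,dv$ into the definition of $\mathcal L_{\pi}$, restrict to the open orbit $v\in V\setminus\{0\}$, and pass to the mirabolic subgroup of $\mathrm{GL}_{n}(E)=N(F)$ stabilizing a chosen vector; interchanging the integrations — which is licit by the estimates of \cite[Appendix D.1]{Xue16} — rewrites $\mathcal L_{\pi}(e\otimes\phi,e'\otimes\phi')$ as a fixed multiple of Flicker's local Asai zeta integral $Z\bigl(s,W_{e,e'},\phi;\mu^{-1}\bigr)$ attached to $\pi\vert_{\mathrm{GL}_{n}(E)}$, evaluated at the centre $s=\tfrac12$, the twist by $\mu^{-1}$ coming from the $\mu(\det g)$ built into $\omega_{\mu}$ and $W_{e,e'}$ being the Whittaker function of $\pi$ determined by $e,e'$. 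Under this dictionary the operator $\tilde\pi(\theta_n)$ — which by the normalization of the extension fixed in \cite{Le25} is the one compatible with the Whittaker model — carries $W_{e,e'}$ to the Whittaker datum governing the dual side of $Z$, while $\tilde\omega_{\psi,\mu,\chi}(\theta_n)\colon\phi\mapsto\widehat{\overline{\phi}\left(\cdot\,J_{n}\right)}$ performs the Fourier transform together with the flip by $J_{n}$, whose determinant is $(-1)^{n(n-1)/2}$. Hence the left-hand side of the reduced identity is $Z$ applied to the transformed data, and the local functional equation for the Asai $L$-function (Flicker) identifies it with $\gamma\bigl(\tfrac12,\ \text{As}_{L/E}(\pi)\times\mu^{-1},\ \psi_{E,e}\bigr)$ times the original integral, up to the elementary constant produced by the $J_{n}$-conjugation, which yields the sign $\omega_{K/F}(-1)^{n(n-1)/2}$.

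Finally, since $\pi$ is tempered the Asai $L$-factors $L(s,\text{As}_{L/E}(\pi)\times\mu^{-1})$ and $L(1-s,(\text{As}_{L/E}(\pi)\times\mu^{-1})^{\vee})$ are holomorphic and non-vanishing at $s=\tfrac12$ and coincide there by the conjugate-self-duality of $\text{As}_{L/E}(\pi)$, so $\gamma(\tfrac12,\cdot,\psi_{E,e})=\epsilon(\tfrac12,\cdot,\psi_{E,e})$; converting the additive character via $\epsilon(\tfrac12,\cdot,\psi_{E,e})=\det\bigl(\text{As}_{L/E}(\pi)\times\mu^{-1}\bigr)(e)\,\epsilon(\tfrac12,\cdot,\psi_{E})$ and using the standard identity for the determinant of an Asai representation produces the factor $\omega_{\pi}(e)^{n}$. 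Collecting the pieces gives $c(\tilde\pi)=\omega_{\pi}(e)^{n}\,\omega_{K/F}(-1)^{n(n-1)/2}\,\epsilon\bigl(\tfrac12,\text{As}_{L/E}(\pi)\times\mu^{-1},\psi_{E}\bigr)=\epsilon_{\psi}(\tilde\pi)$. The main obstacle I anticipate is precisely this final bookkeeping: one must keep simultaneous track of the $|\det|_{E}^{1/2}\mu(\det)$-twist inside $\omega_{\mu}$, the Weil index of the Fourier transform defining $\tilde\omega_{\psi,\mu,\chi}(\theta_n)$, the conjugation by $J_{n}$, and the normalization of $\tilde\pi(\theta_n)$, and verify that they conspire to give exactly the asserted formula. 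This is the twisted counterpart of the sign computations in \cite{GGP23} and \cite{CG22}, and of the Bessel and Fourier--Jacobi computations underlying \cite{HKS96} and \cite{GI16}.
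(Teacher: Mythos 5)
Your proposal is correct and follows essentially the same approach as the paper: the paper's proof simply cites \cite[Proposition 7.2]{Le25} for the case $K=E$ and says the case $K\neq E$ "follows verbatim up to replacing the local functional equation argument for Rankin--Selberg integrals in \cite{JPSS83} with its analog for Asai Rankin--Selberg integrals in \cite{Fli93,Kab04}," and what you have written out — reduction to $\tilde y=\theta_n$ via $N(F)$-invariance, invocation of multiplicity one to identify a scalar, realization of $\mathcal L_\pi$ as a Flicker--Kable Asai zeta integral at $s=1/2$, and computation of the scalar via the local functional equation together with the $\psi_E\leftrightarrow\psi_{E,e}$ change-of-character and the $\det J_n$ bookkeeping — is precisely that delegated argument made explicit.
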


\begin{proof}
When $E=K$, the statement has been proved in \cite[Proposition 7.2]{Le25}. In the case $K\neq E$, the proof follows verbatim up to replacing the local functional equation argument for Rankin-Selberg integrals in \cite{JPSS83} with its analog for Asai Rankin-Selberg integrals in \cite{Fli93,Kab04}.
\end{proof}

Let $\tilde{f}\in \mathcal{C}_\text{scusp}(Z_M(F)\backslash \tilde{M}(F),\chi)$.
Define 
$$
J_{\chi,\text{ spec}}\left(\tilde{f}\right)=\underset{\tilde{L}\in\mathcal{L}(\tilde{M}_{\min})}{\sum}|\widetilde{W}^L||\widetilde{W}^M|^{-1}(-1)^{a_{\tilde{L}}-a_{\tilde{G}}}\int_{E_{\text{ell}}(Z_M(F)\backslash\tilde{L}(F),\chi^{-1})}\hat{\theta}_{\tilde{f}}\left(\tilde{\pi}\right)\epsilon_{\psi}\left(\tilde{\pi}^{\vee}\right)d\tilde{\pi}.
$$
We state the following theorem, whose proof follows from \cite[Theorem 8.1]{Le25} verbatim.
\begin{theorem}\label{spectraltwisted}
For any $\tilde{f}\in \mathcal{C}_\text{scusp}(Z_M(F)\backslash \tilde{M}(F),\chi)$, we have
$$
\tilde{J}_{\chi}\left(\tilde{f}\right)=J_{\chi,\text{spec}}\left(\tilde{f}\right).
$$
\end{theorem}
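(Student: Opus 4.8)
The plan is to run the spectral-expansion argument of \cite[Theorem 8.1]{Le25}, established there for $K=E$, observing that the quadratic extension $K$ enters the proof essentially only through the intertwining relation, which we have already recorded in the required generality as Proposition \ref{intertwining}. First I would introduce a truncated linear form
\[
\tilde{J}_{\chi,N}(\tilde{f})=\int_{N(F)Z_M(F)\backslash M(F)}\kappa_N(m)\,K_{\chi}(\tilde{f},m)\,dm,
\]
for a suitable sequence $(\kappa_N)_{N\geq 1}$ of truncation functions on $N(F)Z_M(F)\backslash M(F)$, and use the absolute convergence of $K_{\chi}$ and $\tilde{J}_{\chi}$ (coming from the estimates in \cite[Appendix D.1]{Xue16} and the analog of Theorem \ref{spectral}, neither of which is sensitive to whether $K=E$) to obtain $\tilde{J}_{\chi}(\tilde{f})=\lim_{N\to\infty}\tilde{J}_{\chi,N}(\tilde{f})$.

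Next I would expand the kernel spectrally. Using the Plancherel decomposition for $N(F)=\text{Res}_{E/F}\text{GL}_{n}(F)$ together with the explicit description of $\tilde{\omega}_{\psi,\mu,\chi}$, one writes $K_{\chi}(\tilde{f},m)$ as an integral over tempered $\tilde{\pi}\in\text{Temp}(\tilde{M}(F))$ parabolically induced from elliptic tempered representations of twisted Levi subgroups $\tilde{L}$, of the operator $\tilde{\pi}(\tilde{f})$ paired against the local relative character assembled from $\mathcal{L}_{\pi}$. Substituting into $\tilde{J}_{\chi,N}$ and performing the $m$-integral against $\kappa_N$, the inner integral over $Z_N(F)\backslash\tilde{N}(F)$ is controlled by Proposition \ref{intertwining}: it contributes the scalar $\epsilon_{\psi}(\tilde{\pi})$, equivalently $\epsilon_{\psi}(\tilde{\pi}^{\vee})$ after dualizing, so that after letting $N\to\infty$ the contribution attached to each $\tilde{L}$ collapses to $\int_{E_{\text{ell}}(Z_M(F)\backslash\tilde{L}(F),\chi^{-1})}\hat{\theta}_{\tilde{f}}(\tilde{\pi})\,\epsilon_{\psi}(\tilde{\pi}^{\vee})\,d\tilde{\pi}$, weighted by the combinatorial factor $|\widetilde{W}^L||\widetilde{W}^M|^{-1}(-1)^{a_{\tilde{L}}-a_{\tilde{G}}}$; summing over $\tilde{L}\in\mathcal{L}(\tilde{M}_{\min})$ recovers $J_{\chi,\text{spec}}(\tilde{f})$.

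The hard part will be the interchange of the limit in $N$ with the spectral integral; as in \cite{BP14,Le25} this requires uniform estimates on the truncated kernel and on weighted characters of strongly cuspidal functions, together with the finiteness statement in Corollary \ref{2.9}. The only genuinely new point for the present case $K\neq E$ is to verify that every step of the proof of \cite[Theorem 8.1]{Le25} uses the dichotomy $K=E$ versus $K\neq E$ solely through Proposition \ref{intertwining} (where in the new case it is absorbed into the Asai Rankin--Selberg functional equation of \cite{Fli93,Kab04} in place of the Rankin--Selberg equation of \cite{JPSS83}) and through the precise shape of $\epsilon_{\psi}(\tilde{\pi})$ --- the truncation, the Plancherel expansion of the kernel, and all convergence bounds being indifferent to it. Granting this, the proof of \cite[Theorem 8.1]{Le25} goes through verbatim.
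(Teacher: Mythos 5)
Your proposal takes essentially the same route as the paper: the paper simply states that the proof ``follows from \cite[Theorem 8.1]{Le25} verbatim,'' and your account correctly identifies that the only place the dichotomy $K=E$ versus $K\neq E$ enters is through the intertwining relation (Proposition \ref{intertwining}), whose proof is adapted via the Asai Rankin--Selberg functional equation of \cite{Fli93,Kab04}. Your reconstruction of the internal structure of \cite[Theorem 8.1]{Le25} (truncation $\kappa_N$, Plancherel expansion of the kernel, Proposition \ref{intertwining} collapsing the $\tilde{N}$-integral to $\epsilon_\psi(\tilde\pi^\vee)$, and the exchange of the $N\to\infty$ limit with the spectral integral) is consistent with the argument the paper implicitly invokes.
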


\subsection{The linear form $\epsilon_{\text{geom}}$}

Similar to section \ref{sec4.1}, we define a linear form $\epsilon_{\text{geom}}$ on the space $\text{QC}(\tilde{M}(F))$. Let $\mathcal{T}_{\text{ell}}(\tilde{N})$ be the set containing representatives of $N(F)$-conjugacy classes of elliptic twisted maximal tori in $\tilde{N}$. Let $(T,\tilde{T})\in \mathcal{T}_{\text{ell}}(\tilde{N})$. We denote by $\theta$ the corresponding involution of $T$. Then $T_\theta(F)$ is isomorphic to $\prod_i U_{E_i/F_i}(1)$, where $F_i$ is a field extension of $F$ not containing $E$ and $E_i=EF_i$. As in section \ref{sec4.1}, we set
$$
\gamma_\psi(\tilde{T})=\prod_i \gamma_\psi(2Nm_{E_i/F_i}).
$$
Let $\tilde{x}\in \tilde{T}(F)$. We set $x={}^t(\tilde{x}^\sigma)^{-1}\tilde{x}$, where $\sigma$ is the nontrivial $F$-automorphism of $E$. For a quasi-character $\tilde{\theta}\in \text{QC}(\tilde{M}(F))$, we define
$$
\epsilon_{\text{geom}}(\tilde{\theta})=\sum_{\tilde{T}\in \mathcal{T}_{\text{ell}}(\tilde{N})}\frac{\gamma_\psi(\tilde{T})}{|W(N,\tilde{T})|}\lim_{s\rightarrow 0^+}\int_{\tilde{T}(F)/\theta}D^{\tilde{M}}(\tilde{x})^{1/2}c_{\tilde{\theta}}(\tilde{x})\frac{\mu(\det(1-x^{-1}))}{|\det(1-x)|_E^{1/2-s}}d\tilde{x}.
$$
A similar argument to Proposition \ref{4.1} shows that the linear form $\epsilon_{\text{geom}}(\tilde{\theta})$ is absolutely convergent. For any virtual tempered representation $\tilde{\pi}$ of $\tilde{M}(F)$, we set $\epsilon_{\text{geom}}(\tilde{\pi})=\epsilon_{\text{geom}}(\theta_{\tilde{\pi}})$. 

In the remaining of this section, we prove the following theorem by induction.
\begin{theorem}\label{geometrictwisted}
    For any $\tilde{f} \in \mathcal{C}_{\text{scusp}}(Z_M(F)\backslash\tilde{M}(F),\chi)$, we have
    $$
    \tilde{J}_{\chi}(\tilde{f})=\epsilon_{\text{geom}}(\theta_{\tilde{f}}).
    $$
    Therefore, for any tempered representation $\tilde{\pi}$ of $\tilde{M}(F)$,
    $$
    \epsilon_{\psi}(\tilde{\pi})=\epsilon_{\text{geom}}(\tilde{\pi}).
    $$
\end{theorem}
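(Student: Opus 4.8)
\textbf{Proof proposal for Theorem \ref{geometrictwisted}.}

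The plan is to mirror, in the twisted setting, the strategy that gave Theorem \ref{maintheorem1}: reduce the identity $\tilde{J}_\chi(\tilde f)=\epsilon_{\mathrm{geom}}(\theta_{\tilde f})$ by Harish-Chandra semisimple descent to a statement near central elements, and then to a statement on the twisted Lie algebra, where it becomes a finite-dimensional linear-algebra comparison. First I would introduce, exactly as in Section \ref{sec4.2}, the quasi-character version $\tilde J_{\chi,\mathrm{qc}}$ of $\tilde J_\chi$ built from the spectral expansion (Theorem \ref{spectraltwisted}): using the compatibility of $\epsilon_{\mathrm{geom}}$ with parabolic induction (an analogue of Lemma \ref{4.4}, proved as in \cite[Lemme 17.2.1]{BP16}) together with the twisted GGP results of \cite{GGP23} and the induction hypothesis, one gets $\epsilon_\psi(\tilde\pi)=\epsilon_{\mathrm{geom}}(\tilde\pi)$ for all $\tilde\pi\in R_{\mathrm{ind}}(\tilde M)$, so that proving $\tilde J_{\chi,\mathrm{qc}}(\theta)=\epsilon_{\mathrm{geom}}(\theta)$ for all quasi-characters suffices, and $\mathrm{Supp}(\tilde J_{\chi,\mathrm{qc}})$ lies in the elliptic locus of $\tilde N$.

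Next I would run the Harish-Chandra descent to non-central twisted semisimple elements: for $\tilde s\in\tilde N(F)$ noncentral, $N_{\tilde s}$ and $M_{\tilde s}$ are products of smaller twisted unitary/linear groups of base-change type, and the kernel $K_\chi(\tilde f,\cdot)$ factorizes accordingly; combined with Proposition \ref{2.8} and its twisted analogue and with \cite[Theorem 4.1.1]{BP18}, this gives $\tilde J_{\chi,\mathrm{qc}}(\theta)=\epsilon_{\mathrm{geom}}(\theta)$ whenever $\theta$ is supported away from $Z_{\tilde N}(F)$. For the descent to non-identity central $\tilde s$ one needs the local character expansion of $\tilde\omega_{\psi,\mu,\chi}$ near $\tilde s$, which is supplied by the intertwining relation (Proposition \ref{intertwining}) and an argument parallel to Proposition \ref{nonidentity}; after this step only the behaviour at the base point survives, i.e.\ one is reduced to a comparison on the twisted Lie algebra, with at most the regular-nilpotent coefficients $c_{\mathcal O}$ undetermined, exactly as in Proposition \ref{prop6.11}.

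The crucial new ingredient—and the part I expect to be the main obstacle—is closing the last gap, namely showing those residual constants vanish. In the untwisted case this was done by an explicit computation on $\mathfrak t_{\mathrm{qd}}$ using the rank-one formula (Proposition \ref{4.3}). Here, rather than developing a twisted rank-one harmonic-analysis computation (which, as the introduction stresses, is not available for Weil representations of twisted groups), I would instead use the twisted endoscopic transfer and the theorem of \cite{CG22}: by \textbf{(TET)} in Section \ref{tet} and Theorem \ref{basechangeGL}, the identity $\tilde J_{\chi,\mathrm{qc}}(\theta_{\tilde\pi})=\epsilon_{\mathrm{geom}}(\theta_{\tilde\pi})$ for the extension $\tilde\pi$ attached to a base-changed $L$-parameter is equivalent to Theorem \ref{maintheorem}(ii) for that parameter, and \cite{CG22} establishes this for an elliptic tempered parameter $M=M_1+\dots+M_n$ with each $M_i$ one-dimensional conjugate self-dual of parity $(-1)^{n-1}$. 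Since this example forces the undetermined constant to be zero (its $D^{\tilde M}$-discriminant weighted integral against $\hat\theta_{\tilde\pi}$ is nonzero by the quasi-split choice of torus, just as in the proof at the end of Section \ref{sec5.6}), we conclude $c_{\mathcal O}=0$ for all $\mathcal O$, hence $\tilde J_\chi(\tilde f)=\epsilon_{\mathrm{geom}}(\theta_{\tilde f})$ in general, and specializing to characters of tempered $\tilde\pi$ gives $\epsilon_\psi(\tilde\pi)=\epsilon_{\mathrm{geom}}(\tilde\pi)$. The delicate points to watch are the precise normalization of transfer factors (Section \ref{llc5}, \ref{llc7}) and of the measures on $\tilde T(F)/\theta$, so that the endoscopic comparison lands on the nose rather than up to a constant.
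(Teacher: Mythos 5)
Your overall skeleton is the right one and matches the paper's proof for $K\neq E$: define $J_{\chi,\mathrm{qc}}$ from the spectral side, show compatibility with parabolic induction (Proposition \ref{twistedparabolic}), descend to non-central elliptic elements (Proposition \ref{elliptictwisted}), isolate the undetermined regular-nilpotent constants (Proposition \ref{descentLietwisted}), and then use the twisted endoscopic relation together with the elliptic-parameter case of \cite{CG22} to kill those constants. Two points, however, are genuine gaps rather than stylistic differences.

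First, you do not separate the case $K=E$ from $K\neq E$, and your plan cannot treat $K=E$ as stated. The theorem is claimed for both cases, but your closing move uses \cite{CG22}, which concerns only $K\neq E$; moreover Theorem \ref{basechangeGL}, which you invoke in the same sentence, is a $K=E$ object (Section \ref{llc2} is base change from $\mathrm{GL}_n$ to $\mathrm{GL}_n\times\mathrm{GL}_n$). The paper resolves this by treating $K=E$ separately and \emph{non-inductively} in Section \ref{sec8.2}: there one already knows $\epsilon_\psi(\widetilde{\pi\times{}^\sigma\pi^\vee})=\mu(\det V)m_V(\pi)+\mu(\det V')m_{V'}(\pi)$ from \cite{Le25}, combines this with Theorem \ref{maintheorem1} to get an integral formula for $\epsilon_\psi$ over elliptic tori of $H_V$, and then uses Theorem \ref{basechangeGL} to translate that formula to twisted tori, giving $\epsilon_\psi=\epsilon_{\mathrm{geom}}$ directly; the spectral expansion then finishes. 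Your inductive machinery should be reserved for $K\neq E$, where the twisted endoscopic relation is \textbf{(TET)} from Section \ref{llc9}, not Theorem \ref{basechangeGL}.

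Second, your proposed tool for the local behaviour near the base point is not the right one. Proposition \ref{intertwining} is an identity for the integrated matrix coefficient $\mathcal L_\pi$ under $\tilde\pi(\tilde y)$ and $\tilde\omega_{\psi,\mu,\chi}(\tilde y)$; it is used in the \emph{spectral} expansion and says nothing about a germ expansion of the character of $\tilde\omega_{\psi,\mu}$ near $\theta_n$. Also, in the quotient $Z_N(F)\backslash\tilde N(F)$ with fixed central character there is effectively only the single central orbit of $\theta_n$, so the ``non-identity central $\tilde s$'' step of the untwisted case does not arise here. What the paper actually uses (Proposition \ref{descentLietwisted}) is the wavefront-set statement \cite[Theorem 4.1]{Kon02} for $\tilde\omega_{\psi,\mu}$, combined with the homogeneity argument: Konno's theorem pins down the leading (minimal-nilpotent) terms of the twisted local expansion with opposite signs, and the $|\lambda|$-scaling on both $\tilde J_{\chi,\mathrm{qc}}$ and $\epsilon_{\mathrm{geom}}$ then forces all non-regular nilpotent coefficients to vanish. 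Without this input the reduction to regular-nilpotent coefficients, and hence the final application of \cite{CG22}, is not justified. (This is also exactly the reason for the odd-residual-characteristic hypothesis in the paper, as noted after Conjecture \ref{mainconjecture}.)

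The endgame of your proposal (using \cite{CG22} with a Steinberg-type elliptic parameter $M=M_1+\dots+M_n$ of one-dimensional summands) does agree with the paper's Section \ref{sec8}, as does the normalization remark about transfer factors and measures on $\tilde T(F)/\theta$; those concerns are legitimate and are discharged by the matching of Section \ref{llc4} together with the Jacobian $|2|_F^n$ and the transfer-factor constant $\gamma^G_{\mu_+^K,\mu_-^K}$ computed in \cite{BP16}.
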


\subsection{The case $K=E$}\label{sec8.2}

When $K=E$, we can use the main result in \cite{Le25} to prove Theorem \ref{geometrictwisted}. In this case, we have $M=\text{Res}_{E/F}\text{GL}_{n,E}\times\text{GL}_{n,E}$ and $N=\text{Res}_{E/F}\text{GL}_{n,E}$. Let $\widetilde{\pi \times {}^\sigma\pi^\vee}$ be a tempered representation of $\tilde{M}(F)$. By \cite[Theorem 1.2]{Le25} and Theorem \ref{maintheorem1}, it follows that
$$
\epsilon_\psi(\widetilde{\pi \times {}^\sigma\pi^\vee})=\mu(\det V)m_V(\pi)+\mu(\det V^\prime)m_{V^\prime}(\pi)
$$
$$
=\underset{T\in{\mathcal{T}}_{\text{ell}}\left(H\right)}{\sum}\frac{\gamma_\psi(T)}{\left|W\left(H,T\right)\right|}
\underset{s\rightarrow0^{+}}{\lim}\int_{T\left(F\right)}D^{G}\left(x\right)^{1/2}c_{\theta}\left(x\right)\frac{\mu\left(\det\left(1-x^{-1}\right)\right)}{\left|\det\left(1-x\right)\right|_{E}^{1/2-s}}dx.
$$
By using the matching in section \ref{llc2} and Theorem \ref{basechangeGL}, we obtain
$$
\epsilon_\psi(\widetilde{\pi \times {}^\sigma\pi^\vee})=\sum_{\tilde{T}\in \mathcal{T}_{\text{ell}}(\tilde{N})}\frac{\gamma_\psi(\tilde{T})}{|W(N,\tilde{T})|}\lim_{s\rightarrow 0^+}\int_{\tilde{T}(F)/\theta}D^{\tilde{M}}(\tilde{x})^{1/2}c_{\theta_{\widetilde{\pi \times {}^\sigma\pi^\vee}}}(\tilde{x})\frac{\mu(\det(1-x^{-1}))}{|\det(1-x)|_E^{1/2-s}}d\tilde{x}
$$
$$
=\epsilon_{\text{geom}}(\widetilde{\pi \times {}^\sigma\pi^\vee}).
$$
Applying Theorem \ref{spectraltwisted}, for any $\tilde{f}\in \mathcal{C}_{\text{scusp}}(Z_M(F)\backslash\tilde{M}(F),\chi)$, we have
$$
\tilde{J}_{\chi}(\tilde{f})=\underset{\tilde{L}\in\mathcal{L}(\tilde{L}_{\min})}{\sum}|\widetilde{W}^L||\widetilde{W}^G|^{-1}(-1)^{a_{\tilde{L}}-a_{\tilde{G}}}\int_{E_{\text{ell}}(Z_M(F)\backslash\tilde{L}(F),\chi^{-1}\times{}^\sigma\chi)}\hat{\theta}_{\tilde{f}}\left(\tilde{\pi}\right)\epsilon_{\psi}\left(\tilde{\pi}^{\vee}\right)d\tilde{\pi}
$$
$$
=\underset{\tilde{L}\in\mathcal{L}(\tilde{L}_{\min})}{\sum}|\widetilde{W}^L||\widetilde{W}^G|^{-1}(-1)^{a_{\tilde{L}}-a_{\tilde{G}}}\int_{E_{\text{ell}}(Z_M(F)\backslash\tilde{L}(F),\chi^{-1}\times{}^\sigma\chi)}\hat{\theta}_{\tilde{f}}\left(\tilde{\pi}\right)\epsilon_{\text{geom}}\left(\tilde{\pi}^{\vee}\right)d\tilde{\pi}=\epsilon_{\text{geom}}(\theta_{\tilde{f}}),
$$
which gives us a proof of Theorem \ref{geometrictwisted} when $K=E$.

\subsection{Linearization of $\tilde{J}_{\chi}$} From now, we consider the case when $K\neq E$. We first prove a compatibility between the geometric twisted multiplicity $\epsilon_{\text{geom}}$ and parabolic induction. Let $\tau$ be the nontrivial $F$-automorphism of $K$ and $E^\prime$ be the third quadratic subfield of $L$. Let $\tilde{L}$ be a twisted Levi subgroup of $\tilde{M}$. There exists a decomposition
$$
V_K=V_u\oplus \ldots V_{1}\oplus V_0 \oplus V_{-1}\oplus \ldots \oplus V_{-u}
$$
such that $\tilde{L}$ is the subset containing $\tilde{x}\in \tilde{M}$ which satisfies $\tilde{x}(V_i)=V_{-i}^\sigma$, for any $i=\overline{1,u}$. For $i=\overline{0,u}$, we set
$$
w_i=\left(\begin{array}{cccc}
0 & \cdots & 0 & -1\\
\vdots & 0 & 1 & 0\\
0 & \ddots & 0 & \vdots\\
\left(-1\right)^{\dim(V_i)} & 0 & \cdots & 0
\end{array}\right).
$$
Similar to the case $K=E$, for each $i=\overline{1,u}$, we denote $M_i=\text{Res}_{L/F}\text{GL}(V_i)\times \text{GL}(V_{-i})$ and $\tilde{M_i}=M_i\theta_i$, where $\theta_i: (g_i,g_{-i})\mapsto (w_i{}^tg_{-i}^{\sigma\tau,-1}w_i^{-1},w_i{}^tg_{i}^{\sigma\tau,-1}w_i^{-1})$ is an involution on $M_i$. Likewise, as in the case $K\neq E$, we set $M_0=\text{Res}_{L/F}\text{GL}(V_0)$ and $\tilde{M_0}=M_0\theta_0$, where $\theta_0:g\mapsto w_0{}^tg^{\sigma,-1}w_0$ is an involution on $M_0$. Then $\tilde{L}=\prod_{i=0}^{u}\tilde{M_i}$. We denote by $\epsilon_{\text{geom}}^{\tilde{M_i}}$ the variant of the linear form $\epsilon_{\text{geom}}$ when replacing $\tilde{M}$ by $\tilde{M_i}$.

Let $\theta^{\tilde{L}}=\otimes_{i=0}^u\,\theta^{\tilde{M_i}}$ be a quasi-character of $\tilde{L}$, where $\theta^{\tilde{M_i}}\in \text{QC}(\tilde{M_i})$ for any $i=\overline{0,u}$. We denote $\tilde{\theta}=\text{Ind}^{\tilde{M}}_{\tilde{L}}(\theta^{\tilde{L}})$ in the sense of \cite[Section 1.12]{Wal12a}. The following proposition gives us a compatibility between geometric twisted multiplicity and parabolic induction.
\begin{proposition}\label{twistedparabolic}
    Using the above notations, we have
    $$
    \epsilon_{\text{geom}}(\tilde{\theta})=\prod_{i=0}^{u}\epsilon_{\text{geom}}^{\tilde{M_i}}(\theta^{\tilde{M_i}}).
    $$
\end{proposition}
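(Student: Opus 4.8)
The plan is to transport the proof of Lemma~\ref{4.4} (itself modelled on \cite[Lemme~17.2.1]{BP16}) to the twisted setting, the two inputs being a germ expansion of the induced quasi-character $\tilde\theta=\text{Ind}^{\tilde M}_{\tilde L}(\theta^{\tilde L})$ and the blockwise multiplicativity of the Weil-index weight $\frac{\mu(\det(1-x^{-1}))}{|\det(1-x)|_E^{1/2-s}}$. First I would establish the behaviour of the regularized germ $c_{\tilde\theta}$ under twisted parabolic induction: for a twisted maximal torus $(T,\tilde T)$ of $\tilde N$ regarded inside $\tilde M$ and $\tilde x\in\tilde T(F)_{\text{reg}}$, one should have
\[
D^{\tilde M}(\tilde x)^{1/2}\,c_{\tilde\theta}(\tilde x)=\sum_{\tilde y}D^{\tilde L}(\tilde y)^{1/2}\,c_{\theta^{\tilde L}}(\tilde y),
\]
the sum running over twisted $L(F)$-conjugacy classes of $\tilde y\in\tilde L(F)$ having the same characteristic polynomial as $\tilde x$; this is the twisted analogue of \cite[Proposition~4.7.1]{BP20}, deduced from the description of the nilpotent germ coefficients of an induced quasi-character together with the twisted harmonic analysis of \cite[Section~2.5--2.6]{Le25} and \cite{Wal12b}. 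In particular $c_{\tilde\theta}(\tilde x)$ vanishes unless $\tilde x$ is twisted conjugate into $\tilde L$, and on tori it is expressed purely through the $\tilde M_i$-data.

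Next I would identify which twisted tori survive in $\epsilon_{\text{geom}}(\tilde\theta)$. Since $\epsilon_{\text{geom}}$ is supported on the elliptic twisted maximal tori of $\tilde N$ and, by the previous step, only tori conjugate into $\tilde L$ contribute, I would show that such an elliptic $(T,\tilde T)\in\mathcal T_{\text{ell}}(\tilde N)$ is, up to $L(F)$-conjugacy, a product $(T_0,\tilde T_0)\times\prod_{i=1}^{u}(T_i,\tilde T_i)$ with $(T_i,\tilde T_i)$ elliptic in the analogue $\tilde N_i$ of $\tilde N$ for $\tilde M_i$: ellipticity gives $T_\theta(F)\cong\prod_j U_{E_j/F_j}(1)$, and compatibility with the decomposition $V_K=V_u\oplus\cdots\oplus V_0\oplus\cdots\oplus V_{-u}$ distributes these anisotropic factors among the blocks $V_0,V_{\pm1},\dots,V_{\pm u}$, so that the torus is block-diagonal; conversely every such product arises. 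Unlike the unitary situation of Lemma~\ref{4.4} there is no residual sum over flags here, because $\tilde L$ already rigidifies the block structure and all blocks are general-linear.

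Then I would verify that every ingredient of $\epsilon_{\text{geom}}$ is multiplicative along this decomposition. Writing $\tilde x=(\tilde x_0,\dots,\tilde x_u)$ and $x={}^t(\tilde x^\sigma)^{-1}\tilde x=(x_0,\dots,x_u)$ blockwise, one has $D^{\tilde M}(\tilde x)=\prod_i D^{\tilde M_i}(\tilde x_i)$ (the off-block contributions cancel, as in the classical germ computation), $\frac{\mu(\det(1-x^{-1}))}{|\det(1-x)|_E^{1/2-s}}=\prod_i\frac{\mu(\det(1-x_i^{-1}))}{|\det(1-x_i)|_E^{1/2-s}}$, $\gamma_\psi(\tilde T)=\prod_i\gamma_\psi(\tilde T_i)$ by definition, and the measures on $\tilde T(F)/\theta$ together with the index $|T^\theta(F):T_\theta(F)|$ factor accordingly. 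The Weyl index decomposes as $|W(N,\tilde T)|=w\cdot\prod_i|W(N_i,\tilde T_i)|$, where $w$ counts permutations of isomorphic blocks, and $w$ equals the number of ways of realizing $(T,\tilde T)$ as such a product, so these factors cancel. Substituting the induction formula into the definition of $\epsilon_{\text{geom}}(\tilde\theta)$, rearranging the sum over $\mathcal T_{\text{ell}}(\tilde N)$ into the product of sums over the $\mathcal T_{\text{ell}}(\tilde N_i)$, and interchanging the $s\to0^{+}$ limit with the finite sums (legitimate by the absolute-convergence statement recorded after the definition of $\epsilon_{\text{geom}}$, the twisted analogue of Proposition~\ref{4.1}), one obtains $\prod_{i=0}^{u}\epsilon_{\text{geom}}^{\tilde M_i}(\theta^{\tilde M_i})$.

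The main obstacle is the first step: setting up twisted parabolic induction of germs with the correct normalization constants, since one must invoke the germ expansion and the induction formalism for the twisted group $\tilde M$ — harmonic analysis developed only piecemeal in the twisted case — and one must check carefully that the block structure of $\tilde L$, which interlaces $\text{Res}_{L/F}\text{GL}$ blocks with the outer involution $\theta_n$, produces precisely the families $\mathcal T_{\text{ell}}(\tilde N_i)$ and the correct weight factorization, rather than a coarser family. Once that formula and the blockwise factorization of the weight are in hand, the remaining combinatorics is identical to \cite[Lemme~17.2.1]{BP16}.
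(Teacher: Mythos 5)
Your proposal is correct and follows essentially the same route as the paper's one-line proof, which simply refers to \cite[Proposition 6.2.1]{BP14} (the twisted Bessel-model analogue); you instead model your argument on the untwisted Lemma~\ref{4.4} and \cite[Lemme~17.2.1]{BP16}, but the technique is identical. The three ingredients you identify — the parabolic-induction formula for the regularized germs $c_{\tilde\theta}$ (which on regular semisimple points is just Clozel's twisted character-induction formula, since $c_{\tilde\theta}$ agrees with $\tilde\theta$ there, so the hard work is only at non-regular points which do not appear in the integral), the block decomposition of elliptic twisted tori of $\tilde N$ lying in $\tilde L$, and the blockwise multiplicativity of $D^{\tilde M}(\tilde x)^{1/2}$, $\gamma_\psi(\tilde T)$, $\mu(\det(1-x^{-1}))/|\det(1-x)|_E^{1/2-s}$, the measure, and the Weyl-group/overcounting cancellation — are precisely those deployed in \cite{BP14}. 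Your structural observation that no flag sum arises here (in contrast with Lemma~\ref{4.4}) is correct and worth making explicit, since it is what makes this statement a clean product rather than a sum over relevant decompositions; in the unitary case the relevant flag is a datum not determined by the Levi, whereas here $\tilde L$ already carries the full block data, and the parametrization of conjugacy classes via $\Xi_{\text{reg}}$ shows each elliptic twisted torus meeting $\tilde L$ splits uniquely (up to $N(F)$-conjugacy and block permutation) into elliptic pieces $\tilde T_i\subset\tilde N_i$.

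The one point where a careful write-up would need to say more, and which you yourself flag as the "main obstacle," is the normalization of the twisted germ-induction formula and the precise identification of the twisted tori $\tilde N_i$ in the base-change blocks $i\geq 1$ (where $\tilde M_i$ is of $\text{GL}\times\text{GL}$-with-flip type over $L/E'$ rather than of the same shape as $\tilde M$). These are handled in \cite{Wal12b} and \cite[Sections~2.5--2.6]{Le25}, and the paper implicitly relies on them through the citation of \cite{BP14}; your outline correctly locates where this input enters. No genuine gap.
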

\begin{proof}
    The proof can be adapted from \cite[Proposition 6.2.1]{BP14} without any difficulties.
\end{proof}
Applying \cite[Proposition 10.4]{GGP23}, Proposition \ref{twistedparabolic} and the induction hypothesis, we obtain
$$
\epsilon_\psi(\tilde{\pi})=\epsilon_{\text{geom}}(\tilde{\pi}),
$$
for any $\tilde{\pi}\in E_{\text{ind}}(\tilde{M})$. By Theorem \ref{spectraltwisted}, for any strongly cuspidal function $\tilde{f}\in \mathcal{C}_{\text{scusp}}(Z_M(F)\backslash \tilde{M}(F),\chi)$, we have
$$
\tilde{J}_{\chi}(\tilde{f})=J_{\chi,\text{spec}}(\tilde{f})=\underset{\tilde{L}\in\mathcal{L}(\tilde{L}_{\min})}{\sum}|\widetilde{W}^L||\widetilde{W}^G|^{-1}(-1)^{a_{\tilde{L}}-a_{\tilde{G}}}\int_{E_{\text{ell}}(Z_M(F)\backslash\tilde{L}(F),\chi^{-1})}\hat{\theta}_{\tilde{f}}\left(\tilde{\pi}\right)\epsilon_{\psi}\left(\tilde{\pi}^{\vee}\right)d\tilde{\pi}
$$
$$
=\epsilon_{\text{geom}}(\theta_{\tilde{f}})+\sum_{\tilde{\pi}\in E_{\text{ell}}(Z_M(F)\backslash \tilde{M}(F),\chi^{-1})}(\epsilon_\psi(\tilde{\pi}^\vee)-\epsilon_{\text{geom}}(\tilde{\pi}^\vee))\int_{\Gamma_{\text{ell}}(Z_M\backslash \tilde{M})}D^{\tilde{M}}(\tilde{x})\theta_{\tilde{f}}(\tilde{x})\theta_{\tilde{\pi}}(\tilde{x})d\tilde{x}.
$$
For $\tilde{\theta}\in \text{QC}(Z_M(F)\backslash\tilde{M}(F),\chi)$, we set
$$
J_{\chi,\text{qc}}(\tilde{\theta})=\epsilon_{\text{geom}}(\tilde{\theta})+\sum_{\tilde{\pi}\in E_{\text{ell}}(Z_M(F)\backslash \tilde{M}(F),\chi^{-1})}(\epsilon_\psi(\tilde{\pi}^\vee)-\epsilon_{\text{geom}}(\tilde{\pi}^\vee))\int_{\Gamma_{\text{ell}}(Z_M\backslash \tilde{M})}D^{\tilde{M}}(\tilde{x})\tilde{\theta}(\tilde{x})\theta_{\tilde{\pi}}(\tilde{x})d\tilde{x}.
$$
Since $\text{Supp}(\epsilon_{\text{geom}})\subseteq \Gamma(\tilde{M})_{\text{ell}}$, it follows that $\text{Supp}(J_{\chi,\text{qc}})\subseteq \Gamma(\tilde{M})_{\text{ell}}$. Moreover, by substituting $\tilde{\theta}=\theta_{\tilde{\pi}}$, we can see that the statement
$$
J_{\chi,\text{qc}}(\tilde{\theta})=\epsilon_{\text{geom}}(\tilde{\theta}),\text{ for any }\tilde{\theta}\in \text{QC}(Z_M(F)\backslash \tilde{M}(F),\chi)
$$
implies
$$
\epsilon_\psi(\tilde{\pi})=\epsilon_{\text{geom}}(\tilde{\pi}),\text{ for any }\tilde{\pi}\in E_{\text{ell}}(Z_M(F)\backslash \tilde{M}(F),\chi),
$$
i.e. $\tilde{J}_{\chi}(\tilde{f})=\epsilon_{\text{geom}}(\theta_{\tilde{f}})$, for any $\tilde{f}\in \mathcal{C}_{\text{scusp}}(Z_M(F)\backslash \tilde{M}(F),\chi)$. Therefore, from now it suffices to prove the following theorem.
\begin{theorem}\label{twistedgeometriclinear}
    For any $\tilde{\theta}\in \text{QC}(Z_M(F)\backslash \tilde{M}(F),\chi)$, we have
    $$
    J_{\chi,\text{qc}}(\tilde{\theta})=\epsilon_{\text{geom}}(\tilde{\theta}).
    $$
\end{theorem}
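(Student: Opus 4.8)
The plan is to follow the architecture of Sections \ref{sec4}--\ref{sec5}, but to replace the local harmonic analysis of Weil representations near central elements---which is not available for the twisted group $\tilde M$---by a twisted endoscopic transfer resting on \cite{CG22}. Throughout we are in the case $K \neq E$ and we argue by induction on $n$. Since both $J_{\chi,\text{qc}}$ and $\epsilon_{\text{geom}}$ are supported on $\Gamma(\tilde M)_{\text{ell}}$, a partition of unity reduces the identity to quasi-characters $\tilde\theta$ supported in a completely stably $M(F)$-invariant open set of the form $\tilde\Omega = (\tilde\Omega_{\tilde x})^{M}$ attached to a single elliptic semisimple $\tilde x \in \tilde M(F)$, where $\tilde\Omega_{\tilde x} \subseteq G_{\tilde x}(F)$ is a $G$-good neighborhood, relatively compact modulo conjugation. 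First I would transport to the present setting the twisted Harish-Chandra semisimple descent and the lifting maps of Proposition \ref{2.8}, together with the descent compatibility of $\epsilon_{\text{geom}}$---the analogue of Proposition \ref{4.1}(2), which can be adapted from \cite{BP14} without real difficulty---stating that near $\tilde x$ the form $\epsilon_{\text{geom}}$ decomposes as a product of the forms $\epsilon_{\text{geom}}^{\tilde M_i}$ on the factors of $G_{\tilde x}$, each a twisted linear group of the type handled in Proposition \ref{twistedparabolic}.

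Next comes the non-central descent. If $\tilde x$ is not $M(F)$-conjugate to an element for which the associated $x = {}^{t}(\tilde x^{\sigma})^{-1}\tilde x$ is a scalar, then $G_{\tilde x}$ is a proper product of twisted groups of the same kind but of strictly smaller rank, so the induction hypothesis (Theorem \ref{geometrictwisted} in lower rank) yields $J_{\chi,\text{qc}} = \epsilon_{\text{geom}}$ near $\tilde x$, exactly as in the proof of Proposition \ref{elliptic}; when $\tilde x$ is not conjugate into the relevant stratum at all, both forms vanish near it. There remains the comparison near the central-type locus---the points $\tilde x$ with $x$ scalar, including the base point with $x = 1$---and here, after a scaling argument in the spirit of Proposition \ref{prop6.11}, the difference $J_{\chi,\text{qc}} - \epsilon_{\text{geom}}$ restricted to such a neighborhood is a combination of the regular nilpotent germs $\hat{j}(\mathcal{O},\cdot)$ with universal coefficients $c_{\mathcal{O}}$ (for $n$ odd one twists by $\lambda \in F^\times \setminus N_{E/F}(E^\times)$ to exchange the two regular orbits, just as in Proposition \ref{prop6.11}(ii)). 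Substituting $\tilde\theta = \theta_{\tilde\pi}$ and using Arthur's orthogonality relations for elliptic representations, it then suffices to exhibit one elliptic tempered $\tilde\pi$ for which $\epsilon_\psi(\tilde\pi) = \epsilon_{\text{geom}}(\tilde\pi)$.

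That example I would produce by a twisted endoscopic transfer. Using the transfer factors normalized in Sections \ref{llc5}--\ref{llc7} and the twisted character identity \textbf{(TET)} of Section \ref{llc9}, the central-type germ data of $\epsilon_{\text{geom}}$ on $\tilde M$ is matched with the corresponding germ data of $m_{V,\text{geom}}$ on $G = \text{Res}_{K/F}\,U(V_K)$, the principal twisted endoscopic group of $\tilde M$, where the geometric multiplicity formula $J_V = m_{V,\text{geom}}\circ\theta$ is already available via Theorem \ref{maintheorem1}; the only freedom left is then the universal constants $c_{\mathcal{O}}$, together with the normalizing scalar $c_{\mu_+,\mu_-}(\phi_+,\phi_-)$ of \textbf{(TET)}. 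Taking $M = M_1 + \cdots + M_n$ with each $M_i$ one-dimensional, conjugate self-dual of parity $(-1)^{n-1}$, the main theorem of Chen--Gan \cite{CG22} computes $\sum_{\pi \in \Pi_M} m_V(\pi)$ for both $n$-dimensional skew-hermitian spaces $V$, hence verifies Conjecture \ref{mainconjecture}(ii) for this $M$; propagating this through the transfer forces $c_{\mathcal{O}} = 0$ and completes the proof. Combined with Theorem \ref{spectraltwisted} this also yields $\epsilon_\psi(\tilde\pi) = \epsilon_{\text{geom}}(\tilde\pi)$ and hence Theorem \ref{geometrictwisted}.

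The step I expect to be the main obstacle is this last one: arranging the twisted endoscopic transfer near central elements so that the normalizations of the transfer factors of Sections \ref{llc5}--\ref{llc7} and of the Weil indices $\gamma_\psi(\tilde T)$ are tracked accurately enough for the Chen--Gan computation to pin the constants down unambiguously, along with the delicate bookkeeping---already present in Proposition \ref{prop6.11}---of which regular nilpotent orbits and which skew-hermitian spaces occur according to the parity of $n$. The support reductions and the non-central descent, by contrast, are routine transpositions of \cite{BP14,BP20} to the twisted setting once Proposition \ref{twistedparabolic} and the twisted form of Proposition \ref{2.8} are in hand.
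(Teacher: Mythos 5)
Your proposal follows essentially the same route as the paper: descent to elliptic semisimple elements away from the central locus plus the induction hypothesis (the paper's Proposition~\ref{elliptictwisted}), a homogeneity/scaling argument to reduce the difference to a regular nilpotent germ with universal constants, paying attention to the parity of $n$ and the two regular orbits when $n$ is odd (Proposition~\ref{descentLietwisted}), and finally eliminating those constants by producing one explicit elliptic tempered test representation via Chen--Gan \cite{CG22} and a twisted endoscopic transfer.

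One point of your sketch is phrased a bit misleadingly, though it does not affect correctness: you speak of matching the ``central-type germ data'' of $\epsilon_{\text{geom}}$ on $\tilde M$ with that of $m_{V,\text{geom}}$ on $G$, and worry about the normalizing scalar $c_{\mu_+,\mu_-}$ of \textbf{(TET)}. What the paper actually does is simpler and more rigid. It takes the Chen--Gan $L$-parameter $M = M_1 + \cdots + M_n$ (chosen elliptic with pairwise distinct $M_i$ and central character $\chi$), forms its extension $\tilde\Pi$ to $\tilde M(F)$, and invokes the twisted base-change character identity of Section~\ref{llc7}, namely $D^{\tilde M}(\tilde x)^{1/2}\theta_{\tilde\Pi}(\tilde x) = D^G(y)^{1/2}\theta_M(y)$ for matching $\tilde x$ and $y$. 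This identity, together with the bijection between $\mathcal{T}^{\text{stab}}_{\text{ell}}(H_V)$ and $\mathcal{T}^{\text{stab}}_{\text{ell}}(\tilde N)$, directly turns the already-established geometric multiplicity formula (Theorem~\ref{maintheorem1}) applied to $\theta_M$ into the geometric side $\epsilon_{\text{geom}}(\tilde\Pi)$; no independent transfer of germ expansions near central elements is needed, and the transfer-factor constant $c_{\mu_+,\mu_-}$ never enters because the identity is stated in the normalized form above. So the obstacle you flag as the main one largely dissolves once you use the endoscopic identity at the level of full characters evaluated against the explicit integral formulas rather than at the level of germs.
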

\subsection{Comparision at non-identity locus}
In this subsection, we prove the two linear forms in Theorem \ref{twistedgeometriclinear} agree outside central locus.
\begin{proposition}\label{elliptictwisted}
Let $\tilde{\theta}\in \text{QC}(Z_M(F)\backslash \tilde{M}(F),\chi)$ and assume that $Z_M(F)\cap \text{Supp}(\tilde{\theta})=\emptyset$. Then
\begin{equation} \label{ellipticeqn}
    J_{\chi,\text{qc}}(\tilde{\theta})=\epsilon_{\text{geom}}(\tilde{\theta}).
\end{equation}
\end{proposition}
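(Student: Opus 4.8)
The plan is to follow the proof of Proposition~\ref{elliptic}, adapting the geometric descent of $J_V$ to the twisted setting. Since $J_{\chi,\text{qc}}$ and $\epsilon_{\text{geom}}$ are both supported in $\Gamma(\tilde{M})_{\text{ell}}$, and $\text{Supp}(\tilde{\theta})$ avoids the central locus by hypothesis, a partition of unity argument reduces the problem to the case where $\tilde{\theta}$ is supported in a completely stably $M(F)$-invariant open subset of the form $\tilde{\Omega}=(\Omega_{\tilde{x}}\tilde{x})^M$, with $\tilde{x}\in\tilde{M}(F)$ a noncentral semisimple elliptic element and $\Omega_{\tilde{x}}\subseteq M_{\tilde{x}}(F)$ an $M$-good open neighbourhood of $1$, which may be taken relatively compact modulo conjugation.

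First I would dispose of the case where $\tilde{x}$ is not $M(F)$-conjugate to any element of $\tilde{N}(F)$: since the set of $M(F)$-conjugacy classes meeting $\tilde{N}(F)$ is closed in $\Gamma(\tilde{M})$, after shrinking $\Omega_{\tilde{x}}$ the set $\tilde{\Omega}$ meets no $M(F)$-conjugate of $\tilde{N}(F)$, so it contributes nothing to $\tilde{J}_{\chi}$---hence nothing to $J_{\chi,\text{qc}}$---and nothing to $\epsilon_{\text{geom}}$, so that (\ref{ellipticeqn}) holds trivially. In the remaining case we may take $\tilde{x}\in\tilde{N}(F)$; writing $x={}^t(\tilde{x}^\sigma)^{-1}\tilde{x}\in N(F)$ for the associated norm, the centralizers decompose as $M_{\tilde{x}}\simeq\prod_k M_k$ and $N_{\tilde{x}}\simeq\prod_k N_k$, where each $(M_k,N_k)$ is a base-change twisted pair of the same shape as $(M,N)$---possibly including factors of $\mathrm{Res}_{L'/F'}\mathrm{GL}$ type, whose contribution is governed by the trivial twisted triple exactly as in the proof of Proposition~\ref{nonidentity}---so that $(M_{\tilde{x}},N_{\tilde{x}},\tilde{\omega}_{\psi,\mu,\chi}|_{\tilde{N}_{\tilde{x}}})$ is a product of twisted Gan-Gross-Prasad triples of strictly smaller rank. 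Using the twisted Harish-Chandra semisimple descent recalled in Section~\ref{sec2.4}, the lifting map of Proposition~\ref{2.8}, and the functoriality of the extended Weil representation under inclusion of centralizers (Proposition~\ref{3.1}), the kernel $K_{\chi}(\tilde{f},\cdot)$ localises onto $\tilde{N}_{\tilde{x}}(F)$; granting the absolute convergence of the resulting multiple integral---which follows from the estimates underlying Theorem~\ref{spectraltwisted} as in Section~\ref{sec4.3}---one obtains $\tilde{J}_{\chi}(\tilde{f})=\sum_{j}\prod_k \tilde{J}^{(k)}({}^{g_j}\tilde{f}_{\tilde{x},k})$, the outer finite sum running over the $N(F)$-conjugacy classes inside the $M(F)$-conjugacy class of $\tilde{x}$, where $\tilde{f}_{\tilde{x}}=\otimes_k\tilde{f}_{\tilde{x},k}$ is the descended function attached to a lift $\tilde{f}$ of $\theta_{\tilde{x},\Omega_{\tilde{x}}}$.

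Applying the induction hypothesis to each strictly smaller factor, $\tilde{J}^{(k)}=\epsilon_{\text{geom}}^{(k)}$, together with the twisted descent compatibility of $\epsilon_{\text{geom}}$ (the analogue of Proposition~\ref{4.1}(2), proved as in \cite[Proposition~11.2.1]{BP20}), yields $J_{\chi,\text{qc}}(\tilde{\theta})=\epsilon_{\text{geom},\tilde{x}}(\theta_{\tilde{x},\Omega_{\tilde{x}}})=\epsilon_{\text{geom}}(\tilde{\theta})$, which is (\ref{ellipticeqn}). The main obstacle is the twisted semisimple descent of the kernel $K_{\chi}$: one must verify that the weighted integral over $Z_N(F)\backslash\tilde{N}(F)$ against the matrix coefficients of $\tilde{\omega}_{\psi,\mu,\chi}$ concentrates correctly near $\tilde{N}_{\tilde{x}}(F)$, and that the splitting of the Weil representation over $\tilde{N}_{\tilde{x}}(F)$ furnished by Proposition~\ref{3.1} matches the Weil representations attached to the smaller triples. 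This is the twisted incarnation of the computation in the proof of Proposition~\ref{elliptic}, and it is made delicate precisely because the local harmonic analysis of Weil representations of twisted groups has not yet been developed.
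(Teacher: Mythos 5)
Your high-level strategy is correct: partition of unity to localise near a noncentral elliptic semisimple $\tilde{x}$, dispose of the case where the $M(F)$-conjugacy class of $\tilde{x}$ misses $\tilde{N}(F)$, then use twisted semisimple descent together with the induction hypothesis and descent compatibility of $\epsilon_{\mathrm{geom}}$. However, the key step is misdescribed in a way that leaves a genuine gap.

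You assert that the centralizers decompose as products of ``base-change twisted pairs of the same shape as $(M,N)$,'' possibly with some extra $\mathrm{Res}\,\mathrm{GL}$ factors. This is not the structure. The centralizer $M_{\tilde{x}}$ of a semisimple $\tilde{x}\in\tilde{M}(F)$ is a connected reductive group on which the twisting $\theta_{\tilde{x}}$ acts trivially, and in fact $N_{\tilde{x}}=\prod_i N_i$ with $N_i=\mathrm{Res}_{F_i/F}\,U_{E_i/F_i}(V_i)$ a unitary group, and $M_{\tilde{x}}=\prod_i M_i$ with $M_i=\mathrm{Res}_{K/F}N_{i,K}$. These are products of \emph{untwisted} unitary groups, not smaller copies of the twisted $\mathrm{GL}$ pair $(\tilde{M},\tilde{N})$. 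Consequently the descended kernel does not directly present itself as a product of smaller twisted trace formulas $\tilde{J}^{(k)}$ of the same base-change type, and the induction hypothesis (which concerns $\tilde{J}_\chi^{\tilde{M}_i}$ on twisted $\mathrm{GL}$ spaces of smaller rank) cannot be applied immediately as you write. What is needed---and what the paper supplies---is an extra re-lifting step: one introduces auxiliary twisted groups $\tilde{M}_i$, $\tilde{N}_i$ attached to the hermitian spaces $V_{i,K}$ and $V_i$, chooses quasi-characters $\tilde{\theta}_i\in\mathrm{QC}(\tilde{M}_i(F))$ whose \emph{own} semisimple descent recovers the $\theta_i$ living on $M_i$, and only then invokes the induction hypothesis in the form $J^{\tilde{M}_i}_{\chi,\mathrm{qc}}(\tilde{\theta}_i)=\epsilon^{\tilde{M}_i}_{\mathrm{geom}}(\tilde{\theta}_i)$. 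The fact that the two descents (of $\tilde{J}_\chi$ around $\tilde{x}$ and of $\tilde{J}^{\tilde{M}_i}_\chi$ around the analogous base points) land on the same objects is the content one must check; your proposal elides it.

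Two smaller remarks. First, your worry that ``the local harmonic analysis of Weil representations of twisted groups has not yet been developed'' is actually neutralised by the very descent you are performing: after descent the Weil representation lives on \emph{untwisted} unitary groups, where the character expansion of $\omega_{V,\psi,\mu}$ (Propositions~\ref{identity}, \ref{nonidentity}) is available. Second, you introduce an outer finite sum over $N(F)$-conjugacy classes inside the $M(F)$-conjugacy class of $\tilde{x}$; the paper's proof runs without such a sum, relying on a single application of twisted semisimple descent for $\tilde{N}$. If you believe the sum is necessary you would need to justify how the set $\tilde{\Omega}\cap\tilde{N}(F)$ decomposes into $N(F)$-orbits; the paper treats this as a single orbit.
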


\begin{proof}
    Similar to Proposition \ref{elliptic}, it suffices to prove the equality for $\tilde{\theta}\in \text{QC}(\Omega)$, where $\Omega$ is a completely stably $M(F)$-invariant open subset of $\tilde{M}(F)$ of the form $\Omega_{\tilde{x}}^{M}$ for some noncentral elliptic element $\tilde{x}$ and $M$-good open neighborhood $\Omega_{\tilde{x}}\subseteq M_{\tilde{x}}(F)$. We can assume $\Omega_{\tilde{x}}$ is relatively compect modulo conjugation.

    If $\tilde{x}$ is not $M(F)$-conjugate to any element of $\tilde{N}(F)$, then we can shrink $\Omega_{\tilde{x}}$ so that $\Omega \cap \tilde{N}(F)$ is empty. In this case, it is easy to see that both sides of (\ref{ellipticeqn}) are equal to zero.

    We may now assume that $\tilde{x}\in \tilde{N}(F)$. In this case, we have
    $$
    M_{\tilde{x}}=M_1\times \ldots M_u \text{ and }N_{\tilde{x}}=N_1\times \ldots \times N_u,
    $$
    where $N_i=\text{Res}_{F_i/F}\,\text{U}_{E_i/F_i}(V_i)$ is a certain unitary group and $M_i=\text{Res}_{K/F}N_{i,K}$ is also a unitary group, for all $i=\overline{1,u}$. We can choose $\Omega_{\tilde{x}}=\Omega_1\times \ldots \times \Omega_u$ such that $\Omega_{\tilde{x}}\cap N_{\tilde{x}}(F)$ is an $N$-good open neighborhood of $\tilde{x}$ and $\Omega_i\subseteq M_i(F)$ is open and completely $M_i(F)$-invariant. Assume $\theta_{\tilde{x},\Omega_{\tilde{x}}}=\otimes_{i}\theta_i$, where $\theta_i\in \text{QC}(\Omega_i)$. For each $i=\overline{1,u}$, we choose $f_{\tilde{x},i}\in \mathcal{C}_{\text{scusp}}(\Omega_i)$ such that $\theta_{f_{\tilde{x},i}}=\theta_i$. We set $f_{\tilde{x}}=\otimes_i f_{\tilde{x},i}$. Using Proposition \ref{2.8}, we set $\tilde{f}\in \mathcal{C}_{\text{scusp}}(\Omega)$ to be a lift of $f_{\tilde{x}}$. Observe
    $$
    J_{\chi,\text{qc}}(\tilde{\theta})=\tilde{J}_{\chi}(\tilde{f})
    =\int_{Z_M(F)N(F)\backslash M(F)}\underset{i}{\sum}\int_{Z_{N}\left(F\right)\backslash\tilde{N}\left(F\right)}\tilde{f}\left(m^{-1}\tilde{n}m\right)\left\langle \phi,\tilde{\omega}_{\psi,\mu,\chi}\left(\tilde{n}\right)\phi_{i}\right\rangle d\tilde{n}dm
    $$
    $$
    =\int_{N\left(F\right)Z_M\left(F\right)\backslash M\left(F\right)}\underset{i}{\sum}\underset{N_{\tilde{x}}\left(F\right)\backslash N\left(F\right)}{\int}\ \underset{Z_N\left(F\right)\backslash N_{\tilde{x}}\left(F\right)\tilde{x}}{\int}\left(^{m}\tilde{f}\right)_{\tilde{x},\tilde{\Omega}_{\tilde{x}}}\left(n_{\tilde{x}}^{-1}\tilde{n}n_{\tilde{x}}\right)\left\langle \phi_{i},\tilde{\omega}_{\psi,\mu}\left(\tilde{n}\right)\phi_{i}\right\rangle d\tilde{n}dn_{\tilde{x}}dm
    $$
    $$
    =\underset{M_{\tilde{x}}\left(F\right)\backslash M\left(F\right)}{\int}\ \underset{Z_M\left(F\right)N_{\tilde{x}}\left(F\right)\backslash M_{\tilde{x}}\left(F\right)}{\int}\underset{i}{\sum}\underset{Z_N\left(F\right)\backslash N_{\tilde{x}}\left(F\right)\tilde{x}}{\int}\left(^{m}\tilde{f}\right)_{\tilde{x},\tilde{\Omega}_{\tilde{x}}}\left(n_{\tilde{x}}^{-1}\tilde{n}n_{\tilde{x}}\right)\left\langle \phi_{i},\tilde{\omega}_{\psi,\mu}\left(\tilde{n}\right)\phi_{i}\right\rangle d\tilde{n}dm_{\tilde{x}}dm
    $$
    $$
    =\underset{M_{\tilde{x}}\left(F\right)\backslash M\left(F\right)}{\int}\alpha\left(m\right)\underset{Z_M\left(F\right)N_{\tilde{x}}\left(F\right)\backslash M_{\tilde{x}}\left(F\right)}{\int}\underset{i}{\sum}\underset{Z_N\left(F\right)\backslash N_{\tilde{x}}\left(F\right)\tilde{x}}{\int}\tilde{f}_{\tilde{x}}\left(m_{\tilde{x}}^{-1}\tilde{n}m_{\tilde{x}}\right)\left\langle \phi_{i},\tilde{\omega}_{\psi,\mu}\left(\tilde{n}\right)\phi_{i}\right\rangle d\tilde{n}dm_{\tilde{x}}dm
    $$
    $$
    =\underset{Z_M\left(F\right)N_{\tilde{x}}\left(F\right)\backslash M_{\tilde{x}}\left(F\right)}{\int}\underset{i}{\sum}\underset{Z_N\left(F\right)\backslash N_{\tilde{x}}\left(F\right)\tilde{x}}{\int}f_{\tilde{x}}\left(m_{\tilde{x}}^{-1}\tilde{n}m_{\tilde{x}}\right)\left\langle \phi_{i},\tilde{\omega}_{\psi,\mu}\left(\tilde{n}\right)\phi_{i}\right\rangle d\tilde{n}dm_{\tilde{x}}.
    $$
    For each $i=\overline{1,u}$, we set $V_{i,K}=V_i\otimes_F K$. We define twisted groups $\tilde{M_i}$ and $\tilde{N_i}$, which are variants of $\tilde{M}$ and $\tilde{N}$, corresponding to the spaces $V_{i,K}$ and $V_i$. We denote by $\epsilon_{\text{geom}}^{\tilde{M_i}}$ the variant of the linear form $\epsilon_{\text{geom}}$ on $\text{QC}(\tilde{M}_i(F))$. Let $\tilde{\theta_i}\in \text{QC}(\tilde{M}_i(F))$ such that $(\tilde{\theta_i})_{\tilde{x},\Omega_{i}}=\theta_i$. Similar to Proposition \ref{4.1}(2), we have
    $$
    \epsilon_{\text{geom}}(\tilde{\theta})=\prod_{i=1}^u \epsilon_{\text{geom}}^{\tilde{M_i}}(\tilde{\theta_i}).
    $$
    Using the induction hypothesis, it follows that $\epsilon_{\text{geom}}^{\tilde{M_i}}(\tilde{\theta_i})=J_{\chi,\text{qc}}^{\tilde{M_i}}(\tilde{\theta_i})$. We define a lift $\tilde{f_i}\in \mathcal{C}_{\text{scusp}}(\tilde{M_i}(F))$ of $f_i$ in the sense of Proposition \ref{2.8}. A similar argument to above gives us
    $$
    J_{\chi,\text{qc}}^{\tilde{M_i}}(\tilde{\theta_i})=\underset{Z_M\left(F\right)N_i\left(F\right)\backslash M_i\left(F\right)}{\int}\underset{j}{\sum}\underset{Z_N\left(F\right)\backslash N_{i}\left(F\right)\tilde{x}}{\int}f_{\tilde{x},i}\left(m_i^{-1}\tilde{n}_im_{i}\right)\left\langle \phi_{i,j},\tilde{\omega}_{\psi\circ\text{Tr}_{F_i/F},\mu\circ \text{Nm}_{E_i/E}}\left(\tilde{n}_i\right)\phi_{i,j}\right\rangle d\tilde{n}_idm_{i},
    $$
    where $\{\phi_{i,j}\}_j$ is an orthonormal basis of the Weil representation of $\text{Res}_{E_i/F}\text{GL}(V_i)$. Thus
    $$
     \prod_{i=1}^u J_{\chi,\text{qc}}^{\tilde{M_i}}(\tilde{\theta_i}) = \underset{Z_M\left(F\right)N_{\tilde{x}}\left(F\right)\backslash M_{\tilde{x}}\left(F\right)}{\int}\underset{i}{\sum}\underset{Z_N\left(F\right)\backslash N_{\tilde{x}}\left(F\right)\tilde{x}}{\int}f_{\tilde{x}}\left(m_{\tilde{x}}^{-1}\tilde{n}m_{\tilde{x}}\right)\left\langle \phi_{i},\tilde{\omega}_{\psi,\mu}\left(\tilde{n}\right)\phi_{i}\right\rangle d\tilde{n}dm_{\tilde{x}},
    $$
    which is to say $J_{\chi,\text{qc}}(\tilde{\theta})=\epsilon_{\text{geom}}(\tilde{\theta})$.
\end{proof}

\subsection{Descent to Lie algebra and homogeneity}
We prove the following proposition.
\begin{proposition}\label{descentLietwisted}
There exists $c_\chi\in \mathbb{C}$ such that for any $\tilde{\theta}\in \text{QC}(Z_M(F)\backslash\tilde{M}(F),\chi)$, we have 
$$
J_{\chi,\text{qc}}(\tilde{\theta})=c_\chi\cdot c_{\tilde{\theta}}(\theta_n)+\epsilon_{\text{geom}}(\tilde{\theta})
$$
when $n$ is even or
$$
J_{\chi,\text{qc}}(\tilde{\theta})=c_\chi\cdot (c_{\tilde{\theta},\mathcal{O}_1}(\theta_n)-c_{\tilde{\theta},\mathcal{O}_2}(\theta_n))+\epsilon_{\text{geom}}(\tilde{\theta})
$$
when $n$ is odd.
\end{proposition}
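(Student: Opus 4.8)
The plan is to follow the proof of Proposition \ref{prop6.11} closely, with the identity element of $G$ there replaced by the base point $\theta_n$ of $\tilde M$. First I would invoke Proposition \ref{elliptictwisted}: the linear form $\tilde\theta\mapsto J_{\chi,\text{qc}}(\tilde\theta)-\epsilon_{\text{geom}}(\tilde\theta)$ on $\text{QC}(Z_M(F)\backslash\tilde M(F),\chi)$ vanishes on every quasi-character whose support avoids the central locus $Z_M(F)\theta_n$. Since both $J_{\chi,\text{qc}}$ and $\epsilon_{\text{geom}}$ are continuous and supported in $\Gamma_{\text{ell}}(\tilde M)$, this difference is concentrated at the image of $\theta_n$; by the twisted analogue of Proposition \ref{2.1}(2),(3) and Proposition \ref{2.2}(3) — obtained from the descent map of Proposition \ref{2.8} — it factors through the local expansion of $\tilde\theta$ at $\theta_n$, so that
$$
J_{\chi,\text{qc}}(\tilde\theta)=\epsilon_{\text{geom}}(\tilde\theta)+\sum_{\mathcal{O}\in\text{Nil}(\mathfrak{m}_{\theta_n})}c_{\chi,\mathcal{O}}\,c_{\tilde\theta,\mathcal{O}}(\theta_n)
$$
for suitable constants $c_{\chi,\mathcal{O}}$, where $\mathfrak{m}_{\theta_n}$ is the Lie algebra of $M_{\theta_n}$, the identity component of the centralizer of $\theta_n$ in $M$. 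When $K\ne E$ the involution $\theta_n$ cuts $M_{\theta_n}\cong\text{Res}_{K/F}\text{U}(V_K)=G$ and $N_{\theta_n}\cong H_V$ out of $M$ and $N$; in particular $\mathfrak{m}_{\theta_n}$ has one regular nilpotent orbit if $n$ is even and exactly two, $\mathcal{O}_1$ and $\mathcal{O}_2$, if $n$ is odd, as recalled in the proof of Proposition \ref{prop6.11}.

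Next I would show $c_{\chi,\mathcal{O}}=0$ unless $\mathcal{O}$ is regular. For this I would descend both $J_{\chi,\text{qc}}$ (which agrees with $\tilde J_\chi$ on quasi-characters $\theta_{\tilde f}$) and $\epsilon_{\text{geom}}$ near $\theta_n$ to infinitesimal linear forms on $\text{QC}_c(\mathfrak{m}_{\theta_n}(F))$. For $\epsilon_{\text{geom}}$ this is the twisted counterpart of Proposition \ref{4.1}(2),(4). For $\tilde J_\chi$ one argues as in Proposition \ref{5.10}(i): using the twisted Harish-Chandra descent of Proposition \ref{2.8} together with a local character expansion of the extended Weil representation $\tilde\omega_{\psi,\mu}$ of $\tilde N(F)$ near the base point $\theta_n$ — the twisted analogue of Propositions \ref{identity} and \ref{nonidentity}, with $\tilde\omega_{\psi,\mu}(\theta_n)$ the normalised partial Fourier transform — one rewrites $\tilde J_\chi(\tilde f)$, for $\tilde f$ supported near $\theta_n$, as the infinitesimal twisted-Gan-Gross-Prasad trace formula on $\mathfrak{m}_{\theta_n}$, i.e. a combination of the forms $J_V^{\text{Lie}}$ of Section \ref{sec5}. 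Both descended forms are then homogeneous of the expected degree under the scaling $\theta\mapsto\theta_\lambda$, $\lambda\in F^\times$, induced by rescaling the trace-$0$ element fixing $\theta_n$ (equivalently the skew-hermitian form on $V$): each acquires a factor $|\lambda|^{\delta(M_{\theta_n})/2}$ up to replacing $V$ by $V_\lambda$, exactly as in Proposition \ref{4.1}(3) and Proposition \ref{5.10}(ii). Since $c_{\theta_\lambda,\mathcal{O}}(0)$ scales by a power of $|\lambda|$ strictly smaller than $|\lambda|^{\delta(M_{\theta_n})/2}$ whenever $\dim\mathcal{O}<\delta(M_{\theta_n})$, comparing the two expansions forces $c_{\chi,\mathcal{O}}=0$ for every non-regular orbit $\mathcal{O}$.

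When $n$ is even the unique regular orbit yields the single constant $c_\chi$ and the statement follows. When $n$ is odd, choosing $\lambda\in F^\times\setminus N_{E/F}(E^\times)$ in the homogeneity relation exchanges $\mathcal{O}_1$ with $\mathcal{O}_2$ (and $V$ with the other skew-hermitian space), giving one relation between $c_{\chi,\mathcal{O}_1}$ and $c_{\chi,\mathcal{O}_2}$; to obtain $c_{\chi,\mathcal{O}_1}=-c_{\chi,\mathcal{O}_2}$ I would copy the end of the proof of Proposition \ref{prop6.11}: for a tempered $L$-parameter $M=M_1+\dots+M_n$ with each $M_i$ one-dimensional conjugate self-dual of parity $(-1)^{n-1}$, the main theorem of \cite{CG22} together with the base-change transfer of Section \ref{llc7} gives $\epsilon_\psi(\tilde\pi)=\epsilon_{\text{geom}}(\tilde\pi)$ for the extension $\tilde\pi$ of $\pi(M)$, so the nilpotent correction annihilates $\theta_{\tilde\pi}$, i.e. $(c_{\chi,\mathcal{O}_1}+c_{\chi,\mathcal{O}_2})\,c_{\theta_{\tilde\pi}}(\theta_n)=0$; since $\pi(M)$ is generic, $c_{\theta_{\tilde\pi}}(\theta_n)\ne0$, and we set $c_\chi:=c_{\chi,\mathcal{O}_1}$.

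The main obstacle is the descent of the twisted trace formula near $\theta_n$ invoked above: proving that $\tilde J_\chi$, restricted to functions supported near the base point, is governed by a well-defined infinitesimal form with the expected homogeneity requires extending the local character analysis of Section \ref{sec3} to the twisted group $\tilde N$ — analysing the operator $\tilde\omega_{\psi,\mu}(\theta_n)$ and the Weil- and Maslov-index identities surrounding it. This is the twisted analogue of Propositions \ref{identity}--\ref{nonidentity}, and is where the genuine work lies; the remaining steps are formally parallel to Sections \ref{sec4}--\ref{sec5}.
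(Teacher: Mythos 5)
Your plan shares the overall scheme of the paper's proof --- invoke Proposition \ref{elliptictwisted} to reduce to an expansion over nilpotent orbits at $\theta_n$, then kill the non-regular terms by homogeneity --- but diverges from the paper at two points, one in the middle step and one, more seriously, at the odd-$n$ endgame.

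For eliminating the non-regular orbits, you propose constructing an infinitesimal twisted trace formula on $\mathfrak{m}_{\theta_n}(F)$ (a twisted analogue of Propositions \ref{identity}, \ref{nonidentity} and \ref{5.10}) and comparing homogeneities. The paper avoids this entirely: it invokes Konno's wavefront set result \cite[Theorem~4.1]{Kon02}, which directly supplies a local character expansion of $\tilde\omega_{\psi,\mu}$ on $\exp(\omega_N)\theta_n$ in the form
$$
c_0\int_{\mathfrak{n}_{\theta_n,0}(F)}f(X)\,dX + c_1\sum_{\mathcal{O}_i\in\mathrm{Nil}_{\min}(\mathfrak{n}_{\theta_n,0})}(-1)^i\int_{\mathcal{O}_i}\widehat{f}\,,
$$
so that $\tilde J_\chi(\tilde f)$ for $\tilde f$ supported near $\theta_n$ can be written down explicitly and the $\lambda$-scaling carried out with no infinitesimal apparatus. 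Your route is consistent in spirit but considerably heavier; it also requires proving the twisted versions of \ref{identity}--\ref{nonidentity}, which the paper bypasses by citing Konno.

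For the odd-$n$ case there is a genuine mismatch. You assert that the $\lambda$-scaling with $\lambda\notin N_{E/F}(E^\times)$ merely gives ``one relation between $c_{\chi,\mathcal{O}_1}$ and $c_{\chi,\mathcal{O}_2}$'' --- indexed by an extra skew-hermitian space $V$ that the scaling is supposed to exchange --- and you then invoke \cite{CG22} to pin down the sign, as in Proposition \ref{prop6.11}. This imports the $V$-dependence from the untwisted setting where it does not exist: the twisted linear form $\tilde J_\chi$ is built from the Weil representation of $N(F)=\text{Res}_{E/F}\mathrm{GL}_n(E)$ on $\mathcal{S}(V)$ with $V$ merely an $n$-dimensional $E$-vector space, with no hermitian form to flip. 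The paper's point is precisely that, in the absence of such a parameter, the $\lambda$-scaling relation closes on itself. Concretely, under $\lambda\notin N_{E/F}(E^\times)$ the factor $\mu(\det X)$ in $\epsilon_{\text{geom}}$ picks up $\omega_{E/F}(\lambda)^n=-1$, while the $(-1)^i$-alternating structure of the Konno expansion flips the minimal-orbit contribution to $J_{\chi,\text{qc}}$; comparing the two sides of (\ref{eqn8.4}) at both $\lambda\in N_{E/F}(E^\times)$ and $\lambda\notin N_{E/F}(E^\times)$ and matching coefficients of the independent germs $c_{\tilde\theta,\mathcal{O}_1}$, $c_{\tilde\theta,\mathcal{O}_2}$ gives $c_{\chi,\mathcal{O}_1}=-c_{\chi,\mathcal{O}_2}$ directly. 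This is exactly where the ``opposite leading coefficients'' input from Konno earns its keep, and it is the observation your proposal is missing. Note also that using \cite{CG22} here would not be harmless redundancy: the stable character $\theta_{\tilde\Pi}$ attached to the CG22 parameter satisfies $c_{\theta_{\tilde\Pi},\mathcal{O}_1}(\theta_n)=c_{\theta_{\tilde\Pi},\mathcal{O}_2}(\theta_n)$, so the relation it yields is precisely the one that does \emph{not} see the antisymmetric coefficient $c_\chi$; the paper reserves CG22 for the next step (showing $c_\chi=0$ in the even case), where it is actually effective.
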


\begin{proof}
By Proposition \ref{elliptictwisted}, it follows that
\begin{equation}\label{eqn8.4}
    J_{\chi,\text{qc}}(\tilde{\theta})=\epsilon_{\text{geom}}(\tilde{\theta})+\sum_{\mathcal{O}\in \text{Nil}(M_{\theta_n})}c_{\chi,\mathcal{O}}\cdot c_{\tilde{\theta},\mathcal{O}}(\theta_n),
\end{equation}
for any $\tilde{\theta}\in \text{QC}(Z_M(F)\backslash \tilde{M}(F),\chi)$. Thus, it suffices to prove the statement for a small $M$-good neighborhood $\Omega=Z_M(F)\exp(\omega)\theta_n\subseteq \tilde{M}(F)$ of $\theta_n$. Here $\omega \subseteq \mathfrak{m}_{\theta_n,0}(F)$ is an $M$-excellent neighborhood of $0$. 

By taking $\omega$ small enough, we can assume that $\omega_N=\omega \cap \mathfrak{n}_{\theta_n,0}(F)$ is $N$-excellent and $\tilde{\omega}_{\psi,\mu}$
has a local character expansion on $\exp(\omega_N)\theta_{n}$.
Let $\tilde{f}\in \mathcal{C}_{\text{scusp}}(Z_M(F)\backslash \Omega,\chi)$ such that $\theta_{\tilde{f}}=\tilde{\theta}$. Denote $f_\omega(X)=f(\exp(X)\theta_n)$, for $X\in \omega$. For any $\lambda \in \mathcal{O}_F$, we set $\tilde{f}_\lambda(z\exp(X)\theta_n)=\tilde{f}(z\exp(\lambda^{-1}X)\theta_n)$, where $z\in Z_M(F)$ and $X\in \omega$. Let $\tilde{\theta}_\lambda=\theta_{\tilde{f}_\lambda}$. By \cite[Theorem 4.1]{Kon02},
the wavefront set of $\tilde{\omega}_{\psi,\mu}$ contains all minimal nilpotent
orbits of $\mathfrak{n}_{\theta_{n},0}\left(F\right)$ with opposite leading coefficients (if there are more than one minimal nilpotent orbit). For any $f\in C^\infty(\omega)$, we have
$$
\sum_{i}\int_{\mathfrak{n}_{\theta_n,0}(F)}f(X)\langle \phi_i,\tilde{\omega}_{\psi,\mu,\chi}(\exp(X)\theta_n)\phi_i\rangle dX
$$
$$
=c_{0}\int_{\mathfrak{n}_{\theta_{n},0}\left(F\right)}f\left(X\right)dX+c_{1}\cdot \left(\sum_{\mathcal{O}_i\in\text{Nil}_{\min}(\mathfrak{n}_{\theta_n,0})}(-1)^{i}\int_{\mathcal{O}}\widehat{f\mid_{\mathfrak{n}_{\theta_{n},0}\left(F\right)}}\left(X\right)d_{\mathcal{O}}X\right),
$$
noting that here we need to pin a minimal nilpotent orbit $\mathcal{O}_1$. We denote by $\Sigma$ the orthogonal complement of $\mathfrak{n}_{\theta_n,0}$ inside $\mathfrak{m}_{\theta_n,0}$. For each $\mathcal{O}\in \text{Nil}_{\min}(\mathfrak{n}_{\theta_n,0})$, we pick an element $N_\mathcal{O}\in \mathcal{O}$. Observe
$$
J_{\chi,\text{qc}}(\tilde{\theta})=\tilde{J}_{\chi}(\tilde{f})=\int_{Z_M(F)N_{\theta_n}(F)\backslash M(F)}\sum_{i}\int_{\mathfrak{n}_{\theta_n,0}(F)}{}^mf_\omega(X)\langle\phi_i,\tilde{\omega}_{\psi,\mu,\chi}(\exp(X)\theta_n)\phi_i\rangle dXdm
$$
$$
=c_0\cdot \int_{Z_M(F)N_{\theta_n}(F)\backslash M(F)}\int_{\mathfrak{n}_{\theta_n,0}(F)}{}^mf_\omega(X)dXdm
$$
$$
+c_1\cdot \left(\sum_{\mathcal{O}\in \text{Nil}_{\min}(\mathfrak{n}_{\theta_n,0})}D^{\tilde{N}}(S_{\mathcal{O}})^{1/2}\int_{Z_{M}\left(F\right)\left(N_{\theta_{n}}\right)_{S_{\mathcal{O}}}\left(F\right)\backslash M\left(F\right)}\int_{\Sigma\left(F\right)+S_{\mathcal{O}}}{}^m\hat{f}_\omega\left(X\right)d\mu_{\Sigma}Xdm\right).
$$
Substituting $\tilde{f}_\lambda$ to the above formula, we have
\begin{equation}\label{eqn8.2}
    J_{\chi,\text{qc}}(\tilde{\theta}_\lambda)=|\lambda|^{n^2-1}c_0\cdot \int_{Z_M(F)N_{\theta_n}(F)\backslash M(F)}\int_{\mathfrak{n}_{\theta_n,0}(F)}{}^mf_\omega(X)dXdm
\end{equation}
$$
+|\lambda|^{n^2-n}c_1\cdot \left( \sum_{\mathcal{O}\in \text{Nil}_{\min}(\mathfrak{n}_{\theta_n,0})}D^{\tilde{N}}(S_{\mathcal{O}})^{1/2}\int_{Z_{M}\left(F\right)\left(N_{\theta_{n}}\right)_{S_{\mathcal{O}}}\left(F\right)\backslash M\left(F\right)}\int_{\Sigma\left(F\right)+S_{\mathcal{O}}}{}^m\hat{f}_\omega\left(X\right)d\mu_{\Sigma}Xdm\right).
$$
We now consider $\epsilon_{\text{geom}}(\tilde{\theta})$. We set $\theta_\omega=\theta_{f_{\omega}}$ and $\theta_{\omega,\lambda}(X)=\theta_\omega(\lambda^{-1}X)$. Observe
$$
\epsilon_{\text{geom}}(\tilde{\theta})=\sum_{\tilde{T}\in \mathcal{T}_{\text{ell}}(\tilde{N})}\frac{\gamma_\psi(\tilde{T})}{|W(N,\tilde{T})|}\lim_{s\rightarrow 0^+}\int_{\tilde{T}(F)/\theta}D^{\tilde{M}}(\tilde{x})^{1/2}c_{\tilde{\theta}}(\tilde{x})\frac{\mu(\det(1-x^{-1}))}{|\det(1-x)|_E^{1/2-s}}d\tilde{x}
$$
$$
=\sum_{\tilde{T}\in \mathcal{T}_{\text{ell}}(\tilde{N})}\frac{\gamma_\psi(\tilde{T})}{|W(N,\tilde{T})|}\lim_{s\rightarrow 0^+}\int_{\mathfrak{t}_{\theta_n,0}(F)}D^{M_{\theta_n}}(X)^{1/2}c_{\theta_\omega}(X)\left(\int_{U(1)}\frac{\chi(z)\mu(\det(1-z^{-1}\exp(-X)))}{|\det(1-z^{-1}\exp(-X))|_E^{1/2-s}}dz\right)dX.
$$
We can shrink $\omega$ so that there exists $\epsilon >0$ sufficently small satisfying for any $X\in \omega$, we have 
$$
\int_{U(1)}\frac{\chi(z)\mu(\det(1-z^{-1}\exp(-X)))}{|\det(1-z^{-1}\exp(-X))|_E^{1/2-s}}dz
=\int_{U(1)_{<\epsilon}}\frac{\mu(\det((1-z^{-1})+z^{-1}X))}{|\det(1-z^{-1}+z^{-1}X)|_E^{1/2-s}}dz
$$
$$
+\int_{U(1)_{>\epsilon}}\frac{\chi(z)\mu(\det(1-z^{-1}))}{|\det(1-z^{-1})|_E^{1/2-s}}dz
$$
$$
=\int_{U(1)_{<\epsilon}}\frac{\mu(\det((1-z^{-1})(1-X)+X))}{|\det((1-z^{-1})(1-X)+X)|_E^{1/2-s}}dz+\int_{U(1)_{>\epsilon}}\frac{\chi(z)\mu(\det(1-z^{-1}))}{|\det(1-z^{-1})|_E^{1/2-s}}dz
$$
$$
=\text{Vol}(\text{U}(1)_{<\epsilon})\cdot \frac{\mu(\det(X))}{|\det(X)|_E^{1/2-s}}+\int_{U(1)_{>\epsilon}}\frac{\chi(z)\mu(\det(1-z^{-1}))}{|\det(1-z^{-1})|_E^{1/2-s}}dz.
$$
Then
$$
\epsilon_{\text{geom}}(\tilde{\theta})=\text{Vol}(\text{U}(1)_{<\epsilon})\cdot\sum_{\tilde{T}\in \mathcal{T}_{\text{ell}}(\tilde{N})}\frac{\gamma_\psi(\tilde{T})}{|W(N,\tilde{T})|}\lim_{s\rightarrow 0^+}\int_{\mathfrak{t}_{\theta_n,0}(F)}D^{M_{\theta_n}}(X)^{1/2}c_{\theta_\omega}(X)\frac{\mu(\det(X))}{|\det(X)|_E^{1/2-s}}dX
$$
$$
+\left(\int_{U(1)_{>\epsilon}}\frac{\chi(z)\mu(\det(1-z^{-1}))}{|\det(1-z^{-1})|_E^{1/2-s}}dz\right)\cdot \sum_{\tilde{T}\in \mathcal{T}_{\text{ell}}(\tilde{N})}\frac{\gamma_\psi(\tilde{T})}{|W(N,\tilde{T})|}\lim_{s\rightarrow 0^+}\int_{\mathfrak{t}_{\theta_n,0}(F)}D^{M_{\theta_n}}(X)^{1/2}c_{\theta_\omega}(X)dX.
$$
Substituting $\tilde{\theta}_\lambda$ to the above formula, we have
$$
\epsilon_{\text{geom}}(\tilde{\theta}_\lambda)=|\lambda|^{n^2-n}\text{Vol}(\text{U}(1)_{<\epsilon})\cdot\sum_{\tilde{T}\in \mathcal{T}_{\text{ell}}(\tilde{N})}\frac{\gamma_\psi(\tilde{T})}{|W(N,\tilde{T})|}\lim_{s\rightarrow 0^+}\int_{\mathfrak{t}_{\theta_n,0}(F)}D^{M_{\theta_n}}(X)^{1/2}c_{\theta_\omega}(X)\frac{\omega_{E/F}(-1)^n\mu(\det(X))}{|\det(X)|_E^{1/2-s}}dX
$$
\begin{equation}\label{eqn8.3}
    +|\lambda|^{n^2-1}\cdot\left(\int_{U(1)_{>\epsilon}}\frac{\chi(z)\mu(\det(1-z^{-1}))}{|\det(1-z^{-1})|_E^{1/2-s}}dz\right)\cdot \sum_{\tilde{T}\in \mathcal{T}_{\text{ell}}(\tilde{N})}\frac{\gamma_\psi(\tilde{T})}{|W(N,\tilde{T})|}\lim_{s\rightarrow 0^+}\int_{\mathfrak{t}_{\theta_n,0}(F)}D^{M_{\theta_n}}(X)^{1/2}c_{\theta_\omega}(X)dX.
\end{equation}
By equations (\ref{eqn8.4}), (\ref{eqn8.2}) and (\ref{eqn8.3}), together with the fact that $c_{\tilde{\theta}_\lambda,\mathcal{O}}(\theta_n)=|\lambda|^{\frac{\dim\mathcal{O}}{2}}c_{\tilde{\theta},\mathcal{O}_\lambda}(\theta_n)$, for any $\mathcal{O}\in \text{Nil}(\mathfrak{n}_{\theta_n,0})$, we obtain
$$
J_{\chi,\text{qc}}(\tilde{\theta})=\epsilon_{\text{geom}}(\tilde{\theta})+\sum_{\mathcal{O}\in \text{Nil}_{\text{reg}}(M_{\theta_n})}c_{\chi,\mathcal{O}}\cdot c_{\tilde{\theta},\mathcal{O}}(\theta_n),
$$
for any $\tilde{\theta}\in \text{QC}(Z_M(F)\backslash \Omega,\chi)$. When $n$ is even, there is only one regular nilpotent orbit, so we establish the desired answer. When $n$ is odd, we choose $\lambda \in \mathcal{O}_F\setminus \text{Nm}_{E/F}(E)$ to exchange orbits in $\text{Nil}_{\text{reg}}(M_{\theta_n})$. This gives us $c_{\chi,\mathcal{O}_1}=-c_{\chi,\mathcal{O}_1}$. Therefore, in this case, there exists $c_\chi\in \mathbb{C}$ such that 
$$
J_{\chi,\text{qc}}(\tilde{\theta})=c_\chi\cdot (c_{\tilde{\theta},\mathcal{O}_1}(\theta_n)-c_{\tilde{\theta},\mathcal{O}_2}(\theta_n))+\epsilon_{\text{geom}}(\tilde{\theta}),
$$
for any $\tilde{\theta}\in \text{QC}(Z_M(F)\backslash \Omega,\chi)$.
\end{proof}

\subsection{End of the proof of Theorem \ref{twistedgeometriclinear}}
We now finish our proof for Theorem \ref{twistedgeometriclinear} when $K\neq E$. 
\begin{proof}
By Proposition \ref{descentLietwisted}, for any irreducible elliptic tempered representation $\tilde{\pi}$ of $\tilde{M}(F)$ whose central character is $\chi$, we have
\begin{equation}\label{eqn8.5}
    \epsilon_{\psi}(\tilde{\pi})=c_\chi\cdot c_{\theta_{\tilde{\pi}}}(\theta_n)+\epsilon_{\text{geom}}(\tilde{\pi})
\end{equation}
when $n$ is even or
\begin{equation}\label{eqn8.55}
    \epsilon_{\psi}(\tilde{\pi})=c_\chi\cdot (c_{\theta_{\tilde{\pi}},\mathcal{O}_1}(\theta_n)-c_{\theta_{\tilde{\pi}},\mathcal{O}_2}(\theta_n))+\epsilon_{\text{geom}}(\tilde{\pi})
\end{equation}
when $n$ is odd. Let $M$ be a tempered $L$-parameter for $G(F)$ such that $M$ is of the form
$$
M=M_1+\ldots +M_n,
$$
where $M_i$ is one-dimensional and conjugate self-dual of parity $(-1)^{n-1}$ for all $i=\overline{1,n}$. Moreover, we can choose $M$ so that the central character of $M$ is $\chi$ and $M_i\neq M_j$, for any $i\neq j$. In this case, $M$ is an elliptic $L$-parameter. We set $\theta_M=\sum_{\pi \in \Pi_M}\theta_\pi$. Let $\Pi$ be the representation of $M(F)$ corresponding to $M$ and $\tilde{\Pi}$ be its extension to $\tilde{M}(F)$. By the twisted endoscopic relation mentioned in section \ref{llc7}, it follows that $D^{\tilde{M}}(\tilde{x})^{1/2}\theta_{\tilde{\Pi}}(\tilde{x})=D^G(y)^{1/2}\theta_M(y)$ if $\tilde{x}$ and $y$ are matched. By the main theorem in \cite{CG22}, observe 
$$
\epsilon_{\psi}(\tilde{\Pi})=\sum_{V}\mu(\det V)\cdot \left(\sum_{\pi\in \Pi_M}m_{V}(\pi)\right).
$$
The geometric multiplicity formula in Theorem \ref{maintheorem1} gives us 
$$
\epsilon_{\psi}(\tilde{\Pi})
=\underset{T\in{\mathcal{T}}^{\text{stab}}_{\text{ell}}\left(H_V\right)}{\sum}\frac{\gamma_\psi(T)|p_{V,\text{stab}}^{-1}(T)|}{\left|W\left(H_V,T\right)\right|}
\underset{s\rightarrow0^{+}}{\lim}\int_{T\left(F\right)}D^{G}\left(x\right)^{1/2}\theta_{M}\left(x\right)\frac{\mu\left(\det\left(1-x^{-1}\right)\right)}{\left|\det\left(1-x\right)\right|_{E}^{1/2-s}}dx.
$$
Since there is a bijection between $\mathcal{T}^{\text{stab}}_{\text{ell}}\left(H_V\right)$ and $\mathcal{T}^{\text{stab}}_{\text{ell}}(\tilde{N})$, it follows that
\begin{equation}\label{eqn8.6}
    \epsilon_{\psi}(\tilde{\Pi})=\underset{\tilde{T}\in{\mathcal{T}}_{\text{ell}}\left(\tilde{N}\right)}{\sum}\frac{\gamma_\psi(\tilde{T})}{\left|W\left(N,\tilde{T}\right)\right|}
\underset{s\rightarrow0^{+}}{\lim}\int_{\tilde{T}\left(F\right)/\theta_n}D^{\tilde{M}}\left(\tilde{x}\right)^{1/2}\theta_{\tilde{\Pi}}\left(\tilde{x}\right)\frac{\mu\left(\det\left(1-x^{-1}\right)\right)}{\left|\det\left(1-x\right)\right|_{E}^{1/2-s}}d\tilde{x}=\epsilon_{\text{geom}}(\tilde{\Pi}).
\end{equation}
From the equations (\ref{eqn8.5}), (\ref{eqn8.55}) and (\ref{eqn8.6}), we can see that $c_\chi=0$, thus finish our proof for Theorem \ref{twistedgeometriclinear}.
\end{proof}

\subsection{Proof of Theorem \ref{maintheorem}(ii)}
We now give our proof for part 2 of Theorem \ref{maintheorem}.
\begin{proof}
Let $M$ be a tempered $L$-parameter for $G(F)$. By Theorem \ref{maintheorem1}, observe
\begin{equation}\label{eqn8.7}
    \sum_{V}\mu(\det V)\cdot \left(\sum_{\pi\in \Pi_M}m_{V}(\pi)\right)=\underset{T\in{\mathcal{T}}^{\text{stab}}_{\text{ell}}\left(H_V\right)}{\sum}\frac{\gamma_\psi(T)|p_{V,\text{stab}}^{-1}(T)|}{\left|W\left(H_V,T\right)\right|}
\end{equation}
$$
\underset{s\rightarrow0^{+}}{\lim}\int_{T\left(F\right)}D^{G}\left(x\right)^{1/2}\theta_{M}\left(x\right)\frac{\mu\left(\det\left(1-x^{-1}\right)\right)}{\left|\det\left(1-x\right)\right|_{E}^{1/2-s}}dx.
$$
Let $\Pi$ be the representation of $M(F)$ corresponding to $M$ and $\tilde{\Pi}$ be its extension to $\tilde{M}(F)$. By Theorem \ref{geometrictwisted}, we have
\begin{equation}\label{eqn8.8}
        \epsilon_{\psi}(\tilde{\Pi})=\underset{\tilde{T}\in{\mathcal{T}}^{\text{stab}}_{\text{ell}}\left(\tilde{N}\right)}{\sum}\frac{\gamma_\psi(\tilde{T})|p_{\tilde{N},\text{stab}}^{-1}(\tilde{T})|}{\left|W\left(N,\tilde{T}\right)\right|}
\underset{s\rightarrow0^{+}}{\lim}\int_{\tilde{T}\left(F\right)/\theta_n}D^{\tilde{M}}\left(\tilde{x}\right)^{1/2}\theta_{\tilde{\Pi}}\left(\tilde{x}\right)\frac{\mu\left(\det\left(1-x^{-1}\right)\right)}{\left|\det\left(1-x\right)\right|_{E}^{1/2-s}}d\tilde{x}.
\end{equation}
Using equations (\ref{eqn8.7}) and (\ref{eqn8.8}), together with the matching of orbits described in Section \ref{llc4}, it follows that
$$
\epsilon_{\psi}(\tilde{\Pi})=\sum_{V}\mu(\det V)\cdot \left(\sum_{\pi\in \Pi_M}m_{V}(\pi)\right),
$$
which confirms part 2 of Theorem \ref{maintheorem}.
\end{proof}

\section{Endoscopic transfers and the twisted Gan-Gross-Prasad conjecture}\label{sec9}
\subsection{Endoscopic transfers of the linear form $m_{V,\text{geom}}$}
Recall the parametrization of conjugacy classes in unitary groups in Section \ref{llc4}. Let $d$ be a positive integer and $(G_d,H_d)=(\text{Res}_{K/F}\,U_{L/K}(d),U_{E/F}(d))$, where $U_{E/F}(d)$ is a quasi-split unitary group of rank $d$ and $U_{L/K}(d)=U_{E/F}(d)(K)$. For $\xi \in \Xi^*_{d,\text{reg}}$, we set $\gamma_\psi=\gamma_\psi(T_\xi)$. Let $\theta_d$ be a stably invariant quasi-character of $G_d(F)$, which can be viewed as a function on $\Xi^*_{d,\text{reg}}$. We fix a character $\mu$ of $E^\times$ whose restriction to $F^\times$ is $\omega_{E/F}$. For any character $\mu_d$ of $E^\times$ which is trivial on $F^\times$, we set
$$
S_{\mu_d}(\theta_d)=\lim_{s\rightarrow 0^+}\int_{\Xi_{d,\text{reg}}^*}\mu_d(P_\xi(1))|C(\xi)|D^d(\xi)\theta_{d}(\xi)\mu(P_{\xi^{-1}}(1))\Delta(\xi)^{-1/2+s}d\xi.
$$
We now define an elliptic endoscopic datum for $G$. Let $\left(V_{+},h_{+}\right)$
and $\left(V_{-},h_{-}\right)$ be quasi-split skew-hermitian spaces
over $L$ of dimension $n_{+}$ and $n_{-}$ respectively, where $n_{+}+n_{-}=n$.
We denote by $G_{+}$ and $G_{-}$ the unitary groups corresponding to $\left(V_{+},h_{+}\right)$ and $\left(V_{-},h_{-}\right)$. Let
$\mu_{+}$ and $\mu_{-}$ be continuous characters of $E^{\times}$
such that 
\[
\mu_{+\mid F^{\times}}=\omega_{E/F}^{n_{-}}\ \ \text{ and }\ \ \mu_{-\mid F^{\times}}=\omega_{E/F}^{n_{+}}.
\]
We set $\mu_+^K=\mu_+\circ \text{Nm}_{L/E}$ and $\mu_-^K=\mu_-\circ \text{Nm}_{L/E}$. Then $\left(G_{+}\times G_{-},\mu_{+}^K,\mu^K_{-}\right)$ determines
an elliptic endoscopic datum of $G$. For any stably invariant quasi-character $\theta=\theta_+ \otimes \theta_-\in\text{QC}_{\text{stab}}\left(G_{+}\times G_{-}\right)$,
we denote by $\theta_{\mu_{+},\mu_{-}}^{G}$ its endoscopic transfer
to $\text{QC}\left(G\right)$ via $\left(G_{+}\times G_{-},\mu^K_{+},\mu^K_{-}\right)$,
i.e. 
\[
D^{G}\left(x\right)^{1/2}\theta_{\mu_{+},\mu_{-}}^{G}\left(x\right)=\underset{y}{\sum}D^{G_{+}\times G_{-}}\left(y\right)^{1/2}\theta\left(y\right)\Delta_{\mu^K_{+},\mu^K_{-}}\left(y,x\right),\text{ for any }x\in G_{\text{reg}}\left(F\right).
\]
We compute $m_{V,\text{geom}}$ via an endoscopic transfer.
\begin{proposition}\label{maintheorem2}
    For any stably invariant quasi-character $\theta=\theta_+\otimes \theta_-$ on $G_{+}(F)\times G_{-}(F)$, we have
    \begin{equation}\label{eqn9.1}
        \sum_{V}\mu(\det V)\cdot m_{V,\text{geom}}(\theta_{\mu_{+},\mu_{-}}^{G})=S_{\mu_+^2}(\theta_+)S_{\mu_-^2}(\theta_-),
    \end{equation}
    where the above sum runs over over the equivalence classes of $n$-dimensional skew-Hermitian structures on $V$.
\end{proposition}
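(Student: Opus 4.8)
The plan is to unfold both sides down to integrals over the space of anisotropic parameters $\Xi^*_{n,\text{reg}}$ from Section \ref{llc3}, and then insert the explicit formula for the endoscopic transfer factor computed in Section \ref{llc5}. First I would expand $m_{V,\text{geom}}\left(\theta^G_{\mu_+,\mu_-}\right)$ by its definition in Section \ref{sec4.1}, using that $c_{\theta^G_{\mu_+,\mu_-}}(x)=\theta^G_{\mu_+,\mu_-}(x)$ for $x$ regular semisimple in $G$ (for such $x$ the torus $\mathfrak g_x$ is abelian, so $\hat j\left(\{0\},\cdot\right)=1$). Crucially, the group $G=\text{Res}_{K/F}U(V_K)$ — and hence the endoscopic datum $\left(G_+\times G_-,\mu_+^K,\mu_-^K\right)$ and the transfer $\theta^G_{\mu_+,\mu_-}$ — does not depend on the choice of $n$-dimensional skew-Hermitian space $V$. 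Therefore in $\sum_V\mu(\det V)\,m_{V,\text{geom}}\left(\theta^G_{\mu_+,\mu_-}\right)$ the contribution of $\tfrac12 c_{\theta^G_{\mu_+,\mu_-}}(1)$ equals $\tfrac12 c_{\theta^G_{\mu_+,\mu_-}}(1)\sum_V\mu(\det V)=0$, since the two spaces $V$ give opposite signs. Squaring $\mu(\det V)^2=1$ in the remaining term, this leaves
\[
\sum_V\mu(\det V)\,m_{V,\text{geom}}\left(\theta^G_{\mu_+,\mu_-}\right)=\sum_V\sum_{T\in\mathcal T_{\text{ell}}(H_V)}\frac{\gamma_\psi(T)}{|W(H_V,T)|}\lim_{s\to 0^+}\int_{T(F)}D^{G}(x)^{1/2}\theta^G_{\mu_+,\mu_-}(x)\frac{\mu(\det(1-x^{-1}))}{|\det(1-x)|_E^{1/2-s}}dx.
\]

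\textbf{Reparametrization over $\Xi^*$.} Next I would rewrite the double sum $\sum_V\sum_T$ using the parametrization of conjugacy classes of unitary groups from Section \ref{llc4}: regular elliptic classes of $H_V=U(V)$ are parametrized by pairs $(\xi,c)$ with $\xi\in\Xi^*_{n,\text{reg}}$ and $c$ in the fiber $C(\xi)^{\epsilon_V}$ of $p_{H_V}$, and $\bigsqcup_V C(\xi)^{\epsilon_V}=C(\xi)$. Since $K\neq E$ forces $L=K\otimes_F E$ to be a field, all $x(\xi,c)$, $c\in C(\xi)$, become isometric over $K$, so their $G$-conjugacy (indeed $G(F)$-conjugacy) class depends only on $\xi$; hence $\theta^G_{\mu_+,\mu_-}\left(x(\xi,c)\right)$, $D^G(x(\xi,c))$, $\det(1-x(\xi,c))$ and $\det(1-x(\xi,c)^{-1})$ all depend only on $\xi$. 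Using the measure-preservation of $p^{\text{st}}_{H_V}$, the factorization $\gamma_\psi(T_\xi)=\prod_i\gamma_\psi(2Nm_{E_i/F_i})$, and the identities $|\det(1-x)|_E=\Delta(\xi)$ and $\det(1-x^{-1})_{|V}=P_{\xi^{-1}}(1)$, the Weyl integration formula turns the above into
\[
\lim_{s\to 0^+}\int_{\Xi^*_{n,\text{reg}}}|C(\xi)|\,\gamma_\psi(T_\xi)\,D^{n}(\xi)^{\flat}\,\theta^G_{\mu_+,\mu_-}(\xi)\,\mu(P_{\xi^{-1}}(1))\,\Delta(\xi)^{-1/2+s}\,d\xi,
\]
where $D^{n}(\xi)^{\flat}$ is the relevant discriminant factor coming from $D^G(x)^{1/2}$; matching it against the $D^{n_\pm}$-factors of $S_{\mu_\pm^2}$ is part of the normalization bookkeeping below.

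\textbf{Inserting the transfer factor and factorizing.} I would then substitute the defining identity of the endoscopic transfer from Section \ref{sec9}, namely $D^{G}(x)^{1/2}\theta^G_{\mu_+,\mu_-}(x)=\sum_y D^{G_+\times G_-}(y)^{1/2}\theta(y)\Delta_{\mu_+^K,\mu_-^K}(y,x)$, in which (by the matching of Section \ref{llc6} and \ref{llc5}) the sum runs over the partitions $I=I_+\sqcup I_-$ with $d_{\xi_\pm}=n_\pm$, $\xi=\xi_+\sqcup\xi_-$, and $\Delta_{\mu_+^K,\mu_-^K}(y,x)=\Delta_{\mu_+,\mu_-,\nu_0}(\xi_+,\xi_-,c)$. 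Because $\theta=\theta_+\otimes\theta_-$ is stably invariant, the $\Xi^*_{n,\text{reg}}$-integral breaks along each partition into a product of integrals over $\Xi^*_{n_+,\text{reg}}$ and $\Xi^*_{n_-,\text{reg}}$; the multiplicativity $|C(\xi)|=|C(\xi_+)||C(\xi_-)|$, $\gamma_\psi(T_\xi)=\gamma_\psi(T_{\xi_+})\gamma_\psi(T_{\xi_-})$, $\Delta(\xi)=\Delta(\xi_+)\Delta(\xi_-)$, $P_{\xi^{-1}}(1)=P_{\xi_+^{-1}}(1)P_{\xi_-^{-1}}(1)$ splits the integrand, and the explicit factor $\mu_-(P_{\xi_-}(-1))\mu_+(P_{\xi_+}(-1))\prod_{i\in I_-}\omega_{F_i/F_{\pm i}}(\nu_0 C_i)$ inside $\Delta_{\mu_+,\mu_-,\nu_0}$, combined with the Weil indices $\gamma_\psi(2Nm_{E_i/F_i})$ and the $\mu(P_{\xi^{-1}}(1))$ already present, recombines — using the classical identification of $\gamma_\psi$ of a norm form with local $\epsilon$/Gauss-sum factors exactly as in the one-dimensional computation of Proposition \ref{4.3} and \cite[Lemme A.1]{BP15}, together with $\mu_{\pm\mid F^\times}=\omega_{E/F}^{n_\mp}$ — into the weight $\mu_\pm^2(P_{\xi_\pm}(1))$ appearing in $S_{\mu_\pm^2}$. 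The square on $\mu_\pm$ arises precisely from this merging of $\mu_\pm$ with $\mu$ and with the quadratic characters. The two resulting factors are $S_{\mu_+^2}(\theta_+)$ and $S_{\mu_-^2}(\theta_-)$, giving \eqref{eqn9.1}.

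\textbf{Main obstacle.} The crux is this last normalization bookkeeping: tracking how the Weil indices $\gamma_\psi(T_\xi)$ (coming from the Weil representation), the auxiliary constants $\nu_0,\delta$ entering the transfer factors of Section \ref{llc5}, the quadratic characters $\omega_{F_i/F_{\pm i}}$, and the precise powers of the various discriminants recombine to produce exactly $S_{\mu_+^2}(\theta_+)S_{\mu_-^2}(\theta_-)$ with the stated $\mu$-twists. I expect to handle this by reducing — via the partition structure and multiplicativity of all factors — to the rank-one identity already established in Proposition \ref{4.3}, so that the global identity follows by taking products field-by-field. A secondary point requiring care is justifying the interchange of $\sum_V\sum_T$ with the limit $s\to 0^+$ and with the $\xi$-integration; this is licensed by the absolute convergence in Proposition \ref{4.1} and the local boundedness of $(D^G)^{1/2}c_\theta$ from Proposition \ref{2.2}(2). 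The same scheme will be reused, in the twisted setting, for $\epsilon_{\text{geom}}$, and then combined with \textbf{(ET)}, \textbf{(TET)} and the known case \cite{CG22} in the proof of Theorem \ref{maintheorem}(iii).
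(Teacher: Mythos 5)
Your overall strategy — eliminate the $c_\theta(1)$ term via $\sum_V\mu(\det V)=0$, reparametrize the elliptic sum over $\Xi^*_{n,\text{reg}}$, insert the explicit transfer factor from Section \ref{llc5}, and factorize — matches the skeleton of the paper's proof. However, there is a genuine gap at the factorization step.

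You write that, after inserting the transfer, the sum ``runs over the partitions $I=I_+\sqcup I_-$'' and then splits as a product by multiplicativity. This is not what happens when $K\neq E$. The transfer $\theta^G_{\mu_+,\mu_-}$ is taken with respect to $G=\text{Res}_{K/F}U(V_K)$, whose elliptic stable classes are parametrized by $\Xi^{K,*}_{n,\text{reg}}$ (parameters relative to $L/K$), and the base-change map $\xi\mapsto\xi^K$ is \emph{not} the identity: those indices $i\in I^2$ for which $F_{\pm i}$ contains $K$ split into \emph{two} copies. The endoscopic sum over corresponding classes of $G_+\times G_-$ therefore runs over partitions of $I^K$, not of $I$, and a large number of those partitions (exactly the ones that separate the two copies of some $i\in I^2$) do not descend to partitions of $I$. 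The paper's proof contains a nontrivial cancellation argument here: for each such ``unbalanced'' partition one flips the sign of the $i$-th component of $c\in C(\xi)$, which flips the sign of $\Delta_{\mu_+,\mu_-}(\xi(I_1)^K,\xi(I_2)^K,c)$, so these terms vanish in the inner sum $\sum_{c\in C(\xi)}$. Only the partitions that send both copies of each $i\in I^2$ to the same side survive, and those are in bijection with partitions of $I$. Your argument omits this cancellation entirely, so as written the factorization does not go through.

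A second, related issue is your explanation of the squared characters $\mu_\pm^2$ in $S_{\mu_\pm^2}$: you attribute them to ``merging of $\mu_\pm$ with $\mu$ and with the quadratic characters,'' pointing to a rank-one reduction via Proposition \ref{4.3}. In fact the square comes directly from the doubling in $\xi^K$ for surviving partitions: one has $\mu_+(P_{\xi(I_1)^K}(-1))=\mu_+(P_{\xi(I_1)}(-1))^2=\mu_+(P_{\xi(I_1)}(1))^2$ precisely because $\xi(I_1)^K$ consists of pairs of copies (together with the $I^1$-part, where the norm to $E$ produces the same square), and similarly for $\mu_-$; this is a computation with the explicit transfer factor of Section \ref{llc5}, not a reduction to the $n=1$ case. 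You should replace the rank-one reduction by this direct bookkeeping, and add the cancellation step above; with those two corrections your outline becomes the paper's argument.
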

\begin{proof}
    We denote by $\Xi_{n,\text{reg}}^{K,*}$ the space parametrizing stable conjugacy classes of elliptic elements in $G(F)$. Let $(\xi,c) = (I,(F_{\pm i})_{i\in I},(F_{i})_{i\in I},(y_{i})_{i\in I},(c_{i})_{i\in I})$, where $\xi \in \Xi_{n,\text{reg}}^*$ and $c\in C(\xi)$. We now determine its image in $\Xi_{n,\text{reg}}^{K,*}$ precisely. We denote by $I^1$ the index set containing $i$ such that $F_{\pm i}$ does not contain $K$ and $I^2=I\setminus I^1$. For each $i\in I^1$, we set $K_{\pm i}=F_{\pm i}\otimes_F K$ and $K_{i}=F_{i}\otimes_F K$. For each $i\in I^2$, we set $K_{\pm i}^1=K_{\pm i}^2=F_{\pm i}$ and $K_{i}^1=K_{i}^2=F_{i}$. Denote $I^K=I^1 \sqcup I^2 \sqcup I^2$ and 
    $$
    \xi^K=(I^1,(K_{\pm i})_{i\in I^1},(K_{i})_{i\in I^1},(y_{i})_{i\in I^1}) \sqcup (I^2\sqcup I^2,(K_{\pm i}^1,K_{\pm i}^2)_{i\in I^2},(K_{i}^1,K_{i}^2)_{i\in I^2},(y_{i},y_i^\tau)_{i\in I^2}),
    $$
    here $\tau$ is the nontrivial element in $\text{Gal}(K/F)$. Moreover, we can identify $C(\xi^K)^1$ with $C(\xi)$.
    
    The left hand side of (\ref{eqn9.1}) is equal to 
    $$
    \sum_{V}\sum_{T\in \mathcal{T}_{\text{ell}}(H_V)}\frac{\gamma_\psi(T)}{\left|W\left(H_V,T\right)\right|}
\underset{s\rightarrow0^{+}}{\lim}\int_{T\left(F\right)}D^{G}\left(x\right)^{1/2}\theta_{\mu_{+},\mu_{-}}^{G}\left(x\right)\frac{\mu\left(\det\left(1-x^{-1}\right)\right)}{\left|\det\left(1-x\right)\right|_{E}^{1/2-s}}dx.
    $$
    $$
    =\lim_{s\rightarrow 0^+}\int_{\Xi^*_{n,\text{reg}}}\gamma_\psi(\xi)\left(\sum_{c\in C(\xi)}D^{n}(\xi)\theta_{\mu_{+},\mu_{-}}^{G}\left(x(\xi^K,c)\right)\right)\mu(P_{\xi^{-1}}(1))|\Delta(\xi)|^{1/2-s}d\xi.
    $$
    We would like to compute $\sum_{c\in C(\xi)}D^{n}(\xi)\theta_{\mu_{+},\mu_{-}}^{G}\left(x(\xi^K,c)\right)$. Observe
    $$
    D^{n}(\xi)\theta_{\mu_{+},\mu_{-}}^{G}(x(\xi^K,c))=\sum_{I_1,I_2}D^{n_+}(\xi(I_1))\theta_+(\xi(I_1)^K)D^{n_-}(\xi(I_2))\theta_-(\xi(I_2)^K)\Delta_{\mu_+,\mu_-}(\xi(I_1)^K,\xi(I_2)^K,c),
    $$
    where the above sum runs over pairs $(I_1,I_2)$ such that $I^K=I_1 \sqcup I_2$, $d_{I_1}=n_+$ and $d_{I_2}=n_-$. We consider two cases.

    \textbf{Case 1:} Assume that for any $i\in I^2$, either $I_1$ or $I_2$ contain both copies of $i$. In this case, we have
    $$
    \Delta_{\mu_+,\mu_-}(\xi(I_1)^K,\xi(I_2)^K,c)=\Delta_{\mu_+,\mu_-}(\xi(I_1),\xi(I_2),c)^2=\mu_+(P_{\xi(I_1)^K}(-1))\mu_-(P_{\xi(I_2)^K}(-1))
    $$
    $$
    =\mu_+(P_{\xi(I_1)}(-1))^2\mu_-(P_{\xi(I_2)}(-1))^2=\mu_+(P_{\xi(I_1)}(1))^2\mu_-(P_{\xi(I_2)}(1))^2.
    $$

    \textbf{Case 2:} Assume that there exists $i\in I^2$ such that both $I_1$ and $I_2$ contain one copy of it. Let $c^\prime=(c^\prime_j)_{j\in I}$ such that $c^\prime_i=-c_i$ and $c^\prime_j=c_j$ for any $j \neq i$. We have
    $$
    \Delta_{\mu_+,\mu_-}(\xi(I_1)^K,\xi(I_2)^K,c)=-\Delta_{\mu_+,\mu_-}(\xi(I_1)^K,\xi(I_2)^K,c^\prime).
    $$
    Therefore
    $$
    \sum_{c\in C(\xi)}D^{n}(\xi)\theta_{\mu_{+},\mu_{-}}^{G}\left(x(\xi^K,c)\right)=|C(\xi)|\sum_{(I_1,I_2)\text{ in case 1}}D^{n_+}(\xi(I_1))\theta_+(\xi(I_1)^K)D^{n_-}(\xi(I_2))\theta_-(\xi(I_2)^K)
    $$
    $$
    \mu_+(P_{\xi(I_1)}(1))^2\mu_-(P_{\xi(I_2)}(-1))^2.
    $$
    This gives us 
    $$
    \sum_{V}\mu(\det V)\cdot m_{V,\text{geom}}(\theta_{\mu_{+},\mu_{-}}^{G})=\left(\lim_{s\rightarrow 0^+}\int_{\Xi_{n_+,\text{reg}}^*}\mu_+(P_\xi(1))^2|C(\xi)|D^{n_+}(\xi)\theta_{+}(\xi)\mu(P_{\xi^{-1}}(1))\Delta(\xi)^{-1/2+s}d\xi\right)
    $$
    $$
    \cdot \left(\lim_{s\rightarrow 0^+}\int_{\Xi_{n_-,\text{reg}}^*}\mu_-(P_\xi(1))^2|C(\xi)|D^{n_-}(\xi)\theta_{-}(\xi)\mu(P_{\xi^{-1}}(1))\Delta(\xi)^{-1/2+s}d\xi\right)=S_{\mu_+^2}(\theta_+)S_{\mu_-^2}(\theta_-)
    $$
    as desired.
\end{proof}

\subsection{Proof of Theorem \ref{maintheorem}(iii)}
In this subsection, we give a proof for part (iii) of Theorem \ref{maintheorem}.
\begin{proof}
    Let $M$ be a tempered $L$-parameter of $G(F)$. It suffices to show that for any $s\in A_M$, we have
    \begin{equation}\label{9.1}
        \sum_{V}\sum_{\chi\in \hat{A}_M}\mu(\det V)\chi(s)m_{V}(\pi(M,\chi))=\epsilon\left(\frac{1}{2},[\text{As}(M^s)+\text{As}(M^{-s})]\cdot \mu^{-1},\psi_{E,e}\right)
    \end{equation}
    $$
    \cdot \omega_{E/F}(-1)^n\omega_{K/F}(k^2)^{n(n-1)/2},
    $$
    here $M^{-s}=M/M^s$ and $k$ is a nonzero $\text{Tr}_{K/F}$-zero element in $K$. We denote $n_+=\dim M^s$ and $n_-=\dim M^{-s}$. Let $G_+\times G_-$ be the endoscopic group corresponding to $s$. We choose continuous characters $\mu_+$ and $\mu_-$ of $E^\times$ so that $(G_+\times G_-,\mu^K_+,\mu^K_-)$ forms an elliptic endoscopic datum. We set $\theta_{M,s}=\sum_{\chi\in \hat{A}_M}\chi(s)\theta_{\pi(M,\chi)}$. Then $\gamma_{\mu^K_+,\mu^K_-}^G(\mu_+^{K,-1}M^s,\mu_-^{K,-1}M^{-s})\theta_{M,s}$ is the transfer of $\theta_{\mu_+^{K,-1}M^s}\otimes \theta_{\mu_-^{K,-1}M^{-s}}$ via the endoscopic datum $(G_+\times G_-,\mu^K_+,\mu^K_-)$. By \cite{BP16}[Proposition 8.4.1], observe 
    $$\gamma_{\mu^K_+,\mu^K_-}^G(\mu_+^{K,-1}M^s,\mu_-^{K,-1}M^{-s})=\gamma_{\psi_K}(\text{Nm}_{L/K})^{-n_+n_-}\omega_{L/K}(-2)^{n_+n_-}=\omega_{E/F}(k^2)^{n_+n_-}.
    $$ 
    By Theorem \ref{maintheorem1} and Proposition \ref{maintheorem2}, we have
    $$
    \sum_{V}\sum_{\chi\in \hat{A}_M}\mu(\det V)\chi(s)m_{V}(\pi(M,\chi))=\sum_{V}\mu(\det V)m_{V,\text{geom}}(\theta_{M,s})
    $$
    $$
    =\omega_{E/F}(k^2)^{n_+n_-}S_{\mu_+^2}(\theta_{\mu_+^{K,-1}M^s})S_{\mu_-^2}(\theta_{\mu_-^{K,-1}M^{-s}}).
    $$
    By Theorem \ref{geometrictwisted}, it follows that
    $$
    \epsilon\left(\frac{1}{2},\text{As}_{L/E}(M^s)\times \mu^{-1},\psi_{E,e}\right)\omega_{E/F}(-1)^{n_+}\omega_{E/F}(k^2)^{n_+(n_+-1)/2}=S_{\mu_+^2}(\theta_{\mu_+^{K,-1}M^s})
    $$
    and
    $$
    \epsilon\left(\frac{1}{2},\text{As}_{L/E}(M^{-s})\times \mu^{-1},\psi_{E,e}\right)\omega_{E/F}(-1)^{n_-}\omega_{E/F}(k^2)^{n_-(n_--1)/2}=S_{\mu_-^2}(\theta_{\mu_-^{K,-1}M^{-s}}).
    $$
    Therefore
    $$
    \sum_{V}\sum_{\chi\in \hat{A}_M}\mu(\det V)\chi(s)m_{V}(\pi(M,\chi))=\epsilon\left(\frac{1}{2},[\text{As}_{L/E}(M^s)+\text{As}_{L/E}(M^{-s})]\times \mu^{-1},\psi_{E,e}\right)
    $$
    $$
    \cdot\omega_{E/F}(-1)^{n_+}\omega_{E/F}(k^2)^{n_+(n_+-1)/2}\omega_{E/F}(-1)^{n_-}\omega_{E/F}(k^2)^{n_-(n_--1)/2}\omega_{E/F}(k^2)^{n_+n_-}
    $$
    $$
    =\epsilon\left(\frac{1}{2},[\text{As}_{L/E}(M^s)+\text{As}_{L/E}(M^{-s})]\times \mu^{-1},\psi_{E,e}\right)\omega_{E/F}(-1)^{n}\omega_{E/F}(k^2)^{n(n-1)/2}.
    $$
    We have finished our proof for Theorem \ref{maintheorem}(iii).
\end{proof}

\appendix

\section{The local Gan-Gross-Prasad conjecture for unitary groups}\label{appendixB}

In this section, combining the theta correspondence arguments from \cite{Xue23a, Xue23b} with the local trace formula approach, we present an alternative proof for the tempered case of the local Gan–Gross–Prasad conjecture for unitary groups over non-archimedean fields of odd residual characteristic. Our contributions include a simplified proof of the geometric side of the local trace formula and its twisted variant developed in \cite{BP14,BP20} for Bessel models, as well as an independent proof of the tempered case for Fourier–Jacobi models that does not rely on the results of \cite{GI16} (i.e. the refined statement of Prasad's conjectures).

\subsection{The local Gan-Gross-Prasad conjecture}

In this subsection, we revisit precise statements of the local Gan-Gross-Prasad conjecture for unitary groups. Let $F$  be a nonarchimedean field of odd residual characteristic and $E$  be a quadratic field extension. We first recall Bessel models. Let $r$ be a nonnegative integer. Let $V_n\subset V_{n+2r+1}$ be a pair of hermitian spaces relative to $E/F$ of dimensions $n$ and $n+2r+1$. The pair $(V_n,V_{n+2r+1})$ is relevant if there exists a subspace $Z_{2r+1}\subseteq V_{n+2r+1}$ such that $V_{n+2r+1}=V_{n}\oplus^\perp Z_{2r+1}$ , where  $Z_{2r+1}=\langle z_{\pm i} \rangle_{i=\overline{0,r}} $  and $h_V(z_i,z_j)=(-1)^n\delta_{i,-j}$, for all $i,j\in[-r,r]$.

Let $P$ be the parabolic subgroup of $U(V_{n+2r+1})$ stabilizing the flag
$$
\langle z_r\rangle \subseteq \langle z_r,z_{r-1}\rangle \subseteq \ldots \subseteq \langle z_r,\ldots, z_1\rangle,
$$
and $N$ be its unipotent radical. We set $H=U(V_n)\ltimes N$  and $G=U(V_{n+2r+1})\times U(V_n)$. Let $\psi$  be a nontrivial additive character of $F$. We define a character $\xi:N(F)\rightarrow \mathbb{C}^\times$  by 
$$
\xi(n)=\psi(\text{Tr}_{E/F}(\sum_{i=0}^r h_V(z_{-i-1},nz_i))),\text{ for } n\in N(F).
$$
We extend it trivially to a character of $H(F)$, which we also denote by $\xi$. Let $\pi$ be a representation of $U(V_{n+2r+1})$  and $\sigma$ be a representation of $U(V_n)$. We set
$$
m_B(\pi,\sigma)=\dim \text{Hom}_{H}(\pi\otimes \sigma,\xi).
$$
This branching problem is called a Bessel model. Let $\phi$  and $\phi^\prime$ be $L$-parameters for $U(V_{n+2r+1})$  and $U(V_n)$. We have component groups
$$
S_{\phi}=\prod_i (\mathbb{Z}/2\mathbb{Z})a_i\ \ \ \text{ and }\ \ \ S_{\phi^\prime}=\prod_j (\mathbb{Z}/2\mathbb{Z})b_j.
$$
We define a distinguished character $\eta_B$  of  $S_\phi\times S_{\phi^\prime}$ via
$$
\eta_B(a_i)=\epsilon(1/2,\phi_i\otimes \phi^\prime,\psi_{-2}^E)\ \ \ \text{ and }\ \ \ \eta_B(b_j)=\epsilon(1/2,\phi\otimes \phi_j^\prime,\psi_{-2}^E),
$$
for any $a_i$ and $b_j$, where $\psi^E_{-2}=\psi(-2\text{Tr}_{E/F}(\cdot))$.

We now recall Fourier-Jacobi models. Let $W_n\subset W_{n+2r}$ be a pair skew-hermitian spaces relative to $E/F$  of dimension $n$ and $n+2r$. We say $(W_n,W_{n+2r})$ is relevant if there exists a $2r$-dimensional split skew-hermitian subspace $Z_{2r}\subseteq W_{n+2r}$ such that $W_{n+2r}=W_n\oplus^\perp Z_{2r}$. We fix a basis $\{z_{\pm 1},\ldots , z_{\pm r}\}$ of $Z_{2r}$ such that $h_W(z_i,z_j)=\delta_{i,-j}$, for all $u,j=\overline{\pm 1, \pm r}$. 

Let $U$  be the unipotent radical of the parabolic subgroup of $U(W_{n+2r})$ stabilizing
$$
\langle z_r\rangle \subseteq \langle z_r,z_{r-1}\rangle \subseteq \ldots \subseteq \langle z_r,\ldots, z_1\rangle.
$$
In this setting, we denote  $H=U(W_n)\ltimes U$  and $G=U(W_{n+2r})\times U(W_{n})$. We define a character $\nu:U(F)\rightarrow \mathbb{C}^\times$  by 
$$
\nu(u)=\psi(-\text{Tr}_{E/F}(\sum_{i=0}^{r-1} h_W(z_{-i-1},uz_i))),\text{ for } u\in U(F).
$$
We extend it trivially to a character of $H(F)$, which we also denote by $\nu$. Let $\mu$  be a conjugate-symplectic character of $E^\times$ and $\omega_{W_n,\psi,\mu}$  be the Weil representation of $U(W_n)$ associated to $\psi$  and $\mu$ . Let $\pi$ be a representation of $U(W_{n+2r})$  and $\sigma$ be a representation of $U(W_n)$. We set
$$
m_{FJ}(\pi,\sigma)=\dim \text{Hom}_{H}(\pi\otimes \sigma,\omega_{V,\psi,\mu}\otimes \nu).
$$
The above branching problem is called a Fourier-Jacobi model. Let $\phi$  and $\phi^\prime$ be $L$-parameters for $U(V_{n+2r+1})$  and $U(V_n)$. We have component groups
$$
S_{\phi}=\prod_i (\mathbb{Z}/2\mathbb{Z})a_i\ \ \ \text{ and }\ \ \ S_{\phi^\prime}=\prod_j (\mathbb{Z}/2\mathbb{Z})b_j.
$$
Similar to the setting of Bessel models, we define a distinguished character $\eta_{FJ}$  of  $S_\phi\times S_{\phi^\prime}$, which depends on the parity of $n$ in this case. 
\begin{itemize}
    \item When $n$  is odd, we set  
$$
\eta_{FJ}(a_i)=\epsilon(1/2,\phi_i\otimes \phi^\prime\otimes \mu^{-1},\psi_{2}^E)\ \ \ \text{ and }\ \ \ \eta_{FJ}(b_j)=\epsilon(1/2,\phi\otimes \phi_j^\prime\otimes \mu^{-1},\psi_{2}^E),
$$
for any $a_i$ and $b_j$, where $\psi^E_{2}=\psi(2\text{Tr}_{E/F}(\delta\cdot))$ and $\delta$ is a nonzero trace-$0$ element in $E$.
\item When $n$ is even, we set $$
\eta_{FJ}(a_i)=\epsilon(1/2,\phi_i\otimes \phi^\prime\otimes \mu^{-1},\psi^E)\ \ \ \text{ and }\ \ \ \eta_{FJ}(b_j)=\epsilon(1/2,\phi\otimes \phi_j^\prime\otimes \mu^{-1},\psi^E),
$$
for any $a_i$ and $b_j$.
\end{itemize}
In Section \ref{secB4} and \ref{secB5}, we prove the following theorem by using induction on $n+r$.
\begin{theorem}\label{maintheoremB}
    Let the notation be as above. We consider two following statements.
    \begin{itemize}
        \item $\mathbf{(B)_{n,r}}:$  Let $\phi\times \phi^\prime$  be a tempered $L$-parameter for  $G=U(V_{n+2r+1})\times U(V_n)$. For any representation $\pi(\eta)\otimes \sigma(\eta^\prime)\in \Pi_\phi\times \Pi_{\phi^\prime}$  of a relevant pure inner form of $G$, we have
    $$
    m_{B}(\pi(\eta),\sigma(\eta^\prime))\neq 0 \Leftrightarrow \eta\times \eta^\prime = \eta_B.
    $$
    \item $\mathbf{(FJ)_{n,r}}:$  Let $\phi\times \phi^\prime$  be a tempered $L$-parameter for  $G=U(W_{n+2r})\times U(W_n)$. For any representation $\pi(\eta)\otimes \sigma(\eta^\prime)\in \Pi_\phi\times \Pi_{\phi^\prime}$  of a relevant pure inner form of $G$, we have
   $$
    m_{FJ}(\pi(\eta),\sigma(\eta^\prime))\neq 0 \Leftrightarrow \eta\times \eta^\prime = \eta_{FJ}.
    $$
    \end{itemize}
\end{theorem}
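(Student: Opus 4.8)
The plan is to establish $(B)_{n,r}$ and $(FJ)_{n,r}$ simultaneously by induction on $n+r$, which is the content of Sections \ref{secB4}--\ref{secB5}. The engine of the argument is the local relative trace formula attached to a Bessel or Fourier--Jacobi triple, run exactly along the lines of the main body of the paper, together with the see-saw identities for the local theta correspondence of \cite{Xue23a,Xue23b} that connect the two families of models. The base of the induction is the corank-one Bessel model $U(V_{n+1})\times U(V_n)$ together with the equal-rank Fourier--Jacobi model, which already involves the Weil representation of the smaller group. For the corank-one Bessel triple I would form the relative trace formula $J$ and its twisted variant $\tilde J$, establish their spectral expansions as in Theorem \ref{spectral} and Theorem \ref{spectraltwisted}, and then obtain geometric expansions by Harish-Chandra semisimple descent, reducing to a comparison near central elements; this reproves, in the simplified form using the twisted endoscopic shortcut of Section \ref{sec8}, the geometric side of the trace formulas of \cite{BP14,BP20}. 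For the Fourier--Jacobi triples the local character expansions of the Weil representation near the identity (Proposition \ref{identity}) and near non-identity central elements (Proposition \ref{nonidentity}) provide precisely the input needed to carry out the descent, the ``Weil character'' manifesting itself through Weil indices and the factor $\mu(\det(1-x^{-1}))\,|\det(1-x)|_E^{-1/2}$ as in Theorem \ref{1.3}; in particular this gives a proof of the equal-rank case independent of the main result of \cite{GI16}.

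Granting the corank-one and equal-rank cases, the inductive step proceeds purely via theta. For a relevant pair of (skew-)hermitian spaces of positive corank one chooses a see-saw dual pair and, following \cite{Xue23a,Xue23b}, identifies a Bessel multiplicity $m_B(\pi,\sigma)$ with a Fourier--Jacobi multiplicity $m_{FJ}(\theta(\pi),\sigma)$ for the appropriate theta lift $\theta(\pi)$, and symmetrically a Fourier--Jacobi multiplicity with a Bessel multiplicity; in each case the quantity $n+r$ strictly decreases, so the induction hypothesis applies. Two compatibilities must be verified along the way: that the local theta correspondence for unitary groups is compatible with the local Langlands correspondence on the level of component-group characters (Prasad's conjectures, which we use as an input), and that under this dictionary the distinguished characters $\eta_B$ and $\eta_{FJ}$ transform as predicted. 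The second reduces to the multiplicativity of $\epsilon$-factors together with standard relations among the additive characters $\psi_{-2}^E$, $\psi_2^E$ and $\psi^E$ and the behaviour of Asai and Rankin--Selberg local factors under theta lifting.

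It then remains to upgrade the corank-one and equal-rank cases from the bare multiplicity-one-in-a-packet statement to the refined identification of $\eta_B$ and $\eta_{FJ}$. Here I would argue as in Sections \ref{sec4}--\ref{sec9}: the geometric side of $J$ together with the stability of $L$-packet characters (condition \textbf{(Stab)}) and ordinary endoscopic comparison yields $\sum_{V}\sum_{(\pi,\sigma)} m(\pi,\sigma)=1$ across the packet and the relevant pure inner forms, while the geometric side of $\tilde J$, combined with the twisted endoscopic character identity \textbf{(TET)}, carries the $\epsilon$-factor and selects the distinguished pair of characters. As in Section \ref{sec8}, a base point for the twisted comparison is supplied by an elliptic tempered $L$-parameter built from one-dimensional conjugate self-dual constituents, for which the refined statement is already known by \cite{CG22}; an endoscopic descent then propagates the identity to an arbitrary tempered parameter $\phi\times\phi'$.

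The principal obstacle I expect is making the theta-correspondence reduction uniform over all relevant pure inner forms at once: theta lifts move one along the Witt tower and are governed by first-occurrence indices, so one must ensure that the see-saw reduction tracks all relevant pure inner forms of the source and target simultaneously without losing any --- this is handled by conservation-relation bookkeeping as in \cite{Xue23a,Xue23b} but is the delicate combinatorial point. A secondary difficulty, already present in the main body, is establishing the geometric expansion of a twisted relative trace formula whose acting subgroup carries a Weil representation and checking that it descends correctly near central elements; the theta-correspondence shortcut of Section \ref{sec8}, which replaces the missing local harmonic analysis for Weil representations of twisted groups by an appeal to a single known case, is precisely what makes this step tractable.
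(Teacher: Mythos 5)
Your proposal takes a genuinely different route from the paper's at the inductive step, and the difference is precisely what the appendix is designed to avoid.

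The paper's inductive step at a given $(n,r)$ does \emph{not} transfer the full refined multiplicity statement across theta. Instead it re-runs the trace-formula comparison (the analogue of Sections~\ref{sec4}--\ref{sec9}) at every $(n,r)$, and uses the seesaw and theta correspondence only to produce a \emph{single elliptic $L$-parameter} where the \emph{unrefined} sum $\sum_\bullet\sum_{(\pi,\sigma)}m(\pi,\sigma)$ can be pinned down (Proposition~\ref{B5} and its Bessel analogue). For that, one needs only the packet-level bijectivity of theta (Theorems~\ref{B2}(iii), \ref{B3}(i)(b)), the dichotomy criterion, and the induction hypothesis at lower $n+r$ -- but \emph{not} the knowledge of how theta permutes the explicit component-group characters, i.e.\ not the refined Prasad conjectures. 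Your proposal, by contrast, has the higher-corank step proceed ``purely via theta,'' which forces you to invoke Prasad's conjectures to track $\eta_B$ and $\eta_{FJ}$ through the lift. The appendix is explicit that one of its goals is an argument for the tempered Fourier--Jacobi case that is independent of the refined Prasad statement from \cite{GI16}; your route reproduces the dependence the paper is trying to remove, and (depending on how the refined Prasad conjectures were established) risks a circularity that the paper sidesteps entirely. It would also fail to yield the paper's other stated payoff -- alternative, truncation-free geometric expansions of the Bessel trace formula $J_B$ and $\tilde J_B$ at \emph{all} coranks (Theorem~\ref{B.6}) -- since in your scheme those trace formulas are only run at corank one.

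A second, more local error: for the twisted base point in the classical Bessel/Fourier--Jacobi setting you appeal to \cite{CG22}. That result concerns the twisted model $U(V_K)/U(V)$ of the main body of the paper, not the classical Bessel/Fourier--Jacobi models of the appendix, so it cannot supply the base point here. The appendix instead manufactures its own base point inside each inductive step via the seesaw (Proposition~\ref{B5} and its mirror in Section~\ref{secB5}), and this is in fact the delicate part: the chain $m_{FJ}(\theta(\pi_0),\sigma)=m_{FJ}(\theta(\pi_0),\sigma_r)=m_B(\theta(\sigma_r^\vee),\pi_0^\vee)=m_B(\theta(\sigma^\vee),\pi_0^\vee)$ passes through intermediate triples with $n+r$ \emph{larger} than the starting value before landing on a strictly smaller one, so the decrease must be read off the composite identity, not termwise. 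Relatedly, the induction within a fixed value of $n+r$ must be ordered ($\mathbf{(FJ)}_{n,r}$ before $\mathbf{(B)}_{n,r}$, since the Bessel base point at level $(n,r)$ uses $\mathbf{(FJ)}_{n,r}$), and your description of the base of the induction as ``the corank-one Bessel model together with the equal-rank Fourier--Jacobi model'' conflates the true base case $n+r=1$ with cases that are themselves inductive.

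Your identification of the principal technical obstacles -- Witt-tower bookkeeping for pure inner forms and the descent of the Weil-representation character near central elements -- is accurate, and your observation that the Weil-character expansions (Propositions~\ref{identity}, \ref{nonidentity}) are the crucial new input for the Fourier--Jacobi trace formula is correct. But the overall inductive architecture you sketch is the \cite{GI16}-style argument, not the one carried out in the appendix, and it does not deliver the independence from refined Prasad that the appendix is built to achieve.
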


\subsection{The local theta correspondence}
In this subsection, we recall the local theta correspondence for unitary groups of (almost) equal rank. Let $V$  be a hermitian space and $W$  be a skew-hermitian space relative to $E/F$. We consider the local theta correspondence for the reductive dual pair $U(V)\times U(W)$. We fix the following data:
\begin{itemize}
    \item a nontrivial additive character $\psi$ of $F$;
    \item a pair of characters $(\mu_V,\mu_W)$  of $E^\times$ whose restriction to $F^\times$  is $(\omega_{E/F}^{\dim V},\omega_{E/F}^{\dim W})$;
    \item a trace-$0$ element $\delta$ in $E^\times$.
\end{itemize}
We can fix a conjugate-symplectic character $\mu$ of $E^\times$ and set $\mu_V=\mu^{\dim V}$ and $\mu_W=\mu^{\dim W}$. This gives us a natural map
$$
U(V)\times U(W) \rightarrow Sp(V\otimes W).
$$
We have a Weil representation $\omega_\psi$ of the metaplectic cover $Mp(V\otimes W)$. The data $(\mu_V,\mu_W)$ defines a splitting over $U(V)\times U(W)$, thus gives us a Weil representation $\omega_{\psi,\mu_V,\mu_W,V,W}$ of $U(V)\times U(W)$. For any $\pi\in \text{Irr}(U(W))$, we define
$$
\Theta_{\psi,\mu_V,\mu_W,V,W}(\pi)=(\omega_{\psi,\mu_V,\mu_W,V,W}\otimes\pi^\vee)_{U(W)}
$$
as a representation of $U(V)$ of finite length. By the Howe duality proved in \cite{Wal90,GT16}, the maximal semisimple quotient $\theta_{\psi,\mu_V,\mu_W,V,W}(\pi)$ of $\Theta_{\psi,\mu_V,\mu_W,V,W}(\pi)$ is either zero or irreducible.

Likewise, for each $\sigma \in  \text{Irr}(U(V))$, we set 
$$
\Theta_{\psi,\mu_V,\mu_W,W,V}(\sigma)=(\omega_{\psi,\mu_V,\mu_W,V,W}\otimes\sigma^\vee)_{U(V)}
$$
as a representation of $U(W)$ and $\theta_{\psi,\mu_V,\mu_W,W,V}(\sigma)$ to be its maximal semisimple quotient.

We summary some results in \cite{GI14} for later uses. We first consider the case when $\dim V = \dim W = n$.
\begin{theorem}\label{B2}
    Let $\phi$ be an $L$-parameter for $U(W)$ and  $\pi\in \Pi_\phi^W$. Then we have
    \begin{enumerate}
        \item $\Theta_{\psi,V,W}(\pi)$ is nonzero if and only if $$ \epsilon(1/2,\phi\otimes \mu_V^{-1},\psi^E_2)=\epsilon(V)\cdot \epsilon(W), $$ where $\epsilon(V)=\omega_{E/F}(\text{disc}(V))$ and $\epsilon(W)=\omega_{E/F}(\delta^{-n}\cdot \text{disc}(W))$.
        \item If  $\Theta_{\psi,V,W}(\pi)\neq 0$, then the $L$-parameter of  $\theta_{\psi,V,W}(\pi)$ is $$\theta(\phi)=\phi\otimes \mu_V^{-1}\mu_W.$$
        \item  The theta correspondence $\pi \mapsto  \theta_{\psi,V,W}(\pi)$ gives a bijection $$\Pi_\phi \leftrightarrow \Pi_{\theta(\phi)}.$$
        \item If $\phi$ is tempered and $\Theta_{\psi,V,W}(\pi)\neq 0$, then $\Theta_{\psi,V,W}(\pi)$ is irreducible.
    \end{enumerate}
\end{theorem}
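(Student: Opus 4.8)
This theorem collects results of Gan and Ichino \cite{GI14}, so the task is to extract the relevant assertions from loc.\ cit.; it is nonetheless worth indicating the strategy, since it shows how the four items interlock. The plan for Part (1), the local nonvanishing dichotomy, is to invoke the doubling method: pairing the square of a matrix coefficient of $\omega_{\psi,\mu_V,\mu_W,V,W}$ against a matrix coefficient of $\pi$ produces a local doubling zeta integral whose nonvanishing is equivalent, via the local Rallis inner product formula, to the nonvanishing of $\Theta_{\psi,V,W}(\pi)$. For the two hermitian spaces of the fixed dimension $n$, the contributions attached to $V^+$ and $V^-$ are governed by the local gamma factor of $\phi\otimes\mu_V^{-1}$ relative to $\psi^E_2$, and a computation in the style of Waldspurger--Tunnell and Harris--Kudla--Sweet shows that exactly one of the two is nonzero, selected by the sign condition $\epsilon(1/2,\phi\otimes\mu_V^{-1},\psi^E_2)=\epsilon(V)\cdot\epsilon(W)$. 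The essential extra input exploited in \cite{GI14} is the formal degree conjecture for unitary groups, which converts this analytic statement into one purely about $\epsilon$-factors.

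Part (2), the parameter identity $\theta(\phi)=\phi\otimes\mu_V^{-1}\mu_W$, I would prove by first reducing to the supercuspidal case through the compatibility of theta lifting with parabolic induction (Kudla's filtration of the Weil representation restricted to a Jacquet module), and then pinning down the parameter of the lift by a global argument: realize $\pi$ as a local component of a cuspidal automorphic representation and transport it through the global theta lift, using its compatibility with the endoscopic classification of unitary groups (Mok, Kaletha--M\'inguez--Shin--White) to read off the local parameter; the precise twist by $\mu_V^{-1}\mu_W$ is forced by the dependence of the splitting $U(V)\times U(W)\to\overline{\mathrm{Sp}}(V\otimes W)$ on the pair $(\mu_V,\mu_W)$. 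Parts (3) and (4) are then formal. Since $\theta(\phi)=\phi\otimes\mu_V^{-1}\mu_W$ has the same component group as $\phi$, one has $|\Pi_\phi|=|\mathcal{S}_\phi|=|\mathcal{S}_{\theta(\phi)}|=|\Pi_{\theta(\phi)}|$; the lift is injective (the theta lift in the reverse direction, which sends $\theta(\phi)$ back to $\phi$, provides an inverse on the relevant packets) and is nonzero on the relevant packet by Part (1) together with Howe duality \cite{Wal90,GT16}, so $\pi\mapsto\theta_{\psi,V,W}(\pi)$ is a bijection $\Pi_\phi\leftrightarrow\Pi_{\theta(\phi)}$. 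For Part (4), when $\pi$ is tempered one analyses the constituents of $\Theta_{\psi,V,W}(\pi)$ via its Jacquet modules and Kudla's filtration and checks that no proper nonzero invariant subspace can occur, so the big theta lift is already irreducible.

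The main obstacle, and the reason the theorem is cited rather than reproved here, is Part (2): identifying the Langlands parameter of a theta lift requires the full endoscopic classification for unitary groups and the compatibility of the global theta correspondence with it, a substantial input that lies outside the methods of the present paper. Everything else reduces, modulo bookkeeping, to the doubling-integral computation of Part (1) and elementary counting of packet sizes.
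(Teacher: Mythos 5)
The paper does not prove this theorem: it is stated explicitly as a summary of results from \cite{GI14} (``We summary some results in \cite{GI14} for later uses''), imported as a black box for the seesaw arguments in the appendix. You correctly identify this, and your sketch of the underlying Gan--Ichino argument is a fair high-level description: Part (1) via the doubling method, the local Rallis inner product formula, and the formal degree conjecture for unitary groups; Part (2) via Kudla's filtration to reduce to the supercuspidal case together with a globalization argument compatible with the endoscopic classification; Parts (3)--(4) via counting of packet sizes, Howe duality, and an analysis of Jacquet modules. None of this appears in the present paper, which simply cites the result, so there is no internal argument to compare against; your proposal is consistent with the paper's treatment and supplies the correct pointers for where the actual work is done.

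One small imprecision worth noting: in Part (1) the local zeta integral in the Rallis inner product formula is built from a matrix coefficient of $\pi$ paired against a section of the degenerate principal series on the doubled group $U(W\oplus(-W))$, rather than against ``the square of a matrix coefficient of $\omega_{\psi,\mu_V,\mu_W,V,W}$''; the Weil representation enters through the Siegel--Weil formula identifying the theta integral with an Eisenstein series, not as an extra matrix-coefficient factor. This does not affect the substance of your summary, but the phrasing as written would mislead a reader trying to reconstruct the computation.
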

We now consider the case when $\dim V= \dim W +1= n+1$.
\begin{theorem}\label{B3}
    Let $\phi$ be an $L$-parameter for $U(W)$ and  $\pi\in \Pi_\phi^W$. Then we have
    \begin{enumerate}
        \item Assume $\phi$ does not contain $\mu_V$.
        \begin{enumerate}
            \item For any $\pi\in \Pi_\phi^W$, $\Theta_{\psi,V,W}(\pi)$ is nonzero and $\theta_{\psi,V,W}(\pi)$ has $L$-parameter
            $$
            \theta(\phi)=(\phi\otimes \mu_V^{-1}\mu_W)\oplus \mu_W.
            $$
            \item The theta correspondence $\pi \mapsto  \theta_{\psi,V,W}(\pi)$ gives a bijection $$\Pi_\phi \leftrightarrow \Pi^V_{\theta(\phi)}.$$
        \end{enumerate}
        \item Assume that $\phi$ contains $\mu_V$.
        \begin{enumerate}
            \item For any $\pi\in \Pi_\phi^W$, exactly one of $\Theta_{\psi,V,W}(\pi)$ ore $\Theta_{\psi,V^\prime,W}(\pi)$ is nonzero. Here $V^\prime$ is the remaining $n+1$-dimensional hermitian space.
            \item If $\Theta_{\psi,V,W}(\pi)$ is nonzero, then $\theta_{\psi,V,W}(\pi)$ has $L$-parameter
            $$
            \theta(\phi)=(\phi\otimes \mu_V^{-1}\mu_W)\oplus \mu_W.
            $$
            \item The theta correspondence $\pi \mapsto  \theta_{\psi,V,W}(\pi)$ gives a bijection $$\Pi_\phi \leftrightarrow \Pi_{\theta(\phi)}.$$
        \end{enumerate}
        \item If $\pi$ is tempered and $\Theta_{\psi,V,W}(\pi)\neq 0$, then $\Theta_{\psi,V,W}(\pi)$ is irreducible.
    \end{enumerate}
\end{theorem}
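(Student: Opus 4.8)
The plan is to obtain Theorem~\ref{B3} by assembling the results of Gan--Ichino~\cite{GI14} on the local theta correspondence for unitary dual pairs of almost equal rank; the new content here is only the packaging of their ingredients for later use, so I describe those ingredients rather than reproving them. Throughout, the splitting data $(\psi,\mu_V,\mu_W,\delta)$ is fixed as above, so that $\omega_{\psi,\mu_V,\mu_W,V,W}$ is a genuine representation of $U(V)\times U(W)$. Two structural facts underlie everything: first, Howe duality for $p$-adic unitary groups \cite{Wal90,GT16}, which guarantees that the maximal semisimple quotient $\theta_{\psi,V,W}(\pi)$ is irreducible or zero; second, the Kudla--Rallis conservation relation (proved in general by Sun--Zhu), which relates the first-occurrence indices of $\pi$ in the two Witt towers of hermitian spaces and, together with Rallis' tower property, dictates exactly when $\Theta_{\psi,V,W}(\pi)$ is nonzero.

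First I would settle the nonvanishing statements (1)(a) and (2)(a) by reducing to the equal-rank dichotomy already recorded as Theorem~\ref{B2}. Passing down one step from $U(V)$ (dimension $n+1$) to a hermitian space of dimension $n$ obtained by deleting an anisotropic line, the conservation relation forces the first occurrence of (a twist of) $\pi$ in an appropriate tower to occur at dimension $\le n+1$, so that $\Theta_{\psi,V,W}(\pi)\neq 0$ for the relevant space; whether the lift lives on the tower of $V$ or of $V'$ is then controlled by the root number $\epsilon(1/2,\phi\otimes\mu_V^{-1},\psi^E_2)$ of Theorem~\ref{B2}, which is trivial precisely when $\phi$ does not contain $\mu_V$ --- giving case (1) --- and otherwise partitions $\Pi_\phi$ between the two pure inner forms, giving (2)(a).

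Next I would compute the $L$-parameter $\theta(\phi)=(\phi\otimes\mu_V^{-1}\mu_W)\oplus\mu_W$. The first step is to identify the cuspidal support of $\theta_{\psi,V,W}(\pi)$ by computing a Jacquet module of the big theta lift through Kudla's filtration of the Weil representation restricted to a maximal parabolic; combined with the compatibility of theta with parabolic induction, this reduces the determination of $\theta(\phi)$ to $\pi$ supercuspidal, and then to the doubling method, where the doubling zeta integral realizes $\Theta_{\psi,V,W}(\pi)$ as an explicit Langlands quotient. Feeding this into the local Langlands correspondence for unitary groups \cite{Mok15,KMSW14} yields the stated formula for $\theta(\phi)$, and a parity count on the multiplicity of $\mu_W$ in $\theta(\phi)$ (which is $1$ when $\phi\not\ni\mu_V$ and even when $\phi\ni\mu_V$) reconciles the component groups on the two sides, so that $\pi\mapsto\theta_{\psi,V,W}(\pi)$ is bijective onto the appropriate packet as in (1)(b) and (2)(c); pinning down which member of that packet the lift hits (Prasad's conjecture) uses the see-saw dual pairs $(U(V)\times U(V),\,U(W)\times U(W))$ together with the doubling $\epsilon$-factors. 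Statement (3) --- irreducibility of $\Theta_{\psi,V,W}(\pi)$ for tempered $\pi$ --- follows from the fact that tempered parameters lift to tempered parameters, together with the formal-degree comparison of \cite{GI14} (via the Hiraga--Ichino--Ikeda conjecture, which is available in this rank). The main obstacle, and the technical core of \cite{GI14} that I would import, is this last point together with the component-group bookkeeping just described: isolating the precise character of $S_{\theta(\phi)}$ attached to the lift requires intertwining the doubling method, the see-saw identities, and the endoscopic characterization of the local Langlands correspondence for unitary groups.
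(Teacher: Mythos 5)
The paper does not actually prove Theorem~\ref{B3}: it is stated purely as a summary of results imported from \cite{GI14} (as the surrounding text says, ``We summary some results in \cite{GI14} for later uses''), and your proposal likewise defers the substance to the Gan--Ichino machinery (Howe duality, conservation relations, Kudla's filtration, doubling, see-saw), so you are taking essentially the same route as the paper. One minor caution: your parenthetical claim that $\epsilon(1/2,\phi\otimes\mu_V^{-1},\psi^E_2)$ is ``trivial precisely when $\phi$ does not contain $\mu_V$'' is not accurate as stated --- the dichotomy in the almost-equal-rank case comes from the conservation relation combined with the criterion for nonvanishing of the lift one step down (to dimension $n-1$), which requires $\phi\supseteq\mu_V$, with the $\epsilon$-factor only deciding \emph{which} of $V,V'$ receives the lift in that case --- but since you are importing rather than reproving \cite{GI14}, this does not affect the statement being used.
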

\subsection{Geometric multiplicities of the local Gan-Gross-Prasad conjecture}
In this subsection, we revisit various geometric multiplicities for both Bessel and Fourier-Jacobi models.

\subsubsection{Bessel models}

We consider $G=U(V_{n+2r+1})\times U(V_n)$. We recall some linear forms in \cite{BP14,BP16,BP20}. For any $f\in \mathcal{C}_{\text{scusp}}(G(F))$, we set
$$
J_B(f)=\int_{H(F)\backslash G(F)}\int_{H(F)}f(x^{-1}hx)\xi(h)dhdx.
$$
For any quasi-character $\theta$ on $G(F)$, we define
$$
m_{\text{geom}}^{B}(\theta)=\lim_{s\rightarrow 0^+}\int_{\Gamma(G,H)}D^G(x)^{1/2}c_\theta(x)\Delta(x)^{s-1/2}dx,
$$
where the set $\Gamma(G,H)$ and its measure are given in \cite[Section 11.1]{BP20}. By \cite[Theorem 11.4.1]{BP20}, we have
$$
J_B(f)=m_{\text{geom}}^B(\theta_f),\ \ \text{ for all }f\in \mathcal{C}_{\text{scusp}}(G(F)).
$$
We now consider twisted spaces. Let $M=\text{Res}_{E/F}\,\text{GL}_{n+2r+1}(E)\times \text{GL}_{n}(E)$ and $N=\text{Res}_{E/F}\,\text{GL}_n(E)\ltimes U(E)$, where $U$ is the unipotent radical of the parabolic subgroup stabilizing the following flag 
$$
\langle e_1 \rangle \subseteq \ldots \subseteq \langle e_1,\ldots,e_{r} \rangle \subseteq \langle e_1,\ldots, e_{n+r+1} \rangle \subseteq \ldots   \subseteq \langle e_1,\ldots,e_{n+2r+1} \rangle.
$$
We define a character $\eta$  on $U(E)$ by
$$
\eta(u)=\psi_E(\sum_{i=1}^{n+2r}u_{i,i+1})
$$
and extend it to a character on $N(F)$. For each $d\geq 1$, let $\theta_d:g\mapsto J_d {}^t\bar{g}^{-1}J_d^{-1}$ be an involution on $\text{GL}_d(E)$, where $J_d=((-1)^i\delta_{i,d-j})_{i,j}$. This defines an involution $\theta$  on $M$ and $N$. We denote $\tilde{M}=M\theta$ and $\tilde{N}=N\theta$. We extend $\eta$ to a character on $\tilde{N}(F)$. We recall the following linear form studied in \cite{BP14}
$$
\tilde{J}_B(\tilde{f})=\int_{N(F)\backslash M(F)}\int_{\tilde{N}(F)}\tilde{f}(x^{-1}\tilde{n}x)\eta(\tilde{n})d\tilde{n}dx,
$$
for all $\tilde{f}\in \mathcal{C}_{\text{scusp}}(\tilde{M}(F))$. For any quasi-character $\tilde{\theta}$ on $\tilde{M}(F)$, we define the following linear form
$$
\epsilon^B_{\text{geom}}(\tilde{\theta})=\sum_{\tilde{T}\in \mathcal{T}}|W(N,\tilde{T})|^{-1}\lim_{s\rightarrow 0^+}\int_{\tilde{T}(F)/\theta}D^{\tilde{M}}(\tilde{t})^{1/2}c_{\tilde{\theta}}(\tilde{t})\Delta(\tilde{t})^{s-1/2}d\tilde{t},
$$
where $\mathcal{T}$ is defined in \cite[Section 3.2]{BP14}.

\subsubsection{Fourier-Jacobi models}

We now consider $G=U(W_{n+2r})\times U(W_n)$. We define some linear forms which are similar to those studied in the main body of this paper. For any $f\in \mathcal{C}_{\text{scusp}}(G(F))$, we set
$$
J_{FJ}(f)=\int_{H(F)\backslash G(F)}\sum_i\int_{U(W_n)}\int_{U(F)}f(x^{-1}hux)\nu(u)du\langle \phi_i,\omega_{W_n,\psi,\mu}(h)\phi_i \rangle dhdx,
$$
where $\{\phi_i\}_{i\in I}$ is an orthonormal basis for $\omega_{W_n,\psi,\mu}$.

Let ${\mathcal{T}}\left(G,H\right)$ be the set containing $H\left(F\right)$-conjugacy classes of tori $T\left(F\right)$ of $U\left(W_n\right)$ such that $T$ is an elliptic maximal torus of $U\left(W\right)$, where $W$ is a nondegenerate skew-hermitian subspace of $W_n$ such that there exists a split skew-hermitian subspace $W^\prime$ of $W_n$ satisfying $W_n=W\oplus^\perp W^\prime$. We define the linear form $m_{\text{geom}}^{FJ}$ on $\text{QC}(G(F))$, which depends on the parity of $n$.
\begin{itemize}
    \item When $\dim V$ is odd:
    $$
    m_{\text{geom}}^{FJ}\left(\theta\right)=\frac{1}{2}c_{\theta}\left(1\right)+\mu\left(\det W_n\right)\underset{T\in{\mathcal{T}}\left(G,H\right)}{\sum}\frac{\gamma_\psi(T)}{\left|W\left(H,T\right)\right|}\underset{s\rightarrow0^{+}}{\lim}\int_{T\left(F\right)}D^{G}\left(x\right)^{1/2}c_{\theta}\left(x\right)
    $$
    $$
    \mu\left(\det\left(1-x^{-1}\right)\right)\Delta(x)^{s-1/2}dx.
    $$
    \item When $\dim V$ is even:
    $$
    m_{\text{geom}}^{FJ}\left(\theta\right)=c_{\theta}\left(1\right)+\mu\left(\det W_n\right)\underset{T\in{\mathcal{T}}\left(G,H\right)}{\sum}\frac{\gamma_\psi(T)}{\left|W\left(H,T\right)\right|}\underset{s\rightarrow0^{+}}{\lim}\int_{T\left(F\right)}D^{G}\left(x\right)^{1/2}c_{\theta}\left(x\right)
    $$
    $$
    \mu\left(\det\left(1-x^{-1}\right)\right)\Delta(x)^{s-1/2}dx.
    $$
\end{itemize}
We revisit an analog of the twisted trace formula defined in Section \ref{sec8}. Let $M=\text{Res}_{E/F}\,\text{GL}_{n+2r}(E)\times \text{GL}_{n}(E)$ and $N=\text{Res}_{E/F}\,\text{GL}_n(E)\ltimes U^\prime(E)$, where $U^\prime$ is the unipotent radical of the parabolic subgroup stabilizing the following flag 
$$
\langle e_1 \rangle \subseteq \ldots \subseteq \langle e_1,\ldots,e_{r} \rangle \subseteq \langle e_1,\ldots, e_{n+r} \rangle \subseteq \ldots   \subseteq \langle e_1,\ldots,e_{n+2r} \rangle.
$$
We define a character $\eta$  on $U^\prime(E)$ by
$$
\eta(u)=\psi_E(\sum_{i=1}^{n+2r-1}u_{i,i+1})
$$
and extend it to a character on $N(F)$. Similar to Bessel models, we define an involution $\theta$  on $M$ and $N$. We denote $\tilde{M}=M\theta$ and $\tilde{N}=N\theta$. We extend $\eta$ to a character on $\tilde{N}(F)$. Moreover, we fix a character $\chi$ of $Z_M(F)$ such that it is invariant under the action of $\theta$. We now recall an analog of the twisted trace formula considered in Section \ref{sec8}.
$$
\tilde{J}_{FJ,\chi}(\tilde{f})=\int_{Z_M(F)N(F)\backslash M(F)}\sum_i\int_{Z_N(F)\backslash\tilde{N}(F)}\tilde{f}(x^{-1}\tilde{n}x)\eta(\tilde{n})\langle \phi^\prime_i,\tilde{\omega}_{\mu,\chi,\psi}(\tilde{n})\phi^\prime_i \rangle d\tilde{n}dx,
$$
for all $\tilde{f}\in \mathcal{C}_{\text{scusp}}(Z_M(F)\backslash\tilde{M}(F),\chi)$. Here $(\phi_i^\prime)_i$ is an orthonormal basis for the Weil representation $\omega_{\mu,\chi}$. See Secion \ref{sec7.1} for a more precise definition. Let ${\mathcal{T}}\left(\tilde{M},\tilde{N}\right)$ be the set containing $N\left(F\right)$-conjugacy classes of elliptic twisted tori $\tilde{T}$ of $\text{Res}_{E/F}\,\text{GL}\left(W_n\right)\theta_n$ such that $\tilde{T}$ is an elliptic maximal twisted torus of $\text{Res}_{E/F}\,\text{GL}\left(W\right)\theta_{\dim W}$, where $W$ is a nondegenerate skew-hermitian subspace of $W_n$ such that there exists a split skew-hermitian subspace $W^\prime$ of $W_n$ satisfying $W_n=W\oplus^\perp W^\prime$. For any quasi-character $\tilde{\theta}$ on $\tilde{M}(F)$, we define the following linear form
$$
\epsilon^{FJ}_{\text{geom}}(\tilde{\theta})=\sum_{\tilde{T}\in {\mathcal{T}}\left(\tilde{M},\tilde{N}\right)}\frac{\gamma_\psi(\tilde{T})}{\left|W\left(N,\tilde{T}\right)\right|}\lim_{s\rightarrow 0^+}\int_{\tilde{T}(F)/\theta}D^{\tilde{M}}(\tilde{t})^{1/2}c_{\tilde{\theta}}(\tilde{t})\Delta(\tilde{t})^{s-1/2}d\tilde{t},
$$
noting that the above definition does not depend on the sign of $W_n$.

\subsection{Proof of Theorem \ref{maintheoremB}: Fourier-Jacobi models}\label{secB4}

We now prove Theorem \ref{maintheoremB} via induction on $n+r$.  When $n+r=1$, Theorem \ref{maintheoremB} has been known. We consider the case when $n+r\geq 2$. We first prove $\mathbf{(FJ)_{n,r}}$. By the same argument to the main part of this paper, it suffices to prove the following theorem.
\begin{theorem}\label{B.4}
  \begin{enumerate}
      \item For any $f\in \mathcal{C}_{\text{scusp}}(G(F))$, we have $$J_{FJ}(f)=m_{\text{geom}}^{FJ}(\theta_f).$$
      \item For any $\tilde{f}\in \mathcal{C}_{\text{scusp}}(Z_M(F)\backslash \tilde{M}(F),\chi)$, we have $$\tilde{J}_{FJ,\chi}(\tilde{f})=\epsilon^{FJ}_{\text{geom}}(\theta_{\tilde{f}}).$$
  \end{enumerate}  
    
\end{theorem}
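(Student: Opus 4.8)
The plan is to prove both parts of Theorem~\ref{B.4} by the local trace formula method developed in Sections~\ref{sec4}, \ref{sec5} and \ref{sec8}, now carried out for the Fourier--Jacobi (Whittaker induction) geometry, and to remove the residual ambiguous constant using the local theta correspondence. First I would note that the spectral expansions of $J_{FJ}$ and $\tilde{J}_{FJ,\chi}$, together with the convergence of the defining integrals, follow from the same reasoning as Theorem~\ref{spectral} and Theorem~\ref{spectraltwisted}, using the estimates of \cite{BP18,BP20} together with the Weil representation estimates of \cite{Xue16,Xue23a,Xue23b}. Combining these with the orthogonality relations for elliptic representations and the compatibility of $m_{\text{geom}}^{FJ}$ and $\epsilon_{\text{geom}}^{FJ}$ with parabolic induction --- the Fourier--Jacobi analogues of Lemma~\ref{4.4} and Proposition~\ref{twistedparabolic}, which follow from \cite{GGP12a,GGP23} and the induction hypothesis on $n+r$ --- both parts reduce to the quasi-character identities
$$
J_{FJ,\text{qc}}(\theta)=m_{\text{geom}}^{FJ}(\theta)\qquad\text{and}\qquad J_{FJ,\chi,\text{qc}}(\tilde{\theta})=\epsilon_{\text{geom}}^{FJ}(\tilde{\theta}),
$$
where $J_{FJ,\text{qc}}$ and $J_{FJ,\chi,\text{qc}}$ are built exactly as $J_{V,\text{qc}}$ and $J_{\chi,\text{qc}}$ in Sections~\ref{sec4.2} and \ref{sec8}.

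Next I would run Harish--Chandra semisimple descent on these identities. At a non-central elliptic semisimple element the centralizer datum is a product of Fourier--Jacobi data and general linear (Whittaker) factors of strictly smaller $n+r$, so the induction hypothesis closes the comparison there; this is the analogue of Propositions~\ref{elliptic} and \ref{elliptictwisted}. At a non-identity central element $s$ of $U(W_n)$ I would use the local character expansion of the Weil representation near $s$ --- the analogue of Proposition~\ref{nonidentity}, proved by the same Schr\"odinger-model computation as Proposition~\ref{identity} using the splitting of \cite{GKT25} --- together with \cite[Theorem~4.1.1]{BP18}, as in Proposition~\ref{central}. This leaves a comparison near $1$ (resp. near $\theta_n$), which I would reduce to the Lie algebra as in Section~\ref{sec5.5}: introduce the infinitesimal form $J_{FJ}^{\text{Lie}}$ built from the moment map $\Phi$ of the $U(W_n)$-symplectic space $\mathrm{Res}_{E/F}W_n$ and the integral over the nilpotent radical against $\nu$, prove its convergence as in Section~\ref{sec5.1}, and establish the spectral expansion $J_{FJ}^{\text{Lie}}(f)=\int_{\Gamma(\Sigma)}D^{G}(X)^{1/2}\hat{\theta}_{f}(X)\,dX$ via the geometry of the slice $\Sigma=\mathfrak{h}^{\perp}\oplus d\Phi(W_n)\oplus\mathfrak{u}$. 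The geometric statements needed --- that $\Sigma^{\prime}/H\to\mathfrak{g}^{\prime}/G$ is an open immersion sending $\mu_{\Sigma^{\prime}/H}$ to $D^{G}(X)^{1/2}\,dX$, together with the norm descent property --- go through as in \cite[Sections~10--11]{BP20} (the analogues of Propositions~\ref{5.2}--\ref{5.8} and Theorem~\ref{Liespectral}). Combining this spectral expansion with the scaling homogeneity under $X\mapsto\lambda^{-1}X$ (the analogues of Propositions~\ref{5.10}(ii), \ref{prop6.11} and \ref{descentLietwisted}) yields
$$
J_{FJ,\text{qc}}^{\text{Lie}}(\theta)=c\cdot(\,\text{regular nilpotent germ coefficients of }\theta\text{ at }0\,)+m_{\text{geom}}^{FJ,\text{Lie}}(\theta)
$$
for a single constant $c$ (with the usual two-orbit sign refinement when $n$ is odd), and similarly $J_{FJ,\chi,\text{qc}}=c_{\chi}\cdot(\cdots)+\epsilon_{\text{geom}}^{FJ}$.

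It remains to show $c=0$ and $c_{\chi}=0$, and this is where the theta correspondence enters, following \cite{Xue23a,Xue23b}. Using the local theta correspondence for the (almost) equal rank dual pairs of Theorems~\ref{B2} and \ref{B3} and the associated see-saw, one relates the Fourier--Jacobi datum for $U(W_{n+2r})\times U(W_n)$ to a Bessel datum, compatibly with the induction on $n+r$: the lifts $\pi\mapsto\theta(\pi)$ and $\sigma\mapsto\theta(\sigma)$ intertwine the distributions $J_{FJ}$ and $J_{B}$ (resp. $\tilde{J}_{FJ,\chi}$ and $\tilde{J}_{B}$), and, via the explicit formulas for $\theta(\phi)$ and the behaviour of $\epsilon$-factors under theta recorded in Theorems~\ref{B2}--\ref{B3}, they also match the geometric sides --- the tori in $\mathcal{T}(G,H)$ with their Weil indices $\gamma_{\psi}(T)$ and the integrand $\mu(\det(1-x^{-1}))\Delta(x)^{s-1/2}$ correspond to the $\Gamma(G,H)$-integrand of $m_{\text{geom}}^{B}$, and likewise $\mathcal{T}(\tilde{M},\tilde{N})$ to the datum of $\epsilon_{\text{geom}}^{B}$. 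Since $J_{B}(f)=m_{\text{geom}}^{B}(\theta_f)$ holds by \cite[Theorem~11.4.1]{BP20} and $\tilde{J}_{B}(\tilde{f})=\epsilon_{\text{geom}}^{B}(\theta_{\tilde{f}})$ by \cite{BP14}, transporting the identity along theta forces $c=0$ and $c_{\chi}=0$; alternatively one could test the Lie algebra identity on a single elliptic parameter produced by theta lifting a known Bessel case, exactly as \cite{CG22} is used in Proposition~\ref{prop6.11}.

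The step I expect to be the main obstacle is this last one: verifying the exact compatibility of the geometric sides of the two trace formulae under the see-saw. This requires tracking the splitting characters $\mu_V$, $\mu_W$, the auxiliary characters $\psi^{E}_{\pm2}$, and the discriminant and Haar-measure normalizations through the theta lift, so that the Weil index factors and the $\epsilon$- and $\gamma$-factors on the two sides genuinely agree; one must also treat separately $r=0$ and $r\geq1$ (only the latter carrying the Whittaker-induction structure) and both parities of $n$, since these govern which minimal and regular nilpotent orbits occur and hence the precise shape of the constant being eliminated.
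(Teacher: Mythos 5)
Your overall framework is the same as the paper's: reduce to quasi-character identities, run semisimple descent, pass to the Lie algebra near the identity with a homogeneity argument to isolate a residual constant $c$ (or $c_{\chi}$), and then kill that constant by a spectral input furnished by theta correspondence. So the proposal is essentially the paper's strategy.

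The one genuine divergence is your primary route for eliminating $c$ and $c_{\chi}$. You propose to intertwine the \emph{entire} distributions $J_{FJ}$ and $J_B$ (resp. their twisted variants) through the equal-rank theta lift and see-saw, matching both the spectral and geometric sides --- and you rightly flag that tracking the splitting characters $\mu_V,\mu_W$, the additive characters $\psi^E_{\pm 2}$, discriminants and Haar normalizations through the lift on the geometric side is the main obstacle. The paper does not do this. It only needs the spectral input at a \emph{single} elliptic $L$-parameter, and this is exactly what Proposition~\ref{B5} provides: one fixes $\phi=\phi_1\oplus\cdots\oplus\phi_{n+2r}$ a sum of distinct conjugate self-dual characters (with $\phi_{n+2r}=\mu^{n+2r+1}$), runs a see-saw for the dual pair $U(V^\bullet)\times U(W_{n+2r}^\bullet)$, and identifies $m_{FJ}$ for $U(W_{n+2r})\times U(W_n)$ at level $n+r$ with $m_B$ for a pair of Hermitian spaces of dimensions $n+2r-1$ and $n$ at level $(n)+(r-1)=n+r-1<n+r$, where the induction hypothesis closes the argument. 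In other words, the leapfrog induction (Bessel at level $n+r-1$ $\Rightarrow$ FJ at level $n+r$ $\Rightarrow$ Bessel at level $n+r$) exactly circumvents the delicate geometric-side compatibility you anticipate. You mention this as an "alternative," but it is in fact the route the paper takes, and I would promote it to the primary argument: it is both lighter and what the induction actually requires, since the machinery from Theorem~\ref{maintheorem1} and Theorem~\ref{geometrictwisted} (your steps of spectral expansion, descent, slice geometry, and homogeneity) goes through mutatis mutandis without needing a global distributional comparison.

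A minor additional point: the paper's Proposition~\ref{B5} carefully arranges the parameters (including the auxiliary characters $\xi_1,\ldots,\xi_r$ that are not conjugate self-dual) so that the $L$-parameter on the Bessel side is \emph{elliptic} and the theta lifts are nonvanishing and irreducible (Theorems~\ref{B2}(iv), \ref{B3}(iii)); this is needed so that one gets a clean spectral value of exactly $1$ and the correct $\varepsilon$-factor condition, which is what pins down the constant. Any write-up of this step should verify that constraint explicitly.
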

The proof Theorem \ref{B.4} follows from Theorem \ref{maintheorem1} and Theorem \ref{geometrictwisted}, up to a dependence on an instance of elliptic $L$-parameters, which was established in \cite{CG22} for the twisted Gan-Gross-Prasad conjecture. Since we want our argument to be independent of the results in \cite{GI16}, we prove a weaker version of the corresponding result in \cite{CG22} for Fourier-Jacobi models, which is still sufficient for our purposes.
\begin{proposition}\label{B5}
    There exists an elliptic $L$-parameter $\phi\times \phi^\prime$ for $G$ such that for any relevant pure inner form $U(W^\bullet_{n+2r})\times U(W_n^\bullet)$ of $G$, we have $$\sum_{\pi\in \Pi_{\phi,W^\bullet_{n+2r}}}\sum_{\sigma\in \Pi_{\phi^\prime,W^\bullet_{n}}}m_{FJ}(\pi,\sigma)\neq 0 \Leftrightarrow \epsilon(W_n^\bullet)=\epsilon(1/2,\phi\times \phi^\prime \times \mu^{-1},\psi_2^E)$$ when $n$ is odd or $$\sum_{\pi\in \Pi_{\phi,W^\bullet_{n+2r}}}\sum_{\sigma\in \Pi_{\phi^\prime,W^\bullet_{n}}}m_{FJ}(\pi,\sigma)\neq 0 \Leftrightarrow \epsilon(W_n^\bullet)=\epsilon(1/2,\phi\times \phi^\prime \times \mu^{-1},\psi^E)$$ when $n$ is even. Moreover, for such $W_n^\bullet$, the above sum is equal to $1$.
\end{proposition}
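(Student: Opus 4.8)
The plan is to produce the required elliptic $L$-parameter by a ``sum of distinct conjugate-dual characters'' construction and to evaluate both sides through the local theta correspondence, reducing the Fourier--Jacobi branching problem to a Bessel branching problem for a pair with strictly smaller invariant $n+r$, where the induction hypothesis $\mathbf{(B)_{n-1,r}}$ (part of Theorem \ref{maintheoremB}, already known for smaller $n+r$) applies. First I would choose $\phi'=\bigoplus_{j=1}^{n}\chi_j$ to be a multiplicity-free sum of conjugate-dual characters of $E^\times$ of the parity attached to $U(W_n)$, so that $\phi'$ is a discrete --- hence elliptic --- parameter and $\Pi_{\phi',W^\bullet_{n}}$ is indexed by characters of $S_{\phi'}\cong(\mathbb{Z}/2\mathbb{Z})^{n}$; and I would choose $\phi$ on $U(W_{n+2r})$ so that, via Theorem \ref{B2} or Theorem \ref{B3}, the theta lifts of both $\sigma\in\Pi_{\phi',W^\bullet_{n}}$ and $\pi\in\Pi_{\phi,W^\bullet_{n+2r}}$ to unitary groups of hermitian spaces are nonzero and land on a Bessel datum $(U(V_{n-1}),U(V_{n+2r}))$, whose codimension is the odd number $2r+1$. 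Concretely this means taking $\phi$ to be, up to the explicit $\mu_V,\mu_W$ twists dictated by Theorems \ref{B2} and \ref{B3}, of the form $\theta(\phi')\oplus(\text{a short multiplicity-free conjugate-dual summand})$, chosen so that $\theta(\phi')$ has first occurrence one step below $U(W_n)$ in the relevant Witt tower.

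Next, using the seesaw identity relating the Fourier--Jacobi period for $(U(W_{n+2r}),U(W_n))$ to a Bessel period for the theta lifts, exactly as in \cite{Xue23a,Xue23b} and in the same spirit as the seesaw arguments of the main body, I would obtain that for all $(\pi,\sigma)$ whose theta lifts to the chosen hermitian pair are nonzero, the non-vanishing of $m_{FJ}(\pi,\sigma)$ is equivalent to that of $m_B(\theta(\pi),\theta(\sigma))$; since the latter is at most $1$ by the induction hypothesis, this gives $m_{FJ}(\pi,\sigma)=m_{B}\big(\theta(\pi),\theta(\sigma)\big)$. The induction hypothesis $\mathbf{(B)_{n-1,r}}$ then evaluates the right-hand side: $m_B$ is nonzero, and equal to $1$, precisely when the component-group character of $(\theta(\pi),\theta(\sigma))$ equals the distinguished Bessel character $\eta_B$. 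Transporting this back through the functoriality of theta on component groups (Theorems \ref{B2}(3), \ref{B3}) and bookkeeping the $\epsilon$-factor shifts produced by the theta lift --- these are the explicit Rankin--Selberg and Asai $\epsilon$-factor identities recorded in \cite{GI14} and used systematically in \cite{Xue23a,Xue23b} --- one recovers that for each fixed $\pi$ there is a unique $\sigma$, on a unique pure inner form, with $m_{FJ}(\pi,\sigma)\neq0$, and that the sign of $W^\bullet_{n}$ selecting that pure inner form is exactly the one recorded by $\epsilon(1/2,\phi\times\phi'\times\mu^{-1},\psi_2^E)$ when $n$ is odd, respectively $\epsilon(1/2,\phi\times\phi'\times\mu^{-1},\psi^E)$ when $n$ is even.

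Finally, for the dichotomy across pure inner forms and the value $1$, I would invoke the conservation relation for the local theta correspondence of unitary groups: for a fixed irreducible representation and a fixed parity of Witt tower, the two first-occurrence indices in the two towers of that parity sum to the conservation constant, so exactly one of the two pure inner forms carries a nonvanishing theta lift to the hermitian group of dimension $n-1$. Summing $m_{FJ}(\pi,\sigma)=m_B(\theta(\pi),\theta(\sigma))$ over $\Pi_\phi\times\Pi_{\phi'}$ and using that the Bessel multiplicities produced above are each exactly $1$ then yields $\sum_{\pi,\sigma}m_{FJ}(\pi,\sigma)=1$ on the distinguished pure inner form and $0$ on the other, which is the assertion. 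The main obstacle I expect is twofold: first, verifying that an elliptic parameter $\phi\times\phi'$ with the precise first-occurrence behaviour required by the seesaw actually exists --- this forces compatibility conditions between the characters $\chi_j$, the conjugate-symplectic character $\mu$, and the Hasse invariant of $W_n$, and is where one must make a genuine choice rather than a formal one; and second, the careful tracking of Weil indices and additive-character normalizations (the passage between $\psi_2^E$ and $\psi^E$ according to the parity of $n$, and between $\mu$ and $\mu_V,\mu_W$ in the theta formulas) so that the accumulated $\epsilon$-factors recombine into exactly the stated sign condition. Both are routine in spirit but concentrate all the sign subtleties specific to the Fourier--Jacobi case.
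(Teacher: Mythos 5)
Your proposal follows the same overall strategy as the paper's proof: choose $\phi\times\phi'$ to be a multiplicity-free sum of distinct conjugate self-dual characters (hence elliptic), use theta lifts and a seesaw to convert the Fourier--Jacobi problem into a Bessel problem with strictly smaller $n+r$, apply the induction hypothesis from $\mathbf{(B)}$, and track the $\epsilon$-factor shifts. The differences are in implementation, and one of them is a genuine gap.

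The paper arranges the theta lifts so that they only ever go \emph{up} or stay at \emph{equal} rank. It first induces $\sigma$ up to $U(W^\bullet_{n+2r})$ via auxiliary non-conjugate-self-dual characters $\xi_1,\ldots,\xi_r$, applies the seesaw at corank $0$/codimension $1$ between $U(W^\bullet_{n+2r})$ and a hermitian pair $U(V^\bullet)\supset U(V_0^\bullet)$, and then peels the induced data back off, landing on $\mathbf{(B)_{n,r-1}}$. The crucial device is the choice $\phi_{n+2r}=\mu^{n+2r+1}$: this guarantees $\phi_0$ does \emph{not} contain $\mu^{n+2r}$, so by Theorem~\ref{B3}(1) the lift $\pi_0\mapsto\theta_{W^\bullet_{n+2r},V_0^\bullet}(\pi_0)$ is always nonzero and gives a bijection of $L$-packets, with no first-occurrence dichotomy to worry about. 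The constraint on $V^\bullet$ via the $\epsilon$-factor is handled at the equal-rank stage (Theorem~\ref{B2}), where the non-conjugate-self-dual $\xi_i$'s contribute trivially to the $\epsilon$-factor, so the condition depends only on $\phi'$.

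Your construction instead theta-lifts $\sigma$ \emph{down} from $U(W_n^\bullet)$ to $U(V_{n-1}^\bullet)$ and lands on $\mathbf{(B)_{n-1,r}}$. Going down by one is not automatic: by Theorem~\ref{B3} (read in the opposite direction), the lift to $U(V_{n-1}^\bullet)$ is nonzero only if $\phi'$ contains the relevant character $\mu^{n-1}$, and even then only for one of the two pure inner forms $V_{n-1}^\bullet$. You flag this as ``a genuine choice'' but do not actually build $\phi'$ to satisfy it, nor do you check that the resulting $\phi'$ remains multiplicity-free and of the correct parity. Moreover, your ``direct'' seesaw at corank $r$ and codimension $2r+1$ is not a single seesaw diagram; when unpacked it is the same two-step procedure (corank-$0$ seesaw plus parabolic-induction compatibility of theta and of the multiplicities) that the paper makes explicit via the chain $m_{FJ}(\pi,\sigma)=m_{FJ}(\pi,\sigma_r)=m_B(\theta(\sigma_r^\vee),\pi_0^\vee)=m_B(\theta(\sigma^\vee),\pi_0^\vee)$, so you should spell that out. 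Both routes land on a Bessel datum with $n'+r'=n+r-1$, so the induction goes through in either case, but the paper's ``always go up'' arrangement of the theta lifts, enforced by the ansatz $\phi_{n+2r}=\mu^{n+2r+1}$, is precisely what eliminates the first-occurrence issue that your proposal leaves open.
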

\begin{proof}
We consider 
    $$
    \phi=\phi_1 \oplus \ldots \oplus \phi_{n+2r}\ \ \text{ and }\ \ \phi^\prime=\phi_1^\prime \oplus \ldots \oplus \phi^\prime_{n},
    $$
where $\phi_1,\ldots,\phi_{n+2r}$ and $\phi_1^\prime,\ldots,\phi^\prime_{n}$ are pairwise distinct conjugate self-dual characters of $E^\times$ of sign $(-1)^{n-1}$ and $\phi_{n+2r}=\mu^{n+2r+1}$.  Let $\xi_1,\ldots,\xi_r$ be characters of $E^\times$ such that they are not conjugate self-dual. We set
$$
\phi^\prime_r=\phi^\prime\oplus \xi_1 \oplus \xi_1^{c,-1}\oplus \ldots \oplus \xi_r \oplus \xi_r^{c,-1}.
$$
For each $\sigma \in \Pi_{\phi^\prime}$, we denote $\sigma_r=\sigma \ltimes (\xi_1\times \ldots \times \xi_r)$.  Let $L=L_{(-1)^{n-1}}$ be an isotropic line of sign $(-1)^{n-1}$ and $V^\bullet$ be a nondegenerate hermitian space of dimension $n+2r$ satisfying
$$
\epsilon(V^\bullet)\epsilon(W_{n+2r}^\bullet)=\epsilon(1/2,\phi^{\prime,\vee}\otimes\mu^{-n-2r},\phi^E_2).
$$
Let $V^\bullet_{0},V_1^\bullet\subset V^\bullet$ such that $V^\bullet=V_{0}^\bullet \oplus^\perp L=V_1^\bullet\oplus^\perp V$, where $V$ is the split $2r$-dimensional hermitian subspace of $V^\bullet$. We consider the following seesaw diagram. 
$$\begin{tikzcd}
U(W_{n+2r}^\bullet)\times U(W_{n+2r}^\bullet) \arrow[d, no head] \arrow[rd, no head] & U(V^\bullet) \arrow[d, no head]     \\
U(W_{n+2r}^\bullet) \arrow[ru, no head]                                              & U(V_{0}^\bullet)\times U(L)
\end{tikzcd}$$
We denote $\phi_r=\theta_{V^\bullet,W^\bullet_{n+2r}}(\phi^{\prime,\vee}_r)$ to be an $L$-parameter for $U(V^\bullet)$. Let $\phi_0$ be an $L$-parameter for $U(V_0^\bullet)$ such that $\theta_{W^\bullet_{n+2r},V_0^\bullet}(\phi_0)=\phi$. Observe
$$
\phi_0=\phi_1\otimes \mu^{-1} \oplus \ldots \oplus \phi_{n+2r-1}\otimes\mu^{-1}\ \ \ \text{ and }\ \ \ \phi_r=\phi_r^{\prime,\vee}.
$$
We treat the case $n$ being odd. The case $n$ being even follows verbatim up to some modifications of notations. By Theorem \ref{B2}, Theorem \ref{B3} and the above seesaw diagram, for any $\pi_0\in \Pi_{\phi,V^\bullet_{0}}$ and $\sigma\in \Pi_{\phi^\prime,W^\bullet_{n}}$, we have
$$
m_{FJ}(\theta_{W^\bullet_{n+2r},V_0^\bullet}(\pi_0),\sigma)=m_{FJ}(\theta_{W^\bullet_{n+2r},V_0^\bullet}(\pi_0),\sigma_r)
$$
$$
=m_B(\theta_{V^\bullet,W^\bullet_{n+2r}}(\sigma_r^\vee),\pi_0^\vee)=m_B(\theta_{V_1^\bullet,W^\bullet_{n}}(\sigma^\vee),\pi_0^\vee).
$$
We now choose $W_{n+2r}^\bullet$ so that
$$
\epsilon(V_0^\bullet)=\epsilon(1/2,\phi^{\prime,\vee}\times \phi_0^\vee,\psi_{-2}^E)=\epsilon(1/2,\phi_0\times \phi^\prime,\psi_{2}^E).
$$
By the induction hypothesis, this is equivalent to the existence of  $\pi_0\in \Pi_{\phi_0,V^\bullet_{0}}$ and $\sigma\in \Pi_{\phi^\prime,W^\bullet_{n}}$ such that 
$$
m_B(\theta_{V^\bullet,W^\bullet_{n+2r}}(\sigma_r^\vee),\pi_0^\vee)\neq 0,
$$
which is to say $m_{FJ}(\theta_{W^\bullet_{n+2r},V_0^\bullet}(\pi_0),\sigma)\neq 0$. In this case, we have
$$
\epsilon(W_{n+2r}^\bullet)=\epsilon(1/2,\phi\times \phi^\prime \times \mu^{-1},\psi_2^E)
$$
as desired. Moreover, by Theorem \ref{B2}(iii) and Theorem \ref{B3}(i)(b), together with the induction hypothesis, it follows that
$$
\sum_{\pi\in \Pi_{\phi,W^\bullet_{n+2r}}}\sum_{\sigma\in \Pi_{\phi^\prime,W^\bullet_{n}}}m_{FJ}(\pi,\sigma)=1.
$$
\end{proof}
\subsection{Proof of Theorem \ref{maintheoremB}: Bessel models}\label{secB5}

In this subsection, we prove $\mathbf{(B)_{n,r}}$ and thus finish our inductive argument. As in \cite{BP14,BP15,BP16,BP20}, we deduce the given statement to the following theorem.
\begin{theorem}\label{B.6}
  \begin{enumerate}
      \item For any $f\in \mathcal{C}_{\text{scusp}}(G(F))$, we have $$J_{B}(f)=m_{\text{geom}}^{B}(\theta_f).$$
      \item For any $\tilde{f}\in \mathcal{C}_{\text{scusp}}(\tilde{M}(F))$, we have $$\tilde{J}_{B}(\tilde{f})=\epsilon^B_{\text{geom}}(\theta_{\tilde{f}}).$$
  \end{enumerate}  
    
\end{theorem}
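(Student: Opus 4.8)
The plan is to prove (1) and (2) in parallel, following the template of Theorem~\ref{maintheorem1} and Theorem~\ref{geometrictwisted} (equivalently of \cite[Theorem 11.4.1]{BP20} and the geometric side of \cite{BP14}), but replacing the harmonic-analytic treatment of the contributions near central elements by an input coming from the theta correspondence. The spectral expansions $J_B(f)=J_{B,\mathrm{spec}}(f)$ and $\tilde J_B(\tilde f)=\tilde J_{B,\mathrm{spec}}(\tilde f)$ are obtained exactly as in \cite{Le25} and \cite{BP20}; one then introduces quasi-character linear forms $J_{B,\mathrm{qc}}$ on $\mathrm{QC}(G(F))$ and $\tilde J_{B,\mathrm{qc}}$ on $\mathrm{QC}(\tilde M(F))$ with $J_B(f)=J_{B,\mathrm{qc}}(\theta_f)$ and $\tilde J_B(\tilde f)=\tilde J_{B,\mathrm{qc}}(\theta_{\tilde f})$. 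Using compatibility of $m^B_{\mathrm{geom}}$ and $\epsilon^B_{\mathrm{geom}}$ with parabolic induction — which holds on $R_{\mathrm{ind}}$ by \cite[Theorem 4.8, Theorem 10.1]{GGP23} and the induction hypothesis on $n+r$ — together with Arthur's orthogonality relations for elliptic representations, one reduces both statements, exactly as in Section~\ref{sec4.2}, to the quasi-character identities $J_{B,\mathrm{qc}}(\theta)=m^B_{\mathrm{geom}}(\theta)$ and $\tilde J_{B,\mathrm{qc}}(\tilde\theta)=\epsilon^B_{\mathrm{geom}}(\tilde\theta)$; both linear forms are supported on the elliptic loci.

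I would then run Harish--Chandra semisimple descent as in Sections~\ref{sec4.3}--\ref{sec4.4}. After a partition of unity it suffices to treat a completely invariant neighbourhood $\Omega_x^G$ (resp. $\Omega_{\tilde x}^M$) of a semisimple element. If $x$ is not conjugate into $H$ there is no contribution. If $x$ is noncentral and conjugate into $H$, the descended datum is a product of Bessel triples of strictly smaller $n+r$ (the Bessel model carries only the character $\xi$, so no Weil-representation factor appears and the descent is the one of \cite{BP20}), and the required equality follows from the induction hypothesis; in the twisted case one descends to a product of base-changed Bessel triples in the same way. What remains is the comparison near the central elements, which, after passing through the exponential map, becomes a comparison of infinitesimal trace formulae $J^{\mathrm{Lie}}_{B}$ on $\mathfrak g(F)$ and $\tilde J^{\mathrm{Lie}}_{B}$ on $\mathfrak m_{\theta}(F)$.

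For the infinitesimal forms one establishes a spectral expansion as in Theorem~\ref{Liespectral}, and then the scaling action $\theta\mapsto\theta_\lambda$ together with the bound $\dim\mathcal O\le\delta(G)$ forces all non-regular nilpotent coefficients to vanish, leaving
\[ J^{\mathrm{Lie}}_{B,\mathrm{qc}}(\theta)=m^{B,\mathrm{Lie}}_{\mathrm{geom}}(\theta)+\sum_{\mathcal O\in\mathrm{Nil}_{\mathrm{reg}}}c_{\mathcal O}\,c_{\theta,\mathcal O}(0), \]
and similarly for the twisted form. An analysis analogous to Proposition~\ref{prop6.11} (using a non-norm scalar to permute the regular nilpotent orbits, and the parity discussion of Section~\ref{sec5.6}) collapses this to a single unknown scalar $c$ in each case. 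The final, and I expect the only genuinely delicate, step is to show $c=0$. For this I would evaluate both sides at one explicit elliptic tempered parameter $\phi\times\phi'$ for $U(V_{n+2r+1})\times U(V_n)$ whose Bessel multiplicities, summed over the relevant pure inner forms, are already known and equal to $1$ on exactly the inner form prescribed by the distinguished character. Such a parameter is produced, in the style of \cite{Xue23a,Xue23b}, by a seesaw argument: starting from $\mathbf{(FJ)}_{n,r}$ (established in Section~\ref{secB4}) and the theta correspondence Theorems~\ref{B2} and~\ref{B3}, one transports a Fourier--Jacobi instance to a Bessel instance — this is the Bessel counterpart of Proposition~\ref{B5}. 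Feeding this example, together with the twisted endoscopic character identity for base change of unitary groups (Section~\ref{llc7}, with transfer factor identically $1$) and the matching of orbits of Section~\ref{llc4}, into the infinitesimal formula identifies the known left-hand value with the right-hand side and pins $c=0$, completing the proof of both (1) and (2). The main obstacle, accordingly, is to arrange the seesaw so that it depends only on $\mathbf{(FJ)}$ at levels $\le n+r$ and on strictly smaller Bessel cases, so that no circularity enters the induction on $n+r$, and to track the transfer-factor normalizations through the base-change identity.
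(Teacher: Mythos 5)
Your proposal matches the paper's approach: reduce via spectral expansion, parabolic compatibility, and semisimple descent to a comparison near central elements, use scaling/homogeneity to leave a single unknown regular-nilpotent coefficient, and pin that coefficient to zero by exhibiting an explicit elliptic tempered parameter whose Bessel multiplicities are already known — produced via the same seesaw from $\mathbf{(FJ)}_{n,r}$ and the theta correspondence Theorems~\ref{B2} and~\ref{B3}, with $\mathbf{(FJ)}_{n,r}$ established first so the induction on $n+r$ closes without circularity. The only minor slip is attributing a trivial transfer factor to the base change of Section~\ref{llc7} (it is the $\text{GL}_n\hookrightarrow\text{GL}_n\times\text{GL}_n$ base change of Section~\ref{llc2} whose transfer factor is identically $1$; the unitary base-change factors of Section~\ref{llc7} are not), but this does not affect the structure or validity of the argument.
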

Theorem \ref{B.6} was established in \cite{BP14,BP15,BP20}, which use some truncation arguments on infinitesimal variants of the corresponding trace formulas. See \cite[Theorem 10.8.1]{BP20} and \cite[Theorem 3.5.1]{BP15} for more details. We would like to provide an alternative way to prove Theorem \ref{B.6} without using such truncations. Similar to the previous subsection, it suffices to prove the following result.
\begin{proposition}
    There exists an elliptic $L$-parameter $\phi\times \phi^\prime$ for $G$ such that for any relevant pure inner form $U(V^\bullet_{n+2r+1})\times U(V_n^\bullet)$ of $G$, we have $$\sum_{\pi\in \Pi_{\phi,V^\bullet_{n+2r+1}}}\sum_{\sigma\in \Pi_{\phi^\prime,V^\bullet_{n}}}m_{B}(\pi,\sigma)\neq 0 \Leftrightarrow \epsilon(V_n^\bullet)=\epsilon(1/2,\phi\times \phi^\prime ,\psi_{-2}^E)$$
    and for such $V_n^\bullet$, the above sum is equal to $1$.
\end{proposition}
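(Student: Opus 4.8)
The plan is to mirror the proof of Proposition~\ref{B5}, interchanging the roles of Hermitian and skew-Hermitian spaces, and thereby reduce the Bessel statement to the Fourier--Jacobi case $\mathbf{(FJ)_{n',r'}}$ of Theorem~\ref{maintheoremB} with $n'+r'\leq n+r$, which is legitimate since $\mathbf{(FJ)}$ is treated in Section~\ref{secB4} before $\mathbf{(B)}$ (and the proof of $\mathbf{(FJ)_{n,r}}$ only uses $\mathbf{(B)}$ at strictly smaller rank, so there is no circularity). First I would fix the test parameter: choose pairwise distinct conjugate self-dual characters $\phi_1,\dots,\phi_{n+2r+1}$ of $E^{\times}$ of sign $(-1)^{n}$ and pairwise distinct conjugate self-dual characters $\phi_1',\dots,\phi_n'$ of sign $(-1)^{n+1}$, all of them two by two distinct, and single out one summand of $\phi$ so as to arrange, in the seesaw step below, the ``contains $\mu_V$'' situation of Theorem~\ref{B3} (that is where the dichotomy on the sign of a space is produced). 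Put $\phi=\bigoplus_i\phi_i$ and $\phi'=\bigoplus_j\phi_j'$; then $\phi\times\phi'$ is discrete, hence an elliptic $L$-parameter for $G=U(V_{n+2r+1})\times U(V_n)$, and every character of $S_\phi\times S_{\phi'}$ is realized on exactly one relevant pure inner form of $G$.

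Next I would run the two-step reduction of Proposition~\ref{B5} in the Bessel setting. As there, one first absorbs the codimension $2r+1$ into a codimension-one model: introducing auxiliary non-conjugate-self-dual characters $\xi_1,\dots,\xi_r$ of $E^{\times}$ and replacing $\sigma$ by the induced representation $\sigma_r=\sigma\rtimes(\xi_1\times\cdots\times\xi_r)$ of the larger unitary group, the compatibility of Bessel models with parabolic induction (used already for Fourier--Jacobi models in Section~\ref{secB4}) gives $m_B(\pi,\sigma)=m_B(\pi,\sigma_r)$. One then fixes a skew-Hermitian space $W^\bullet$ together with two orthogonal decompositions, one cutting off a skew-Hermitian line and one cutting off a split skew-Hermitian subspace, and writes down the corresponding seesaw dual pair built from $U(V_{n+2r+1}^\bullet)$ and $U(W^\bullet)$. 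The seesaw identity, combined with the nonvanishing dichotomy and with the explicit $L$-parameters of the theta lifts supplied by Theorems~\ref{B2}(i),(ii) and~\ref{B3}(i),(ii), turns the codimension-one Bessel multiplicity attached to $\phi\times\phi'$ into a Fourier--Jacobi multiplicity attached to an explicit elliptic parameter $\psi\times\psi'$ for a relevant pair of unitary groups whose rank invariant is $\leq n+r$.

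At that point I would invoke $\mathbf{(FJ)}$ for $\psi\times\psi'$ to conclude that the Fourier--Jacobi sum is nonzero, and then equal to $1$, exactly when the sign of the relevant skew-Hermitian space matches the corresponding local root number; unwinding the seesaw and using the bijectivity of theta on $L$-packets of tempered parameters (Theorems~\ref{B2}(iii),(iv) and~\ref{B3}(i)(b),(ii)(c),(iii)) yields the equality of multiplicity sums and hence multiplicity one. The translation of the sign condition back into $\epsilon(V_n^\bullet)=\epsilon(\tfrac{1}{2},\phi\times\phi',\psi_{-2}^E)$ is then obtained by tracking, step by step, how the discriminants of the intermediate Hermitian and skew-Hermitian spaces and the local root numbers are permuted under each theta correspondence via the formula in Theorem~\ref{B2}(i), together with the standard behaviour of $\epsilon$-factors under character twists. \textbf{The main obstacle} I expect is exactly this last bookkeeping: matching the Hermitian/skew-Hermitian dichotomy and the discriminant and $\epsilon$-factor shifts across the two theta steps, while verifying that the auxiliary characters $\xi_j$ can be chosen so that every intermediate parameter stays discrete and every intermediate pair stays relevant, so that the induction on $n+r$ actually closes. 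Once the proposition is in hand, Theorem~\ref{B.6} follows exactly as in the proof of Theorem~\ref{B.4}, i.e.\ from Theorem~\ref{maintheorem1} and Theorem~\ref{geometrictwisted} by the trace-formula argument of the main body.
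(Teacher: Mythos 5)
Your proposal is essentially identical to the paper's own argument: the same choice of $\phi$ and $\phi'$ as direct sums of pairwise-distinct conjugate self-dual characters of the appropriate signs with $\phi_{n+2r+1}=\mu^{n+2r+2}$ singled out to trigger the dichotomy, the same absorption of the corank via $\sigma_r=\sigma\rtimes(\xi_1\times\cdots\times\xi_r)$ with auxiliary non-conjugate-self-dual $\xi_j$, the same seesaw through a skew-Hermitian $W^\bullet=W_0^\bullet\oplus^\perp W$ with $W$ split, and the same invocation of $\mathbf{(FJ)_{n,r}}$ together with the bijectivity of theta on tempered packets and the $\epsilon$-formulas of Theorems~\ref{B2} and~\ref{B3}; you also correctly observe that the double induction on $n+r$ is well-founded since the Bessel pair appearing inside the proof of $\mathbf{(FJ)_{n,r}}$ has invariant $n+r-1$. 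Worth noting in your favour: your sign bookkeeping is actually more careful than the paper's written proof, which nominally assigns both families $\phi_i$ and $\phi'_j$ the sign $(-1)^n$, whereas, as you observe, the $\phi'_j$ must have sign $(-1)^{n+1}$ because $\Phi_{\text{temp}}(U(V_d))=\Phi_{\text{temp}}^{(-1)^{d+1}}(\text{GL}_d)$ and $n$, $n+2r+1$ have opposite parities.
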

\begin{proof}
We follow the same strategy as in the proof of Proposition \ref{B5}. Consider 
    $$
    \phi=\phi_1 \oplus \ldots \oplus \phi_{n+2r+1}\ \ \text{ and }\ \ \phi^\prime=\phi_1^\prime \oplus \ldots \oplus \phi^\prime_{n},
    $$
where $\phi_1,\ldots,\phi_{n+2r+1}$ and $\phi_1^\prime,\ldots,\phi^\prime_{n}$ are pairwise distinct conjugate self-dual characters of $E^\times$ of sign $(-1)^{n}$, and $\phi_{n+2r+1}=\mu^{n+2r+2}$.  Let $\xi_1,\ldots,\xi_r$ be characters of $E^\times$ that are not conjugate self-dual, and define
$$
\phi^\prime_r=\phi^\prime\oplus \xi_1 \oplus \xi_1^{c,-1}\oplus \ldots \oplus \xi_r \oplus \xi_r^{c,-1}.
$$
For each $\sigma \in \Pi_{\phi^\prime}$, we have $\sigma_r\in \Pi_{\phi_r^\prime}$.  Let $L=L_{(-1)^{n}}$ be an isotropic line of sign $(-1)^{n}$, and let $W^\bullet$ be a nondegenerate skew-hermitian space of dimension $n+2r$ satisfying
$$
\epsilon(W^\bullet)\epsilon(V_{n}^\bullet)=\epsilon(1/2,\phi^{\prime,\vee}\otimes\mu^{-n-2r},\phi^E_2).
$$
Write $W^\bullet=W_{0}^\bullet \oplus^\perp W$, where $W$ is the split $2r$-dimensional skew-hermitian subspace of $W^\bullet$. The following seesaw diagram is then considered:
$$\begin{tikzcd}
U(V_{n+2r+1}^\bullet) \arrow[d, no head] \arrow[rd, no head] &  U(W^\bullet)\times U(W^\bullet) \arrow[d, no head]     \\
 U(V_{n+2r}^\bullet)\times U(L)\arrow[ru, no head]                                              & U(W^\bullet)
\end{tikzcd}$$
We denote $\phi_r=\theta_{W^\bullet,V^\bullet_{n+2r}}(\phi^{\prime,\vee}_r)$ to be an $L$-parameter for $U(W^\bullet)$. Let $\phi_0$ be an $L$-parameter for $U(W^\bullet)$ satisfying $\theta_{V^\bullet_{n+2r+1},W^\bullet}(\phi_0)=\phi$. Then
$$
\phi_0=\phi_1\otimes \mu^{-1} \oplus \ldots \oplus \phi_{n+2r}\otimes\mu^{-1}\ \ \ \text{ and }\ \ \ \phi_r=\phi_r^{\prime,\vee}.
$$
We treat the case when $n$ is odd; the case of even $n$ follows verbatim with minor notational adjustments. By Theorem \ref{B2}, Theorem \ref{B3}, together with the above seesaw diagram, for any $\pi_0\in \Pi_{\phi_0,W^\bullet}$ and $\sigma\in \Pi_{\phi^\prime,V^\bullet_{n}}$, we have
$$
m_{B}(\theta_{V^\bullet_{n+2r+1},W^\bullet}(\pi_0),\sigma)=m_{B}(\theta_{V^\bullet_{n+2r+1},W^\bullet}(\pi_0),\sigma_r)
$$
$$
=m_{FJ}(\theta_{W^\bullet,V^\bullet_{n+2r}}(\sigma_r^\vee),\pi_0^\vee)=m_{FJ}(\theta_{W_0^\bullet,V^\bullet_{n}}(\sigma^\vee),\pi_0^\vee).
$$
We choose $V_{n}^\bullet$ so that
$$
\epsilon(W^\bullet)=\epsilon(1/2,\phi^{\prime,\vee}\times \phi_0^\vee\times \mu^{-1},\psi_{2}^E)=\epsilon(1/2,\phi_0\times \phi^\prime\times \mu,\psi_{-2}^E).
$$
By $\mathbf{(FJ)_{n,r}}$, this is equivalent to the existence of  $\pi_0\in \Pi_{\phi_0,W^\bullet}$ and $\sigma\in \Pi_{\phi^\prime,V^\bullet_{n}}$ such that 
$$
m_{FJ}(\theta_{W_0^\bullet,V^\bullet_{n}}(\sigma^\vee),\pi_0^\vee)\neq 0,
$$
which is to say $m_{B}(\theta_{V^\bullet_{n+2r+1},W^\bullet}(\pi_0),\sigma)\neq 0$. In this case, we have
$$
\epsilon(V_{n}^\bullet)=\epsilon(1/2,\phi\times \phi^\prime,\psi_{-2}^E).
$$
As in Theorem \ref{B2}(iii) and Theorem \ref{B3}(i)(b), since our theta correspondences give bijection between the corresponding $L$-packets, $\mathbf{(FJ)_{n,r}}$ gives us
$$
\sum_{\pi\in \Pi_{\phi,V^\bullet_{n+2r+1}}}\sum_{\sigma\in \Pi_{\phi^\prime,V^\bullet_{n}}}m_{B}(\pi,\sigma)=1.
$$
\end{proof}

\end{document}